\numberwithin{equation}{section}
\numberwithin{figure}{section}
\theoremstyle{plain}
\newtheorem{thm}{\protect\theoremname}[section]
  \theoremstyle{definition}
  \newtheorem{defn}[thm]{\protect\definitionname}
  \theoremstyle{plain}
  \newtheorem{lem}[thm]{\protect\lemmaname}
  \theoremstyle{remark}
  \newtheorem{rem}[thm]{\protect\remarkname}
  \theoremstyle{plain}
  \newtheorem{fact}[thm]{\protect\factname}
  \theoremstyle{plain}
  \newtheorem{cor}[thm]{\protect\corollaryname}
  \theoremstyle{plain}
  \newtheorem{conjecture}[thm]{\protect\conjecturename}
  \theoremstyle{plain}
  \newtheorem{prop}[thm]{\protect\propositionname}
  \theoremstyle{definition}
  \newtheorem{example}[thm]{\protect\examplename}
  \theoremstyle{definition}
  \newtheorem{problem}[thm]{\protect\problemname}
\newenvironment{lyxlist}[1]
{\begin{list}{}
{\settowidth{\labelwidth}{#1}
 \setlength{\leftmargin}{\labelwidth}
 \addtolength{\leftmargin}{\labelsep}
 }}
{\end{list}}
  \theoremstyle{remark}
  \newtheorem*{claim*}{\protect\claimname}
  \providecommand{\claimname}{Claim}
  \providecommand{\conjecturename}{Conjecture}
  \providecommand{\corollaryname}{Corollary}
  \providecommand{\definitionname}{Definition}
  \providecommand{\examplename}{Example}
  \providecommand{\factname}{Fact}
  \providecommand{\lemmaname}{Lemma}
  \providecommand{\problemname}{Problem}
  \providecommand{\propositionname}{Proposition}
  \providecommand{\remarkname}{Remark}
\providecommand{\theoremname}{Theorem}
\begin{document}
\def\Ind#1#2{#1\setbox0=\hbox{$#1x$}\kern\wd0\hbox to 0pt{\hss$#1\mid$\hss}
\lower.9\ht0\hbox to 0pt{\hss$#1\smile$\hss}\kern\wd0}
\def\Notind#1#2{#1\setbox0=\hbox{$#1x$}\kern\wd0\hbox to 0pt{\mathchardef
\nn="3236\hss$#1\nn$\kern1.4\wd0\hss}\hbox to 0pt{\hss$#1\mid$\hss}\lower.9\ht0
\hbox to 0pt{\hss$#1\smile$\hss}\kern\wd0}
\def\indi{\mathop{\mathpalette\Ind{}}}
\def\nindi{\mathop{\mathpalette\Notind{}}}
\def\bdd {bdd}

\global\long\def\acl{\operatorname{acl}}

\global\long\def\dcl{\operatorname{dcl}}

\global\long\def\Avg{\operatorname{Avg}}

\global\long\def\Sk{\operatorname{Sk}}

\global\long\def\inp{\operatorname{inp}}

\global\long\def\dprk{\operatorname{dprk}}

\global\long\def\ind{\operatorname{\indi}}

\global\long\def\nind{\operatorname{\nindi}}

\global\long\def\ist{\operatorname{ist}}

\global\long\def\Aut{\operatorname{Aut}}

\global\long\def\M{\operatorname{\mathbb{M}}}

\global\long\def\NTP{\operatorname{NTP}}

\global\long\def\NIP{\operatorname{NIP}}

\global\long\def\IP{\operatorname{IP}}

\global\long\def\TP{\operatorname{TP}}

\global\long\def\tp{\operatorname{tp}}

\global\long\def\NSOP{\operatorname{NSOP}}

\global\long\def\SOP{\operatorname{SOP}}

\global\long\def\bdn{\operatorname{bdn}}

\global\long\def\vstp{\operatorname{vstp}}

\global\long\def\lstp{\operatorname{Lstp}}

\global\long\def\sf{\operatorname{sf}}

\title{Theories without the tree property of the second kind}

\author{Artem Chernikov}

\thanks{The author was supported by the Marie Curie Initial Training Network
in Mathematical Logic - MALOA - From MAthematical LOgic to Applications,
PITN-GA-2009-238381}
\begin{abstract}
We initiate a systematic study of the class of theories without the
tree property of the second kind --- $\NTP_{2}$. Most importantly,
we show: the burden is ``sub-multiplicative'' in arbitrary theories
(in particular, if a theory has $\TP_{2}$ then there is a formula
with a single variable witnessing this); $\NTP_{2}$ is equivalent
to the generalized Kim's lemma and to the boundedness of ist-weight;
the dp-rank of a type in an arbitrary theory is witnessed by mutually
indiscernible sequences of realizations of the type, after adding
some parameters --- so the dp-rank of a 1-type in any theory is always
witnessed by sequences of singletons; in $\NTP_{2}$ theories, simple
types are co-simple, characterized by the co-independence theorem,
and forking between the realizations of a simple type and arbitrary
elements satisfies full symmetry; a Henselian valued field of characteristic
$(0,0)$ is $\NTP_{2}$ (strong, of finite burden) if and only if
the residue field is $\NTP_{2}$ (the residue field and the value
group are strong, of finite burden respectively), so in particular
any ultraproduct of $p$-adics is $\NTP_{2}$; adding a generic predicate
to a geometric $\NTP_{2}$ theory preserves $\NTP_{2}$.
\end{abstract}
\maketitle

\section*{Introduction}

The aim of this paper is to initiate a systematic study of theories
without the tree property of the second kind, or $\NTP_{2}$ theories.
This class was defined by Shelah implicitly in \cite{MR1083551} in
terms of a certain cardinal invariant $\kappa_{\inp}$ (see Section
\ref{sec: kappa_inp and burden}) and explicitly in \cite{MR595012},
and it contains both simple and $\NIP$ theories. There was no active
research on the subject until the recent interest in generalizing
methods and results of stability theory to larger contexts, necessitated
for example by the developments in the model theory of important algebraic
examples such as algebraically closed valued fields \cite{HHMStableDomination}.

We give a short overview of related results in the literature. The
invariant $\kappa_{\inp}$, the upper bound for the number of independent
partitions, was considered by Tsuboi in \cite{MR805688} for the case
of stable theories. In \cite{Adl} Adler defines burden, by relativizing
$\kappa_{\inp}$ to a fixed partial type, makes the connection to
weight in simple theories and defines strong theories. Burden in the
context of $\NIP$ theories, where it is called dp-rank, was already
introduced by Shelah in \cite{Sh863} and developed further in \cite{OnshuusUsvyatsov,AlfAlexItay,KapSim}.
Results about forking and dividing in $\NTP_{2}$ theories were established
in \cite{CheKap}. In particular, it was proved that a formula forks
over a model if and only if it divides over it (see Section \ref{sec: Forking in NTP2}).
Some facts about ordered $\inp$-minimal theories and groups (that
is with $\kappa_{\inp}^{1}=1$) are proved in \cite{GoodrickDpMinGroups,PierreDpMin}.
In \cite[Theorem 4.13]{BYRandomVariables} Ben Yaacov shows that if
a structure has $\IP$, then its randomization (in the sense of continuous
logic) has $\TP_{2}$. Malliaris \cite{Maryanthe} considers $\TP_{2}$
in relation to the saturation of ultra-powers and the Keisler order.
In \cite{ZoeUnboundedPAC} Chatzidakis observes that $\omega$-free
PAC fields have $\TP_{2}$.

~

A brief description of the results in this paper. 

In Section \ref{sec: kappa_inp and burden} we introduce $\inp$-patterns,
burden, establish some of their basic properties and demonstrate that
burden is sub-multiplicative: that is, if $\bdn(a/C)<\kappa$ and
$\bdn(b/aC)<\lambda$, then $\bdn(ab/C)<\kappa\times\lambda$. As
an application we show that the value of the invariant of a theory
$\kappa_{\inp}(T)$ does not depend on the number of variables used
in the computation. This answers a question of Shelah from \cite{MR1083551}
and shows in particular that if $T$ has $\TP_{2}$, then some formula
$\phi(x,y)$ with $x$ a singleton has $\TP_{2}$. It remains open
whether burden in $\NTP_{2}$ theories is actually sub-additive.

In Section \ref{sec: NTP2 in the classification hierarchy} we describe
the place of $\NTP_{2}$ in the classification hierarchy of first-order
theories and the relationship of burden to dp-rank in NIP theories
and to weight in simple theories. We also recall some combinatorial
``structure / non-structure'' dichotomy due to Shelah, and discuss
the behavior of the $\SOP_{n}$ hierarchy restricting to $\NTP_{2}$
theories.

Section \ref{sec: Forking in NTP2} is devoted to forking (and dividing)
in $\NTP_{2}$ theories. After discussing strictly invariant types,
we give a characterization of $\NTP_{2}$ in terms of the appropriate
variants of Kim's lemma, local character and bounded weight relatively
to strict non-forking. As an application we consider theories with
dependent dividing (i.e. whenever $p\in S(N)$ divides over $M\prec N$,
there some $\phi(x,a)\in p$ dividing over $M$ and such that $\phi(x,y)$
is $\NIP$) and show that any theory with dependent dividing is $\NTP_{2}$.
Finally we observe that the the analysis from \cite{CheKap} generalizes
to a situation when one is working inside an $\NTP_{2}$ type in an
arbitrary theory.

A famous equation of Shelah ``$\NIP$ = stability + dense linear
order'' turned out to be a powerful ideological principle, at least
at the early stages of the development of $\NIP$ theories. In this
paper the equation ``$\NTP_{2}$ = simplicity + $\NIP$'' plays
an important role. In particular, it seems very natural to consider
two extremal kinds of types in $\NTP_{2}$ theories (and in general)
--- simple types and $\NIP$ types. While it is perfectly possible
for an $\NTP_{2}$ theory to have neither, they form important special
cases and are not entirely understood.

In section \ref{sec: NIP types} we look at $\NIP$ types. In particular
we show that the results of the previous section on forking localized
to a type combined with honest definitions from \cite{ExtDefI} allow
to omit the global $\NTP_{2}$ assumption in the theorem of \cite{KapSim},
thus proving that dp-rank of a type in arbitrary theory is always
witnessed by mutually indiscernible sequences of its realizations,
after adding some parameters (see Theorem \ref{thm: dp-rank is witnessed inside the type}).
We also observe that in an $\NTP_{2}$ theory, a type is $\NIP$ if
and only if every extension of it has only boundedly many global non-forking
extensions.

In Section \ref{sec: Simple types} we consider simple types (defined
as those types for which every completion satisfies the local character),
first in arbitrary theories and then in $\NTP_{2}$. While it is more
or less immediate that on the set of realizations of a simple type
forking satisfies all the properties of forking in simple theories,
the interaction between the realizations of a simple type and arbitrary
tuples seems more intricate. We establish full symmetry between realizations
of a simple type and arbitrary elements, answering a question of Casanovas
in the case of $\NTP_{2}$ theories (showing that simple types are
co-simple, see Definition \ref{def: co-simple}). Then we show that
simple types are characterized as those satisfying the co-independence
theorem and that co-simple stably embedded types are simple (so in
particular a theory is simple if and only if it is $\NTP_{2}$ and
satisfies the independence theorem).

Section \ref{sec: Examples} is devoted to examples. We give an Ax-Kochen-Ershov
type statement: a Henselian valued field of characteristic $(0,0)$
is $\NTP_{2}$ (strong, of finite burden) if and only if the residue
field is $\NTP_{2}$ (the residue field and the value group are strong,
of finite burden respectively). This is parallel to the result of
Delon for $\NIP$ \cite{Delon}, and generalizes a result of Shelah
for strong dependence \cite{Sh863}. It follows that valued fields
of Hahn series over pseudo-finite fields are $\NTP_{2}$. In particular,
every theory of an ultra-product of $p$-adics is $\NTP_{2}$ (and
in fact of finite burden). We also show that expanding a geometric
$\NTP_{2}$ theory by a generic predicate (Chatzidakis-Pillay style
\cite{ChatzidakisPillay}) preserves $\NTP_{2}$.

\subsection*{Acknowledgments}

I am grateful to Ita\"{i} Ben Yaacov, Itay Kaplan and Martin Hils
for multiple discussions around the topics of the paper. I would also
like to thank Hans Adler and Enrique Casanovas for their interest
in this work and for suggesting nice questions. Finally, I thank Ehud
Hrushovski and the referee for some corrections.

\section{Preliminaries}

As usual, we will be working in a monster model $\M$ of a complete
first-order theory $T$. We will not be distinguishing between elements
and tuples unless explicitly stated.
\begin{defn}
We will often be considering collections of sequences $\left(\bar{a}_{\alpha}\right)_{\alpha<\kappa}$
with $\bar{a}_{\alpha}=\left(a_{\alpha,i}\right)_{i<\lambda}$ (where
each $a_{\alpha,i}$ is a tuple, maybe infinite). We say that they
are \emph{mutually indiscernible} over a set $C$ if $\bar{a}_{\alpha}$
is indiscernible over $C\bar{a}_{\neq\alpha}$ for all $\alpha<\kappa$.
We will say that they are \emph{almost mutually indiscernible} over
$C$ if $\bar{a}_{\alpha}$ is indiscernible over $C\bar{a}_{<\alpha}\left(a_{\beta,0}\right)_{\beta>\alpha}$.
Sometimes we call $\left(a_{\alpha,i}\right)_{\alpha<\kappa,i<\lambda}$
an \emph{array}. We say that $\left(\bar{b}_{\alpha}\right)_{\alpha<\kappa'}$
is a \emph{sub-array} of $\left(\bar{a}_{\alpha}\right)_{\alpha<\kappa}$
if for each $\alpha<\kappa'$ there is $\beta_{\alpha}<\kappa$ such
that $\bar{b}_{\alpha}$ is a sub-sequence of $\bar{a}_{\beta_{\alpha}}$.
We say that an array is mutually indiscernible (almost mutually indiscernible)
if rows are mutually indiscernible (resp. almost mutually indiscernible).
Finally, an array is \emph{strongly indiscernible} if it is mutually
indiscernible and in addition the sequence of rows $\left(\bar{a}_{\alpha}\right)_{\alpha<\kappa}$
is an indiscernible sequence.
\end{defn}
The following lemma will be constantly used for finding indiscernible
arrays.
\begin{lem}
\label{lem: Erdos-Rado} 
\begin{enumerate}
\item For any small set $C$ and cardinal $\kappa$ there is $\lambda$
such that:\\
If $A=\left(a_{\alpha,i}\right)_{\alpha<n,i<\lambda}$ is an array,
$n<\omega$ and $\left|a_{\alpha,i}\right|\leq\kappa$, then there
is an array $B=\left(b_{\alpha,i}\right)_{\alpha<n,i<\omega}$ with
rows mutually indiscernible over $C$ and such that every finite sub-array
of $B$ has the same type over $C$ as some sub-array of $A$.
\item Let $C$ be small set and $A=\left(a_{\alpha,i}\right)_{\alpha<n,i<\omega}$
be an array with $n<\omega$. Then for any finite $\Delta\in L(C)$
and $N<\omega$ we can find $\Delta$-mutually indiscernible sequences
$\left(a_{\alpha,i_{\alpha,0}},...,a_{\alpha,i_{\alpha,N}}\right)\subset\bar{a}_{\alpha}$,
$\alpha<n$.
\end{enumerate}
\end{lem}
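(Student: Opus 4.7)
The plan is to prove (1) by induction on the number of rows $n$, iteratively applying the classical Erd\H{o}s--Rado theorem once per row, and then to derive (2) from (1) via a standard compactness argument.

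For (1), choose cardinals $\lambda_{1} \le \lambda_{2} \le \cdots \le \lambda_{n} = \lambda$, where $\lambda_{1}$ is the Erd\H{o}s--Rado bound for extracting a $C$-indiscernible $\omega$-subsequence from a sequence of $\kappa$-tuples, and inductively $\lambda_{k+1}$ is an iterated beth-function large enough to absorb the Erd\H{o}s--Rado bound for parameter sets of size $\lambda_{k+1}\cdot\kappa + |C|$ and yield an indiscernible subsequence of length $\lambda_{k}$. The base case $n=1$ is classical Erd\H{o}s--Rado. For the inductive step, given $(a_{\alpha,i})_{\alpha \le n, i < \lambda_{n+1}}$, first apply Erd\H{o}s--Rado to the bottom row $(a_{n,i})_{i < \lambda_{n+1}}$ using the parameter set $C \cup \{a_{\alpha,i}: \alpha<n,\ i<\lambda_{n+1}\}$ to obtain an index set $I \subseteq \lambda_{n+1}$ of size $\lambda_{n}$ such that $(a_{n,i})_{i \in I}$ is indiscernible over $C$ together with the entire top $n$ rows. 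Then apply the inductive hypothesis to the truncated array $(a_{\alpha,i})_{\alpha<n, i\in I}$ with enlarged parameter set $C \cup \{a_{n,i}:i\in I\}$ to extract mutually indiscernible $(b_{\alpha,i})_{\alpha<n,i<\omega}$, and take the matching sub-sequence of $(a_{n,i})_{i \in I}$ as $(b_{n,i})_{i<\omega}$. The result is $C$-mutually indiscernible: the top $n$ rows are mutually indiscernible over $C \cup \{b_{n,i}\}_i$ by the inductive hypothesis, while $(b_{n,i})_i$ is indiscernible over $C$ together with the other rows because it is a sub-sequence of a sequence already indiscernible over the full top $n$ rows of the ambient array. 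Passing only to sub-sequences throughout preserves the finite-sub-array realization property.

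For (2), I would use compactness to extend $A$ to an array $A' = (a'_{\alpha,i})_{\alpha<n, i<\lambda}$ (with $\lambda$ the cardinal from (1)) such that every finite sub-array of $A'$ realizes the same $C$-type as some sub-array of $A$; this is consistent because every finite sub-array shape is certainly realized in $A$ and overlapping coherence conditions reduce to realizing a single enclosing finite sub-array of $A$. Applying (1) to $A'$ yields a $C$-mutually indiscernible array $B' = (b'_{\alpha,i})_{\alpha<n,i<\omega}$ with each finite sub-array sharing a $C$-type with a sub-array of $A'$, hence of $A$. In particular the length-$(N+1)$ initial segment of $B'$ is $\Delta$-mutually indiscernible over $C$; since this is a first-order condition on the finite sub-array determined by its $C$-type, some length-$(N+1)$ sub-array of $A$ itself realizes the same type and so is $\Delta$-mutually indiscernible.

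The main obstacle is the cardinal bookkeeping in (1): one must verify at each inductive stage that the Erd\H{o}s--Rado bound absorbs the growing parameter set, which contains the entire top portion of the ambient array. This amounts to choosing $\lambda$ of the form $\beth_{\mu}$ with $\mu$ large enough relative to $n$, $\kappa$ and $|C|$ --- a standard but fiddly computation. Conceptually, one also needs that indiscernibility of rows extracted in earlier inductive stages survives later extractions: this holds automatically because indiscernibility of a sequence over a set $P$ is preserved under both passing to sub-sequences and restricting $P$.
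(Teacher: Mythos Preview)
Your argument for (1) has a genuine gap: the Erd\H{o}s--Rado theorem does \emph{not} produce an indiscernible \emph{subsequence}. What it yields is an indiscernible sequence $(b_j)_{j<\omega}$ such that every finite tuple from it has the same type over the parameters as some tuple from the original sequence; the $b_j$ need not lie among the $a_{n,i}$ at all. So the step ``obtain an index set $I\subseteq\lambda_{n+1}$ of size $\lambda_n$ such that $(a_{n,i})_{i\in I}$ is indiscernible over $C$ together with the top $n$ rows'' is unjustified --- extracting an infinite indiscernible \emph{subset} would require a partition relation of the form $\lambda_{n+1}\to(\lambda_n)^{<\omega}_{\mu}$, a large-cardinal hypothesis. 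Without an actual index set $I$ there is no way to ``truncate'' the top $n$ rows to $(a_{\alpha,i})_{\alpha<n,\,i\in I}$, nor to take a ``matching sub-sequence'' of row $n$ afterwards. This same confusion makes your cardinal recursion circular: when you process row $n$, the top $n$ rows are still at full length $\lambda_{n+1}$, so the parameter set already has size $\lambda_{n+1}\cdot\kappa$, and you are asking $\lambda_{n+1}$ to dominate an Erd\H{o}s--Rado bound computed from $\lambda_{n+1}$ itself.

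The paper's proof of (1) sidesteps both issues by reversing the order: it restricts row $\alpha$ to length $\lambda_\alpha$ with $\lambda_0<\lambda_1<\cdots<\lambda_{n-1}$, and processes the \emph{longest} row $\bar a_{n-1}$ first over the shorter rows $\bar a_{<n-1}$, so the parameter set has size at most $\lambda_{n-2}$. At each step it extracts an EM-based (not sub-) indiscernible sequence of length $\omega$; already-processed rows then contribute only countably many parameters. Mutual indiscernibility at the end is checked via the EM-based property rather than by literal containment. Your proposal for (2) --- extend $A$ by compactness and invoke (1) --- is a legitimate alternative to the paper's direct finite-Ramsey argument, with one caveat: compactness only gives an $A'$ realizing the array EM-type of $A$ (every formula satisfied by a finite sub-array of $A'$ is satisfied by some sub-array of $A$), which is weaker than what you claim but still suffices, since $\Delta$-mutual-indiscernibility of a fixed finite block is expressed by a single formula.
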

\begin{proof}
(1) Let $\lambda_{0}=\kappa+\left|T\right|+\left|C\right|$, $\lambda_{n+1}=\beth_{\left(2^{\lambda_{n}}\right)^{+}}$
and let $\lambda=\sum_{n<\omega}\lambda_{n}$. Now assume that we
are given an array $A=\left(a_{\alpha,i}\right)_{\alpha<n,i<\lambda}$,
and let $\bar{a}_{\alpha}=\left(a_{\alpha,i}\right)_{i<\lambda_{\alpha}}$.
By the Erd\H os-Rado theorem (see e.g. \cite[Lemma 1.2]{MR2030083})
and the choice of $\lambda_{\alpha}$'s we can find a sequence $\bar{a}_{n-1}'=\left(a_{n-1,i}'\right)_{i<\omega}$
which is indiscernible over $\bar{a}_{<n-1}$ and $ $such that every
finite subsequence of $\bar{a}_{n-1}'$ has the same type over $\bar{a}_{<n-1}$
as some finite subsequence of $\bar{a}_{n-1}$. Next, as $\left|\bar{a}_{<n-2}\bigcup\bar{a}_{n-1}'\right|\leq\lambda_{n-3}$
it follows by Erd\H os-Rado that we can find some sequence $\bar{a}_{n-2}'=\left(a_{n-2,i}'\right)_{i<\omega}$
which is indiscernible over $\bar{a}_{<n-2}\bar{a}_{n-1}'$ and such
that every finite subsequence of it has the same type over $\bar{a}_{<n-2}\bar{a}_{n-1}'$
as some subsequence of $\bar{a}_{n-2}$. Continuing in the same manner
we get sequences $\bar{a}_{n-1}',\bar{a}_{n-2}',\ldots,\bar{a}_{0}'$
and it is easy to check from the construction that they are mutually
indiscernible and give rows of an array satisfying (1).

(2) By a repeated use of the finite Ramsey theorem, see \cite[Lemma 3.5(3)]{2012arXiv1208.1341C}
for details.
\end{proof}

\begin{lem}
\label{lem: almost indiscernible array gives an indiscernible array}
Let $\left(\bar{a}_{\alpha}\right)_{\alpha<\kappa}$ be almost mutually
indiscernible over $C$. Then there are $\left(\bar{a}_{\alpha}'\right)_{\alpha<\kappa}$,
mutually indiscernible over $C$ and such that $\bar{a}_{\alpha}'\equiv_{Ca_{\alpha,0}}\bar{a}_{\alpha}$
for all $\alpha<\kappa$.\end{lem}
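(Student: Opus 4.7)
The plan is to reduce to the case of finite $\kappa$ by compactness. The partial type $\Phi((\bar{x}_\alpha)_{\alpha<\kappa})$ asserting that $(\bar{x}_\alpha)$ is mutually indiscernible over $C$ and $\bar{x}_\alpha \equiv_{Ca_{\alpha,0}} \bar{a}_\alpha$ for each $\alpha$ is consistent iff each of its finite sub-types is, and the restriction of an almost mutually indiscernible family to any finite index set $F = \{\alpha_1 < \ldots < \alpha_n\}$ is again almost mutually indiscernible over $C$ (since $\bar{a}_{<\alpha_k}$ contains $(\bar{a}_{\alpha_j})_{j<k}$ and $(a_{\beta,0})_{\beta>\alpha_k}$ contains $(a_{\alpha_j,0})_{j>k}$). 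So it suffices to treat $\kappa = n < \omega$.

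The key observation for the finite case is that for any choice of indices $(i_\alpha)_{\alpha < n}$, the joint tuple $(a_{\alpha, i_\alpha})_{\alpha < n}$ has the same type over $C$ as $(a_{\alpha, 0})_{\alpha < n}$. I would prove this by a reverse induction on $\alpha$ from $n-1$ down to $0$: at stage $\alpha$, the indiscernibility of $\bar{a}_\alpha$ over $C\bar{a}_{<\alpha}(a_{\beta,0})_{\beta > \alpha}$ gives an automorphism fixing this set and mapping $a_{\alpha, i_\alpha}$ to $a_{\alpha, 0}$. The entries $(a_{\gamma, i_\gamma})_{\gamma < \alpha}$ live inside $\bar{a}_{<\alpha}$, and the entries $(a_{\beta, 0})_{\beta > \alpha}$ have already been normalized by the previous stages, so this automorphism fixes the rest of the tuple and performs the required swap.

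Given this, proceed as follows. Using compactness and the almost-mutual indiscernibility, extend each $\bar{a}_\alpha$ to a sequence $\hat{\bar{a}}_\alpha$ of length $\lambda$ (with $\lambda$ large enough for Lemma \ref{lem: Erdos-Rado}(1)) while preserving the first elements $a_{\alpha,0}$ and almost mutual indiscernibility over $C$. Apply Lemma \ref{lem: Erdos-Rado}(1) to extract a mutually indiscernible array $(\bar{a}^*_\alpha)_{\alpha < n}$ of length $\omega$ over $C$ every finite sub-array of which has the type over $C$ of some sub-array of $(\hat{\bar{a}}_\alpha)$. In particular, the tuple $(a^*_{\alpha, 0})_{\alpha < n}$ has the same type over $C$ as some $(\hat{a}_{\alpha, i_\alpha})_{\alpha < n}$, which by the key observation equals the type of $(a_{\alpha, 0})_{\alpha < n}$. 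Choose $\sigma \in \Aut(\M / C)$ with $\sigma(a^*_{\alpha, 0}) = a_{\alpha, 0}$ for all $\alpha$, and set $\bar{a}'_\alpha := \sigma(\bar{a}^*_\alpha)$. Then $(\bar{a}'_\alpha)$ is mutually indiscernible over $C$ (invariance under automorphism), each $\bar{a}'_\alpha$ starts with $a_{\alpha, 0}$, and $\bar{a}'_\alpha$ has the same EM-type over $C$ as $\bar{a}_\alpha$; since both are indiscernible sequences of the same length starting with the same element, this forces $\bar{a}'_\alpha \equiv_{Ca_{\alpha, 0}} \bar{a}_\alpha$.

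The main obstacle is the joint type-preservation observation: a naive Erd\H{o}s--Rado extraction produces mutually indiscernible rows, but the ``first elements'' of those extracted rows need not match the distinguished $a_{\alpha,0}$ one wishes to preserve. The almost mutual indiscernibility is exactly strong enough to make the tuple of first elements of any re-indexing have the original joint type, which is what permits the final automorphism-alignment to go through.
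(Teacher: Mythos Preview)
Your proposal is correct and follows essentially the same outline as the paper's (very terse) proof: reduce to finitely many rows by compactness, stretch the rows and apply the Erd\H{o}s--Rado extraction of Lemma~\ref{lem: Erdos-Rado}(1), then use an automorphism over $C$ to realign the first column, with compactness gluing everything together. Your ``key observation''---that almost mutual indiscernibility forces $\tp((a_{\alpha,i_\alpha})_{\alpha<n}/C)=\tp((a_{\alpha,0})_{\alpha<n}/C)$ for every choice of $(i_\alpha)$, proved by reverse induction on $\alpha$---is precisely the content hidden behind the paper's phrase ``taking an automorphism,'' and you have spelled it out correctly.
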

\begin{proof}
By Lemma \ref{lem: Erdos-Rado}, taking an automorphism, and compactness
(see \cite[Lemma 3.5(2)]{2012arXiv1208.1341C} for details).\end{proof}
\begin{defn}
\label{def: Ramsey number} Given a set of formulas $\Delta$, let
$R(\kappa,\Delta)$ be the minimal length of a sequence of singletons
sufficient for the existence of a $\Delta$-indiscernible sub-sequence
of length $\kappa$. In particular, for finite $\Delta$ we have:
\begin{enumerate}
\item $R\left(\omega,\Delta\right)=\omega$ --- by infinite Ramsey theorem,
\item $R\left(n,\Delta\right)<\omega$ for every $n<\omega$ --- by finite
Ramsey theorem,
\item $R(\kappa^{+},\Delta)\leq\beth_{\omega}\left(\kappa\right)$ for any
infinite $\kappa$ --- by Erd\H os-Rado theorem.
\end{enumerate}
\end{defn}
\begin{rem}
Let $\left(\bar{a}_{i}\right)$ be a mutually indiscernible array
over $A$. Then it is still mutually indiscernible over $\acl(A)$.\end{rem}
\begin{fact}
(see e.g. \cite{MR2800483}) Let $p\left(x\right)$ be a global type
invariant over a set $C$ (that is $\phi(x,a)\in p$ if and only if
$\phi(x,\sigma(a))\in p$ for any $\sigma\in\Aut(\M/C)$). For any
set $D\supseteq C$, and an ordinal $\alpha$, let the sequence $\bar{c}=\left\langle c_{i}\left|\, i<\alpha\right.\right\rangle $
be such that $c_{i}\models p|_{Dc_{<i}}$. Then $\bar{c}$ is indiscernible
over $D$ and its type over $D$ does not depend on the choice of
$\bar{c}$. Call this type $p^{\left(\alpha\right)}|_{D}$, and let
$p^{\left(\alpha\right)}=\bigcup_{D\supseteq C}p^{\left(\alpha\right)}|_{D}$.
Then $p^{\left(\alpha\right)}$ also does not split over $C$. 
\end{fact}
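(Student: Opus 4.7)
The plan is to prove all three assertions simultaneously by induction on the length of the sequence, with the basic engine being the observation that $C$-invariance of $p$ amounts to: for any tuple $e$ and any formula $\phi(x,y)$, whether $\phi(x,e)\in p$ depends only on $\tp(e/C)$. Since $D\supseteq C$, this in particular means that $p|_{D e}$ is determined by $\tp(e/D)$ whenever the parameters of $p|_{D e}$ lie in $De$.

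First I would show, by induction on $n<\omega$, that for any two candidate sequences $\bar{c}=(c_i)_{i<\alpha}$ and $\bar{c}'=(c'_i)_{i<\alpha}$ produced as in the statement one has $\tp(c_{<n}/D)=\tp(c'_{<n}/D)$. The base $n=0$ is trivial. For the inductive step, pick $\sigma\in\Aut(\M/D)$ sending $c_{<n}$ to $c'_{<n}$. Since $D\supseteq C$ and $p$ is $C$-invariant we have $\sigma(p)=p$, so $\sigma(c_n)\models \sigma(p)|_{Dc'_{<n}}=p|_{Dc'_{<n}}$; but also $c'_n\models p|_{Dc'_{<n}}$, so $\sigma(c_n)\equiv_{Dc'_{<n}}c'_n$, and therefore $c_{\le n}\equiv_D c'_{\le n}$, completing the induction. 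Extending this to all finite sub-tuples (and then by compactness to $\bar{c}$ itself) shows that $\tp(\bar{c}/D)$ is well-defined; call it $p^{(\alpha)}|_D$.

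For indiscernibility, I note that any strictly increasing sub-sequence $(c_{i_0},\dots,c_{i_{n-1}})$ is itself a sequence of the same kind over $D$: indeed $c_{i_k}\models p|_{Dc_{<i_k}}$ and $Dc_{i_0}\cdots c_{i_{k-1}}\subseteq Dc_{<i_k}$, so $c_{i_k}\models p|_{Dc_{i_0}\cdots c_{i_{k-1}}}$. By the previous paragraph all such sub-sequences have the same type over $D$ as $(c_0,\dots,c_{n-1})$, which is exactly indiscernibility of $\bar{c}$ over $D$.

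Finally, for non-splitting of $p^{(\alpha)}$ over $C$, suppose $\bar{a}\equiv_C\bar{a}'$ and fix $\sigma\in\Aut(\M/C)$ with $\sigma(\bar{a})=\bar{a}'$. Take $\bar{c}\models p^{(\alpha)}|_{\bar{a}C}$. Since $\sigma$ fixes $C$, $\sigma(p)=p$, and one checks from the definition that $\sigma(\bar{c})$ is again a sequence of the desired form over $\bar{a}'C$, hence realizes $p^{(\alpha)}|_{\bar{a}'C}$. Therefore for any $\phi(x,y)$,
\[
\phi(x,\bar{a})\in p^{(\alpha)} \iff \models\phi(\bar{c},\bar{a}) \iff \models\phi(\sigma(\bar{c}),\bar{a}') \iff \phi(x,\bar{a}')\in p^{(\alpha)},
\]
which is non-splitting. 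There is no real obstacle; the only thing to be careful about is invoking $\sigma(p)=p$ at each automorphism step, which is exactly what $C$-invariance of $p$ buys.
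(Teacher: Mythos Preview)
Your argument is correct and is the standard proof of this fact. Note, however, that the paper does not supply its own proof: the statement is recorded as a \emph{Fact} with a reference to the literature, so there is nothing in the paper to compare your approach against. Your induction on $n$ for well-definedness, the observation that increasing sub-sequences are again Morley sequences for indiscernibility, and the automorphism argument for $C$-invariance of $p^{(\alpha)}$ are exactly the expected steps.
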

~

Finally, we assume some acquaintance with the basics of simple (e.g.
\cite{MR2585153}) and $\NIP$ (e.g. \cite{Adl}) theories.

\section{\label{sec: kappa_inp and burden} Burden and $\kappa_{\inp}$}

Let $p(x)$ be a (partial) type. 
\begin{defn}
An \emph{$\inp$-pattern} in $p(x)$ of depth $\kappa$ consists of
$\left(a_{\alpha,i}\right)_{\alpha<\kappa,i<\omega}$, $\phi_{\alpha}(x,y_{\alpha})$
and $k_{\alpha}<\omega$ such that
\begin{itemize}
\item $\left\{ \phi_{\alpha}(x,a_{\alpha,i})\right\} _{i<\omega}$ is $k_{\alpha}$-inconsistent,
for each $\alpha<\kappa$
\item $\left\{ \phi_{\alpha}(x,a_{\alpha,f(\alpha)})\right\} _{\alpha<\kappa}\cup p(x)$
is consistent, for any $f:\,\kappa\to\omega$.
\end{itemize}
The \emph{burden} of $p(x)$, denoted $\bdn(p)$, is the supremum
of the depths of all $\inp$-patterns in $p(x)$. By $\bdn(a/C)$
we mean $\bdn(tp(a/C))$.
\end{defn}
Obviously, $p(x)\subseteq q(x)$ implies $\bdn(p)\geq\bdn(q)$ and
$\bdn(p)=0$ if and only if $p$ is algebraic. Also notice that $\bdn(p)<\infty\Leftrightarrow\bdn(p)<|T|^{+}$
by compactness.

First we observe that it is sufficient to look at mutually indiscernible
$\inp$-patterns.
\begin{lem}
\label{lem: IndiscDivPattern} For $p(x)$ a (partial) type over $C$,
the following are equivalent:
\begin{enumerate}
\item There is an $\inp$-pattern of depth $\kappa$ in $p(x)$.
\item There is an array $(\bar{a}_{\alpha})_{\alpha<\kappa}$ with rows
mutually indiscernible over $C$ and $\phi_{\alpha}(x,y_{\alpha})$
for $\alpha<\kappa$ such that: 

\begin{itemize}
\item $\{\phi_{\alpha}(x,a_{\alpha,i})\}_{i<\omega}$ is inconsistent for
every $\alpha<\kappa$
\item $p(x)\cup\{\phi_{\alpha}(x,a_{\alpha,0})\}_{\alpha<\kappa}$ is consistent.
\end{itemize}
\item There is an array $\left(\bar{a}_{\alpha}\right)_{\alpha<\kappa}$
with rows almost mutually indiscernible over $C$ with the same properties.
\end{enumerate}
\end{lem}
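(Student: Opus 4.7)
My plan is to prove the cycle $(2) \Rightarrow (3) \Rightarrow (1) \Rightarrow (2)$. The implication $(2) \Rightarrow (3)$ is immediate, since mutual indiscernibility over $C$ trivially entails almost mutual indiscernibility over $C$, and the remaining hypotheses on the $\phi_{\alpha}$ and on first-column consistency are identical in the two statements.

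For $(3) \Rightarrow (1)$, I would run a shifting argument. Fix $f \colon \kappa \to \omega$; by compactness it is enough to show, for each finite $F = \{\alpha_{1} < \cdots < \alpha_{n}\} \subseteq \kappa$, that $p(x) \cup \{\phi_{\alpha_{j}}(x, a_{\alpha_{j}, f(\alpha_{j})})\}_{j=1}^{n}$ is consistent. Starting from the given consistency of $p(x) \cup \{\phi_{\alpha_{j}}(x, a_{\alpha_{j}, 0})\}_{j=1}^{n}$, I would shift the indices one at a time for $j = 1, \ldots, n$: at step $j$ I replace $a_{\alpha_{j}, 0}$ by $a_{\alpha_{j}, f(\alpha_{j})}$ while leaving the other columns untouched. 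The already-shifted entries $a_{\alpha_{j'}, f(\alpha_{j'})}$ with $j' < j$ belong to $\bar{a}_{<\alpha_{j}}$, and the not-yet-shifted entries $a_{\alpha_{j'}, 0}$ with $j' > j$ belong to $(a_{\beta, 0})_{\beta > \alpha_{j}}$, so almost mutual indiscernibility of the row $\bar{a}_{\alpha_{j}}$ over $C\bar{a}_{<\alpha_{j}}(a_{\beta, 0})_{\beta > \alpha_{j}}$ yields $a_{\alpha_{j}, 0} \equiv a_{\alpha_{j}, f(\alpha_{j})}$ over exactly the parameters appearing in the current formula list, preserving consistency. After $n$ steps the full shifted diagonal is consistent with $p(x)$.

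For $(1) \Rightarrow (2)$, I would argue by compactness together with Lemma~\ref{lem: Erdos-Rado}(1). Let $\Sigma((y_{\alpha, i})_{\alpha < \kappa,\, i < \omega})$ be the partial type over $C$ asserting that $(y_{\alpha, i})$ has rows mutually indiscernible over $C$, that $\{\phi_{\alpha}(x, y_{\alpha, i})\}_{i < \omega}$ is $k_{\alpha}$-inconsistent for each $\alpha$, and that $p(x) \cup \{\phi_{\alpha}(x, y_{\alpha, 0})\}_{\alpha < \kappa}$ is consistent. A finite subtype of $\Sigma$ mentions only finitely many rows $\alpha_{1}, \ldots, \alpha_{n}$ and finitely many entries in each. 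To realize it, I would first stretch the restriction of the given $\inp$-pattern to these $n$ rows up to any desired length $\lambda$ by compactness (the row-inconsistency and path-consistency conditions are preserved), then apply Lemma~\ref{lem: Erdos-Rado}(1) to the stretched $n \times \lambda$ array to obtain a mutually $C$-indiscernible $n \times \omega$ array whose finite sub-arrays have the same type over $C$ as sub-arrays of the stretched pattern. Both the $k_{\alpha_{j}}$-inconsistency of each row and the consistency of $p(x)$ with the first-column diagonal are inherited from the input, which realizes the finite fragment. Compactness then yields a full mutually indiscernible $\inp$-pattern of depth $\kappa$.

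The main subtlety is the bookkeeping in $(3) \Rightarrow (1)$: one must check at each shift that the base over which $\bar{a}_{\alpha_{j}}$ is almost-indiscernible really contains \emph{exactly} the parameters present in the current consistent formula list, which is why the shifts must be performed in increasing order of $\alpha_{j}$. A secondary and routine point is verifying that mutual indiscernibility over $C$ can be written as a partial type each of whose finite subtypes involves only finitely many rows, so that the compactness reduction in $(1) \Rightarrow (2)$ really does reduce to an instance of Lemma~\ref{lem: Erdos-Rado}(1) with $n < \omega$ rows.
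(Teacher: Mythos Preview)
Your proposal is correct and follows essentially the same approach as the paper, which sketches the cycle as ``(1)$\Rightarrow$(2) is a standard argument using Lemma~\ref{lem: Erdos-Rado} and compactness, (2)$\Rightarrow$(3) is clear and (3)$\Rightarrow$(1) is an easy reverse induction plus compactness.'' Your forward shifting in $(3)\Rightarrow(1)$ is equivalent to the paper's ``reverse induction'' (reducing the $f$-diagonal back to the $0$-column one row at a time from the top), and your compactness-plus-Erd\H{o}s--Rado argument for $(1)\Rightarrow(2)$ is exactly what the paper has in mind; the only omission is the routine observation in $(3)\Rightarrow(1)$ that row-inconsistency upgrades to $k_\alpha$-inconsistency by compactness and indiscernibility of each row over $C$.
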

\begin{proof}
(1)$\Rightarrow$(2) is a standard argument using Lemma \ref{lem: Erdos-Rado}
and compactness, (2)$\Rightarrow$(3) is clear and (3)$\Rightarrow$(1)
is an easy reverse induction plus compactness.
\end{proof}
We will need the following technical lemma.
\begin{lem}
\label{lem: RotateIfConsistent} Let $\left(\bar{a}_{\alpha}\right)_{\alpha<\kappa}$
be a mutually indiscernible array over $C$ and $b$ given. Let $p_{\alpha}(x,a_{\alpha,0})=\tp(b/a_{\alpha,0}C)$,
and assume that $p^{\infty}(x)=\bigcup_{\alpha<\kappa,i<\omega}p_{\alpha}(x,a_{\alpha,i})$
is consistent. Then there are $\left(\bar{a}_{\alpha}'\right)_{\alpha<\kappa}$
such that:
\begin{enumerate}
\item $\bar{a}_{\alpha}'\equiv_{a_{\alpha,0}C}\bar{a}_{\alpha}$ for all
$\alpha<\kappa$
\item $\left(\bar{a}'_{\alpha}\right)_{\alpha<\kappa}$ is a mutually indiscernible
array over $Cb$.
\end{enumerate}
\end{lem}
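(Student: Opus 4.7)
My plan is to build, by transfinite induction on $\alpha$, an almost mutually indiscernible array $(\bar a^{*}_{\alpha})_{\alpha<\kappa}$ over $Cb$ with $a^{*}_{\alpha,0}=a_{\alpha,0}$ and $\bar a^{*}_{\alpha}\equiv_{Ca_{\alpha,0}}\bar a_{\alpha}$, and then invoke Lemma \ref{lem: almost indiscernible array gives an indiscernible array} to convert it into the required mutually indiscernible $(\bar a'_{\alpha})$.

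First, fix $b^{*}\models p^{\infty}$. Then from $b^{*}a_{\alpha,i}\equiv_{C}ba_{\alpha,0}$ one obtains two useful facts: $b^{*}\equiv_{Ca_{\alpha,0}}b$ for every $\alpha$ (so in particular $b^{*}\equiv_{C}b$), and $a_{\alpha,i}\equiv_{Cb^{*}}a_{\alpha,0}$ for every $\alpha$ and $i$. By compactness together with mutual indiscernibility of $(\bar a_{\alpha})$ over $C$, stretch each row to a sufficiently long length $\lambda$, obtaining $(\bar a^{+}_{\alpha})$ still mutually indiscernible over $C$; a routine finite-fragment-plus-automorphism argument shows the stretched analogue $\bigcup_{\alpha,i}p_{\alpha}(x,a^{+}_{\alpha,i})$ is still consistent, so $b^{*}$ may be chosen to realize it, giving $a^{+}_{\alpha,i}\equiv_{Cb^{*}}a_{\alpha,0}$ for all indices $i$ of the long rows.

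At stage $\alpha$ of the induction, having built $(\bar a^{*}_{\beta})_{\beta<\alpha}$, fix $\rho_{\alpha}\in\Aut(\M/Ca_{\alpha,0})$ with $\rho_{\alpha}(b^{*})=b$, which exists because $b^{*}\equiv_{Ca_{\alpha,0}}b$. The transported row $\rho_{\alpha}(\bar a^{+}_{\alpha})$ is indiscernible over $C$, its first element is still $a_{\alpha,0}$, and every one of its elements has type $\tp(a_{\alpha,0}/Cb)$ over $Cb$. Set $D_{\alpha}:=Cb\,(\bar a^{*}_{\beta})_{\beta<\alpha}\,(a_{\gamma,0})_{\gamma>\alpha}$ and apply Lemma \ref{lem: Erdos-Rado} to the tail of $\rho_{\alpha}(\bar a^{+}_{\alpha})$ over $D_{\alpha}$: by iterating a Ramsey/pigeonhole argument on the long tail, one extracts an infinite sub-sequence whose $1$-type and joint types over $D_{\alpha}$ agree with those of $a_{\alpha,0}$, using that all tail elements already agree with $a_{\alpha,0}$ in $1$-type over $Cb\subseteq D_{\alpha}$. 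Prepending $a_{\alpha,0}$ then yields $\bar a^{*}_{\alpha}$ satisfying (i) $a^{*}_{\alpha,0}=a_{\alpha,0}$, (ii) $\bar a^{*}_{\alpha}\equiv_{Ca_{\alpha,0}}\bar a_{\alpha}$ (because the tail retains the $Ca_{\alpha,0}$-type of the stretched indiscernible tail of $\bar a_{\alpha}$), and (iii) indiscernibility of $\bar a^{*}_{\alpha}$ over $D_{\alpha}$, exactly as needed for almost mutual indiscernibility.

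Finally, Lemma \ref{lem: almost indiscernible array gives an indiscernible array} applied to $(\bar a^{*}_{\alpha})$ over $Cb$ produces $(\bar a'_{\alpha})$ mutually indiscernible over $Cb$ with $\bar a'_{\alpha}\equiv_{Cb\,a^{*}_{\alpha,0}}\bar a^{*}_{\alpha}$, and combining with (i) and (ii) gives $\bar a'_{\alpha}\equiv_{Ca_{\alpha,0}}\bar a_{\alpha}$, as required. The main difficulty lies in the inductive step, specifically in arranging (iii) after prepending $a_{\alpha,0}$: one must ensure that the $1$-type and higher joint types of the extracted tail over $D_{\alpha}$ actually align with those anchored at $a_{\alpha,0}$. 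This is precisely where the hypothesis that $p^{\infty}$ (rather than merely each $p_{\alpha}(x,a_{\alpha,0})$ separately) is consistent gets used crucially, via the uniformity $a^{+}_{\alpha,i}\equiv_{Cb^{*}}a_{\alpha,0}$ transported through $\rho_{\alpha}$, with stretching to length $\lambda$ providing the pigeonhole room needed to target the correct higher-order types.
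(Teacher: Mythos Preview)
There is a genuine gap in your inductive step. After applying $\rho_\alpha\in\Aut(\M/Ca_{\alpha,0})$ you correctly observe that every element of $\rho_\alpha(\bar a^{+}_\alpha)$ has type $\tp(a_{\alpha,0}/Cb)$; but you then claim that a Ramsey/pigeonhole extraction over the larger set $D_\alpha = Cb\,(\bar a^{*}_\beta)_{\beta<\alpha}\,(a_{\gamma,0})_{\gamma>\alpha}$ yields a tail whose $1$-type and higher joint types over $D_\alpha$ agree with those anchored at $a_{\alpha,0}$. Pigeonhole can only force the extracted elements to share \emph{some} common type over $D_\alpha$; nothing forces that common type to equal $\tp(a_{\alpha,0}/D_\alpha)$. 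Since $\rho_\alpha$ fixes only $Ca_{\alpha,0}$, it may move every $a_{\gamma,0}$ with $\gamma\neq\alpha$ as well as all previously constructed rows $\bar a^{*}_\beta$, so the relationship of the transported tail to these extra parameters is completely uncontrolled. The consistency of $p^\infty$ gives $b^{*}\equiv_{Ca_{\alpha,0}}b$ for each $\alpha$ \emph{separately}, but says nothing about the joint type of $b^{*}$ over $C\cup\{a_{\gamma,0}:\gamma<\kappa\}$, and that joint information is exactly what your scheme would need in order to align the transported rows with $D_\alpha$. Hence prepending $a_{\alpha,0}$ to your extracted tail need not produce a $D_\alpha$-indiscernible sequence, and the induction fails already at $\alpha=0$ whenever $\kappa>1$. (There is a secondary issue as well: $|D_\alpha|$ grows with $\alpha$, so a single $\lambda$ fixed in advance cannot provide enough room for Erd\H os--Rado at every stage.)

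The paper avoids transfinite induction entirely by moving $b$ rather than the array. One shows it suffices to produce $b'$ with $b'\equiv_{Ca_{\alpha,0}}b$ for every $\alpha$ such that the \emph{original} $(\bar a_\alpha)_{\alpha<\kappa}$ is already mutually indiscernible over $Cb'$; an automorphism over $C$ then converts this into the stated conclusion. For each finite fragment one takes $b^\infty\models p^\infty$, applies Lemma~\ref{lem: Erdos-Rado}(2) to extract a finite $\Delta(b^\infty)$-mutually indiscernible sub-array of $(\bar a_\alpha)_{\alpha\in S}$, and uses mutual indiscernibility over $C$ to carry this sub-array back to the initial segment by a $C$-automorphism $\sigma$; then $b'=\sigma(b^\infty)$ does the job, and compactness finishes. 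Because the original array is never disturbed, the anchors $a_{\alpha,0}$ stay fixed throughout and the coupling problem that wrecks your approach does not arise.
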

\begin{proof}
It is sufficient to find $b'$ such that $b'\equiv_{a_{\alpha,0}C}b$
for all $\alpha<\kappa$ and $\left(\bar{a}_{\alpha}\right)_{\alpha<\kappa}$
is mutually indiscernible over $b'C$ (then applying an automorphism
over $C$ to conclude). Let $b^{\infty}\models p^{\infty}(x)$. By
Lemma \ref{lem: Erdos-Rado}, for any finite $\Delta\in L(C)$, $S\subseteq\kappa$
and $n<\omega$, there is a $\Delta(b^{\infty})$-mutually indiscernible
sub-array $\left(a_{\alpha,i}'\right)_{\alpha\in S,i<n}$ of $\left(\bar{a}_{\alpha}\right)_{\alpha\in S}$.
Let $\sigma$ be an automorphism over $C$ sending $\left(a_{\alpha,i}'\right)_{\alpha\in S,i<n}$
to $\left(a_{\alpha,i}\right)_{\alpha\in S,i<n}$ and $b'=\sigma(b^{\infty})$.
Then $\left(a_{\alpha,i}\right)_{\alpha\in S,i<n}$ is $\Delta(b')$-mutually
indiscernible and $b'\models\bigcup_{\alpha\in S}p_{\alpha}(x,a_{\alpha,0})$,
so $b'\equiv_{a_{\alpha,0}C}b$. Conclude by compactness.
\end{proof}
~

Next lemma provides a useful equivalent way to compute the burden
of a type.
\begin{lem}
\label{lem: burden by rotation} The following are equivalent for
a partial type $p(x)$ over $C$:
\begin{enumerate}
\item There is no $\inp$-pattern of depth $\kappa$ in $p$.
\item For any $b\models p(x)$ and $\left(\bar{a}_{\alpha}\right)_{\alpha<\kappa}$,
an almost mutually indiscernible array over $C$, there is $\beta<\kappa$
and $\bar{a}'$ indiscernible over $bC$ and such that $\bar{a}'\equiv_{a_{\beta,0}C}\bar{a}_{\beta}$.
\item For any $b\models p(x)$ and $\left(\bar{a}_{\alpha}\right)_{\alpha<\kappa}$,
a mutually indiscernible array over $C$, there is $\beta<\kappa$
and $\bar{a}'$ indiscernible over $bC$ and such that $\bar{a}'\equiv_{a_{\beta,0}C}\bar{a}_{\beta}$.
\end{enumerate}
\end{lem}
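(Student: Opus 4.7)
The plan is to establish (2) $\Leftrightarrow$ (3) $\Leftrightarrow$ (1), with the main content concentrated in (1) $\Leftrightarrow$ (3); the step (2) $\Leftrightarrow$ (3) is a bookkeeping reduction from ``almost mutually indiscernible'' to ``mutually indiscernible''. For (2) $\Rightarrow$ (3) nothing needs to be done, since mutually indiscernible implies almost mutually indiscernible. For (3) $\Rightarrow$ (2), I would take an almost mutually indiscernible $(\bar{a}_{\alpha})$ witnessing the hypothesis of (2), invoke Lemma \ref{lem: almost indiscernible array gives an indiscernible array} to produce a mutually indiscernible array $(\bar{a}''_{\alpha})$ with $\bar{a}''_{\alpha}\equiv_{Ca_{\alpha,0}}\bar{a}_{\alpha}$, apply (3) to it to extract some $\beta$ and $\bar{a}'$, and conclude by transitivity of $\equiv_{Ca_{\beta,0}}$.

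For (3) $\Rightarrow$ (1), I would argue contrapositively. Starting from an $\inp$-pattern of depth $\kappa$ in $p$, Lemma \ref{lem: IndiscDivPattern} supplies a mutually indiscernible array $(\bar{a}_{\alpha})$ over $C$ and formulas $\phi_{\alpha}$ with $\{\phi_{\alpha}(x,a_{\alpha,i})\}_{i}$ inconsistent for each $\alpha$ and $p(x)\cup\{\phi_{\alpha}(x,a_{\alpha,0})\}_{\alpha}$ consistent. Let $b$ realize the latter. Then for any $\beta$ an indiscernible $\bar{a}'$ over $Cb$ with $\bar{a}'\equiv_{Ca_{\beta,0}}\bar{a}_{\beta}$ would force $a'_{0}=a_{\beta,0}$ (the formula ``first entry $=a_{\beta,0}$'' lies in the base type), hence $b\models\phi_{\beta}(x,a'_{0})$; $Cb$-indiscernibility of $\bar{a}'$ then gives $b\models\phi_{\beta}(x,a'_{i})$ for every $i$, and $\bar{a}'\equiv_{C}\bar{a}_{\beta}$ transports this into consistency of $\{\phi_{\beta}(x,a_{\beta,i})\}_{i}$, contradicting the hypothesis. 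Thus (3) fails for this $b$ and this array.

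The essential direction (1) $\Rightarrow$ (3) is also by contraposition: suppose (3) fails via some $b\models p$ and a mutually indiscernible array $(\bar{a}_{\alpha})_{\alpha<\kappa}$ over $C$. The key move is to apply Lemma \ref{lem: RotateIfConsistent} to each single row $\bar{a}_{\beta}$, viewed as a trivially mutually indiscernible one-row array: if $\bigcup_{i}p_{\beta}(x,a_{\beta,i})$ were consistent (where $p_{\beta}(x,a_{\beta,0})=\tp(b/Ca_{\beta,0})$), the lemma would produce $\bar{a}'$ witnessing (3) for that $\beta$. So failure of (3) forces $\bigcup_{i}p_{\beta}(x,a_{\beta,i})$ inconsistent for every $\beta$; compactness plus $C$-indiscernibility of $\bar{a}_{\beta}$ then isolates a single $\phi_{\beta}(x,y_{\beta})$ with $\phi_{\beta}(x,a_{\beta,0})\in\tp(b/Ca_{\beta,0})$ whose instances along $\bar{a}_{\beta}$ are $k_{\beta}$-inconsistent. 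Because $b$ realizes $p\cup\{\phi_{\beta}(x,a_{\beta,0})\}_{\beta}$ and the array is mutually indiscernible over $C$, Lemma \ref{lem: IndiscDivPattern} repackages this data into an $\inp$-pattern of depth $\kappa$ in $p$. The main conceptual obstacle, namely that a naive Erd\H os--Rado extraction of a $Cb$-indiscernible subsequence of $\bar{a}_{\beta}$ need not preserve the first entry $a_{\beta,0}$ (and the compensating $C$-automorphism typically moves $b$), is exactly what Lemma \ref{lem: RotateIfConsistent} packages away.
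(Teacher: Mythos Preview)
Your proposal is correct and follows essentially the same argument as the paper. The only difference is organizational: the paper proves $(1)\Rightarrow(2)$ directly (handling almost mutually indiscernible arrays in one pass) and gets $(2)\Rightarrow(3)$ for free, whereas you prove $(1)\Rightarrow(3)$ and then recover $(3)\Rightarrow(2)$ via Lemma~\ref{lem: almost indiscernible array gives an indiscernible array}; the core steps---using Lemma~\ref{lem: RotateIfConsistent} to force inconsistency of $\bigcup_i p_\beta(x,a_{\beta,i})$ for every $\beta$, then extracting the $\phi_\beta$'s by compactness and indiscernibility---are identical.
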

\begin{proof}
(1)$\Rightarrow$(2): So let $\left(\bar{a}_{\alpha}\right)_{\alpha<\kappa}$
be almost mutually indiscernible over $C$ and $b\models p(x)$ given.
Let $p_{\alpha}(x,a_{\alpha,0})=\tp(b/a_{\alpha,0}C)$ and let $p_{\alpha}(x)=\bigcup_{i<\omega}p_{\alpha}(x,a_{\alpha,i})$.

Assume that $p_{\alpha}$ is inconsistent for each $\alpha$, by compactness
and indiscernibility of $\bar{a}_{\alpha}$ over $C$ there is some
$\phi_{\alpha}(x,a_{\alpha,0}c_{\alpha})\in p_{\alpha}(x,a_{\alpha,0})$
with $c_{\alpha}\in C$ such that $\left\{ \phi_{\alpha}(x,a_{\alpha,i}c_{\alpha})\right\} _{i<\omega}$
is $k_{\alpha}$-inconsistent. As $b\models\left\{ \phi_{\alpha}(x,a_{\alpha,0}c_{\alpha})\right\} _{\alpha<\kappa}$,
by almost indiscernibility of $\left(\bar{a}_{\alpha}\right)_{\alpha<\kappa}$
over $C$ and Lemma \ref{lem: IndiscDivPattern} we find an $\inp$-pattern
of depth $\kappa$ in $p$ -- a contradiction.

Thus $p_{\beta}(x)$ is consistent for some $\beta<\kappa$. Then
we can find $\bar{a}'$ which is indiscernible over $bC$ and such
that $\bar{a}'\equiv_{a_{\beta,0}C}\bar{a}_{\beta}$ by Lemma \ref{lem: RotateIfConsistent}.

(2)$\Rightarrow$(3) is clear.

(3)$\Rightarrow$(1): Assume that there is an $\inp$-pattern of depth
$\kappa$ in $p(x)$. By Lemma \ref{lem: IndiscDivPattern} there
is an $\inp$-pattern $\left(\bar{a}_{\alpha},\phi_{\alpha},k_{\alpha}\right)_{\alpha<\kappa}$
in $p(x)$ with $(\bar{a}_{\alpha})_{\alpha<\kappa}$ a mutually indiscernible
array over $C$. Let $b\models p(x)\cup\{\phi_{\alpha}(x,a_{\alpha,0})\}_{\alpha<\kappa}$.
On the one hand $\models\phi_{\alpha}(b,a_{\alpha,0})$, while on
the other $\left\{ \phi_{\alpha}(x,a_{\alpha,i})\right\} _{i<\omega}$
is inconsistent, thus it is impossible to find an $\bar{a}_{\alpha}'$
as required for any $\alpha<\kappa$.\end{proof}
\begin{thm}
\label{thm: product array} If there is an $\inp$-pattern of depth
$\kappa_{1}\times\kappa_{2}$ in $\tp(b_{1}b_{2}/C)$, then either
there is an $\inp$-pattern of depth $\kappa_{1}$ in $\tp(b_{1}/C)$
or there is an $\inp$-pattern of depth $\kappa_{2}$ in $\tp(b_{2}/b_{1}C)$.\end{thm}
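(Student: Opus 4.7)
The plan is to split the given $\kappa_1 \kappa_2$-deep $\inp$-pattern along its $\kappa_1$ ``blocks of height $\kappa_2$'' and set up a dichotomy: either some block, after a rotation supplied by Lemma \ref{lem: RotateIfConsistent}, yields an $\inp$-pattern of depth $\kappa_2$ in $\tp(b_2/Cb_1)$, or every block fails and the failures assemble into an $\inp$-pattern of depth $\kappa_1$ in $\tp(b_1/C)$. By Lemma \ref{lem: IndiscDivPattern} I first pass to a mutually indiscernible witnessing pattern $(\bar{a}_{\alpha_1, \alpha_2}, \phi_{\alpha_1, \alpha_2}, k_{\alpha_1, \alpha_2})_{\alpha_1 < \kappa_1,\, \alpha_2 < \kappa_2}$ over $C$, with $(b_1, b_2)$ realizing the $0$-column; the $\alpha_1$-th block $B_{\alpha_1} := (\bar{a}_{\alpha_1, \alpha_2})_{\alpha_2 < \kappa_2}$ is itself mutually indiscernible over $C$, and $k$-inconsistency of $\{\phi_{\alpha_1, \alpha_2}(x_1 x_2, a_{\alpha_1, \alpha_2, i})\}_i$ transfers automatically to $\{\phi_{\alpha_1, \alpha_2}(b_1, x_2, a_{\alpha_1, \alpha_2, i})\}_i$ for any fixed $b_1$. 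To each block I attach the \emph{single}-variable partial type
\[
\Pi^{(1)}_{\alpha_1}(x_1) := \bigcup_{\alpha_2 < \kappa_2,\, i < \omega} \tp(b_1 / C a_{\alpha_1, \alpha_2, 0})(x_1;\, a_{\alpha_1, \alpha_2, i});
\]
working in a single variable is essential, since the obvious two-variable analogue $\tp(b_1 b_2 / \cdot)(x_1 x_2; \cdot)$ would contain the $k$-inconsistent row $\{\phi_{\alpha_1, \alpha_2}(x_1 x_2, a_{\alpha_1, \alpha_2, i})\}_i$ of the original pattern and be automatically inconsistent.

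If $\Pi^{(1)}_{\alpha_1^*}$ is consistent for some $\alpha_1^*$, then Lemma \ref{lem: RotateIfConsistent} applied to $B_{\alpha_1^*}$ with $b = b_1$ yields a rotated block $(\bar{a}'_{\alpha_1^*, \alpha_2})_{\alpha_2}$, mutually indiscernible over $Cb_1$, with $\bar{a}'_{\alpha_1^*, \alpha_2} \equiv_{C a_{\alpha_1^*, \alpha_2, 0}} \bar{a}_{\alpha_1^*, \alpha_2}$ (so in particular the $0$-entries coincide). The $\kappa_2$ formulas $\phi_{\alpha_1^*, \alpha_2}(b_1, x_2, y)$ on these rotated rows form an $\inp$-pattern in $\tp(b_2/Cb_1)$: inconsistency is inherited from two variables, $b_2$ realizes the $0$-column directly, and each path is realized by a $Cb_1$-conjugate of $b_2$ obtained from the $Cb_1$-indiscernibility of the rotated block.

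Otherwise $\Pi^{(1)}_{\alpha_1}$ is inconsistent for every $\alpha_1$. Compactness then produces, for each $\alpha_1$, finite $F_{\alpha_1} \subseteq \kappa_2$, $N_{\alpha_1} < \omega$, and formulas $\eta_{\alpha_1, \alpha_2} \in \tp(b_1 / C a_{\alpha_1, \alpha_2, 0})$ (with parameters $c_{\alpha_1, \alpha_2} \in C$) making $\{\eta_{\alpha_1, \alpha_2}(x_1, a_{\alpha_1, \alpha_2, i}, c_{\alpha_1, \alpha_2}) : \alpha_2 \in F_{\alpha_1},\, i < N_{\alpha_1}\}$ inconsistent in $x_1$. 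Setting
\[
\eta^*_{\alpha_1}(x_1; \bar{y}) := \bigwedge_{\alpha_2 \in F_{\alpha_1}} \eta_{\alpha_1, \alpha_2}(x_1, y^{\alpha_2}, c_{\alpha_1, \alpha_2}), \qquad a^*_{\alpha_1, m} := (a_{\alpha_1, \alpha_2, m})_{\alpha_2 \in F_{\alpha_1}},
\]
mutual indiscernibility inside $B_{\alpha_1}$ makes $(a^*_{\alpha_1, m})_{m < \omega}$ a $C$-indiscernible sequence; $\{\eta^*_{\alpha_1}(x_1, a^*_{\alpha_1, m})\}_m$ is $N_{\alpha_1}$-inconsistent, since any common realizer of $N_{\alpha_1}$ of its members would contradict a $C$-conjugate of the compactness-produced inconsistency; and $b_1$ realizes the $m = 0$ instance by construction. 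Across $\alpha_1 < \kappa_1$ these pattern elements assemble into a depth-$\kappa_1$ $\inp$-pattern in $\tp(b_1/C)$: for any $f : \kappa_1 \to \omega$, mutual indiscernibility of the full array over $C$ gives $(a_{\alpha_1, \alpha_2, f(\alpha_1)})_{\alpha_1 < \kappa_1,\, \alpha_2 \in F_{\alpha_1}} \equiv_C (a_{\alpha_1, \alpha_2, 0})_{\alpha_1 < \kappa_1,\, \alpha_2 \in F_{\alpha_1}}$, so a single automorphism over $C$ carries $b_1$ to an element of $\tp(b_1/C)$ that realizes the $f$-path.

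The principal obstacle, which is really the heart of the argument, is pinpointing $\Pi^{(1)}$ in a single variable as the correct auxiliary partial type; once that is in place, both cases reduce to direct applications of Lemmas \ref{lem: IndiscDivPattern} and \ref{lem: RotateIfConsistent} together with automorphism/indiscernibility transfers, with no further alternation-to-inconsistency trickery required.
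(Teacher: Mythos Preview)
Your proof is correct and takes a genuinely different route from the paper's.

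The paper argues by contrapositive and via the rotation characterisation of burden (Lemma~\ref{lem: burden by rotation}): assuming neither of the two patterns exists, it takes an arbitrary mutually indiscernible array of height $\kappa_1\kappa_2$ and runs an induction of length $\kappa_1$, at each step using Lemma~\ref{lem: burden by rotation}(3) on the current $\kappa_2$-block to extract one row rotatable over $b_2$ (together with the remaining array); the resulting $\kappa_1$ rows are upgraded from almost mutually indiscernible to mutually indiscernible over $b_2$ via Lemma~\ref{lem: almost indiscernible array gives an indiscernible array}, and a final call to Lemma~\ref{lem: burden by rotation} produces a row rotatable over $b_1b_2$. Your argument is direct and avoids both the induction and the almost-to-mutually-indiscernible step: you split on whether the single-variable type $\Pi^{(1)}_{\alpha_1}$ is consistent for some $\alpha_1$, and in the ``yes'' case a single application of Lemma~\ref{lem: RotateIfConsistent} rotates the \emph{entire} block over $b_1$, immediately yielding the depth-$\kappa_2$ pattern in $\tp(b_2/Cb_1)$; in the ``no'' case compactness packages each block into one $N_{\alpha_1}$-inconsistent row in the variable $x_1$, and mutual indiscernibility of the full array handles all paths at once. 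The two proofs are dual in the sense that the paper first rotates over $b_2$ row-by-row and then over $b_1b_2$, while you first rotate over $b_1$ block-by-block; yours is shorter and stays closer to the definition of an $\inp$-pattern, while the paper's is phrased so as to showcase Lemma~\ref{lem: burden by rotation}.
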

\begin{proof}
Assume not. Without loss of generality $C=\emptyset$, and let $\left(\bar{a}_{\alpha}\right)_{\alpha\in\kappa_{1}\times\kappa_{2}}$
be a mutually indiscernible array, where we consider the product $\kappa_{1}\times\kappa_{2}$
lexicographically ordered. By induction on $\alpha<\kappa_{1}$ we
choose $\bar{a}_{\alpha}'$ and $\beta_{\alpha}\in\kappa_{2}$ such
that:
\begin{enumerate}
\item $\bar{a}_{\alpha}'$ is indiscernible over $b_{2}\bar{a}_{<\alpha}'\bar{a}_{\geq(\alpha+1,0)}$.
\item $\tp(\bar{a}_{\alpha}'/a_{(\alpha,\beta_{\alpha}),0}\bar{a}_{<\alpha}'\bar{a}_{\geq(\alpha+1,0)})=\tp(\bar{a}_{(\alpha,\beta_{\alpha})}/a_{(\alpha,\beta_{\alpha}),0}\bar{a}_{<\alpha}'\bar{a}_{\geq(\alpha+1,0)})$.
\item $\bar{a}_{\leq\alpha}'\cup\bar{a}_{\geq(\alpha+1,0)}$ is a mutually
indiscernible array.
\end{enumerate}
Assume we have managed up to $\alpha$, and we need to choose $\bar{a}_{\alpha}'$
and $\beta_{\alpha}$. Let $D=\bar{a}_{<\alpha}'\bar{a}_{\geq(\alpha+1,0)}$.
As $\left(\bar{a}_{(\alpha,\delta)}\right)_{\delta\in\kappa_{2}}$
is a mutually indiscernible array over $D$ (by assumption in the
case $\alpha=0$ and by (3) of the inductive hypothesis in the other
cases) and there is no $\inp$-pattern of depth $\kappa_{2}$ in $\tp(b_{2}/D)$,
by Lemma \ref{lem: burden by rotation}(3) there is some $\beta_{\alpha}<\kappa_{2}$
and $\bar{a}_{\alpha}'$ indiscernible over $b_{2}D$ (which gives
us (1)) such that $\tp(\bar{a}_{\alpha}'/a_{(\alpha,\beta_{\alpha}),0}D)=\tp(\bar{a}_{(\alpha,\beta_{\alpha})}/a_{(\alpha,\beta_{\alpha}),0}D)$
(which together with the inductive hypothesis gives us (2) and (3)).

So we have carried out the induction. Now it is easy to see by (1),
noticing that the first elements of $\bar{a}_{\alpha}'$ and $\bar{a}_{(\alpha,\beta_{\alpha})}$
are the same by (2), that $\left(\bar{a}_{\alpha}'\right)_{\alpha<\kappa_{1}}$
is an almost mutually indiscernible array over $b_{2}$. By Lemma
\ref{lem: almost indiscernible array gives an indiscernible array},
we may assume that in fact $\left(\bar{a}_{\alpha}'\right)_{\alpha<\kappa_{1}}$
is a mutually indiscernible array over $b_{2}$.

As there is no $\inp$-pattern of depth $\kappa_{1}$ in $\tp(b_{1}/b_{2})$,
by Lemma \ref{lem: burden by rotation} there is some $\gamma<\kappa_{1}$
and $\bar{a}$ indiscernible over $b_{1}b_{2}$ and such that $\bar{a}\equiv_{a_{\gamma,0}'}\bar{a}_{\gamma}'\equiv_{a_{(\gamma,\beta_{\gamma}),0}}\bar{a}_{(\gamma,\beta_{\gamma})}$.
As $\left(\bar{a}_{\alpha}\right)_{\alpha\in\kappa_{1}\times\kappa_{2}}$
was arbitrary, by Lemma \ref{lem: burden by rotation}(3) this implies
that there is no $\inp$-pattern of depth $\kappa_{1}\times\kappa_{2}$
in $\tp(b_{1}b_{2})$.\end{proof}
\begin{cor}
``Sub-multiplicativity'' of burden: If $\bdn(a_{i})<k_{i}$ for
$i<n$ with $k_{i}\in\omega$, then $\bdn(a_{0}...a_{n-1})<\prod_{i<n}k_{i}$.
\end{cor}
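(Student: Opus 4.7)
The plan is a straightforward induction on $n$, with Theorem \ref{thm: product array} doing the real work. The base case $n = 1$ reads $\bdn(a_{0}) < k_{0} = \prod_{i<1} k_{i}$, which is just the hypothesis. For the inductive step, suppose the statement holds up to $n-1$ and that $\bdn(a_{i}) < k_{i}$ for all $i < n$.

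First I would split the tuple by setting $b_{1} = a_{0}\ldots a_{n-2}$ and $b_{2} = a_{n-1}$. Applying the inductive hypothesis to $b_{1}$ gives $\bdn(b_{1}) < K := \prod_{i < n-1} k_{i}$. Next, since adding parameters enlarges the type and $p(x) \subseteq q(x)$ implies $\bdn(p) \geq \bdn(q)$ (recorded immediately after the definition of $\bdn$), we obtain $\bdn(a_{n-1}/b_{1}) \leq \bdn(a_{n-1}) < k_{n-1}$.

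The heart of the argument is then the contrapositive of Theorem \ref{thm: product array} applied with $\kappa_{1} = K$ and $\kappa_{2} = k_{n-1}$: since there is no $\inp$-pattern of depth $K$ in $\tp(b_{1})$ and no $\inp$-pattern of depth $k_{n-1}$ in $\tp(b_{2}/b_{1})$, there cannot be an $\inp$-pattern of depth $K \times k_{n-1} = \prod_{i<n} k_{i}$ in $\tp(b_{1}b_{2}) = \tp(a_{0}\ldots a_{n-1})$. Hence $\bdn(a_{0}\ldots a_{n-1}) < \prod_{i<n} k_{i}$, closing the induction.

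There is no substantial obstacle here: the combinatorial work is entirely packaged into Theorem \ref{thm: product array}, and the only subsidiary observation needed is the monotonicity of burden in the base set, which is an immediate consequence of the definition via inclusion of (partial) types.
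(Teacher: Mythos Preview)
Your proof is correct and is exactly the argument the paper has in mind: the corollary is stated without proof precisely because it follows by the induction on $n$ you wrote, with Theorem~\ref{thm: product array} supplying the inductive step and monotonicity of burden under type extension handling the passage from $\bdn(a_{n-1})$ to $\bdn(a_{n-1}/b_{1})$.
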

In the case of $\NIP$ theories it is known that burden is not only
sub-multiplicative, but actually sub-additive, i.e. $\bdn\left(ab\right)\leq\bdn\left(a\right)+\bdn\left(b\right)$
(by \cite{AlfAlexItay} and Fact \ref{fac: In NIP, burden =00003D dp-rank}).
Similarly, burden is sub-additive in simple theories because of the
sub-additivity of weight and Fact \ref{fac: in simple T, burden is weight}.
This motivates the following conjecture:
\begin{conjecture}
Burden is sub-additive in $\NTP_{2}$ theories.
\end{conjecture}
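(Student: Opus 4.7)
The plan is to refine the proof of Theorem \ref{thm: product array} to extract the additive rather than the multiplicative bound. Setting $n = \bdn(a/C)$ and $m = \bdn(b/aC)$, suppose for contradiction there is an $\inp$-pattern of depth $n+m+1$ in $\tp(ab/C)$; by Lemma \ref{lem: IndiscDivPattern} take it as a mutually indiscernible array $(\bar a_\alpha)_{\alpha \leq n+m}$ over $C$ with formulas $\phi_\alpha(x,y,z)$ and inconsistency bounds $k_\alpha$. The goal is to partition the rows into those ``responsible for dividing in $a$'' (at most $n$ such) and those ``responsible for dividing in $b$'' (at least $m+1$ such), and derive a contradiction from whichever side.

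For the dichotomy, classify each row by whether $\tp(a/Ca_{\alpha,0})$ divides over $C$. If at least $n+1$ rows do, picking witnesses $\psi_\alpha(x,a_{\alpha,0}) \in \tp(a/Ca_{\alpha,0})$ with $\{\psi_\alpha(x,a_{\alpha,i})\}_i$ inconsistent produces an $\inp$-pattern of depth $n+1$ in $\tp(a/C)$ on these same rows: mutual indiscernibility is inherited, and consistency of $\{\psi_\alpha(x,a_{\alpha,f(\alpha)})\}_\alpha \cup \tp(a/C)$ for any $f$ follows from an automorphism over $C$ sending $(a_{\alpha,f(\alpha)})_\alpha$ to $(a_{\alpha,0})_\alpha$, contradicting $\bdn(a/C) \leq n$. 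So at least $m+1$ rows are non-dividing, forming an index set $S$ on which each individual type $p_\alpha(x) = \bigcup_i \tp(a/Ca_{\alpha,0})(x,a_{\alpha,i})$ is consistent. If one could upgrade this to joint consistency of $\bigcup_{\alpha \in S} p_\alpha(x)$, then Lemma \ref{lem: RotateIfConsistent} applied to the sub-array indexed by $S$ would directly produce a modification $(\bar a'_\alpha)_{\alpha \in S}$ mutually indiscernible over $Ca$ with first elements preserved, so that the formulas $\phi_\alpha(a,y,a_{\alpha,0})$ together with $(\bar a'_\alpha)_{\alpha \in S}$ would form an $\inp$-pattern of depth $m+1$ in $\tp(b/Ca)$, contradicting $\bdn(b/Ca) \leq m$.

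The main obstacle is precisely this \emph{joint} consistency: row-by-row consistency of the $p_\alpha$ need not combine, and no a priori independence binds the rows together relative to $a$. This is where genuine $\NTP_2$ input must enter, in analogy with the role of coheirs in the sub-additivity of weight in simple theories and of rigidity of indiscernible sequences in the sub-additivity of dp-rank in $\NIP$. The natural strategy is to pass to a model $M \supseteq C$ and exploit the strict non-forking machinery of Section \ref{sec: Forking in NTP2}: for each non-dividing row, choose a global extension $q_\alpha$ of $\tp(a_{\alpha,0}/Ma)$ strictly invariant over $M$, and construct the modified rows iteratively as Morley sequences of these $q_\alpha$ over $Ma$ augmented by the previously constructed rows. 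Mutual indiscernibility over $Ma$ then follows from standard properties of invariant Morley sequences, and the generalized Kim's lemma for $\NTP_2$ should preserve the dividing of $\phi_\alpha(a,y,\cdot)$ along each resulting sequence. The crux, where any proof must do genuine work, is ensuring that the dividing witnesses are preserved \emph{simultaneously} across all rows together with the non-forking of $a$ over the growing context --- effectively a joint form of strict non-forking whose availability in arbitrary $\NTP_2$ theories appears delicate. An inductive reduction to the case $n=1$ is tempting but blocked because burden is not known to split along coordinates of $a$.
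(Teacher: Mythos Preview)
The statement you are attempting to prove is presented in the paper as a \emph{conjecture}, not a theorem: there is no proof in the paper to compare against. Immediately after stating it, the author remarks only that sub-additivity is known in $\NIP$ theories (via sub-additivity of dp-rank) and in simple theories (via sub-additivity of weight), and then asks whether it holds in arbitrary theories as well. So there is nothing for your proposal to be compared with; you are attacking an open problem of the paper.

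As for the proposal itself, you have correctly identified the gap and stated it yourself: the passage from row-by-row consistency of the $p_\alpha$ to \emph{joint} consistency of $\bigcup_{\alpha\in S} p_\alpha$ is exactly the missing step, and nothing in the paper supplies it. The strict-invariance machinery of Section~\ref{sec: Forking in NTP2} lets you realize each non-dividing row as a strict Morley sequence over a model, but it does not give you the simultaneous control you describe: there is no reason the global strictly invariant extensions $q_\alpha$ of $\tp(a_{\alpha,0}/Ma)$ should interact well enough to guarantee both mutual indiscernibility over $Ma$ \emph{and} preservation of the original $k_\alpha$-dividing along each new row. Your outline is a reasonable articulation of where the difficulty lies, but it is not a proof; what you have written is essentially a restatement of why the conjecture is open rather than a resolution of it.
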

We also ask if burden is sub-additive in arbitrary theories.
\begin{defn}
For $n<\omega$, we let $\kappa_{\inp(T)}^{n}$ be the first cardinal
$\kappa$ such that there is no $\inp$-pattern $\left(\bar{a}_{\alpha},\phi_{\alpha}(x,y_{\alpha}),k_{\alpha}\right)$
of depth $\kappa$ with $|x|\leq n$. And let $\kappa_{\inp}(T)=\sup_{n<\omega}\kappa_{\inp}^{n}(T)$.
Notice that $\kappa_{\inp}^{m}\geq\kappa_{\inp}^{n}(T)\geq n$ for
all $n<m$, just because of having the equality in the language, and
thus $\kappa_{\inp(T)}\geq\aleph_{0}$. 
\end{defn}
We can use Theorem \ref{thm: product array} to answer a question
of Shelah \cite[Ch. III, Question 7.5]{MR1083551}.
\begin{cor}
\label{thm: k_inp^n =00003D k_inp^1}$\kappa_{\inp}(T)=\kappa_{\inp}^{n}(T)=\kappa_{\inp}^{1}(T)$,
as long as $\kappa_{\inp}^{n}$ is infinite for some $n<\omega$.
\end{cor}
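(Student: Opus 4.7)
The inequalities $\kappa_{\inp}^{1}(T)\leq\kappa_{\inp}^{n}(T)\leq\kappa_{\inp}(T)$ are immediate from the definitions, so the real task is to establish $\kappa_{\inp}^{m}(T)\leq\kappa_{\inp}^{1}(T)$ for every finite $m$.

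First I would rule out the case that $\kappa_{\inp}^{1}(T)$ is finite. If $\kappa_{\inp}^{1}(T)=k<\omega$, then every $1$-type has burden strictly less than $k$, and the preceding sub-multiplicativity corollary applied $m-1$ times yields $\bdn(b/C)<k^{m}$ for every $m$-tuple $b$ and every parameter set $C$. This forces $\kappa_{\inp}^{m}(T)\leq k^{m}<\omega$ for every $m$, contradicting the assumption that some $\kappa_{\inp}^{n}$ is infinite. Hence $\kappa:=\kappa_{\inp}^{1}(T)$ is necessarily an infinite cardinal.

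The main argument is then an induction on $m\geq1$ showing $\kappa_{\inp}^{m}(T)\leq\kappa$. The base case $m=1$ is the definition of $\kappa$. For the inductive step, assuming the statement at level $m-1$, I would suppose towards a contradiction that there is an $\inp$-pattern of depth $\kappa$ in some type $\tp(b/C)$ with $|b|=m$, and decompose $b=b_{1}b'$ with $|b_{1}|=1$ and $|b'|=m-1$. Since $\kappa$ is infinite, $|\kappa\times\kappa|=\kappa$; via any bijection of index sets, the given pattern is equivalently a mutually indiscernible $\inp$-pattern of depth $\kappa\times\kappa$ in $\tp(b/C)$ (mutual indiscernibility over $C$ does not depend on the indexing of the rows, and the $k$-inconsistency of each row together with the consistency of all paths transports along the bijection). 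Applying Theorem \ref{thm: product array} with $\kappa_{1}=\kappa_{2}=\kappa$ now yields either an $\inp$-pattern of depth $\kappa$ in $\tp(b_{1}/C)$---contradicting $\kappa=\kappa_{\inp}^{1}(T)$---or an $\inp$-pattern of depth $\kappa$ in $\tp(b'/b_{1}C)$, contradicting the inductive hypothesis on the $(m-1)$-tuple $b'$. Passing to the supremum gives $\kappa_{\inp}(T)\leq\kappa$, so all three invariants coincide.

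The one delicate point is the re-indexing step: a depth-$\kappa$ pattern must be legitimately recognizable as a depth-$(\kappa\times\kappa)$ input for Theorem \ref{thm: product array}, and this is precisely where the infiniteness hypothesis enters, through the cardinal identity $\kappa\times\kappa=\kappa$. In the finite regime this collapse fails and the sub-multiplicative bound $k^{m}$ is genuinely weaker than $k$, which explains why the hypothesis cannot be dropped. No other obstacles arise; granted the re-indexing, the induction goes through cleanly and the equality of the three invariants follows.
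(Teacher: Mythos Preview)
Your argument is correct and is exactly the derivation the paper leaves implicit: use sub-multiplicativity to force $\kappa_{\inp}^{1}(T)$ to be infinite under the hypothesis, then apply Theorem \ref{thm: product array} with $\kappa_{1}=\kappa_{2}=\kappa$ via the re-indexing $\kappa\cong\kappa\times\kappa$ and induct on the length of the tuple. The only point worth making explicit is that the properties defining an $\inp$-pattern (row-wise $k_{\alpha}$-inconsistency, consistency along choice functions, mutual indiscernibility) are invariant under bijective re-indexing of the rows, so feeding a depth-$\kappa$ pattern into Theorem \ref{thm: product array} as a depth-$(\kappa\times\kappa)$ pattern is legitimate --- and you have already said this.
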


\section{\label{sec: NTP2 in the classification hierarchy} $\NTP_{2}$ and
its place in the classification hierarchy}

The aim of this section is to (finally) define $\NTP_{2}$, describe
its place in the classification hierarchy of first-order theories
and what burden amounts to in the more familiar situations.
\begin{defn}
A formula $\phi(x,y)$ has $\TP_{2}$ if there is an array $\left(a_{\alpha,i}\right)_{\alpha,i<\omega}$
such that $\left\{ \phi(x,a_{\alpha,i})\right\} _{i<\omega}$ is $2$-inconsistent
for every $\alpha<\omega$ and $\left\{ \phi(x,a_{\alpha,f(\alpha)})\right\} _{\alpha<\omega}$
is consistent for any $f:\,\omega\to\omega$. Otherwise we say that
$\phi(x,y)$ is $\NTP_{2}$, and $T$ is $\NTP_{2}$ if every formula
is.\end{defn}
\begin{lem}
\label{lem: NTP_2 =00003D bounded burden} The following are equivalent
for $T$:
\begin{enumerate}
\item Every formula $\phi(x,y)$ with $|x|\leq n$ is $\NTP_{2}$.
\item $\kappa_{\inp}^{n}(T)\leq|T|^{+}$.
\item $\kappa_{\inp}^{n}(T)<\infty$. 
\item $\bdn(b/C)<\left|T\right|^{+}$ for all $b$ and $C$, with $\left|b\right|=n$.
\end{enumerate}
\end{lem}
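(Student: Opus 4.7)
The plan is to close the cycle $(1) \Rightarrow (2) \Rightarrow (3) \Rightarrow (1)$ and then prove $(2) \Leftrightarrow (4)$ by absorbing parameters. The easy directions are $(2) \Rightarrow (3)$ (since $|T|^{+}<\infty$) and $(3) \Rightarrow (1)$, by contrapositive: if some $\phi(x,y)$ with $|x|\leq n$ has $\TP_{2}$, compactness stretches its $\TP_{2}$-witnessing array to any depth $\kappa$, so $\kappa_{\inp}^{n}(T)=\infty$, contradicting $(3)$.

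The main step is $(1) \Rightarrow (2)$, which I argue by contrapositive. Suppose $\kappa_{\inp}^{n}(T) > |T|^{+}$ and fix an $\inp$-pattern $(\bar{a}_{\alpha},\phi_{\alpha}(x,y_{\alpha}),k_{\alpha})_{\alpha<|T|^{+}}$ with $|x|\leq n$. There are only $|L|\cdot\aleph_{0}=|T|$ possible pairs $(\phi,k)$, so by pigeonhole at least $|T|^{+}$ (in particular $\omega$) rows share a common pair $(\phi,k)$. Restricting to those rows gives an $\inp$-pattern of depth $\omega$ using the single formula $\phi$ with $k$-inconsistency. Now replace $\phi(x,y)$ by $\phi'(x,y_{0},\ldots,y_{k-2}):=\bigwedge_{i<k-1}\phi(x,y_{i})$ and regroup each row in consecutive $(k-1)$-blocks $a'_{\alpha,j}:=(a_{\alpha,(k-1)j},\ldots,a_{\alpha,(k-1)j+k-2})$; then $\{\phi'(x,a'_{\alpha,j})\}_{j<\omega}$ is $2$-inconsistent, path-consistency persists, and $|x|\leq n$ in $\phi'$ — so $\phi'$ has $\TP_{2}$, contradicting $(1)$.

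For $(2) \Leftrightarrow (4)$: the direction $(2) \Rightarrow (4)$ is obtained by taking a depth-$\kappa$ $\inp$-pattern $(\bar{a}_{\alpha},\phi_{\alpha},k_{\alpha})$ in $\tp(b/C)$ with $|b|=n$, writing each $\phi_{\alpha}(x,y_{\alpha})$ as $\psi_{\alpha}(x,y_{\alpha},c_{\alpha})$ with $\psi_{\alpha}\in L$ and $c_{\alpha}\in C$, and absorbing $c_{\alpha}$ into the column by setting $a'_{\alpha,i}:=a_{\alpha,i}c_{\alpha}$. This produces an $\inp$-pattern of depth $\kappa$ in the trivial type with $|x|\leq n$ (dropping the $\tp(b/C)$ requirement from path-consistency), so $\kappa < \kappa_{\inp}^{n}(T) \leq |T|^{+}$ by $(2)$. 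Conversely $(4) \Rightarrow (2)$: any depth-$\kappa$ pattern over $\emptyset$ with $|x|\leq n$ has, by compactness applied to the path-consistency condition at $f\equiv 0$, a realizer $b$ of $\{\phi_{\alpha}(x,a_{\alpha,0})\}_{\alpha}$; padding so $|b|=n$, the pattern lives inside $\tp(b/\emptyset)$, so $\kappa\leq\bdn(b/\emptyset)<|T|^{+}$.

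The only step with any real content is the $k$-to-$2$ reduction in $(1)\Rightarrow(2)$, which is standard; everything else is bookkeeping with pigeonhole, compactness, and parameter absorption. The one place to be careful is ensuring that starting from depth $|T|^{+}$ (rather than just any infinite depth) is needed so that pigeonhole over the $|T|$-many pairs $(\phi,k)$ produces $\omega$ rows with a common label.
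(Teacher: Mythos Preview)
Your argument for $(1)\Rightarrow(2)$ has a genuine gap in the claim that ``path-consistency persists'' after regrouping each row into $(k-1)$-blocks. Path-consistency for the new pattern requires that
\[
\bigl\{\phi(x,a_{\alpha,(k-1)f(\alpha)+i}) : \alpha<\omega,\ i<k-1\bigr\}
\]
be consistent for every $f$, i.e.\ that one may choose $k-1$ elements from \emph{each} row and still have a consistent family. The original $\inp$-pattern only guarantees consistency when choosing \emph{one} element per row; nothing prevents, say, $\phi(x,a_{\alpha,0})\land\phi(x,a_{\alpha,1})$ from being inconsistent with $\phi(x,a_{\beta,0})\land\phi(x,a_{\beta,1})$ for $\alpha\neq\beta$. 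Mutual indiscernibility of rows does not help here, since it only tells you all such choices have the same type, not that this type is consistent with the conjunction.

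The paper's proof handles exactly this obstacle by first passing (via Ramsey and compactness) to a \emph{strongly} indiscernible array and then running a dichotomy: either $\{\phi(x,a_{\alpha,0})\land\phi(x,a_{\alpha,1})\}_{\alpha<n}$ is inconsistent for some finite $n$, in which case one groups $n$ consecutive \emph{rows} together to obtain $2$-inconsistency (and path-consistency is inherited since one still picks a single column index per new row); or else $\{\phi(x,a_{\alpha,0})\land\phi(x,a_{\alpha,1})\}_{\alpha<\omega}$ is consistent, in which case one pairs within rows, path-consistency now holds by assumption, and the inconsistency constant drops from $k$ to $\lceil k/2\rceil$. Iterating reaches $k=2$. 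Your remaining implications and the $(2)\Leftrightarrow(4)$ parameter-absorption argument are fine.
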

\begin{proof}
(1)$\Rightarrow$(2): Assume we have a mutually indiscernible $\inp$-pattern
$(\bar{a}_{\alpha},\phi_{\alpha}(x,y_{\alpha}),k_{\alpha})_{\alpha<|T|^{+}}$
of depth $|T|^{+}$. By pigeon-hole we may assume that $\phi_{\alpha}(x,y_{\alpha})=\phi(x,y)$
and $k_{\alpha}=k$. Then by Ramsey and compactness we may assume
in addition that $\left(\bar{a}_{\alpha}\right)$ is a strongly indiscernible
array. If $\left\{ \phi(x,a_{\alpha,0})\land\phi(x,a_{\alpha,1})\right\} _{\alpha<n}$
is inconsistent for some $n<\omega$, then taking $b_{\alpha,i}=a_{n\alpha,i}a_{n\alpha+1,i}...a_{n\alpha+n-1,i}$,
$\left(\bigwedge_{i<n}\phi(x,y_{i}),\bar{b}_{\alpha},2\right)_{\alpha<\omega}$
is an $\inp$-pattern. Otherwise $\left\{ \phi(x,a_{\alpha,0})\land\phi(x,a_{\alpha,1})\right\} _{\alpha<\omega}$
is consistent, then taking $b_{\alpha,i}=a_{\alpha,2i}a_{\alpha,2i+1}$
we conclude that $\left(\phi(x,y_{1})\land\phi(x,y_{2}),\bar{b}_{\alpha},\left[\frac{k}{2}\right]\right)_{\alpha<\omega}$
is an $\inp$-pattern. Repeat if necessary.

The other implications are clear by compactness.\end{proof}
\begin{rem}
(1) implies (2) is from \cite{HansBurden}.
\end{rem}
It follows from the lemma and Theorem \ref{thm: k_inp^n =00003D k_inp^1}
that if $T$ has $\TP_{2}$, then some formula $\phi(x,y)$ with $|x|=1$
has $\TP_{2}$. From Lemma \ref{lem: boolean operations on inp-patterns}
it follows that if $\phi_{1}(x,y_{1})$ and $\phi_{2}(x,y_{2})$ are
$\NTP_{2}$, then $\phi_{1}(x,y_{1})\lor\phi_{2}(x,y_{2})$ is $\NTP_{2}$.
This, however, is the only Boolean operation preserving $\NTP_{2}$
(see Example \ref{ex: triangle-free random graph has TP2}).
\begin{defn}
{[}Adler{]} $T$ is called \emph{strong} if there is no $\inp$-pattern
of infinite depth in it. It is clearly a subclass of $\NTP_{2}$ theories.\end{defn}
\begin{prop}
If $\phi(x,y)$ is $\NIP$, then it is $\NTP_{2}$.\end{prop}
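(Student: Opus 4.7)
The plan is to prove the contrapositive: if $\phi(x,y)$ has $\TP_{2}$, then it has $\IP$. So suppose $\phi(x,y)$ admits a $\TP_{2}$ array $(a_{\alpha,i})_{\alpha,i<\omega}$ with each row $2$-inconsistent and every path consistent. I do not expect to need any of the indiscernibility machinery from the preliminary lemmas for this direction; the witness to $\IP$ will come directly from the raw $\TP_{2}$ array.

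The key observation is that $2$-inconsistency of rows lets me translate a choice of path into a binary distinguishing pattern on the first column. Concretely, restrict attention to the first two columns $(a_{\alpha,0})_{\alpha<\omega}$ and $(a_{\alpha,1})_{\alpha<\omega}$. For each $S\subseteq\omega$ define $f_{S}\colon\omega\to\omega$ by $f_{S}(\alpha)=0$ if $\alpha\in S$ and $f_{S}(\alpha)=1$ otherwise. By consistency of paths there is $b_{S}$ with $\models\phi(b_{S},a_{\alpha,f_{S}(\alpha)})$ for every $\alpha<\omega$. Now if $\alpha\in S$ then $\models\phi(b_{S},a_{\alpha,0})$ by construction; if $\alpha\notin S$ then $\models\phi(b_{S},a_{\alpha,1})$, and $2$-inconsistency of the $\alpha$-th row forces $\not\models\phi(b_{S},a_{\alpha,0})$. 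Thus $\models\phi(b_{S},a_{\alpha,0})$ if and only if $\alpha\in S$, and the parameters $(a_{\alpha,0})_{\alpha<\omega}$ together with the family $\{b_{S}\}_{S\subseteq\omega}$ witness that $\phi(x,y)$ has $\IP$.

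There is essentially no obstacle here: the argument is a one-line translation between the combinatorial shapes of $\TP_{2}$ and $\IP$. The only subtle point worth stating cleanly in the write-up is that $2$-inconsistency gives an \emph{exclusive or} between satisfying $\phi(x,a_{\alpha,0})$ and $\phi(x,a_{\alpha,1})$ along a realizer of a path, which is exactly what converts a path-choice function into a Boolean pattern on a single column.
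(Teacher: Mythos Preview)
Your proof is correct and is essentially identical to the paper's own argument: the paper also takes the contrapositive, defines $f(\alpha)=0$ if $\alpha\in s$ and $f(\alpha)=1$ otherwise, picks $d$ realizing the path, and concludes $\phi(d,a_{\alpha,0})\Leftrightarrow\alpha\in s$. Your write-up is in fact slightly more explicit than the paper's in spelling out how $2$-inconsistency yields the ``only if'' direction.
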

\begin{proof}
Let $\left(a_{\alpha,j}\right)_{\alpha,j<\omega}$ be an array witnessing
that $\phi(x,y)$ has $\TP_{2}$. But then for any $s\subseteq\omega$,
let $f(\alpha)=0$ if $\alpha\in s$, and $f(\alpha)=1$ otherwise.
Let $d\models\left\{ \phi(x,a_{\alpha,f(\alpha)})\right\} $. It follows
that $\phi(d,a_{\alpha,0})\Leftrightarrow\alpha\in s$.
\end{proof}
We recall the definition of dp-rank (e.g. \cite{AlfAlexItay}):
\begin{defn}
\label{def: dp-rank} We let the dp-rank of $p$, denoted $\dprk(p)$,
be the supremum of $\kappa$ for which there are $b\models p$ and
mutually indiscernible over $C$ (a set containing the domain of $p$)
sequences $\left(\bar{a}_{\alpha}\right)_{\alpha<\kappa}$ such that
none of them is indiscernible over $bC$.\end{defn}
\begin{fact}
The following are equivalent for a partial type $p\left(x\right)$
(by Ramsey and compactness):
\begin{enumerate}
\item $\dprk\left(p\right)>\kappa$.
\item There is an ict-pattern of depth $\kappa$ in $p\left(x\right)$,
that is $\left(\bar{a}_{i},\varphi_{i}\left(x,y_{i}\right),k_{i}\right)_{i<\kappa}$
such that $p\left(x\right)\cup\left\{ \varphi_{i}\left(x,a_{i,s\left(i\right)}\right)\right\} _{i<\kappa}\cup\left\{ \varphi_{i}\left(x,a_{i,j}\right)\right\} _{s\left(i\right)\neq j<\kappa}$
is consistent for every $s:\,\kappa\to\omega$.
\end{enumerate}
\end{fact}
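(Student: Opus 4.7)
The plan is to prove both equivalences by the standard extract-and-realize arguments flagged by the author (Ramsey and compactness), working inside a parameter set $C$ containing $\dom(p)$ and all relevant parameters.

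For the easier direction (2)$\Rightarrow$(1), I would start with an ict-pattern $\left(\bar{a}_{i},\varphi_{i}\left(x,y_{i}\right),k_{i}\right)_{i<\kappa}$ in $p$ and first replace the rows by mutually indiscernible ones over $C$ using Lemma~\ref{lem: Erdos-Rado} and compactness: the consistency requirement for every $s$ is a conjunction of partial types that is preserved under this extraction, since sub-arrays of the new array have the same type over $C$ as sub-arrays of the original. Then pick a distinguished $s$ (say $s\equiv 0$) and take $b\models p\cup\{\varphi_{i}(x,a_{i,0})\}_{i<\kappa}\cup\{\neg\varphi_{i}(x,a_{i,j})\}_{i<\kappa,\,j\neq 0}$ using the consistency clause of the ict-pattern. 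For each $i$, the formula $\varphi_{i}(x,y_{i})$ separates $a_{i,0}$ from $a_{i,1}$ over $bC$, so $\bar{a}_{i}$ is not indiscernible over $bC$, while mutual indiscernibility over $C$ is retained. This witnesses $\dprk(p)>\kappa$.

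For (1)$\Rightarrow$(2), suppose $b\models p$ and $\left(\bar{a}_{\alpha}\right)_{\alpha<\kappa}$ are mutually indiscernible over $C$ with none indiscernible over $bC$. For each $\alpha$, failure of indiscernibility over $bC$ provides a formula $\psi_{\alpha}(b;\bar{y}_{\alpha})\in L(bC)$ that holds on one increasing tuple from $\bar{a}_{\alpha}$ and fails on another. By extracting within each row (using that Ramsey-type extraction on a single row preserves mutual indiscernibility of the whole array over $C$, because the other rows can be treated as parameters), I may replace $\bar{a}_{\alpha}$ by a subsequence and rename so that $\models\psi_{\alpha}(b,a_{\alpha,0},\ldots)\wedge\neg\psi_{\alpha}(b,a_{\alpha,1},\ldots)$, which after bundling consecutive entries into a new single entry produces $\varphi_{\alpha}(x,y_{\alpha})\in L(C)$ with $\varphi_{\alpha}(b,a'_{\alpha,0})\wedge\neg\varphi_{\alpha}(b,a'_{\alpha,1})$ while the re-grouped array remains mutually indiscernible over $C$. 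Now mutual indiscernibility over $C$, together with the fact that this configuration occurs on every row, yields the consistency required of an ict-pattern: for any $s:\kappa\to\omega$, apply an automorphism row-by-row (using the indiscernibility of each $\bar{a}_{\alpha}'$ over the union of the other rows and $C$) to move $a_{\alpha,s(\alpha)}'$ to position $0$ in its row while keeping other rows fixed; the image of $b$ then realizes $p\cup\{\varphi_{\alpha}(x,a'_{\alpha,s(\alpha)})\}\cup\{\neg\varphi_{\alpha}(x,a'_{\alpha,j})\}_{j\neq s(\alpha)}$, and compactness packages this into the required pattern.

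The main obstacle, as usual in this kind of equivalence, is the (1)$\Rightarrow$(2) direction: the single witness $b$ only gives the alternation of $\varphi_{\alpha}$ on its own row, so to obtain the ict-pattern property for \emph{all} choice functions $s$ simultaneously one must exploit mutual indiscernibility over $C$ carefully, either by the row-wise automorphism argument sketched above (which works because each row is indiscernible over $C$ together with the rest of the array) or equivalently by a compactness packaging after extracting a strongly indiscernible array; the parameter $k_{\alpha}$ plays no essential role and can be absorbed by a finite conjunction on each row.
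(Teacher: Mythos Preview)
The paper records this only as a fact with the parenthetical ``by Ramsey and compactness'', so there is no detailed argument to compare against; your outline is in the right spirit and your $(2)\Rightarrow(1)$ is correct.

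There is, however, a genuine gap in your $(1)\Rightarrow(2)$. After bundling you have only $\varphi_\alpha(b,a'_{\alpha,0})\wedge\neg\varphi_\alpha(b,a'_{\alpha,1})$, i.e.\ one positive and \emph{one} negative instance per row. Your automorphism step then claims that sending $a'_{\alpha,s(\alpha)}$ to position $0$ produces a realization of $\{\varphi_\alpha(x,a'_{\alpha,s(\alpha)})\}\cup\{\neg\varphi_\alpha(x,a'_{\alpha,j})\}_{j\neq s(\alpha)}$. But indiscernibility of $\bar a'_\alpha$ over $C$ and the other rows only lets you map the \emph{ordered pair} $(a'_{\alpha,0},a'_{\alpha,1})$ to an increasing pair; the image of $b$ then witnesses $\varphi_\alpha$ at one index and $\neg\varphi_\alpha$ at one other index, not at all $j\neq s(\alpha)$. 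Since $s(\alpha)$ is typically not the minimum of the finitely many indices in play, no order-preserving self-map of an $\omega$-indexed row sends $s(\alpha)$ to $0$ while sending the remaining indices into $\{j\geq 1\}$, so the argument as written does not deliver the ict-pattern.

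The missing step (and this is where Ramsey is really used) is to colour $k$ by the truth value of $\varphi_\alpha(b,a'_{\alpha,k})$, pass to an infinite monochromatic subsequence, and---after possibly replacing $\varphi_\alpha$ by its negation---arrange that $\varphi_\alpha(b,\cdot)$ is true at a single distinguished index and false on an infinite set around it; doing this row by row preserves mutual indiscernibility over $C$. One then stretches the index set by compactness to an order (e.g.\ $\mathbb{Z}$) in which the shift by $s(\alpha)$ is an order-automorphism, so that the row-wise automorphism argument now legitimately moves the unique ``true'' position to $s(\alpha)$ while keeping ``false'' at every other index. With this extra Ramsey pass and the re-indexing, your strategy goes through.
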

It is easy to see that every $\inp$-pattern with mutually indiscernible
rows gives an ict-pattern of the same depth. On the other hand, if
$T$ is $\NIP$ then every ict-pattern gives an $\inp$-pattern of
the same depth (see \cite[Section 3]{HansBurden}). Thus we have: 
\begin{fact}
\label{fac: In NIP, burden =00003D dp-rank} 
\begin{enumerate}
\item For a partial type $p(x)$, $\bdn(p)\leq\dprk(p)$. And if $p(x)$
is an $\NIP$ type, then $\bdn(p)=\dprk(p)$
\item $T$ is strongly dependent $\Leftrightarrow$ $T$ is $\NIP$ and
strong.
\end{enumerate}
\end{fact}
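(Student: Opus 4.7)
The plan is to prove (1) directly and then deduce (2) as a corollary.

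For the inequality $\bdn(p) \leq \dprk(p)$: given an inp-pattern $(\bar{a}_\alpha, \phi_\alpha, k_\alpha)_{\alpha < \kappa}$ in $p(x)$, Lemma \ref{lem: IndiscDivPattern} lets me take the array mutually indiscernible over $C$, and I pick $b \models p(x) \cup \{\phi_\alpha(x, a_{\alpha, 0})\}_{\alpha < \kappa}$. If some row $\bar{a}_\alpha$ were indiscernible over $bC$, then $\models \phi_\alpha(b, a_{\alpha, 0})$ would propagate to $\models \phi_\alpha(b, a_{\alpha, i})$ for all $i$, contradicting $k_\alpha$-inconsistency. Hence no row is indiscernible over $bC$, and the configuration $(b, (\bar{a}_\alpha)_{\alpha < \kappa})$ witnesses $\dprk(p) \geq \kappa$.

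For the converse $\bdn(p) \geq \dprk(p)$ when $p$ is $\NIP$: given $b \models p$ and mutually indiscernible rows $(\bar{a}_\alpha)_{\alpha < \kappa}$ over $C$ with no row indiscernible over $bC$, I want to extract an inp-pattern of depth $\kappa$ in $p$. For each $\alpha$ pick a formula $\psi_\alpha(x, \bar{y}_\alpha) \in L(C)$ witnessing non-indiscernibility of $\bar{a}_\alpha$ over $bC$, and use Ramsey together with the $C$-indiscernibility of $\bar{a}_\alpha$ to group consecutive elements of the row so that $\psi_\alpha(b, -)$ alternates strictly along the resulting sub-tuples. The $\NIP$ hypothesis on $p$ bounds the alternation number of $\psi_\alpha$ on realizations of $p$ by some finite $n_\alpha$, so only boundedly many of the instances $\psi_\alpha(x, \bar{a}_{\alpha, i})$ (or their negations) can be satisfied by $b$; selecting the dominant side of the alternation, the remaining sub-row is $C$-indiscernible and $k_\alpha$-inconsistent for an appropriate $k_\alpha$. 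Restricting within each row preserves mutual indiscernibility, producing an inp-pattern of depth $\kappa$ in $p$; this is the calculation recalled from \cite[Section 3]{HansBurden} just before the fact.

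For (2), ``strong'' means $\bdn$ is finite globally (equivalently, no infinite inp-pattern), while ``strongly dependent'' means $T$ is $\NIP$ with $\dprk$ finite globally (equivalently, no infinite ict-pattern). Strongly dependent trivially implies $\NIP$, and under $\NIP$ the two finiteness conditions coincide by (1), yielding the stated equivalence. The main obstacle is the alternation-bounding step in the $\NIP$ direction of (1); the first inequality and (2) are then essentially formal.
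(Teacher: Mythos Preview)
Your argument is correct in outline and matches the paper's route exactly: the paper simply records that a mutually indiscernible $\inp$-pattern yields an ict-pattern of the same depth (your first paragraph), and that under $\NIP$ the converse holds, referring to \cite[Section 3]{HansBurden} for the alternation argument you sketch. Part (2) then follows formally, as you say.

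One step in your converse direction is misstated and would not work as written. Bounded alternation of $\psi_\alpha(b,-)$ along an indiscernible row does \emph{not} imply that ``only boundedly many of the instances $\psi_\alpha(x,\bar a_{\alpha,i})$ (or their negations) can be satisfied by $b$'': both truth values may occur infinitely often, arranged in at most $n_\alpha$ blocks. The correct extraction of $k_\alpha$-inconsistency (this is Adler's argument) is to pass to the alternation formula $\theta_\alpha(x,y_0y_1)=\psi_\alpha(x,y_0)\wedge\neg\psi_\alpha(x,y_1)$ on paired elements $b_{\alpha,i}=(a_{\alpha,2i},a_{\alpha,2i+1})$; then $\{\theta_\alpha(x,b_{\alpha,i})\}_{i<\omega}\cup p(x)$ is $n_\alpha$-inconsistent by the alternation bound, and by compactness one absorbs a formula of $p$ into $\theta_\alpha$ to get absolute $k_\alpha$-inconsistency. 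Since $b$ realises $\theta_\alpha(x,b_{\alpha,j_\alpha})$ for some $j_\alpha$ (choose the pairing and the sign of $\theta_\alpha$ according to where the first switch occurs), and the regrouped rows remain mutually indiscernible over $C$, you obtain an $\inp$-pattern of depth $\kappa$ in $p$. Working from the ict-pattern characterisation rather than from a single $b$ (as the paper's reference does) makes this step slightly cleaner, but your version goes through once the alternation formula is made explicit.
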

\begin{prop}
If $T$ is simple, then it is $\NTP_{2}$.\end{prop}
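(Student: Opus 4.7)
The plan is to give the standard implication $\TP_2 \Rightarrow \TP$; since simplicity is exactly $\NTP$, this yields the proposition.

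Assume for contradiction that $T$ is simple but some formula $\phi(x,y)$ has $\TP_2$, witnessed by an array $(a_{\alpha,i})_{\alpha,i<\omega}$ with $\{\phi(x,a_{\alpha,i})\}_{i<\omega}$ being $2$-inconsistent for each row $\alpha<\omega$, and $\{\phi(x,a_{\alpha,f(\alpha)})\}_{\alpha<\omega}$ consistent for every $f\colon\omega\to\omega$. I would convert this rectangular pattern into a tree pattern witnessing that $\phi(x,y)$ has the tree property. Index nodes by $\omega^{<\omega}$ and define $b_\eta = a_{|\eta|-1,\,\eta(|\eta|-1)}$ for every nonempty $\eta$ (the value of $b_\emptyset$ is irrelevant). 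The siblings below a node $\eta$ are $\{b_{\eta{}^\frown\! i}\}_{i<\omega} = \{a_{|\eta|,i}\}_{i<\omega}$, so $\{\phi(x,b_{\eta{}^\frown\! i})\}_{i<\omega}$ is $2$-inconsistent by the row condition. For any branch $\eta \in \omega^\omega$, the set $\{\phi(x, b_{\eta|_n})\}_{1\le n<\omega}$ equals $\{\phi(x,a_{n-1,\eta(n-1)})\}_{n}$, which is a choice of one formula per row of the original $\TP_2$ array, hence consistent. This exhibits $\TP$ for $\phi(x,y)$, contradicting simplicity of $T$.

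There is no real obstacle: the only point requiring any care is bookkeeping the tree indexing so that ``siblings'' in the tree correspond to ``same row, different columns'' in the $\TP_2$ array, and ``branches'' in the tree correspond to ``one entry per row''. Alternatively, one could cite Shelah's theorem $\TP \Leftrightarrow \TP_1 \vee \TP_2$ directly, but the translation above is short enough to include in one line.
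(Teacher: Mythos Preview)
Your proof is correct and takes essentially the same approach as the paper: both observe that a $\TP_2$ array (an $\inp$-pattern with a single formula $\phi$ and fixed $k$) directly witnesses the tree property. The paper compresses this to a single sentence, while you spell out the tree indexing explicitly.
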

\begin{proof}
Of course, $\inp$-pattern of the form $\left(\bar{a}_{\alpha},\phi(x,y),k\right)_{\alpha<\omega}$
witnesses the tree property.
\end{proof}
Moreover,
\begin{fact}
\cite[Proposition 8]{HansBurden} \label{fac: in simple T, burden is weight}
Let $T$ be simple. Then the burden of a partial type is the supremum
of the weights of its complete extensions. And $T$ is strong if and
only if every type has finite burden.

~\end{fact}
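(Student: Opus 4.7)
The statement has two parts: (a) for simple $T$ and a partial type $p$, $\bdn(p)=\sup\{w(q):q\supseteq p\text{ complete}\}$; (b) $T$ is strong if and only if every (complete) type has finite weight, equivalently finite burden. My approach is to establish (a) by two inequalities, then deduce (b) by combining (a) with compactness.

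For the inequality $\bdn(p)\geq\sup w(q)$, I would fix a complete extension $q\in S(A)$ of $p$ of weight $\geq\kappa$, witnessed by $b\models q$ and an $A$-independent family $(c_\alpha)_{\alpha<\kappa}$ with $b\nind_A c_\alpha$ for each $\alpha$. By simplicity, forking equals dividing, so for each $\alpha$ there is a formula $\phi_\alpha(x,c_\alpha)\in\tp(b/Ac_\alpha)$ dividing over $A$; extend $c_\alpha$ to an $A$-Morley sequence $\bar c_\alpha=(c_{\alpha,i})_{i<\omega}$ starting with $c_\alpha=c_{\alpha,0}$, along which $\phi_\alpha$ is $k_\alpha$-inconsistent. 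Using independence of the $c_\alpha$ together with iterated non-forking extensions (choose each $\bar c_\alpha$ to be a Morley sequence over $A\bar c_{<\alpha}$, starting with $c_\alpha$, which is possible because $c_\alpha\ind_A c_{<\alpha}$ extends to $c_\alpha\ind_A\bar c_{<\alpha}$ by transitivity), and then extracting via Lemma~\ref{lem: Erdos-Rado}, I obtain an array $(\bar c_\alpha')_{\alpha<\kappa}$ that is mutually indiscernible over $A$ and whose first column still realizes the relevant type; the $b$-path through the first column remains consistent with $p$, yielding an $\inp$-pattern of depth $\kappa$ in $p$.

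For the inequality $\bdn(p)\leq\sup w(q)$, start with an $\inp$-pattern $(\bar a_\alpha,\phi_\alpha,k_\alpha)_{\alpha<\kappa}$ in $p$ over $C$ with mutually indiscernible rows (by Lemma~\ref{lem: IndiscDivPattern}), and pick $b\models p\cup\{\phi_\alpha(x,a_{\alpha,0})\}_{\alpha<\kappa}$. For each $\alpha$, the formula $\phi_\alpha(x,a_{\alpha,0})$ divides over $C$, so $b\nind_C a_{\alpha,0}$ by simplicity. Moreover, mutual indiscernibility of the rows forces the first column $(a_{\alpha,0})_{\alpha<\kappa}$ to be $C$-independent: each row $\bar a_\alpha$ is an infinite $C\bar a_{\neq\alpha}$-indiscernible sequence, hence by the simple-theory characterization of forking via indiscernible sequences one obtains $a_{\alpha,0}\ind_C \bar a_{\neq\alpha}$, and in particular $(a_{\alpha,0})_{\alpha}$ is mutually $C$-independent. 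Any complete extension $q\supseteq p\cup\tp(b/C(a_{\alpha,0})_\alpha)$ thus witnesses weight $\geq\kappa$.

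For part (b), once (a) is in place, note that strongness asserts no $\inp$-pattern of depth $\omega$ in any partial type, while ``every type has finite burden'' says $\bdn(p)<\omega$ for every (complete) $p$. If some complete $q$ had $\bdn(q)\geq\omega$, by (a) some completion would have infinite weight, which gives an $\inp$-pattern of depth $\omega$; conversely a global $\inp$-pattern of depth $\omega$ yields a realization producing a type of infinite weight by (a), hence infinite burden. Thus (a) and a short compactness argument close the equivalence.

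The main obstacle will be the upgrade from an $A$-independent weight-witnessing family $(c_\alpha)$ and individual $A$-Morley sequences $\bar c_\alpha$ to a single mutually indiscernible array over $A$ with the same first column; this requires careful use of transitivity of non-forking in simple theories (to inductively make each Morley sequence independent from the previously constructed block) and an Erd\H os--Rado extraction to conclude mutual indiscernibility. The reverse direction is comparatively clean, since in a simple theory mutually indiscernible rows over $C$ immediately yield $C$-independence of their first columns.
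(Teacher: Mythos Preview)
The paper does not prove this statement; it is recorded as a fact with a citation to Adler and no argument is given. So there is no ``paper's own proof'' to compare against, and the natural internal route in this paper would be to invoke Lemma~\ref{lem: burden and ind-ist} together with the observation (Definition~\ref{def: strict invariance, etc}(6)) that in simple theories $\ind^{\ist}=\ind^{f}$: an $\inp$-pattern of depth $\kappa$ in $p$ yields $d\models p$, $D\supseteq C$ and an $\ind^{\ist}$-independent (hence forking-independent) family over $D$ with $d\nind_D$ each member, i.e.\ a complete extension of $p$ of (pre)weight $\geq\kappa$; and conversely a weight witness is an $\ind^{\ist}$-independent family, which by the other direction of Lemma~\ref{lem: burden and ind-ist} produces an $\inp$-pattern in $p$.

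Your $\geq$ direction is essentially this second implication, carried out by hand; it is fine modulo the usual bookkeeping (at stage $\alpha$ one needs $c_\alpha\ind_A\bar c_{<\alpha}$, not just $c_\alpha\ind_A c_{<\alpha}$, so one should use extension and an automorphism, or simply quote Lemma~\ref{lem: burden and ind-ist}(2)$\Rightarrow$(1)). Your $\leq$ direction, however, has a genuine gap. From mutual indiscernibility of the rows over $C$ you assert that the first column $(a_{\alpha,0})_\alpha$ is $C$-independent, citing ``the simple-theory characterization of forking via indiscernible sequences''. No such characterization gives this: in a simple (even stable) theory a $D$-indiscernible sequence need not be a Morley sequence over $D$, and an infinite $C\bar a_{\neq\alpha}$-indiscernible row says nothing about $a_{\alpha,0}\ind_C\bar a_{\neq\alpha}$. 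For instance, in the theory of an equivalence relation with infinitely many infinite classes, two disjoint sequences of distinct elements from the same class are mutually indiscernible over $\emptyset$, yet their first elements fork over $\emptyset$. The extra $\inp$-pattern constraints do not obviously rescue this. The fix is exactly the base change hidden in Lemma~\ref{lem: burden and ind-ist}(1)$\Rightarrow$(2): pass to a larger $D$ (a saturated model over the array) where coheirs along the rows give an $\ind^{\ist}$-independent, hence forking-independent, family; then $\tp(d/D)$ is a completion of $p$ of weight $\geq\kappa$. Part (b) then follows as you indicate.
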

\begin{defn}
{[}Shelah{]} $\phi(x,y)$ is said to have $\TP_{1}$ if there are
$\left(a_{\eta}\right)_{\eta\in\omega^{<\omega}}$ and $k\in\omega$
such that:
\begin{itemize}
\item $\left\{ \phi(x,a_{\eta|i})\right\} _{i\in\omega}$ is consistent
for any $\eta\in\omega^{\omega}$
\item $\left\{ \phi(x,a_{\eta_{i}})\right\} _{i<k}$ is inconsistent for
any mutually incomparable $\eta_{0},...,\eta_{k-1}\in\omega^{<\omega}$.
\end{itemize}
\end{defn}
\begin{fact}
\label{fac: TP =00003D TP1 or TP2}\cite[III.7.7, III.7.11]{MR1083551}
Let $T$ be $\NTP_{2}$, $q(y)$ a partial type and $\phi(x,y)$ has
$\TP$ witnessed by $\left(a_{\eta}\right)_{\eta\in\omega^{<\omega}}$
with $a_{\eta}\models q$, and such that in addition $\left\{ \phi(x,a_{\eta|i})\right\} _{i\in\omega}\cup p(x)$
is consistent for any $\eta\in\omega^{\omega}$. Then some formula
$\psi(x,\bar{y})=\bigwedge_{i<k}\phi(x,y_{i})\land\chi(x)$ (where
$\chi(x)\in p(x)$) has $\TP_{1}$, witnessed by $\left(b_{\eta}\right)$
with $b_{\eta}\subseteq q(\M)$ and such that $\left\{ \phi(x,b_{\eta|i})\right\} _{i\in\omega}\cup p(x)$
is consistent.
\end{fact}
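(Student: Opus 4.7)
This is Shelah's dichotomy ``$\TP \Rightarrow \TP_1 \vee \TP_2$'' in a local form: we must stay inside $q(y)$ and preserve branch-compatibility with $p(x)$. I would organize the argument into three stages.

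\emph{Stage 1 (Ramsey extraction).} Starting from $(a_\eta)_{\eta\in\omega^{<\omega}}$, extract a \emph{strongly indiscernible} tree $(a'_\eta)$, meaning the type of any finite subtree depends only on its quantifier-free type in the tree signature $(\lhd, <_{lex}, \wedge)$. This is the $\omega^{<\omega}$-analogue of Lemma \ref{lem: Erdos-Rado}, carried out by iterated Erd\H{o}s--Rado along the tree order, over a parameter set naming $q$, $\phi$, and each $\chi \in p$. Since the hypotheses ``$a_\eta \models q$'', branch-consistency with $p(x)$, and antichain-inconsistency of $\phi$ are all finite-configuration properties, they pass to $(a'_\eta)$, and the inconsistency parameters now depend only on the antichain-isomorphism type.

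\emph{Stage 2 (Critical antichain size).} By $\TP$, there is a least $k \geq 2$ such that, for some (hence every) antichain $\eta_1, \dots, \eta_k$ in the extracted tree, the partial type $\{\phi(x, a'_{\eta_i})\}_{i<k} \cup p(x)$ is inconsistent. Compactness plus strong indiscernibility produce a single $\chi(x) \in p(x)$ for which $\bigwedge_{i<k} \phi(x, a'_{\eta_i}) \wedge \chi(x)$ is outright inconsistent on every $k$-antichain, whereas every $(k-1)$-antichain remains consistent with $p(x)$.

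\emph{Stage 3 (Tree-squaring).} Put $\psi(x, \bar y) := \bigwedge_{j<k-1} \phi(x, y_j) \wedge \chi(x)$ and try the embedding $b_\nu := (a'_{\nu \frown \langle j\rangle})_{j<k-1}$ for $\nu \in \omega^{<\omega}$. For incomparable $\nu_0, \nu_1$ the $2(k-1) \geq k$ nodes $\{\nu_\ell \frown \langle j\rangle : \ell<2,\ j<k-1\}$ are pairwise incomparable in the old tree, so contain a $k$-antichain, whence $\psi(x, b_{\nu_0}) \wedge \psi(x, b_{\nu_1})$ is inconsistent by Stage 2. This delivers the antichain-inconsistency half of $\TP_1$ unconditionally. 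The remaining branch-consistency requirement is that, for each $\sigma \in \omega^\omega$, the set $\{\psi(x, b_{\sigma|_n})\}_n \cup p(x)$ be consistent. The old-tree nodes involved form a \emph{comb} (a chain together with all omitted siblings at each level), not an old branch, and this is where $\NTP_2$ enters: were comb-consistency to fail, strong indiscernibility would let us read off, from the sibling rows $(a'_{(\sigma|_n) \frown \langle i\rangle})_{i<\omega}$ indexed by $n$, a mutually indiscernible array whose rows are finitely inconsistent and along every selector of which there is a realization of finitely many $\phi$'s together with $p$ --- that is, a $\TP_2$-pattern for $\phi$ (or a finite conjunction of $\phi$'s), contradicting the hypothesis. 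Hence branch-consistency is preserved and $(b_\nu)$ witnesses $\TP_1$ for $\psi$ with $b_\nu \subseteq q(\M)$ and branches consistent with $p(x)$, as required.

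\emph{Main obstacle.} The delicate point is the ``branch vs.\ comb'' discrepancy in Stage 3: converting $\phi$-consistency along branches of the old tree into $\psi$-consistency along branches of the new tree requires consistency along comb-shaped subsets of the old tree. Everything in Stages 1 and 2 is arranged so that this conversion is controlled by one uniform $k$ and one formula $\chi$; once those are fixed, it is precisely $\NTP_2$ that excludes the obstruction, closing the argument.
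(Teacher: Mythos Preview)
The paper does not prove this Fact; it cites Shelah and refers to \cite[Section 4]{HansBurden} and \cite[Theorem 6.6]{TreeIndiscernibility} for a detailed account. Your three-stage plan is the standard strategy and you correctly isolate the branch/comb discrepancy as the crux, but two steps do not go through as written.

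The ``some (hence every)'' in Stage 2 fails for $k\ge 3$: strong indiscernibility in $(\lhd, <_{\mathrm{lex}}, \wedge)$ only identifies antichains with the same meet-closure, and a $k$-fan of siblings, a $k$-comb, and the configuration your Stage 3 actually produces (one child of $\nu_0$ together with $k{-}1$ children of $\nu_1$) have meet-closures of sizes $k{+}1$, $2k{-}1$, $k{+}2$ respectively. So pair-inconsistency in Stage 3 does not follow from Stage 2. More seriously, your use of $\NTP_2$ for branch-consistency is backwards. The sibling array $\left(a'_{(\sigma|n)\frown\langle i\rangle}\right)_{n,i}$ witnesses $\TP_2$ exactly when \emph{every} selector (comb) is $p$-consistent, so $\NTP_2$ forces some comb to be $p$-\emph{inconsistent}, which is the opposite of what your new branches (themselves fat combs in the old tree) require; and the failure of fat-comb consistency certainly does not yield ``every selector realized with $p$'' as you write. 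In the cited proofs $\NTP_2$ is deployed for the \emph{antichain} half of $\TP_1$: it gives a minimal $k$ with every $k$-comb (a family that \emph{does} share a single qf-type) $\chi$-inconsistent, and the embedding defining $(b_\nu)$ is then chosen so that incomparable new pairs contain old $k$-combs while new branches sit inside configurations whose $p$-consistency comes from the original branch hypothesis together with the minimality of $k$. Reorganizing Stages 2--3 around combs, and swapping which half of $\TP_1$ each ingredient buys, closes the argument.
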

It is not stated in exactly the same form there, but immediately follows
from the proof. See \cite[Section 4]{HansBurden} and \cite[Theorem 6.6]{TreeIndiscernibility}
for a more detailed account of the argument. See \cite{KimKim_TP1}
for more details on $\NTP_{1}$.
\begin{example}
\label{ex: triangle-free random graph has TP2}Triangle-free random
graph (i.e. the model companion of the theory of graphs without triangles)
has $\TP_{2}$.\end{example}
\begin{proof}
We can find $\left(a_{ij}b_{ij}\right)_{ij<\omega}$ such that $R(a_{ij},b_{ik})$
for every $i$ and $j\neq k$, and this are the only edges around.
But then $\left\{ xRa_{ij}\land xRb_{ij}\right\} _{j<\omega}$ is
2-inconsistent for every $i$ as otherwise it would have created a
triangle, while $\left\{ xRa_{if(i)}\land xRb_{if(i)}\right\} _{i<\omega}$
is consistent for any $f:\,\omega\to\omega$. Note that the formula
$xRy$ is $\NTP_{2}$, thus demonstrating that a conjunction of two
$\NTP_{2}$ formulas need not be $\NTP_{2}$.
\end{proof}
A similar argument shows that the theory of a $K_{n}$-free random
graph has $\TP_{2}$ for all $n\geq3$. In fact it is known that the
triangle-free random graph is rosy and 2-dependent (in the sense of
\cite{Sh886}), thus there is no implication between rosiness and
$\NTP_{2}$, and between k-dependence and $\NTP_{2}$ for $k>1$.

\subsection{On the $\SOP_{n}$ hierarchy restricted to $\NTP_{2}$ theories}

We recall the definition of $\SOP_{n}$ for $n\geq2$ from \cite[Definition 2.5]{MR1402297}:
\begin{defn}

\begin{enumerate}
\item Let $n\geq3$. We say that a formula $\phi\left(x,y\right)$ has $\SOP_{n}$
if there are $\left(a_{i}\right)_{i\in\omega}$ such that:

\begin{enumerate}
\item There is an infinite chain: $\models\phi\left(a_{i},a_{j}\right)$
for all $i<j<\omega$,
\item There are no cycles of length $n$: $\models\neg\exists x_{0}\ldots x_{n-1}\bigwedge_{j=i+1\left(\mod n\right)}\phi\left(x_{i},x_{j}\right)$.
\end{enumerate}
\item $\phi\left(x,y\right)$ has $\SOP_{2}$ if and only if it has $\TP_{1}$.
\item For a theory $T$, $\SOP\Rightarrow\ldots\Rightarrow\SOP_{n+1}\Rightarrow\SOP_{n}\Rightarrow\ldots\Rightarrow\SOP_{3}\Rightarrow\SOP_{2}\Rightarrow\TP$.
\item By Fact \ref{fac: TP =00003D TP1 or TP2} we see that restricting
to $\NTP_{2}$ theories, the last 2 items coincide.
\end{enumerate}
\end{defn}
The following are the standard examples showing that the $\SOP_{n}$
hierarchy is strict for $n\geq3$:
\begin{example}
\cite[Claim 2.8]{MR1402297}
\begin{enumerate}
\item For $n\geq3$, let $T_{n}$ be the model completion of the theory
of directed graphs (no self-loops or multiple edges) with no directed
cycles of length $\leq n$. Then it has $\SOP_{n}$ but not $\SOP_{n+1}$.
\item For odd $n\geq3$, the model completion of the theory of graphs with
no odd cycles of length $\leq n$, has $\SOP_{n}$ but not $\SOP_{n+1}$.
\item Consider the model companion of a theory in the language $\left(<_{n,l}\right)_{l\leq n}$
saying:

\begin{enumerate}
\item $x<_{n,m-1}y\rightarrow x<_{n,m}y$,
\item $x<_{n,n}y$,
\item $\neg\left(x<_{n,n-1}x\right)$,
\item if $l+k+1=m\leq n$ then $x<_{n,l}y\,\land\, y<_{n,k}z\,\rightarrow\, x<_{n,m}z$.
\end{enumerate}

It eliminates quantifiers.

\end{enumerate}
\end{example}
However, all these examples have $\TP_{2}$.
\begin{proof}
(1) Let $\phi\left(x,y_{1}y_{2}\right)=xRy_{1}\land y_{2}Rx$. For
$i\in\omega$ we choose sequencese $\left(a_{i,j}b_{i,j}\right)_{j\in\omega}$
such that $\models R\left(a_{i,j},b_{i,k}\right)$ and $R\left(b_{i,j},a_{i,k}\right)$
for all $j<k\in\omega$, and these are the only edges around --- it
is possible as no directed cycles are created. Now for any $i$, if
there is $c\models\phi\left(x,a_{i,0}b_{i,0}\right)\land\phi\left(x,a_{i,1}b_{i,1}\right)$,
then we would have a directed cycle $c,b_{i,0},a_{i,1}$ of length
$3$ --- a contradiction. On the other hand, given and $i_{0}<\ldots<i_{n}$
and $j_{0},\ldots,j_{n}$ there has to be an element $a\models\bigwedge_{\alpha\leq n}\phi\left(x,a_{i_{\alpha},j_{\alpha}}b_{i_{\alpha},j_{\alpha}}\right)$
as there are no directed cycles created. Thus $\phi\left(x,y_{1}y_{2}\right)$
has $\TP_{2}$.

(2) and (3) Similar.
\end{proof}
This naturally leads to the following question:
\begin{problem}
Is the $\SOP_{n}$ hierarchy strict restricting to $\NTP_{2}$ theories?
\end{problem}
In \cite[Exercise III.7.12]{MR1083551} Shelah suggests an example
of a theory satisfying $\NTP_{2}+\NSOP$ which is not simple. However,
his example doesn't seem to work.

\section{\label{sec: Forking in NTP2} Forking in $\NTP_{2}$}

In \cite[Theorem 2.4]{MR1833481} Kim gives several equivalents to
the simplicity of a theory in terms of the behavior of forking and
dividing. 
\begin{fact}
The following are equivalent:
\begin{enumerate}
\item $T$ is simple.
\item $\phi(x,a)$ divides over $A$ if and only if $\left\{ \phi(x,a_{i})\right\} _{i<\omega}$
is inconsistent for every Morley sequence $\left(a_{i}\right)_{i<\omega}$
over $A$. 
\item Dividing in $T$ satisfies local character.
\end{enumerate}
\end{fact}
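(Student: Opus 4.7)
The proof proceeds in a triangle $(1) \Rightarrow (2) \Rightarrow (3) \Rightarrow (1)$. The implication $(1) \Rightarrow (2)$ contains the real content (this is Kim's Lemma), while $(2) \Rightarrow (3)$ is a cardinality argument and $(3) \Rightarrow (1)$ is essentially the contrapositive of the tree property giving rise to a type that divides over every small subset of its parameter set.

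For $(1) \Rightarrow (2)$: the backward direction is trivial since a Morley sequence is in particular an $A$-indiscernible sequence starting with $a$, so if the conjunction is inconsistent along it, $\phi(x,a)$ divides over $A$ by definition. The forward direction is Kim's Lemma. The plan is to fix an $A$-indiscernible sequence $(b_i)$ with $b_0 = a$ such that $\{\phi(x,b_i)\}_{i<\omega}$ is $k$-inconsistent, and an arbitrary Morley sequence $(a_i)$ over $A$ with $a_0 = a$. Assume for contradiction some $c \models \{\phi(x,a_i)\}_{i<\omega}$. Pass to a model $M \supseteq A$ such that $M \ind_A a\,(b_i)(a_i)c$; using simplicity I can extend $\tp(a/A)$ to a global $M$-invariant type $p^*$ and realize its Morley sequence, then apply the independence theorem over $M$ (a characterization of simplicity) to amalgamate a realization $c'$ of $\tp(c/A\,(a_i))$ with the sequence $(b_i)$ so that $c' \models \phi(x, b_i)$ for all $i$. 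This contradicts the $k$-inconsistency of $\{\phi(x,b_i)\}_{i<\omega}$.

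For $(2) \Rightarrow (3)$: suppose local character fails, so there exists $p(x) \in S(B)$ such that for every $A \subseteq B$ with $|A| \leq |T|$, $p$ divides over $A$. Build by induction an increasing chain $(A_\alpha)_{\alpha < |T|^+}$ inside $B$ with $|A_\alpha| \leq |T|$ and a formula $\phi_\alpha(x,a_\alpha) \in p\restriction A_{\alpha+1}$ dividing over $A_\alpha$. Using $(2)$, pick for each $\alpha$ a Morley sequence $(a_{\alpha,i})_{i<\omega}$ over $A_\alpha$ starting with $a_\alpha$ so that $\{\phi_\alpha(x,a_{\alpha,i})\}_i$ is inconsistent. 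A standard extraction using Lemma~\ref{lem: Erdos-Rado} plus a pigeonhole on $(\phi_\alpha, k_\alpha)$ produces an $\inp$-pattern of depth $\omega$ with all rows using the same formula and the same constant of inconsistency; by Ramsey (as in the proof of Lemma~\ref{lem: NTP_2 =00003D bounded burden}) this can be promoted to the tree property for $\phi$, contradicting $(2)$ applied along a Morley sequence indexing a branch.

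For $(3) \Rightarrow (1)$: assume some formula $\phi(x,y)$ has the tree property, witnessed by $(a_\eta)_{\eta \in \omega^{<\omega}}$ with $\{\phi(x,a_{\eta i})\}_{i<\omega}$ $k$-inconsistent for every $\eta$ and $\{\phi(x,a_{\eta\restriction n})\}_{n<\omega}$ consistent for every branch. By standard tree manipulation one can arrange the tree to be strongly indiscernible, and then for any branch $\eta \in \omega^\omega$ the type $p_\eta(x) = \{\phi(x,a_{\eta\restriction n})\}_{n<\omega}$ is consistent but, by the $k$-inconsistency condition at every node, $\phi(x, a_{\eta\restriction n})$ divides over $\{a_{\eta\restriction m} : m < n\}$. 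Taking $\eta$ of cofinality $|T|^+$ (after stretching the index set), any realization of $p_\eta$ has a type over the branch parameters that divides over every subset of cardinality $\leq |T|$, contradicting $(3)$.

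The main obstacle is $(1) \Rightarrow (2)$: constructing the amalgam $c'$ requires the full independence theorem over models, which is one of the deeper consequences of simplicity and is not formally available at this point of the exposition without invoking the classical simple-theory machinery; the other two implications are comparatively routine combinatorics once the Morley-sequence characterization is in hand.
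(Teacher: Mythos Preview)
The paper does not prove this Fact---it is quoted from Kim's paper and no argument is supplied. The closest thing to a ``paper's own proof'' is the analogous equivalence for \emph{types} in Definition~\ref{def: simple}, and comparing your cycle to that one exposes two real gaps.

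\textbf{On $(1)\Rightarrow(2)$.} You yourself flag the problem: invoking the independence theorem here is out of order, since in the standard development Kim's Lemma is what gives symmetry and transitivity of non-forking, and those are prerequisites for the independence theorem. Beyond the circularity, the amalgamation step as written does not parse. The independence theorem takes two complete types over $MB_1$ and $MB_2$ restricting to the same type over $M$, with $B_1\ind_M B_2$, and produces a common extension. In your sketch it is unclear what plays the role of $B_1,B_2$ and of the two types: the sequences $(a_i)$ and $(b_i)$ share $a_0=b_0=a$ and are not independent, and you have only one element $c$ on hand, not two realizations of a common type over $M$. The paper's route (Definition~\ref{def: simple}, implication ``local character $\Rightarrow$ Kim's lemma'') avoids this entirely: take a Morley sequence indexed by $(|T|^+)^*$; if it were consistent with $\phi$, local character applied to a realization gives non-dividing over a small end segment, but then any element strictly below that segment still has $\phi$ dividing over it---contradiction. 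So the natural direction is $(3)\Rightarrow(2)$, not $(1)\Rightarrow(2)$.

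\textbf{On $(2)\Rightarrow(3)$.} Your endgame is ``the tree property contradicts $(2)$ applied along a Morley sequence indexing a branch,'' but a branch of a TP-tree is \emph{not} a Morley sequence: along a branch each $\phi(x,a_{\eta\restriction n})$ \emph{divides} over its predecessors, which is the opposite of the Morley condition $a_{\eta\restriction n}\ind a_{\eta\restriction <n}$. Extracting an indiscernible subsequence does not help---turning an arbitrary indiscernible sequence into a Morley sequence already needs symmetry/local character, which is exactly what you are trying to prove. The paper's cycle (again Definition~\ref{def: simple}) closes the loop differently: Kim's lemma $\Rightarrow$ bounded weight $\Rightarrow$ a symmetry condition $\Rightarrow$ no tree property, and then the standard tree construction gives no tree property $\Rightarrow$ local character. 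Your $(3)\Rightarrow(1)$ is fine and matches that last step.
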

In this section we show an analogous characterization of $\NTP_{2}$.
But first we recall some facts about forking and dividing in $\NTP_{2}$
theories and introduce some terminology.
\begin{defn}
\label{def: strict invariance, etc}
\begin{enumerate}
\item A type $p(x)\in S(C)$ is \emph{strictly invariant} over $A$ if it
is Lascar invariant over $A$ and for any small $B\subseteq C$ and
$a\models p|_{B}$, we have that $\tp(B/aA)$ does not divide over
$A$ (we can replace ``does not divide'' by ``does not fork''
$C=\M$). For example, a definable type or a global type which is
both an heir and a coheir over $M$, are strictly invariant over $M$. 
\item We will write $a\ind_{c}^{\ist}b$ when $\tp(a/bc)$ can be extended
to a global type $p(x)$ strictly invariant over $A$. 
\item We say that $\left(a_{i}\right)_{<\omega}$ is a strict Morley sequence
over $A$ if it is indiscernible over $A$ and $a_{i}\ind_{A}^{\ist}a_{<i}$
for all $i<\omega$.
\item As usual, we will write $a\ind_{c}^{u}b$ if $\tp(a/bc)$ is finitely
satisfiable in $c$, $a\ind_{c}^{d}b$ ($a\ind_{c}^{f}b$) if $\tp(a/bc)$
does not divide (resp. does not fork) over $c$.
\item We write $a\ind_{c}^{i}b$ if $\tp(a/bc)$ can be extended to a global
type $p(x)$ Lascar invariant over $c$. We point out that if $a\ind_{c}^{i}b$
and $\left(b_{i}\right)_{i<\omega}$ is a $c$-indiscernible sequence
with $b_{0}=b$, then it is actually indiscernible over $a$.
\item If $T$ is simple, then $\ind^{i}=\ind^{\ist}$. And if $T$ is $\NIP$,
then $\ind^{i}=\ind^{f}$.
\item We say that a set $A$ is an \emph{extension base} if every type over
$A$ has a global non-forking extension. Every model is an extension
base (because every type has a global coheir). A theory in which every
set is an extension base is called extensible.
\end{enumerate}
\end{defn}
Strictly invariant types exist in any theory (but it is not true that
every type over a model has a global extension which is strictly invariant
over the same model). In fact, there are theories in which over any
set there is some type without a global strictly invariant extension
(see \cite{NFSpectra}).
\begin{lem}
\label{lem: finding strictly invariant type} Let $p(x)$ be a global
type invariant over $A$, and let $M\supset A$ be $|A|^{+}$-saturated.
Then $p$ is strictly invariant over $M$.\end{lem}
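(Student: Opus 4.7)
The plan is to derive strict invariance by first showing that $p$ is an heir over $M$ and then extracting the dividing clause of the definition directly from heirship; the Lascar invariance of $p$ over $M$ is automatic, since $p$ is already invariant over $A\subseteq M$.

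Heirship of $p$ over $M$ should fall out of the saturation hypothesis: given an $L(M)$-formula $\psi(x,\bar y)$ and a tuple $\bar b\in\M$ with $\psi(x,\bar b)\in p$, the $|A|^{+}$-saturation of $M$ realizes $\tp(\bar b/A)$ inside $M$; any such realization $\bar b'\in M$ satisfies $\bar b'\equiv_A\bar b$, and $A$-invariance of $p$ then forces $\psi(x,\bar b')\in p$. Since $\bar b'\in M$, this witness lies in $p|_M$, which is precisely what heirship requires.

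For the dividing clause, I would let $B\subseteq\M$ be small, $a\models p|_{MB}$, and suppose for contradiction that some $\phi(y,a,m)\in\tp(B/Ma)$ divides over $M$, witnessed by an $M$-indiscernible sequence $(a_i)_{i<\omega}$ with $a_0=a$ and $\{\phi(y,a_i,m)\}_{i<\omega}$ being $k$-inconsistent. From $\models\phi(B,a,m)$ together with $\tp(a/MB)=p|_{MB}$ we read off that $\phi(B,x,m)\in p$. Applying heirship to $\psi(x,\bar y):=\phi(\bar y,x,m)$, which lies in $L(M)$ because $m\in M$, yields some $B'\in M^{|B|}$ with $\phi(B',x,m)\in p|_M$. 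Each $a_i$ satisfies $a_i\equiv_M a_0=a$, hence realizes $p|_M$, so $\models\phi(B',a_i,m)$ for every $i$; thus $B'$ realizes $\{\phi(y,a_i,m)\}_{i<\omega}$, contradicting $k$-inconsistency.

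The main conceptual point is the last step: the universal quantification over $M$-indiscernible continuations of $a$ in the definition of dividing is handled uniformly by a single heir-witness $B'\in M$, precisely because every term of such a sequence still realizes $p|_M$. This also explains why one aims for heirship rather than coheirship: coheirship of $p$ over $M$ can genuinely fail (for instance in DLO with $p$ the type at $+\infty$ and any small model $M$), whereas heirship always follows from the saturation of $M$ over $A$.
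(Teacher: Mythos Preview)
Your argument matches the paper's: establish that $p$ is an heir over $M$, and deduce the non-dividing clause from that. The paper leaves the second step implicit (heirship of $\tp(a/MB)$ over $M$ is dual to finite satisfiability of $\tp(B/Ma)$ in $M$, which certainly precludes dividing), whereas you spell it out directly; your version of the deduction is correct and arguably clearer.

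One small slip in the heirship paragraph: you take $\psi(x,\bar y)\in L(M)$, say $\psi(x,\bar y)=\psi_0(x,\bar y,m)$ with $m\in M$, but then realize only $\tp(\bar b/A)$ in $M$. From $\bar b'\equiv_A\bar b$ alone you cannot conclude $\psi(x,\bar b')\in p$ by $A$-invariance, since an $A$-automorphism sending $\bar b$ to $\bar b'$ need not fix $m$. Instead realize $\tp(\bar b/Am)$ in $M$ (still possible by $|A|^+$-saturation, as $m$ is a finite tuple); then $\bar b'm\equiv_A\bar bm$, and $A$-invariance applied to the full parameter tuple gives $\psi_0(x,\bar b',m)=\psi(x,\bar b')\in p$ as needed.
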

\begin{proof}
It is enough to show that $p$ is an heir over $M$. Let $\phi(x,c)\in p$.
By saturation of $M$, $\tp(c/A)$ is realized by some $c'\in M$.
But as $p$ is invariant over $A$, $\phi(x,c')\in p$ as wanted.
\end{proof}
One of the main uses of strict invariance is the following criterion
for making indiscernible sequences mutually indiscernible without
changing their type over the first elements.
\begin{lem}
\label{lem: mutual indiscernibility criteria} Let $\left(\bar{a}_{i}\right)_{i<\kappa}$and
$C$ be given, with $\bar{a}_{i}$ indiscernible over $C$ and starting
with $a_{i}$. If $a_{i}\ind_{C}^{\ist}a_{<i}$, then there are mutually
$C$-indiscernible $\left(\bar{b}_{i}\right)_{i<\kappa}$ such that
$\bar{b}_{i}\equiv_{a_{i}C}\bar{a}_{i}$.\end{lem}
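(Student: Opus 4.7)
The natural approach is induction on $i<\kappa$, constructing $\bar{b}_i$ row by row while preserving mutual $C$-indiscernibility of the rows already built. At stage $i$, assume inductively that $(\bar{b}_j)_{j<i}$ is already mutually $C$-indiscernible with $\bar{b}_j\equiv_{Ca_j}\bar{a}_j$ (so in particular each $\bar{b}_j$ starts with $a_j$). The hypothesis $a_i\ind_C^{\ist}a_{<i}$ supplies a global type $p$ strictly invariant over $C$ extending $\tp(a_i/Ca_{<i})$, and this is the only place where the strictness assumption is used in an essential way.

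Realize $a'\models p\vert C\bar{b}_{<i}$; since $p\vert Ca_{<i}=\tp(a_i/Ca_{<i})$, we have $a'\equiv_{Ca_{<i}}a_i$, so transport $\bar{a}_i$ along an automorphism over $Ca_{<i}$ sending $a_i\mapsto a'$ to obtain a $C$-indiscernible sequence $\bar{a}'_i$ starting with $a'$ and with $\bar{a}'_i\equiv_{Ca_{<i}}\bar{a}_i$. By the definition of strict invariance applied to $a'\models p\vert C\bar{b}_{<i}$, the type $\tp(\bar{b}_{<i}/Ca')$ does not divide over $C$. A standard Kim-style argument then produces $\bar{b}'_{<i}\equiv_{Ca'}\bar{b}_{<i}$ with $\bar{a}'_i$ indiscernible over $C\bar{b}'_{<i}$; and the mutual $C$-indiscernibility of $\bar{b}'_{<i}$ is preserved since it only depends on the type over $C$.

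After this, finish by applying an automorphism fixing $C$ that sends $a'\mapsto a_i$ and the first elements of $\bar{b}'_{<i}$ back to $a_{<i}$ (such an automorphism exists because the two tuples have the same type over $C$, as can be checked by composing the automorphisms already constructed); call the images $\bar{b}_i$ and the new $\bar{b}_{<i}$ respectively. A routine check, using that $\bar{a}'_i\equiv_{Ca_{<i}}\bar{a}_i$ and that the transporting automorphisms fix $Ca_i$ and $Ca_j$ on the nose, shows that $\bar{b}_i\equiv_{Ca_i}\bar{a}_i$ and that the modified $\bar{b}_{<i}$ still satisfies $\bar{b}_j\equiv_{Ca_j}\bar{a}_j$.

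\textbf{Main obstacle.} The delicate point is not the indiscernibility of the new row $\bar{b}_i$ over $C\bar{b}_{<i}$ -- this is exactly what the Kim-style non-dividing argument delivers. The subtler issue is that adding $\bar{b}_i$ must not break the indiscernibility of each old $\bar{b}_j$ over $C\bar{b}_{\le i,\neq j}$, and this is not automatic from the one-sided Kim lemma. The cleanest way to close this gap is to extract, at the inductive step, a mutually $C$-indiscernible sub-array of the extended configuration via Lemma~\ref{lem: Erdos-Rado} and then transport back; alternatively, one strengthens the induction hypothesis to assert that $a_i\ind_C^{\ist}\bar{b}_{<i}$ (not merely $a_i\ind_C^{\ist}a_{<i}$), which is maintained precisely because the strictness of $p$ over $C$ transfers across the $Ca_i$-conjugation from $\bar{b}_{<i}$ to $\bar{b}'_{<i}$. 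Either way, strict invariance, as opposed to mere Lascar invariance, is what allows one to simultaneously enforce indiscernibility in both directions.
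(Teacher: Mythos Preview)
Your approach is the same as the paper's, and your diagnosis of the ``main obstacle'' is correct. Two points, however, are not fully resolved in your write-up.

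First, a reduction to finite $\kappa$ by compactness is needed before the induction starts. Your inductive step \emph{replaces} the previously built rows $\bar{b}_{<i}$ at every stage, so at a limit ordinal $i$ there is no well-defined array $(\bar{b}_j)_{j<i}$ to continue from; a transfinite induction on $i<\kappa$ as you describe it simply does not converge. The paper opens with ``Enough to show for finite $\kappa$ by compactness,'' which removes this difficulty.

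Second, your proposed fixes for the obstacle are not quite right. Fix~(a), invoking Lemma~\ref{lem: Erdos-Rado} directly, does not preserve the types over the first elements $Ca_j$; the correct tool is Lemma~\ref{lem: almost indiscernible array gives an indiscernible array}, and applying it requires that the extended array be \emph{almost} mutually indiscernible---in particular, that each old row $\bar{b}'_j$ is indiscernible over $Ca'\bar{b}'_{<i,\neq j}$. This is exactly the step you have not carried out, and it does not come from the ``strictness'' half of $\ind^{\ist}$ but from the \emph{Lascar-invariance} half: since $a'\models p|_{C\bar{b}_{<i}}$ with $p$ invariant over $C$, we have $a'\ind_{C\bar{b}_{<i,\neq j}}^{i}\bar{b}_j$, and Definition~\ref{def: strict invariance, etc}(5) then gives that $\bar{b}_j$ remains indiscernible over $Ca'\bar{b}_{<i,\neq j}$ (hence the same for $\bar{b}'_j$, since $\bar{b}'_{<i}\equiv_{Ca'}\bar{b}_{<i}$). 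Combined with your non-dividing step (which produces $\bar{a}'_i$ indiscernible over $C\bar{b}'_{<i}$), this yields an almost mutually indiscernible array, and Lemma~\ref{lem: almost indiscernible array gives an indiscernible array} restores mutual indiscernibility while preserving the types over each $Ca_j$. This is precisely what the paper does: it uses $\ind^i$ explicitly for one direction and $\ind^f$ for the other, then concludes by Lemma~\ref{lem: almost indiscernible array gives an indiscernible array}.
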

\begin{proof}
Enough to show for finite $\kappa$ by compactness. So assume we have
chosen $\bar{a}_{0}',...,\bar{a}_{n-1}'$, and lets choose $\bar{a}_{n}'$.
As $a_{n}\ind_{C}^{\ist}a_{<n}$, there are $\bar{a}_{0}''...\bar{a}_{n-1}''\equiv_{Ca_{0}...a_{n-1}}\bar{a}_{0}'...\bar{a}_{n-1}'$
and such that $a_{n}\ind_{C}^{\ist}\bar{a}_{<n}''$. As $a_{n}\ind_{C\bar{a}''_{<n,\neq j}}^{i}\bar{a}''_{j}$
for $j<n$, it follows by the inductive assumption and Definition
\ref{def: strict invariance, etc}(5) that $\bar{a}''_{j}$ is indiscernible
over $a_{n}\bar{a}''_{\neq j}$. On the other hand $\bar{a}_{0}''...\bar{a}_{n-1}''\ind_{C}^{f}a_{n}$,
and so by basic properties of forking there is some $\bar{a}'_{n}\equiv_{Ca_{n}}\bar{a}_{n}$
indiscernible over $\bar{a}_{0}'',...,\bar{a}_{n-1}''$. Conclude
by Lemma \ref{lem: almost indiscernible array gives an indiscernible array}.\end{proof}
\begin{rem}
This argument is essentially from \cite[Section 5]{ShelahDependentCont}.
\end{rem}
We recall a result about forking and dividing in $\NTP_{2}$ theories
from \cite{CheKap}.
\begin{fact}
\cite{CheKap}\label{Fac: Forking in an NTP2 theory} Let $T$ be
$\NTP_{2}$ and $M\models T$.
\begin{enumerate}
\item Every $p\in S(M)$ has a global strictly invariant extension.
\item For any $a$, $\phi(x,a)$ divides over $M$ if and only if $\phi(x,a)$
forks over $M$, if and only if for every $\left(a_{i}\right)_{i<\omega}$,
a strict Morley sequence in $\tp(a/M)$, $\left\{ \phi(x,a_{i})\right\} _{i<\omega}$
is inconsistent.
\item In fact, just assuming that $A$ is an extension base, we still have
that $\phi(x,a)$ does not divide over $A$ if and only if $\phi(x,a)$
does not fork over $A$.
\end{enumerate}
\end{fact}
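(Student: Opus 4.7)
The plan is to establish (3) first, since it encapsulates the essential use of $\NTP_{2}$, and then derive (2) and (1) by combining (3) with the mutual‐indiscernibility machinery of Lemma \ref{lem: mutual indiscernibility criteria}.

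For (3), the nontrivial direction is that non‐dividing over an extension base $A$ implies non‐forking; equivalently, if $\phi(x,a)\vdash\bigvee_{j<n}\psi_{j}(x,b_{j})$ with each $\psi_{j}(x,b_{j})$ dividing over $A$, then $\phi(x,a)$ divides over $A$. I would induct on $n$, implementing a ``broom'' argument: for each $j$ choose a witness $\left(b_{j}^{i}\right)_{i<\omega}$ that is $A$-indiscernible with $b_{j}^{0}=b_{j}$ and $\left\{ \psi_{j}(x,b_{j}^{i})\right\} _{i}$ inconsistent; use the extension base hypothesis on $A$ to extend the joint type of $(a,b_{0},\ldots,b_{n-1})$ to a global $A$-non-forking type, which by compactness and Lemma \ref{lem: Erdos-Rado} may be taken to interact coherently with the chosen witnesses. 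By transporting $b_{0}^{i}$ by an $A$-automorphism fixing a suitably chosen ``skeleton'', one consolidates the first disjunct into the array and reduces the disjunction length. The main obstacle is compatibility of the iteratively chosen indiscernibles; one arranges them to be almost mutually indiscernible via repeated use of Lemma \ref{lem: Erdos-Rado}, and if the reduction fails one reads off an $\inp$-pattern in $\phi$ (or equivalently a witness to $\TP_{2}$), contradicting $\NTP_{2}$. This is exactly where the hypothesis is used.

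For (2), the equivalence of forking and dividing over the model $M$ is immediate from (3). The Kim‐style content is the implication: if $\phi(x,a)$ divides over $M$, then for every strict Morley sequence $(a_{i})_{i<\omega}$ in $\tp(a/M)$, the set $\left\{ \phi(x,a_{i})\right\} _{i<\omega}$ is inconsistent. Assuming it is consistent, I would pick an $M$-indiscernible sequence $(c_{j}^{0})_{j<\omega}$ with $c_{0}^{0}=a$ and $\left\{ \phi(x,c_{j}^{0})\right\} _{j}$ inconsistent, then for each $i<\omega$ transport this by an $M$-automorphism sending $a$ to $a_{i}$, obtaining an $M$-indiscernible $\bar{c}^{i}=(c_{j}^{i})_{j<\omega}$ starting at $a_{i}$ whose row is inconsistent. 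Since $a_{i}\ind_{M}^{\ist}a_{<i}$, Lemma \ref{lem: mutual indiscernibility criteria} yields mutually $M$-indiscernible $\bar{c}'^{i}$ with $\bar{c}'^{i}\equiv_{a_{i}M}\bar{c}^{i}$. The assumed consistency of $\left\{ \phi(x,a_{i})\right\} _{i}$ then gives an $\inp$-pattern of depth $\omega$ for $\phi$, contradicting $\NTP_{2}$.

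For (1), I would extend $p\in S(M)$ first to a global coheir $q_{0}$, which is Lascar invariant over $M$ by finite satisfiability. To upgrade invariance to strict invariance, I would construct $q$ as the type of the last element of a long Morley sequence of coheirs (so that $q$ itself remains $M$-invariant). Strict invariance of $q$ then reduces, by (3), to verifying that for any small $B\supseteq M$ and $b\models q|_{B}$, no formula $\phi(y,b)\in\tp(B/bM)$ divides over $M$: if it did, with $M$-indiscernible witnesses $(b_{i})_{i<\omega}$, $b_{0}=b$, and $\left\{ \phi(y,b_{i})\right\} _{i}$ inconsistent, then finite satisfiability of $q$ in $M$ combined with $\models\phi(B,b)$ lets one absorb $B$ into $M$ and derive a contradiction from the inconsistency of the row. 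The main obstacle is choosing the right global extension: a single coheir need not be strict, so the Morley‐of‐coheirs construction is needed, and $\NTP_{2}$ (through (3)) is what ensures the non-dividing witness exists at the limit stage.
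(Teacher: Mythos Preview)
This statement is cited from \cite{CheKap} and carries no proof in the present paper; the only internal evidence of the intended arguments is the localized versions in Section~\ref{sec: Forking in NTP2} (Lemmas~\ref{lem: inside NTP2 type, strict Morley sequence is universal}--\ref{lem: local broom lemma}, Corollary~\ref{lem: inside NTP2 type, strict invariant extensions exist}, Proposition~\ref{prop: forking=00003Ddividing inside NTP2 type}). Your sketch for (2) is essentially the argument behind Lemma~\ref{lem: inside NTP2 type, strict Morley sequence is universal}: transport the dividing witness along the strict Morley sequence, apply Lemma~\ref{lem: mutual indiscernibility criteria}, and read off an $\inp$-pattern. That part is fine.

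The problems are in the logical order you propose and in the content of (1) and (3). In \cite{CheKap} (and in the localized treatment here) the flow is: Broom lemma for an $M$-invariant type in the $x$-variable $\Rightarrow$ existence of strictly invariant extensions (1) $\Rightarrow$ Kim's lemma direction of (2) $\Rightarrow$ forking $=$ dividing (3), the last step being exactly the computation in Proposition~\ref{prop: forking=00003Ddividing inside NTP2 type} using a strict Morley sequence in $\tp(a b_0\ldots b_{n-1}/M)$. Your plan inverts this: you want (3) first, proved by extending the \emph{parameter} type $\tp(a,b_0,\ldots,b_{n-1}/A)$ to a global non-forking type. But a mere non-forking (or coheir) Morley sequence in the parameters does not witness dividing of $\phi(x,a)$ universally; that is precisely what \emph{strict} Morley sequences are for, and their existence is (1). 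So your (3) is circular as written. The actual ``broom'' step is different: one works with a global $M$-invariant type in the $x$-variable and sweeps away one dividing disjunct at a time using invariance (see Lemma~\ref{lem: local broom lemma}); the extension-base hypothesis on $A$ enters later, not at the broom step.

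For (1), your proposed fix---``take $q$ to be the type of the last element of a long Morley sequence of coheirs''---is not a well-defined construction of a global type, and the concluding argument (``finite satisfiability of $q$ in $M$ combined with $\models\phi(B,b)$ lets one absorb $B$ into $M$'') does not show $\tp(B/bM)$ is non-dividing: from $\phi(B,x)\in q$ and finite satisfiability you get some $m\in M$ with $\models\phi(B,m)$, which says nothing about the inconsistency of $\{\phi(y,b_i)\}_{i<\omega}$ along an $M$-indiscernible sequence in $b$. You correctly note that a bare coheir need not be strict; the remedy in \cite{CheKap} is not an iterated coheir but the Broom lemma (Lemma~\ref{lem: local broom lemma} here), which is exactly what Corollary~\ref{lem: inside NTP2 type, strict invariant extensions exist} invokes to produce the strictly invariant extension.
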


\subsection{Characterization of $\NTP_{2}$}

Now we can give a method for computing the burden of a type in terms
of dividing with each member of an $\ind^{\ist}$-independent sequence.
\begin{lem}
\label{lem: burden and ind-ist} Let $p(x)$ be a partial type over
$C$. The following are equivalent:
\begin{enumerate}
\item There is an $\inp$-pattern of depth $\kappa$ in $p(x)$.
\item There is $d\models p(x)$, $D\supseteq C$ and $\left(a_{\alpha}\right)_{\alpha<\kappa}$
such that $a_{\alpha}\ind_{D}^{\ist}a_{<\alpha}$ and $d\nind_{D}^{d}a_{\alpha}$
for all $\alpha<\kappa$. 
\end{enumerate}
\end{lem}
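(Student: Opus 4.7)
The plan is to prove the two implications separately. Direction $(2)\Rightarrow(1)$ is a direct application of Lemma~\ref{lem: mutual indiscernibility criteria}, while $(1)\Rightarrow(2)$ requires first transplanting the $\inp$-pattern over a sufficiently saturated model so that a limit-type argument combined with Lemma~\ref{lem: finding strictly invariant type} supplies strict invariance.

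For $(2)\Rightarrow(1)$: for each $\alpha<\kappa$, the hypothesis $d\nind_D^d a_\alpha$ produces a formula $\phi_\alpha(x,y_\alpha)$ (with any additional $D$-parameters folded into $y_\alpha$, harmlessly enlarging $a_\alpha$ without disturbing $a_\alpha\ind_D^{\ist}a_{<\alpha}$) together with a $D$-indiscernible sequence $\bar{a}_\alpha=(a_{\alpha,i})_{i<\omega}$ beginning with $a_\alpha$ such that $\{\phi_\alpha(x,a_{\alpha,i})\}_{i<\omega}$ is inconsistent while $d\models\phi_\alpha(x,a_\alpha)$. Since $a_\alpha\ind_D^{\ist}a_{<\alpha}$, Lemma~\ref{lem: mutual indiscernibility criteria} yields a mutually $D$-indiscernible array $(\bar{b}_\alpha)_{\alpha<\kappa}$ with $\bar{b}_\alpha\equiv_{Da_\alpha}\bar{a}_\alpha$. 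This array together with $(\phi_\alpha)_\alpha$ is an $\inp$-pattern of depth $\kappa$ in $p$: the rows are still inconsistent, and for any $f:\kappa\to\omega$ mutual $D$-indiscernibility gives $(b_{\alpha,f(\alpha)})_\alpha\equiv_D(b_{\alpha,0})_\alpha=(a_\alpha)_\alpha$, so a $D$-automorphism sends $d$ (which satisfies $p\subseteq\tp(d/D)$ and $\phi_\alpha(x,a_\alpha)$ for all $\alpha$) to a realization of $p\cup\{\phi_\alpha(x,b_{\alpha,f(\alpha)})\}_{\alpha<\kappa}$.

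For $(1)\Rightarrow(2)$: use Lemma~\ref{lem: IndiscDivPattern} to obtain a mutually $C$-indiscernible $\inp$-pattern $(\bar{a}_\alpha,\phi_\alpha)_{\alpha<\kappa}$, and fix a sufficiently saturated model $M\supseteq C$. The key step is to transplant the pattern to a mutually $M$-indiscernible array $(\bar{a}'_\alpha)_{\alpha<\kappa}$: by compactness it suffices to produce, for every finite $\kappa_0\subseteq\kappa$, finite $M_0\subseteq M$, and finite $p_0\subseteq p$, a finite array mutually $M_0$-indiscernible and preserving the relevant inconsistency and $p_0$-consistency statements. This is supplied by Lemma~\ref{lem: Erdos-Rado}(1) applied to $(\bar{a}_\alpha)_{\alpha\in\kappa_0}$ over $M_0$: the $k_\alpha$-inconsistency of a row is determined by its EM-type, and ``$\exists y\,(\bigwedge p_0(y)\wedge\bigwedge_{\alpha\in\kappa_0}\phi_\alpha(y,y_\alpha))$'' is a first-order condition in parameters from $C\subseteq M_0$, so both transfer through type-equality over $M_0$. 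In the transplanted array set $D=M$ and $a_\alpha:=a'_{\alpha,0}$, and pick $d\models p\cup\{\phi_\alpha(x,a_\alpha)\}_{\alpha<\kappa}$. Then $\phi_\alpha(x,a_\alpha)$ divides over $M$ via the $M$-indiscernible row $\bar{a}'_\alpha$, so $d\nind_M^d a_\alpha$. For strict independence, let $q_\alpha$ be the global limit type of $\bar{a}'_\alpha$: as the sequence is $C$-indiscernible, $q_\alpha$ is $C$-invariant, hence by Lemma~\ref{lem: finding strictly invariant type} strictly invariant over $M$. Mutual $M$-indiscernibility makes $\bar{a}'_\alpha$ indiscernible over $Ma_{<\alpha}$, so $q_\alpha|_{Ma_{<\alpha}}=\tp(a_\alpha/Ma_{<\alpha})$, establishing $a_\alpha\ind_M^{\ist}a_{<\alpha}$.

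The main obstacle is the transplanting step in $(1)\Rightarrow(2)$: without passing to mutual indiscernibility over a saturated model, the row limit type is only $C$-invariant and Lemma~\ref{lem: finding strictly invariant type} does not by itself produce strict $C$-invariance (strictness is only guaranteed over a saturated extension of the invariance base). Some care is also needed in the compactness argument to simultaneously preserve the $k_\alpha$-inconsistency of the rows and the joint consistency of the first column with $p$.
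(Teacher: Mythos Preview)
Your $(2)\Rightarrow(1)$ is correct and matches the paper's argument (the paper's citation of ``Lemma~\ref{lem: IndiscDivPattern}'' there is almost certainly a slip for Lemma~\ref{lem: mutual indiscernibility criteria}, which is exactly what you invoke).

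Your $(1)\Rightarrow(2)$ has a genuine gap. The claim ``as the sequence $\bar{a}'_\alpha$ is $C$-indiscernible, the global limit type $q_\alpha$ is $C$-invariant'' is false in general. A global type finitely satisfiable in a sequence $I$ is invariant over $I$, not over the base of indiscernibility of $I$. For a concrete failure take $T=\mathrm{DLO}$, $C=\emptyset$ (or any model $M$), and $(a_i)_{i<\omega}$ increasing and $M$-indiscernible; if $b$ lies above all $a_i$ and $b'$ lies strictly between $a_0$ and $a_1$, then $b\equiv_M b'$ while ``$x<b$'' is in the limit type and ``$x<b'$'' is not. So $q_\alpha$ need not be $C$-invariant, nor even $M$-invariant, and Lemma~\ref{lem: finding strictly invariant type} does not apply. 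Since your transplanted array lives \emph{outside} $M$, you cannot recover strict $M$-invariance of $q_\alpha$ this way.

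The paper circumvents this by reversing the order of the construction: it chooses $M$ large enough to \emph{contain} the original mutually $C$-indiscernible array $(\bar{a}_\alpha)_{\alpha<\kappa}$, takes each $q_\alpha$ finitely satisfiable in $\bar{a}_\alpha\subset M$ (hence invariant over the small set $\bar{a}_\alpha$, so strictly $M$-invariant by Lemma~\ref{lem: finding strictly invariant type}), and only then realizes the $q_\alpha$'s outside $M$ in a diagonal pattern to produce the required $(e_\alpha)_{\alpha<\kappa}$. The point is that the invariance base for $q_\alpha$ must sit inside the saturated model $M$; your transplanting step moves it out.
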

\begin{proof}
(1)$\Rightarrow$(2): Let $\left(\bar{a}_{\alpha},\phi_{\alpha}(x,y_{\alpha}),k_{\alpha}\right)_{\alpha<\kappa}$
be an $\inp$-pattern in $p(x)$ with $(\bar{a}_{\alpha})$ mutually
indiscernible over $C$. Let $q_{\alpha}(\bar{y}_{\alpha})$ be a
non-algebraic type finitely satisfiable in $\bar{a}_{\alpha}$ and
extending $\tp\left(a_{\alpha0}/C\right)$. Let $M\supseteq C\left(\bar{a}_{\alpha}\right)_{\alpha<\kappa}$
be $\left(|C|+\kappa\right)^{+}$-saturated. Then $q_{\alpha}$ is
strictly invariant over $M$ by Lemma \ref{lem: finding strictly invariant type}.
For $\alpha,i<\kappa$ let $b_{\alpha,i}\models q_{\alpha}\restriction_{M\left(b_{\alpha,j}\right)_{\alpha<\kappa,j<i}\left(b_{\beta,i}\right)_{\beta<\alpha}}$.
Let $e_{\alpha}=b_{\alpha,\alpha}$. Now we have:
\begin{itemize}
\item $e_{\alpha}\ind_{M}^{\ist}e_{<\alpha}$: as $e_{\alpha}\models q_{\alpha}\restriction_{e_{<\alpha}M}$.
\item there is $d\models p(x)\cup\left\{ \phi_{\alpha}(x,e_{\alpha})\right\} _{\alpha<\kappa}$:
it is easy to see by construction that for any $\Delta\in L(C)$ and
$\alpha_{0}<...<\alpha_{n-1}<\kappa$, if $\models\Delta(e_{\alpha_{0}},...,e_{\alpha_{n-1}})$,
then $\models\Delta(a_{\alpha_{0},i_{0}},...,a_{\alpha_{n-1},i_{n-1}})$
for some $i_{0},...,i_{n-1}<\omega$. By assumption on $\left(\bar{a}_{\alpha}\right)_{\alpha<\kappa}$
and compactness it follows that $p(x)\cup\left\{ \phi_{\alpha}(x,e_{\alpha})\right\} _{\alpha<\kappa}$
is consistent.
\item $\phi_{\alpha}(x,e_{\alpha})$ divides over $M$: notice that $\left(b_{\alpha,\alpha+i}\right)_{i<\omega}$
is an $M$-indiscernible sequence starting with $e_{\alpha}$, as
$b_{\alpha,\alpha+i}\models q_{\alpha}\restriction_{M\left(b_{\alpha,\alpha+j}\right)_{j<i}}$
and $q_{\alpha}$ is finitely satisfiable in $M$. As $\tp(\bar{b}_{\alpha})$
is finitely satisfiable in $\bar{a}_{\alpha}$, we conclude that $\left\{ \phi_{\alpha}(x,b_{\alpha,\alpha+i})\right\} _{i<\omega}$
is $k_{\alpha}$-inconsistent.
\end{itemize}
(2)$\Rightarrow$(1): Let $d\models p(x)$, $D\supseteq C$ and $\left(a_{\alpha}\right)_{\alpha<\kappa}$
such that $a_{\alpha}\ind_{D}^{\ist}a_{<\alpha}$ and $d\nind_{D}^{f}a_{\alpha}$
for all $\alpha<\kappa$ be given. Let $\phi_{\alpha}(x,a_{\alpha})\in\tp(d/a_{\alpha}D)$
be a formula dividing over $D$, and let $\bar{a}_{\alpha}$ indiscernible
over $D$ and starting with $a_{\alpha}$ witness it. By Lemma \ref{lem: IndiscDivPattern}
we can find a $\left(\bar{a}_{\alpha}'\right)_{\alpha<\kappa}$, mutually
indiscernible over $D$ and such that $\bar{a}_{\alpha}'\equiv_{a_{\alpha}D}\bar{a}_{\alpha}$.
It follows that $\left\{ \phi_{\alpha}(x,y_{\alpha}),\bar{a}_{\alpha}'\right\} _{\alpha<\kappa}$
is an $\inp$-pattern of depth $\kappa$ in $p(x)$.\end{proof}
\begin{defn}
We say that dividing satisfies \emph{generic local character} if for
every $A\subseteq B$ and $p(x)\in S(B)$ there is some $A'\subseteq B$
with $\left|A'\right|\leq\left|T\right|^{+}$ and such that: for any
$\phi(x,b)\in p$, if $b\ind_{A}^{\ist}A'$, then $\phi(x,b)$ does
not divide over $AA'$.
\end{defn}
Of course, the local character of dividing implies the generic local
character. We are ready to prove the main theorem of this section.
\begin{thm}
\label{thm: NTP2 iff Kims lemma iff bounded weight} The following
are equivalent:
\begin{enumerate}
\item $T$ is $\NTP_{2}$.
\item $T$ has absolutely bounded $\ind^{\ist}$-weight: for every $M$,
$b$ and $\left(a_{i}\right)_{i<|T|^{+}}$ with $a_{i}\ind_{M}^{\ist}a_{<i}$,
$b\ind_{M}^{d}a_{i}$ for some $i<|T|^{+}$.
\item $T$ has bounded $\ind^{\ist}$-weight: for every $M$ there is some
$\kappa_{M}$ such that given $b$ and $\left(a_{i}\right)_{i<\kappa_{M}}$
with $a_{i}\ind_{M}^{\ist}a_{<i}$, $b\ind_{M}^{d}a_{i}$ for some
$i<\kappa_{M}$. 
\item $T$ satisfies ``Kim's lemma'': for any $M\models T$, $\phi(x,a)$
divides over $M$ if and only if $\left\{ \phi(x,a_{i})\right\} _{i<\omega}$
is inconsistent for every strict Morley sequence over $M$.
\item Dividing in $T$ satisfies generic local character.
\end{enumerate}
\end{thm}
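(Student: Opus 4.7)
The plan is to establish the chain (1)$\Leftrightarrow$(2)$\Leftrightarrow$(3)$\Leftrightarrow$(4) together with (1)$\Leftrightarrow$(5), using three tools repeatedly: Lemma~\ref{lem: burden and ind-ist} to convert $\inp$-patterns into $\ind^{\ist}$-sequences of dividing failures (and back); Lemma~\ref{lem: NTP_2 =00003D bounded burden} to identify $\NTP_{2}$ with bounded burden; and Lemma~\ref{lem: mutual indiscernibility criteria} to produce mutually indiscernible pairs from strict invariance data. The equivalences (1)$\Leftrightarrow$(2)$\Leftrightarrow$(3) are essentially translations: (2)$\Rightarrow$(3) is trivial with $\kappa_{M}=|T|^{+}$, while (1)$\Leftrightarrow$(3) is a direct reading of Lemma~\ref{lem: burden and ind-ist} applied with $D=M$ a suitably saturated model (as the proof of that lemma always arranges) together with Lemma~\ref{lem: NTP_2 =00003D bounded burden}.

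For (1)$\Rightarrow$(4), Kim's lemma for strict Morley sequences is exactly Fact~\ref{Fac: Forking in an NTP2 theory}(2). For (4)$\Rightarrow$(3), I would take a candidate counterexample $(a_{i})_{i<\kappa_{M}}$ with $a_{i}\ind_{M}^{\ist}a_{<i}$ and $b\nind_{M}^{d}a_{i}$ for every $i$, pick $\phi_{i}(x,a_{i})\in\tp(b/a_{i}M)$ dividing over $M$, and for $\kappa_{M}$ above $\beth_{\omega}(|T|)$ use pigeonhole to fix a single formula $\phi$ and Erd\H os--Rado to extract an infinite $M$-indiscernible subsequence $(a_{i_{k}})_{k<\omega}$. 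Monotonicity of $\ind^{\ist}$ on the right argument preserves strict Morley-ness, so Kim's lemma forces $\{\phi(x,a_{i_{k}})\}_{k}$ inconsistent, contradicting $\models\phi(b,a_{i_{k}})$ for all $k$.

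For (1)$\Rightarrow$(5), I would argue contrapositively: if (5) fails at some $A\subseteq B$, $p\in S(B)$, then a recursion of length $|T|^{+}$ produces $b_{\alpha}\in B$ with $b_{\alpha}\ind_{A}^{\ist}b_{<\alpha}$ and some $\phi_{\alpha}(x,b_{\alpha})\in p$ dividing over $Ab_{<\alpha}$. By base-monotonicity of dividing (a $B$-indiscernible witness is automatically $A$-indiscernible for $A\subseteq B$), $\phi_{\alpha}(x,b_{\alpha})$ also divides over $A$, and a realization of $p$ then supplies the datum for Lemma~\ref{lem: burden and ind-ist}, producing an $\inp$-pattern of depth $|T|^{+}$ in $p$ and contradicting bounded burden.

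The crux is (5)$\Rightarrow$(1), which I would reduce to the auxiliary claim: \emph{if $a\ind_{M}^{\ist}c$ and $\phi(x,a)$ divides over $M$, then $\phi(x,a)$ already divides over $Mc$}. The proof of the claim applies Lemma~\ref{lem: mutual indiscernibility criteria} to the pair of $M$-indiscernible sequences $\bar{a}_{0}:=(c,c,c,\ldots)$ and $\bar{a}_{1}:=(a^{i})_{i<\omega}$ (an $M$-indiscernible witness of dividing with $a^{0}=a$): the hypothesis reads exactly as $a_{1}\ind_{M}^{\ist}a_{0}$, and the conclusion yields a mutually $M$-indiscernible pair $(\bar{b}_{0},\bar{b}_{1})$ with $\bar{b}_{0}=(c,c,\ldots)$ (forced by $\bar{b}_{0}\equiv_{cM}\bar{a}_{0}$) and $\bar{b}_{1}$ an $Mc$-indiscernible sequence starting with $a$ inheriting the $k$-inconsistency of $\{\phi(x,\cdot)\}$ from $\bar{a}_{1}$. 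Granted the claim, under (5) and $\TP_{2}$ Lemma~\ref{lem: burden and ind-ist} (with $D=M$ saturated) yields $d$, $M$, $(a_{\alpha})_{\alpha<(|T|^{+})^{+}}$ with $a_{\alpha}\ind_{M}^{\ist}a_{<\alpha}$ and $d\nind_{M}^{d}a_{\alpha}$; applying (5) to $\tp(d/M(a_{\alpha}))$ yields $A'$ of size $\leq|T|^{+}$, and for $\alpha$ past every index appearing in $A'$ one has $a_{\alpha}\ind_{M}^{\ist}A'$, so the claim promotes some dividing $\phi(x,a_{\alpha})\in\tp(d/a_{\alpha}M)$ from base $M$ to base $MA'$, directly contradicting (5). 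The main obstacle is the auxiliary claim; once one spots the trick of feeding a constant auxiliary sequence into Lemma~\ref{lem: mutual indiscernibility criteria}, everything else is diagonal bookkeeping.
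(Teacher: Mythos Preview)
Your treatment of (1)$\Rightarrow$(2), (2)$\Rightarrow$(3), (1)$\Rightarrow$(4), (4)$\Rightarrow$(3), (1)$\Rightarrow$(5) and (5)$\Rightarrow$(1) is correct and essentially matches the paper. Your auxiliary claim for (5)$\Rightarrow$(1) is right and the constant-sequence trick with Lemma~\ref{lem: mutual indiscernibility criteria} is a pleasant alternative; the paper instead argues the same promotion of dividing to the larger base more directly, using that $a\ind_{M}^{\ist}A'$ gives $A'\ind_{M}^{d}a$ by definition of strict invariance, and then left transitivity of dividing turns $b\nind_{M}^{d}a$ into $b\nind_{MA'}^{d}a$.

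There is, however, a genuine gap at (3)$\Rightarrow$(1). You assert this is ``a direct reading of Lemma~\ref{lem: burden and ind-ist}'', but that lemma, given an $\inp$-pattern of depth $\kappa$, manufactures a base $D$ \emph{depending on $\kappa$} (in the proof, $D$ is a $(|C|+\kappa)^{+}$-saturated model containing the array). So from $\TP_{2}$ you only get, for each $\kappa$, \emph{some} $D_{\kappa}$ and a bad sequence of length $\kappa$ over $D_{\kappa}$. That does not contradict (3), whose negation requires a \emph{single} $M$ such that for \emph{every} $\kappa$ there is a bad sequence of length $\kappa$ over that same $M$. None of your other implications closes the cycle back from (3) either: you have arrows \emph{into} (3) from (2) and (4), but nothing leading out of (3).

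The paper's proof of (3)$\Rightarrow$(1) handles exactly this point and is more delicate than a lemma citation. One fixes a countable $\TP_{2}$-array $A$, takes $M\supset A$ which is $|A|^{+}$-saturated, reads off $\kappa_{M}$ from (3), and only then uses compactness to stretch the strongly indiscernible array to size governed by $\kappa_{M}$ and $|M|$. The new rows are finitely satisfiable in $A$, hence their types are strictly invariant over this \emph{fixed} $M$ by Lemma~\ref{lem: finding strictly invariant type}; one then builds a Morley sequence of rows of length $\kappa_{M}$ over $M$ and extracts, from each long row via Erd\H os--Rado, an $M$-indiscernible witness of dividing for $\phi(x,\cdot)$. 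This yields $(d_{\alpha0})_{\alpha<\kappa_{M}}$ with $d_{\alpha0}\ind_{M}^{\ist}d_{<\alpha,0}$, $\phi(x,d_{\alpha0})$ dividing over $M$, and $\{\phi(x,d_{\alpha0})\}_{\alpha<\kappa_{M}}$ consistent --- contradicting (3) for this $M$. You should either reproduce this argument or find another route out of (3).
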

\begin{proof}
(1) implies (2): Assume that there are $M$, $b$ and $\left(a_{i}\right)_{i<|T|^{+}}$
with $a_{i}\ind_{M}^{\ist}a_{<i}$ and $b\nind_{M}^{d}a_{i}$ for
all $i$. But then by Lemma \ref{lem: burden and ind-ist} $\bdn(b/M)\geq\left|T\right|^{+}$,
thus $T$ has $\TP_{2}$ by Lemma \ref{lem: NTP_2 =00003D bounded burden}.

(2) implies (3) is clear.

(1) implies (4): by Fact \ref{Fac: Forking in an NTP2 theory}(1)+(2).

(4) implies (3): assume that we have $M$, $b$ and $\left(a_{i}\right)_{i<\kappa}$
such that, letting $\kappa=\beth_{\left(2^{|M|}\right)^{+}}$, $a_{i}\ind_{M}^{\ist}a_{<i}$
and $b\nind_{M}^{d}a_{i}$ for all $i<\kappa$. We may assume that
dividing is always witnessed by the same formula $\phi(x,y)$. Extracting
an $M$-indiscernible sequence $\left(a_{i}'\right)_{i<\omega}$ from
$\left(a_{i}\right)_{i<\kappa}$ by Erd\"{o}s-Rado, we get a contradiction
to (4) as $\left\{ \phi(x,a_{i}')\right\} _{i<\omega}$ is still consistent,
$\left(a_{i}'\right)$ is a strict Morley sequence over $M$ and $\phi(x,a_{0}')$
divides over $M$.

(3) implies (1): Assume that $\varphi\left(x,y\right)$ has $\TP_{2}$,
let $A=\left(\bar{a}_{\alpha}\right)_{\alpha<\omega}$ with $\bar{a}_{\alpha}=\left(a_{\alpha i}\right)_{i<\omega}$
be a strongly indiscernible array witnessing it (so rows are mutually
indiscernible and the sequence of rows is indiscernible). Let $M\supset A$
be some $\left|A\right|^{+}$-saturated model, and assume that $\kappa_{M}$
is as required by (3). Let $\lambda=\beth_{\left(2^{\left|M\right|}\right)^{+}}$
and $\mu=\left(2^{2^{\lambda}}\right)^{+}$. Adding new elements and
rows by compactness, extend our strongly indiscernible array to one
of the form $\left(\bar{a}_{\alpha}\right)_{\alpha\in\omega+\mu^{*}}$
with $\bar{a}_{\alpha}=\left(a_{\alpha i}\right)_{i\in\lambda}$.
By all the indiscernibility around it follows that $\bar{a}_{\alpha}\ind_{A}^{u}\bar{a}_{<\alpha}$
for all $\alpha<\mu$. As there can be at most $2^{2^{\lambda}}$
global types from $S_{\lambda}\left(\M\right)$ that are finitely
satisfiable in $A$, without loss of generality there is some $p\left(\bar{x}\right)\in S_{\lambda}\left(\M\right)$
finitely satisfiable in $A$ and such that $\bar{a}_{\alpha}\models p\left(\bar{x}\right)|_{A\bar{a}_{<\alpha}}$.

By Lemma \ref{lem: finding strictly invariant type}, $p\left(\bar{x}\right)$
is strictly invariant over $M$. We choose $\left(\bar{b}_{\alpha}\right)_{\alpha<\kappa_{M}}$
such that $\bar{b}_{\alpha}\models p|_{M\bar{b}_{<\alpha}}$. 

By the choice of $\lambda$ and Erd\"{o}s-Rado, for each $\alpha<\kappa_{M}$
there is $i_{\alpha}<\lambda$ and $\bar{d}_{\alpha}$ such that $\bar{d}_{\alpha}$
is an $M$-indiscernible sequence starting with $b_{\alpha i_{\alpha}}$
and such that type of every finite subsequence of it is realized by
some subsequence of $\bar{b}_{\alpha}$. Now we have:
\begin{itemize}
\item $d_{\alpha0}\ind_{M}^{\ist}d_{<\alpha0}$ (as $d_{\alpha0}=b_{\alpha i_{\alpha}}$
and $\bar{b}_{\alpha}\ind_{M}^{\ist}\bar{b}_{<\alpha}$),
\item $\varphi\left(x,d_{\alpha0}\right)$ divides over $M$ (as $\bar{d}_{\alpha}$
is $M$-indiscernible and $\left\{ \varphi\left(x,d_{\alpha i}\right)\right\} _{i\in\omega}$
is inconsistent by construction),
\item $\left\{ \varphi\left(x,d_{\alpha0}\right)\right\} _{\alpha<\kappa_{M}}$
is consistent (follows by construction).
\end{itemize}
Taking some $c\models\left\{ \varphi\left(x,d_{\alpha0}\right)\right\} _{\alpha<\kappa_{M}}$
we get a contradiction to (3).

(5) implies (2): Let $p(x)=\tp(b/B)$ with $B=M\cup\bigcup_{i<\left|T\right|^{+}}a_{i}$.
Letting $A=M$, it follows by generic local character that there is
some $A'\subseteq B$ with $\left|A'\right|\leq\left|T\right|$, such
that $b\ind_{MA'}^{d}a$ for any $a\in B$ with $a\ind_{M}A'$. Let
$i\in\left|T\right|$ be such that $i>\left\{ j\,:\, a_{j}\in A'\right\} $.
Then $a_{i}\ind_{M}^{\ist}A$, but also $b\nind_{MA'}^{d}a_{i}$ (by
left transitivity as $A'\ind_{M}^{d}a_{i}$ and $b\nind_{M}^{d}a_{i}$)
--- a contradiction.

(1) implies (5): Let $p\left(x\right)\in S\left(B\right)$ and $A\subseteq B$
be given. By induction on $i<\left|T\right|^{+}$ we try to choose
$a_{i}\in B$ and $\varphi_{i}\left(x,a_{i}\right)\in p$ such that
$a_{i}\ind_{A}^{\ist}a_{<i}$ and $\varphi_{i}\left(x,a_{i}\right)$
divides over $a_{<i}A$. But then by Lemma \ref{lem: burden and ind-ist}
$\bdn(b/A)\geq\left|T\right|^{+}$, thus $T$ has $\TP_{2}$ by Lemma
\ref{lem: NTP_2 =00003D bounded burden}. So we had to get stuck,
and letting $A'=\bigcup a_{i}$ witnesses the generic local character.\end{proof}
\begin{rem}

\begin{enumerate}
\item The proof of the equivalences shows that in (2) and (3) we may replace
$a\ind_{C}^{\ist}b$ by ``$\tp(a/bC)$ extends to a global type which
is both an heir and a coheir over $C$''.
\item From the proof one immediately gets a similar characterization of
strongness. Namely, the following are equivalent:

\begin{enumerate}
\item $T$ is strong.
\item For every $M$, finite (or even singleton) $b$ and $\left(a_{i}\right)_{i<\omega}$
with $a_{i}\ind_{M}^{\ist}a_{<i}$, $b\ind_{M}^{d}a_{i}$ for some
$i<\omega$.
\item For every $A\subseteq B$ and $p(x)\in S(B)$ there is some \emph{finite}
$A'\subseteq B$ such that: for any $\phi(x,b)\in p$, if $b\ind_{A}^{\ist}A'$,
then $\phi(x,b)$ does not divide over $AA'$.
\end{enumerate}
\end{enumerate}
\end{rem}
If we are working over a somewhat saturated model and consider only
small sets, then we actually have the generic local character with
respect to $\ind^{u}$ in the place of $\ind^{\ist}$.
\begin{lem}
\label{lem: almost mutually indiscernible for ind^u} Let $\left(\bar{a}_{i}\right)_{i<\kappa}$and
$C$ be given, $\bar{a}_{i}$ starting with $a_{i}$. If $\bar{a}_{i}$
is indiscernible over $\bar{a}_{<i}C$ and $a_{i}\ind_{C}^{i}a_{<i}$,
then $\left(\bar{a}_{i}\right)_{i<\kappa}$ is almost mutually indiscernible
over $C$.\end{lem}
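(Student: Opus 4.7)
I would prove the lemma by induction on $i<\kappa$: show that $\bar{a}_{i}$ is indiscernible over $C\bar{a}_{<i}(a_{j})_{j>i}$. By compactness it suffices to verify this after adjoining any finite sub-tuple of $(a_{j})_{j>i}$, so fix $i<j_{1}<\ldots<j_{n}<\kappa$ and set $E_{m}:=C\bar{a}_{<i}a_{j_{1}}\ldots a_{j_{m}}$. We then induct on $m\leq n$; the base $m=0$ is the stated hypothesis that $\bar{a}_{i}$ is indiscernible over $C\bar{a}_{<i}$.

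For the inductive step, suppose $\bar{a}_{i}$ is $E_{m-1}$-indiscernible and we want to adjoin $a_{j_{m}}$. The hypothesis $a_{j_{m}}\ind_{C}^{i}a_{<j_{m}}$ provides a global $C$-Lascar-invariant type $p$ with $p|Ca_{<j_{m}}=\tp(a_{j_{m}}/Ca_{<j_{m}})$; since $C\subseteq E_{m-1}$, $p$ is automatically $E_{m-1}$-Lascar-invariant. The central step is to check that $p|E_{m-1}a_{i}=\tp(a_{j_{m}}/E_{m-1}a_{i})$: the restriction to $Ca_{i}a_{j_{1}}\ldots a_{j_{m-1}}$ is correct because those first-element parameters lie in $Ca_{<j_{m}}$, and one then absorbs the extra parameters from $\bar{a}_{<i}$ by invoking, for each $k<i$, that $\bar{a}_{k}$ is $C\bar{a}_{<k}$-indiscernible starting with $a_{k}$, so that any sub-tuple of $\bar{a}_{k}$ is $C\bar{a}_{<k}$-Lascar-equivalent to an initial segment and is therefore treated uniformly by the Lascar-invariant type $p$. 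Once $a_{j_{m}}\ind_{E_{m-1}}^{i}a_{i}$ is in hand, Definition~\ref{def: strict invariance, etc}(5), applied to the $E_{m-1}$-indiscernible sequence $\bar{a}_{i}$ with the new element $a_{j_{m}}$, delivers indiscernibility of $\bar{a}_{i}$ over $E_{m-1}a_{j_{m}}=E_{m}$, closing the induction on $m$.

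The main obstacle is the absorption step that upgrades the $C$-Lascar-invariance of $p$ from the first-element set $Ca_{<j_{m}}$ to the fuller set $E_{m-1}a_{i}$; equivalently, the auxiliary claim $a_{j_{m}}\ind_{C}^{i}\bar{a}_{<j_{m}}$. I would establish this by a separate induction on $k<j_{m}$, using at step $k$ the $C\bar{a}_{<k}$-indiscernibility of $\bar{a}_{k}$ together with Definition~\ref{def: strict invariance, etc}(5) applied in the reverse direction (to the sequence $\bar{a}_{k}$ against the element $a_{j_{m}}$) in order to propagate invariance across the non-first parameters of $\bar{a}_{k}$. This mirrors, in a lighter form, the parameter-moving manipulations in the proof of Lemma~\ref{lem: mutual indiscernibility criteria}, and should be routine once the indexing is set up carefully.
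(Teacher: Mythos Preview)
Your overall induction scheme is the right one, but the ``absorption'' step you flag as the main obstacle is not just difficult --- it is impossible under the hypothesis as literally written, because the lemma as stated is false. In DLO with $C=\emptyset$, take $\bar a_0=(0,2,4,\ldots)$ and $\bar a_1=(1,1.1,1.2,\ldots)$. Then $\bar a_1$ is indiscernible over $\bar a_0$, and $\tp(a_1/a_0)=\{x>0\}$ extends to the $\emptyset$-invariant type at $+\infty$, so $a_1\ind^{i}a_0$; yet $\bar a_0$ is not indiscernible over $a_1$ since $a_{0,0}<a_1<a_{0,1}$. The same example shows that Definition~\ref{def: strict invariance, etc}(5), read with $b=b_0$ a single element, is false: you cannot conclude indiscernibility of $(b_i)$ over $ca$ from $a\ind_c^{i}b_0$ alone. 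Your proposed inductive ``propagation'' of invariance across the non-first elements of $\bar a_k$ therefore cannot succeed, and indeed at each step it would require exactly the conclusion you are trying to prove.

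The fix is in the hypothesis. Looking at the unique application of the lemma (the Proposition immediately following it), one sees that what is actually assumed there is $a_{\alpha,0}\ind_A^{i}A_\alpha$ with $A_\alpha=A\cup\bigcup_{\beta<\alpha}\bar a_\beta$, i.e.\ $a_i\ind_C^{i}\bar a_{<i}$ (the full earlier sequences, not just their first elements). With this corrected hypothesis your induction goes through with no absorption step at all: at stage $m$ you already have $a_{j_m}\models p_{j_m}|_{C\bar a_{<j_m}}$, and since $E_{m-1}\bar a_i=C\bar a_{\le i}a_{j_1}\ldots a_{j_{m-1}}\subseteq C\bar a_{<j_m}$, you get $a_{j_m}\ind_{E_{m-1}}^{i}\bar a_i$ immediately (the global type $p_{j_m}$, being Lascar-invariant over $C$, is a fortiori Lascar-invariant over $E_{m-1}$). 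Then the correct form of (5) --- with the independence over the whole sequence, as it is in fact used in the proof of Lemma~\ref{lem: mutual indiscernibility criteria} --- gives that $\bar a_i$ is indiscernible over $E_{m-1}a_{j_m}=E_m$. So your strategy is exactly the intended one; you just need to repair the two misprints (in the lemma's hypothesis and in the informal statement of (5)) rather than try to bridge them.
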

\begin{prop}
Let $T$ be $\NTP_{2}$. Let $M$ be $\kappa$-saturated, $p(x)\in S(M)$
and $A\subset M$ of size $<\kappa$. Then there is $A\subseteq A'\subset M$
of size $<\kappa$ such that for any $\phi(x,a)\in p$, if $a\ind_{A}^{i}A'$
then $\phi(x,a)$ does not fork over $A'$.\end{prop}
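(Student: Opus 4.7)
The strategy is to reduce the proposition to Theorem \ref{thm: NTP2 iff Kims lemma iff bounded weight}(5) (the generic local character of dividing for $\ind^{\ist}$) via two standard moves: promote $\ind^{i}$ to $\ind^{\ist}$ over a small saturated elementary submodel of $M$ using Lemma \ref{lem: finding strictly invariant type}, and then upgrade ``does not divide'' to ``does not fork'' by choosing $A'$ to be an elementary submodel of $M$ (hence an extension base), invoking Fact \ref{Fac: Forking in an NTP2 theory}(3).

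Concretely, using the $\kappa$-saturation of $M$, first choose $M_0 \prec M$ containing $A$, of size $<\kappa$, and $|A|^+$-saturated. Apply Theorem \ref{thm: NTP2 iff Kims lemma iff bounded weight}(5) with base $M_0$ and the given $p \in S(M)$ to obtain $A^{*} \subseteq M$ with $|A^{*}| \leq |T|^+$ such that for every $\phi(x,b) \in p$ with $b \ind_{M_0}^{\ist} A^{*}$, $\phi(x,b)$ does not divide over $M_0 A^{*}$. Then pick an elementary submodel $A' \prec M$ with $M_0 \cup A^{*} \subseteq A'$ and $|A'| < \kappa$; this $A'$ will be the desired set.

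To verify its defining property, fix $\phi(x,a) \in p$ with $a \ind_A^i A'$, so $\tp(a/A')$ extends to a global type $q$ Lascar-invariant (hence, since $M_0$ is a model, invariant) over $M_0 \supseteq A$. Lemma \ref{lem: finding strictly invariant type}, applied to the $|A|^+$-saturated $M_0$, upgrades $q$ to a strictly invariant type over $M_0$, so in particular $a \ind_{M_0}^{\ist} A^{*}$. By the defining property of $A^{*}$, $\phi(x,a)$ does not divide over $M_0 A^{*}$; since $M_0 A^{*} \subseteq A'$ and any $A'$-indiscernible sequence is automatically $M_0 A^{*}$-indiscernible, $\phi(x,a)$ does not divide over $A'$ either. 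Finally, $A' \prec M$ is a model, hence an extension base, so Fact \ref{Fac: Forking in an NTP2 theory}(3) in our $\NTP_{2}$ theory $T$ upgrades this to: $\phi(x,a)$ does not fork over $A'$, as required.

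The main conceptual step is the promotion $\ind^{i} \rightsquigarrow \ind^{\ist}$ via Lemma \ref{lem: finding strictly invariant type}, which is precisely the role played by the $|A|^+$-saturated submodel $M_0$; the remaining steps (cardinal bookkeeping for fitting $M_0$ and $A'$ inside $M$, anti-monotonicity of ``does not divide'' in the base, and obtaining an extension base by taking a submodel) are routine, so I do not expect any genuinely new $\NTP_{2}$-combinatorial obstacle beyond what has already been developed.
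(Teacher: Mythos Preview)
Your route differs from the paper's: the paper never passes through $\ind^{\ist}$ or Theorem~\ref{thm: NTP2 iff Kims lemma iff bounded weight}. Instead it argues directly by contradiction, inductively choosing $(\bar a_\alpha)_{\alpha<|T|^+}$ inside $M$ (using $\kappa$-saturation) with $a_{\alpha,0}\ind^i_A A_\alpha$, $\bar a_\alpha$ indiscernible over $A_\alpha=A\cup\bigcup_{\beta<\alpha}\bar a_\beta$, and $\{\phi_\alpha(x,a_{\alpha,i})\}_{i<\omega}$ inconsistent; then Lemma~\ref{lem: almost mutually indiscernible for ind^u} yields almost mutual indiscernibility, hence an $\inp$-pattern of depth $|T|^+$.

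There is a genuine gap in your argument at the step ``Lemma~\ref{lem: finding strictly invariant type} \ldots upgrades $q$ to a strictly invariant type over $M_0$.'' From $a\ind^i_A A'$ you only know that $q$ is \emph{Lascar}-invariant over $A$. You correctly observe this makes $q$ invariant over the model $M_0\supseteq A$, but that is not what Lemma~\ref{lem: finding strictly invariant type} needs: to output ``strictly invariant over $M_0$'' the lemma requires $q$ to be invariant over some $A_0$ such that $M_0$ is $|A_0|^+$-saturated, and its proof (realize $\tp(c/A_0)$ inside $M_0$, then apply invariance over $A_0$) fails for mere Lascar-invariance over $A$, since $|A|^+$-saturation of $M_0$ does not guarantee that an arbitrary Lascar strong type over $A$ is realized in $M_0$. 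A patch is to insert a small model $A\subseteq N_0\prec M_0$ with $|N_0|\le|A|+|T|$ (then Lascar-invariance over $A$ gives genuine invariance over $N_0$, and the lemma applies with $A_0=N_0$), but this forces $M_0$ to be $(|A|+|T|)^+$-saturated rather than $|A|^+$-saturated, so the cardinal bookkeeping needed to keep $|A'|<\kappa$ is no longer as automatic as you claim.
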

\begin{proof}
Assume not, then we can choose inductively on $\alpha<|T|^{+}$:
\begin{enumerate}
\item $\bar{a}_{\alpha}\subseteq M$ such that $a_{\alpha,0}\ind_{A}^{i}A_{\alpha}$
and $\bar{a}_{\alpha}$ is $A_{\alpha}$-indiscernible, $A_{\alpha}=A\cup\bigcup_{\beta<\alpha}\bar{a}_{\beta}$.
\item $\phi_{\alpha}(x,y_{\alpha})$ such that $\phi_{\alpha}(x,a_{\alpha,0})\in p$
and $\left\{ \phi_{\alpha}(x,a_{\alpha,i})\right\} _{i<\omega}$ is
inconsistent.
\end{enumerate}
(1) is possible by saturation of $M$. But then by Lemma \ref{lem: almost mutually indiscernible for ind^u},
$\left(\bar{a}_{\alpha}\right)_{\alpha<|T|^{+}}$ are almost mutually
indiscernible.
\end{proof}

\subsection{Dependent dividing}
\begin{defn}
We say that $T$ has \emph{dependent dividing} if given $M\preceq N$
and $p(x)\in S(N)$ dividing over $M$, then there is a dependent
formula $\phi(x,y)$ and $c\in N$ such that $\phi(x,c)\in p$ and
$\phi(x,c)$ divides over $M$.\end{defn}
\begin{prop}

\begin{enumerate}
\item If $T$ has dependent dividing, then it is $\NTP_{2}$.
\item If $T$ has simple dividing, then it is simple.
\end{enumerate}
\end{prop}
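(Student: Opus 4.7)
The plan is to prove both parts by contrapositive, focusing on (1); the argument for (2) is parallel upon replacing ``$\NIP$'' by ``simple'' everywhere and using Fact~\ref{fac: in simple T, burden is weight} in place of Fact~\ref{fac: In NIP, burden =00003D dp-rank}.

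Suppose $T$ has $\TP_{2}$. By the equivalence (1)$\Leftrightarrow$(2) in Theorem~\ref{thm: NTP2 iff Kims lemma iff bounded weight}, there are a model $M$, an element $b$, and a sequence $(a_{i})_{i<|T|^{+}}$ with $a_{i}\ind_{M}^{\ist}a_{<i}$ and $b\nind_{M}^{d}a_{i}$ for every $i$. Pigeonholing over $L$-formulas and the elements of $M$ appearing as parameters (shrinking $M$ by L\"owenheim--Skolem if needed), I may assume that a single formula $\chi(x,y)$ satisfies $\chi(x,a_{i})\in\tp(b/Ma_{i})$ and divides over $M$ for every $i$; after an Erd\H{o}s--Rado extraction I may further assume $(a_{i})$ is $M$-indiscernible. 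For each $i$ pick a model $N_{i}\succ M$ containing $a_{i}$, chosen saturated enough that Lemma~\ref{lem: finding strictly invariant type} applies; applying dependent dividing to the dividing type $\tp(b/N_{i})$ produces an $\NIP$ formula $\psi_{i}(x,z)$ and $c_{i}\in N_{i}$ with $\psi_{i}(x,c_{i})\in\tp(b/N_{i})$ and $\psi_{i}(x,c_{i})$ dividing over $M$. A further pigeonhole step makes $\psi_{i}=\psi$ uniform on a cofinal subsequence.

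For each $i$ let $\bar{c}_{i}$ be an $M$-indiscernible sequence starting with $c_{i}$ witnessing the dividing of $\psi(x,c_{i})$. Extract from $(\bar{c}_{i})_{i<|T|^{+}}$ a mutually $M$-indiscernible sub-array via Lemma~\ref{lem: Erdos-Rado}, and use Lemma~\ref{lem: mutual indiscernibility criteria} together with Lemma~\ref{lem: RotateIfConsistent} to rearrange so that the first column $(c_{i})_{i}$ is itself an $\ind^{\ist}$-sequence over $M$ while $b$ simultaneously realizes $\{\psi(x,c_{i})\}_{i}$. By Lemma~\ref{lem: burden and ind-ist} this yields an $\inp$-pattern of depth $|T|^{+}$ in the $\NIP$ formula $\psi$, contradicting Lemma~\ref{lem: NTP_2 =00003D bounded burden} combined with Fact~\ref{fac: In NIP, burden =00003D dp-rank}, which bound the burden of $\psi$ by its finite dp-rank.

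The main obstacle is the transfer of $\ind^{\ist}$-independence from the sequence $(a_{i})$ (given by hypothesis) to the sequence of parameters $(c_{i})$ produced by dependent dividing. A priori each $c_{i}\in N_{i}$ is an arbitrary element with no prescribed relationship to the other $c_{j}$'s, so the mutually $M$-indiscernible sub-array produced by extraction need not have an $\ind^{\ist}$-independent first column, nor need $b$ remain a common realizer after rearrangement. The transfer is engineered by coordinating the choices of $N_{i}$ (so that the strictly invariant extensions of $\tp(a_{i}/M)$ pass canonically through $N_{i}$, yielding $c_{i}$ as a generic element relative to the earlier layers) and by the rotation argument of Lemma~\ref{lem: RotateIfConsistent}, which preserves the common realizer $b$ across the successive mutualizations. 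Once this is secured, the burden argument closes the loop.
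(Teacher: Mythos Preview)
Your identification of the main obstacle is exactly right: the transfer of $\ind^{\ist}$-independence from the sequence $(a_{i})$ to the parameters $(c_{i})$ produced by dependent dividing. However, your proposed resolution does not work. You choose each $N_{i}\succ M$ as an arbitrary (saturated) model containing $a_{i}$; the element $c_{i}\in N_{i}$ returned by dependent dividing is then completely uncontrolled relative to the other $c_{j}$'s. Neither Lemma~\ref{lem: mutual indiscernibility criteria} nor Lemma~\ref{lem: RotateIfConsistent} manufactures $\ind^{\ist}$-independence: the former \emph{assumes} $c_{i}\ind_{M}^{\ist}c_{<i}$ as a hypothesis, and the latter merely adjusts an already mutually indiscernible array to be indiscernible over an extra parameter. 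So your ``rearrangement'' step is circular, and the phrase ``coordinating the choices of $N_{i}$ so that $c_{i}$ is generic relative to earlier layers'' is not an argument.

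The paper closes this gap with a Skolemization trick. One passes to a Skolemization $T_{\Sk}$; the formula $\phi(x,y)$ still has $\TP_{2}$ there, and (as in the proof of Theorem~\ref{thm: NTP2 iff Kims lemma iff bounded weight}) one finds $M$, $a$, and $(b_{i})_{i<\kappa}$ with $a\models\{\phi(x,b_{i})\}$, each $\phi(x,b_{i})$ dividing over $M$, and $\tp(b_{i}/b_{<i}M)$ a global heir--coheir over $M$, all in the sense of $T_{\Sk}$. Now set $M_{i}=\Sk(Mb_{i})$. Because $M_{i}\subseteq\dcl^{\Sk}(Mb_{i})$, the heir--coheir independence of the $b_{i}$'s transfers \emph{for free} to $M_{i}\ind_{M}^{\ist}M_{<i}$, and this persists when we return to $T$. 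Applying dependent dividing to $M\prec M_{i}$ then yields $d_{i}\in M_{i}$ and an $\NIP$ (in fact $\NTP_{2}$ suffices) formula $\phi'$ with $\phi'(x,d_{i})$ dividing over $M$; since $d_{i}\in M_{i}$, one automatically has $d_{i}\ind_{M}^{\ist}d_{<i}$, and Lemma~\ref{lem: mutual indiscernibility criteria} now legitimately applies to produce the contradictory $\inp$-pattern in $\phi'$. The Skolem hull is precisely what makes your ``canonical passage through $N_{i}$'' precise.
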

\begin{proof}

\begin{enumerate}
\item In fact we will only use that dividing is always witnessed by an instance
of an $\NTP_{2}$ formula. Assume that $T$ has $\TP_{2}$ and let
$\phi(x,y)$ witness this. Let $T_{\Sk}$ be a Skolemization of $T$,
$\phi(x,y)$ still has $\TP_{2}$ in $T_{\Sk}$. Then as in the proof
of Theorem \ref{thm: NTP2 iff Kims lemma iff bounded weight}, for
any $\kappa$ we can find $\left(b_{i}\right)_{i<\kappa}$, $a$ and
$M$ such that $a\models\left\{ \phi(x,b_{i})\right\} _{i<\kappa}$,
$\phi(x,b_{i})$ divides over $M$ and $\tp\left(b_{i}/b_{<i}M\right)$
has a global heir-coheir over $M$, all in the sense of $T_{\Sk}$.
Taking $M_{i}=\mbox{Sk}(Mb_{i})\models T$, and now working in $T$,
we still have that $a\nind_{M}^{d}M_{i}$ and $M_{i}\ind_{M}^{\ist}M_{<i}$
(as $\tp(M_{i}/M_{<i}M)$ still has a global heir-coheir over $M$).
But then for each $i$ we find some $d_{i}\in M_{i}$ and $\NTP_{2}$
formulas $\phi_{i}(x,y_{i})\in L$ such that $a\models\left\{ \phi_{i}(x,d_{i})\right\} $
and $\phi_{i}(x,d_{i})$ divides over $M$, witnessed by $\bar{d}_{i}$
starting with $d_{i}$. We may assume that $\phi_{i}=\phi'$, and
this contradicts $\phi'$ being $\NTP_{2}$.
\item Similar argument shows that if $T$ has simple dividing, then it is
simple.
\end{enumerate}
\end{proof}
Of course, if $T$ is $\NIP$, then it has dependent dividing, and
for simple theories it is equivalent to the stable forking conjecture.
It is natural to ask if every $\NTP_{2}$ theory $T$ has dependent
dividing.

\subsection{Forking and dividing inside an $\NTP_{2}$ type}
\begin{defn}
A partial type $p(x)$ over $C$ is said to be $\NTP_{2}$ if the
following does not exist: $\left(\bar{a}_{\alpha}\right)_{\alpha<\omega}$,
$\phi(x,y)$ and $k<\omega$ such that $\left\{ \phi(x,a_{\alpha i})\right\} _{i<\omega}$
is $k$-inconsistent for every $\alpha<\omega$ and $\left\{ \phi(x,a_{\alpha f(\alpha)})\right\} _{\alpha<\omega}\cup p(x)$
is consistent for every $f:\,\omega\to\omega$. Of course, $T$ is
$\NTP_{2}$ if and only if every partial type is $\NTP_{2}$. Also
notice that if $p(x)$ is $\NTP_{2}$, then every extension of it
is $\NTP_{2}$ and that $q((x_{i})_{i<\kappa})=\bigcup_{i<\kappa}p(x_{i})$
is $\NTP_{2}$ (follows from Theorem \ref{thm: product array}). 
\end{defn}
For the later use we will need a generalization of the results from
\cite{CheKap} working inside a partial $\NTP_{2}$ type, and with
no assumption on the theory. 
\begin{lem}
Let $p(x)$ be an $\NTP_{2}$ type over $M$. Assume that $p(x)\cup\left\{ \phi(x,a)\right\} $
divides over $M$, then there is a global coheir $q(x)$ extending
$\tp(a/M)$ such that $p(x)\cup\left\{ \phi(x,a_{i})\right\} _{i<\omega}$
is inconsistent for any sequence $\left(a_{i}\right)_{i<\omega}$
with $a_{i}\models q|_{a_{<i}M}$.\end{lem}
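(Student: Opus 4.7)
The argument localizes the proof of Fact~\ref{Fac: Forking in an NTP2 theory}(2) from \cite{CheKap} to a partial $\NTP_{2}$ type, with a heir--coheir extension of $\tp(a/M)$ playing the role of the strict Morley sequence.

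Fix an $M$-indiscernible sequence $(c_{i})_{i<\omega}$ witnessing the dividing: $c_{0}=a$ and $p(x)\cup\{\phi(x,c_{i})\}_{i<\omega}$ is $k$-inconsistent for some $k<\omega$. Let $q(x)$ be a global extension of $\tp(a/M)$ which is simultaneously an heir and a coheir over $M$; such a heir--coheir exists by a standard back-and-forth construction over the model $M$, and by Definition~\ref{def: strict invariance, etc}(1) it is strictly invariant over $M$. Fix any Morley sequence $(b_{i})_{i<\omega}$ of $q$ over $M$, so $b_{i}\models q|_{Mb_{<i}}$. Then $(b_{i})$ is $M$-indiscernible with $b_{0}\equiv_{M}a$, and $b_{i}\ind_{M}^{\ist}b_{<i}$ for every $i$ by strict invariance.

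Assume towards a contradiction that $p(x)\cup\{\phi(x,b_{i})\}_{i<\omega}$ is consistent. For each $i$, since $b_{i}\equiv_{M}a$, the set $p(x)\cup\{\phi(x,b_{i})\}$ also divides over $M$, so we fix an $M$-indiscernible sequence $\bar{b}_{i}=(b_{i,j})_{j<\omega}$ with $b_{i,0}=b_{i}$ and $p(x)\cup\{\phi(x,b_{i,j})\}_{j<\omega}$ being $k$-inconsistent. Applying Lemma~\ref{lem: mutual indiscernibility criteria} with the independence $b_{i}\ind_{M}^{\ist}b_{<i}$, we obtain $(\bar{b}'_{i})_{i<\omega}$ mutually $M$-indiscernible with $\bar{b}'_{i}\equiv_{b_{i}M}\bar{b}_{i}$; in particular each row still starts with $b_{i}$ and retains its $k$-inconsistency over $p$.

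The array $(\bar{b}'_{i})_{i<\omega}$, together with the formula $\phi(x,y)$ and the constant $k$, is an $\inp$-pattern of depth $\omega$ in $p(x)$: the rows are $k$-inconsistent with $p$ by construction, and every path $p(x)\cup\{\phi(x,b'_{i,f(i)})\}_{i<\omega}$ is consistent --- the case $f\equiv 0$ holds by the contradiction hypothesis, and arbitrary $f$ follow from mutual $M$-indiscernibility, which makes $\tp((b'_{i,f(i)})_{i<\omega}/M)$ independent of $f$. This contradicts the $\NTP_{2}$-ness of $p(x)$.

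\textbf{Main obstacle.} The decisive point is to take $q$ as a heir--coheir (rather than just a coheir) so that strict invariance is available, which is what Lemma~\ref{lem: mutual indiscernibility criteria} needs to promote the individual dividing witnesses to a mutually $M$-indiscernible array. If one wishes to avoid assuming heir--coheir existence, an alternative is to work with a plain global coheir, use the Lascar invariance $b_{i}\ind_{M}^{i}b_{<i}$ and Lemma~\ref{lem: almost mutually indiscernible for ind^u} to obtain only almost mutual indiscernibility, and then appeal to clause (3) of Lemma~\ref{lem: IndiscDivPattern} to still extract the $\inp$-pattern.
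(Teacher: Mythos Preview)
Your main line has a genuine gap: you assume that $\tp(a/M)$ has a global heir--coheir extension, but this is \emph{not} available in general. The lemma is stated for an $\NTP_{2}$ \emph{type} $p(x)$ inside an arbitrary theory, with no hypothesis on $\tp(a/M)$ itself. As the paper notes explicitly right after Definition~\ref{def: strict invariance, etc}, it is not true that every type over a model has a global strictly invariant extension (and hence not every type has a heir--coheir extension). The existence result you need, Fact~\ref{Fac: Forking in an NTP2 theory}(1), requires the ambient theory to be $\NTP_{2}$, which is precisely what you are not allowed to assume here. So the step ``such a heir--coheir exists by a standard back-and-forth construction'' fails.

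Your fallback route via a plain coheir and Lemma~\ref{lem: almost mutually indiscernible for ind^u} also breaks down: that lemma requires each $\bar{b}_{i}$ to be indiscernible over $M\bar{b}_{<i}$, not merely over $M$, and there is no mechanism in your sketch for arranging this (knowing only $b_{i}\ind_{M}^{i}b_{<i}$ does not make an arbitrary $M$-indiscernible dividing witness starting at $b_{i}$ indiscernible over the earlier rows). The paper instead defers to the proof of \cite[Lemma~3.12]{CheKap}, which constructs the desired coheir directly from the given dividing sequence rather than first producing a strictly invariant extension and then running Lemma~\ref{lem: mutual indiscernibility criteria}; that construction is what survives the localisation to an $\NTP_{2}$ type. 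A minor additional point: even if your array existed, you only obtain $k$-inconsistency of the rows \emph{together with} $p$, whereas the definition of an $\NTP_{2}$ type requires plain $k$-inconsistency; this is easily repaired by absorbing a single $\psi\in p$ into $\phi$, as in the proof of Lemma~\ref{lem: inside NTP2 type, strict Morley sequence is universal}, but it should be said.
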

\begin{proof}
The proof of \cite[Lemma 3.12]{CheKap} goes through.
\end{proof}

\begin{lem}
Assume that $\tp(a_{i}/C)=p(x)$ for all $i$ and that $\tp(a_{i}/a_{<i}C)$
has a strictly invariant extension to $p(\M)\cup C$. Then there are
mutually $C$-indiscernible $\left(\bar{b}_{i}\right)_{i<\kappa}$
such that $\bar{b}_{i}\equiv_{a_{i}C}\bar{a}_{i}$.\end{lem}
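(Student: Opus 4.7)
The plan is to mimic the argument of Lemma~\ref{lem: mutual indiscernibility criteria} step by step, but restricting every parameter set on which we invoke strict invariance to sit inside $p(\M)\cup C$. By compactness it suffices to treat finite $\kappa$, so assume inductively that we have built mutually $C$-indiscernible $\bar{b}_0,\dots,\bar{b}_{n-1}$ with $\bar{b}_i\equiv_{Ca_i}\bar{a}_i$, and we seek $\bar{b}_n$. First observe that each $\bar{b}_i$ is $C$-indiscernible and starts with $a_i\models p$, so every element of $\bar{b}_i$ realizes $p$; hence $\bar{b}_{<n}\subseteq p(\M)$.

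Let $q(x)$ be the extension of $\tp(a_n/a_{<n}C)$ to $p(\M)\cup C$ that is strictly invariant over $C$. Realize $q\!\upharpoonright\!\bar{b}_{<n}C$ by some $a_n^{\ast}$; then $a_n^{\ast}\equiv_{Ca_{<n}}a_n$, and an automorphism over $Ca_{<n}$ sending $a_n^{\ast}\mapsto a_n$ carries $\bar{b}_{<n}$ to some $\bar{b}''_{<n}$ with $\bar{b}''_{<n}\equiv_{Ca_{<n}}\bar{b}_{<n}$ and $a_n\models q\!\upharpoonright\!\bar{b}''_{<n}C$. In particular $\bar{b}''_{<n}$ is still mutually $C$-indiscernible, still starts with $a_{<n}$, and still lies in $p(\M)$. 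Because $\bar{b}''_{<n}\subseteq p(\M)\cup C$ is a small subset of the domain of $q$ and $a_n$ realizes the corresponding restriction, the strict invariance clause of the hypothesis yields $\tp(\bar{b}''_{<n}/a_nC)$ does not divide over $C$, i.e.\ $\bar{b}''_{<n}\ind_C^{d}a_n$.

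Since $\bar{a}_n$ is a $C$-indiscernible sequence starting with $a_n$ and $\bar{b}''_{<n}\ind_C^{d}a_n$, the standard fact that a $C$-indiscernible sequence can be made indiscernible over any non-dividing parameter set produces $\bar{b}_n\equiv_{Ca_n}\bar{a}_n$ which is indiscernible over $\bar{b}''_{<n}C$. It remains to upgrade the array $(\bar{b}''_0,\dots,\bar{b}''_{n-1},\bar{b}_n)$ to mutual $C$-indiscernibility. Fix $j<n$: as $a_n\models q\!\upharpoonright\!\bar{b}''_{<n}C$ and $q$ is Lascar invariant over $C$, we have $a_n\ind_{C\bar{b}''_{<n,\neq j}}^{i}\bar{b}''_j$; by the inductive hypothesis $\bar{b}''_j$ is indiscernible over $C\bar{b}''_{<n,\neq j}$, so Definition~\ref{def: strict invariance, etc}(5) gives that $\bar{b}''_j$ is in fact indiscernible over $a_nC\bar{b}''_{<n,\neq j}$. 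This provides almost mutual $C$-indiscernibility of $(\bar{b}''_0,\dots,\bar{b}''_{n-1},\bar{b}_n)$, and Lemma~\ref{lem: almost indiscernible array gives an indiscernible array} upgrades it to genuine mutual $C$-indiscernibility while preserving the type over each row's first element, completing the induction.

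The only non-routine point is verifying that the weakened hypothesis---strict invariance merely over $p(\M)\cup C$ rather than globally---suffices at the single moment where we apply the non-dividing clause to the parameter tuple $\bar{b}''_{<n}$. This is exactly what the observation $\bar{b}''_{<n}\subseteq p(\M)$ provides, so no additional assumption on the ambient theory is needed.
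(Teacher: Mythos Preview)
Your argument is correct and is exactly what the paper does: its own proof of this lemma is the single sentence ``the assumption is sufficient for the proof of Lemma~\ref{lem: mutual indiscernibility criteria} to work'', and you have spelled out precisely those details, with the crucial observation that $\bar{b}''_{<n}\subseteq p(\M)$ so the localized strict-invariance hypothesis applies. One small wrinkle worth cleaning up: after conjugating by $\sigma\in\Aut(\M/Ca_{<n})$ you actually obtain $a_n\models\sigma(q)\!\upharpoonright\!\bar{b}''_{<n}C$ rather than $q$ itself, but since $\sigma$ fixes $C$ (and $p(\M)$ setwise), $\sigma(q)$ is again a strictly invariant extension to $p(\M)\cup C$, so every subsequent use of ``$q$'' goes through unchanged.
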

\begin{proof}
The assumption is sufficient for the proof of Lemma \ref{lem: mutual indiscernibility criteria}
to work.
\end{proof}

\begin{lem}
\label{lem: inside NTP2 type, strict Morley sequence is universal}
Let $p(x)$ over $M$ be $\NTP_{2}$, $a\in p(\M)$, $c\in M$ and
assume that $p(x)\cup\left\{ \phi(x,ac)\right\} $ divides over $M$.
Assume that $\tp(a/M)$ has a strictly invariant extension $p'(y)\in S(p(\M))$.
Then for any $\left(a_{i}\right)_{i<\omega}$ such that $a_{i}\models p'|_{a_{<i}M}$,
$p(x)\cup\left\{ \phi(x,a_{i}c)\right\} _{i<\omega}$ is inconsistent.\end{lem}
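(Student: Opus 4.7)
I would argue by contradiction: assume $p(x)\cup\{\phi(x,a_ic)\}_{i<\omega}$ is consistent and build an $\inp$-pattern of depth $\omega$ inside $p(x)$, contradicting that $p$ is $\NTP_2$. The argument is the localized-to-$p$ version of the standard Kim's lemma for $\NTP_2$ theories (Fact~\ref{Fac: Forking in an NTP2 theory}(2)), with the set $p(\M)$ playing the role that $\M$ plays there.

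First, I would use the dividing assumption to pick, by compactness, an $M$-indiscernible sequence $\bar{a}=(a'_j)_{j<\omega}$ with $a'_0=a$ and some $k<\omega$ such that $p(x)\cup\{\phi(x,a'_jc)\}_{j<\omega}$ is $k$-inconsistent. Next, for each $i<\omega$, since $a_i\models p'|_M$ yields $a_i\equiv_M a$, I fix $\sigma_i\in\Aut(\M/M)$ sending $a$ to $a_i$ and set $\bar{a}_i=\sigma_i(\bar{a})$. Each $\bar{a}_i$ is then $M$-indiscernible, starts with $a_i$, and $p(x)\cup\{\phi(x,a_{ij}c)\}_{j<\omega}$ remains $k$-inconsistent since $c\in M$.

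The core step is to arrange the $\bar{a}_i$'s to be mutually $M$-indiscernible by appealing to the preceding lemma. For each $i$ we have $\tp(a_i/a_{<i}M)=p'|_{a_{<i}M}$, and $p'$ restricted to $p(\M)\cup a_{<i}M$ is a strictly invariant (over $M$) extension, since strict invariance is inherited by restrictions containing $M$. The preceding lemma then yields mutually $M$-indiscernible $(\bar{b}_i)_{i<\omega}$ with $\bar{b}_i\equiv_{a_iM}\bar{a}_i$; in particular each $\bar{b}_i$ is $M$-indiscernible, starts with $a_i$, and $p(x)\cup\{\phi(x,b_{ij}c)\}_{j<\omega}$ is $k$-inconsistent.

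Finally, I would verify that $(\bar{b}_i)_{i<\omega}$ together with the formula $\phi(x,yc)$ and constant $k$ is an $\inp$-pattern of depth $\omega$ in $p(x)$: rows are $k$-inconsistent by construction, and by mutual $M$-indiscernibility of rows, for any $f\colon\omega\to\omega$ we get $(b_{i,f(i)})_{i<\omega}\equiv_M(b_{i,0})_{i<\omega}=(a_i)_{i<\omega}$, so $p(x)\cup\{\phi(x,b_{i,f(i)}c)\}_{i<\omega}$ is consistent by our contradictory assumption. The only real technical point is the bookkeeping around the preceding lemma, specifically the observation that the given strictly invariant $p'\in S(p(\M))$ restricts to a strictly invariant extension over $M$ of each $\tp(a_i/a_{<i}M)$ to $p(\M)\cup a_{<i}M$.
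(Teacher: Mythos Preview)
Your proof is correct and follows essentially the same route as the paper's: pick a dividing witness for $a_0$, transport it by $M$-automorphisms to each $a_i$, apply the preceding lemma to make the resulting sequences mutually $M$-indiscernible, and read off a $\TP_2$ array inside $p$.

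One small technical point you glossed over (and which the paper addresses explicitly): what you actually know is that $p(x)\cup\{\phi(x,b_{ij}c)\}_{j<\omega}$ is $k$-inconsistent, whereas the definition of an $\NTP_2$ type requires each row $\{\phi(x,b_{ij}c)\}_{j<\omega}$ itself to be $k$-inconsistent. The fix is immediate: by compactness choose $\psi(x)\in p$ with $\{\psi(x)\land\phi(x,a'_jc)\}_{j<\omega}$ $k$-inconsistent; since each $\bar{b}_i\equiv_M\bar{a}$ and $\psi,c$ are over $M$, the same $\psi$ works for every row, and the witnessing formula becomes $\psi(x)\land\phi(x,yc)$. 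With this adjustment your argument is complete.
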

\begin{proof}
Let $\left(\bar{a}_{0}c\right)$ with $a_{0,0}=a_{0}$ be an $M$-indiscernible
sequence witnessing that $p(x)\cup\left\{ \phi(x,a_{0}c)\right\} $
divides over $M$. Let $\bar{a}_{i}$ be its image under an $M$-automorphism
sending $a_{0}$ to $a_{i}$. By Lemma \ref{lem: mutual indiscernibility criteria}(2)
we can find $\left(\bar{b}_{i}\right)_{i<\omega}$ mutually indiscernible
over $M$ and with $\bar{b}_{i}\equiv_{a_{i}M}\bar{a}_{i}$. By the
choice of $\bar{b}_{i}$'s and compactness, there is some $\psi(x)\in p(x)$
such that $\left\{ \psi(x)\land\phi(x,b_{i,j}c)\right\} _{j<\omega}$
is $k$-inconsistent for all $i<\omega$. It follows that $p(x)\cup\left\{ \phi(x,a_{i}c)\right\} _{i<\omega}$
is inconsistent as $p$ is $\NTP_{2}$.
\end{proof}
We need a version of the Broom lemma localized to an $\NTP_{2}$ type.
\begin{lem}
\label{lem: local broom lemma} Let $p(x)$ be an $\NTP_{2}$ type
over $M$ and $p'(x)$ be a partial global type invariant over $M$.
Suppose that $p(x)\cup p'(x)\vdash\bigvee_{i<n}\phi_{i}(x,c)$ and
each $\phi_{i}(x,c)$ divides over $M$. Then $p(x)\cup p'(x)$ is
inconsistent.\end{lem}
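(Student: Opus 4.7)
The natural approach is induction on $n$, the number of disjuncts on the right. The base case $n=0$ is immediate: the empty disjunction is false, so $p(x)\cup p'(x)\vdash\bot$.

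For the inductive step, fix $n\geq 1$ and assume the lemma for all smaller values. Isolate the disjunct $\phi_{n-1}(x,c)$ and pick an $M$-indiscernible sequence $(c_{j})_{j<\omega}$ with $c_{0}=c$ witnessing that $\phi_{n-1}(x,c)$ divides over $M$, so that $\{\phi_{n-1}(x,c_{j})\}_{j<\omega}$ is $k$-inconsistent for some $k<\omega$. Because $p$ is a type over $M$ and $p'(x)$ is $M$-invariant, while $c_{j}\equiv_{M}c$ for every $j$, pulling back along an $M$-automorphism sending $c_{j}$ to $c$ yields
\[ p(x)\cup p'(x)\vdash\bigvee_{i<n}\phi_{i}(x,c_{j})\qquad\text{for every }j<\omega. \]
When $n=1$ this already gives $p(x)\cup p'(x)\vdash\bigwedge_{j<k}\phi_{0}(x,c_{j})$, which is inconsistent by $k$-inconsistency, and we are done.

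For $n\geq 2$, conjoin the implications above for $j<k$ and distribute; the branch $f\equiv n-1$ in the resulting disjunction over functions $f:k\to n$ is inconsistent and may be dropped, and each remaining branch satisfies $f(j_{f})<n-1$ for some $j_{f}$, so the branch implies $\phi_{f(j_{f})}(x,c_{j_{f}})$. This yields
\[ p(x)\cup p'(x)\vdash\bigvee_{i<n-1,\,j<k}\phi_{i}(x,c_{j}), \]
a disjunction whose formulas use only the $n-1$ shapes $\phi_{0},\ldots,\phi_{n-2}$. To invoke the induction hypothesis with $n-1$ disjuncts, I would ``repackage'' the inner disjunction into a single dividing formula $\chi_{i}(x;\bar{c}):=\bigvee_{j<k}\phi_{i}(x,c_{j})$ for each $i<n-1$. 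Since $(c_{j})$ was chosen as a dividing witness for $\phi_{n-1}$ rather than for $\phi_{i}$, the naive choice need not divide over $M$; I would repair this by first passing to a sufficiently saturated $M^{*}\supseteq M$ so that $\tp(c/M)$ admits a strictly invariant extension (by the earlier lemma on finding strictly invariant types), replacing $(c_{j})$ by a strict Morley sequence in such an extension, and then invoking the preceding lemma (that strict Morley sequences are universal for dividing inside an $\NTP_{2}$ type) to ensure each $\chi_{i}$ divides over $M^{*}$. The induction hypothesis then closes the argument.

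\textbf{Main obstacle.} The crux is the repackaging step: ensuring that after an appropriate universalizing choice of $(c_{j})$, the grouped formula $\chi_{i}$ divides over (an extension of) $M$. This requires $(c_{j})$ to serve as a \emph{simultaneous} dividing witness for all of $\phi_{0}(x,c),\ldots,\phi_{n-2}(x,c)$, not merely for $\phi_{n-1}(x,c)$; this is exactly what the preceding Kim's-lemma-style result inside an $\NTP_{2}$ type provides once $\tp(c/M)$ has a strictly invariant extension over $p(\M)$. Keeping track of the $M$-invariance of $p'$ through the base change from $M$ to $M^{*}$ is the remaining bookkeeping.
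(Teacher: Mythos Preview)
Your overall plan---induction on $n$; use that $p$ is over $M$ and $p'$ is $M$-invariant to replace $c$ by any $M$-conjugate; conjoin along a dividing witness $(c_j)_{j<k}$ for $\phi_{n-1}$; distribute and drop the all-$\phi_{n-1}$ branch; then repackage the survivors as $\chi_i(x,\bar c)=\bigvee_{j<k}\phi_i(x,c_j)$---is exactly the shape of the Broom-lemma argument of \cite{CheKap}, to which the paper's one-line proof defers. You have also correctly isolated the one nontrivial point: each $\chi_i(x,\bar c)$ must again divide over $M$ for the induction to close.

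Your repair for that step, however, does not work. First, the localized Kim lemma you want to invoke (Lemma~\ref{lem: inside NTP2 type, strict Morley sequence is universal}) applies only when the dividing parameter lies in $p(\M)\cup M$; in the present lemma $c$ is an arbitrary tuple, and the case $c\in M$ is vacuous since then $\phi_i(x,c)$ cannot divide over $M$. So that lemma is simply unavailable here. Second, the base change to a saturated $M^*\supseteq M$ is not mere bookkeeping: to run any strict-Morley argument over $M^*$ you would need each $\phi_i$ (at a suitable $M$-conjugate of $c$) to divide over $M^*$, whereas you only know dividing over $M$, and dividing does not in general persist to larger bases. Third, the existence of strictly invariant extensions inside $p(\M)\cup M$ (Corollary~\ref{lem: inside NTP2 type, strict invariant extensions exist}) is deduced \emph{from} the present lemma, so it cannot be fed back in; your detour via Lemma~\ref{lem: finding strictly invariant type} is precisely what forces the problematic base change just mentioned.

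What you are missing is that the Broom lemma of \cite{CheKap} is a general fact about $A$-invariant partial types, proved by an elementary indiscernibility/automorphism argument with no appeal to $\NTP_2$ or to strict Morley sequences at all. The localization here just means carrying $p(x)\cup{-}$ through that same argument; the $\NTP_2$ hypothesis on $p$ is contextual and plays no role in the proof of this particular lemma.
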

\begin{proof}
Follows from the proof of \cite[Lemma 3.1]{CheKap}.\end{proof}
\begin{cor}
\label{lem: inside NTP2 type, strict invariant extensions exist}
Let $p(x)$ be an $\NTP_{2}$ type over $M$ and $a\in p(\M)$. Then
$\tp(a/M)$ has a strictly invariant extension $p'(x)\in S(p(\M)\cup M)$.\end{cor}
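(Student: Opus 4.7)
The corollary is the localization of Fact \ref{Fac: Forking in an NTP2 theory}(1) from the globally $\NTP_{2}$ setting to an $\NTP_{2}$ type, and the plan is to mimic the Chernikov--Kaplan argument using the local broom lemma (Lemma \ref{lem: local broom lemma}) in place of its global counterpart. The candidate extension will be an arbitrary coheir.

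First I would take a coheir extension $p'(x)\in S(p(\M)\cup M)$ of $\tp(a/M)$: since $M$ is a model, any global coheir of $\tp(a/M)$ exists, and its restriction to $p(\M)\cup M$ is the desired $p'$. This $p'$ is automatically Lascar invariant over $M$, so the only remaining task is to verify the non-forking condition for strict invariance: for every small $B\subseteq p(\M)\cup M$ and $a'\models p'|_{B}$, $\tp(B/a'M)$ does not fork over $M$.

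Suppose for contradiction this fails, witnessed by a formula $\psi(\bar{y},a',c)\in\tp(B/a'M)$ with $c\in M$ that forks over $M$, so that $\psi(\bar{y},a',c)\vdash\bigvee_{i<n}\phi_{i}(\bar{y},e_{i})$ with each $\phi_{i}(\bar{y},e_{i})$ dividing over $M$. Absorbing the $M$-coordinates of $B$ into $c$, I may assume $B\subseteq p(\M)$, so that $\tp(B/M)$ is a completion of the $\NTP_{2}$ product type $\bar{p}(\bar{y}):=\bigcup_{j}p(y_{j})$ (which is $\NTP_{2}$ by the product remark following the definition of $\NTP_{2}$-type). Now form a coheir Morley sequence $(a'_{i})_{i<\omega}$ of $p'$ over $Bc\bar{e}M$ with $a'_{0}=a'$; by the coheir property each $a'_{i}$ realizes $\tp(a'/BcM)$, so $\models\psi(B,a'_{i},c)$ for all $i$, and the forking implication yields $\psi(\bar{y},a'_{i},c)\vdash\bigvee_{i'}\phi_{i'}(\bar{y},e_{i'}^{(i)})$ along the sequence.

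Applying Lemma \ref{lem: local broom lemma} to $\bar{p}(\bar{y})$ together with a global invariant extension of the joint type of the parameters $(a'_{i},\bar{e}^{(i)})_{i<\omega}$ in the $\bar{y}$-variable, the disjunctions of dividing formulas can be peeled off iteratively and one concludes that $\bar{p}(\bar{y})\cup\{\psi(\bar{y},a'_{i},c)\}_{i<\omega}$ is inconsistent; but $B$ realizes this set, a contradiction. The main obstacle is the variable bookkeeping in the broom lemma application: the $\NTP_{2}$ hypothesis lives on the $\bar{y}$-side (coming from $p$) while the coheir $p'$ lives on the $x$-side, and the forking witness $\psi$ mixes the two. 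Weaving together the coheir Morley sequence in $x$ with an appropriate global invariant type in $\bar{y}$ that captures the dividing formulas follows the array--and--broom template of \cite[Section 4]{CheKap}, which transfers verbatim once the broom lemma itself has been localized as in Lemma \ref{lem: local broom lemma}.
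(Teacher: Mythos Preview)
Your final sentence---that the argument ``follows the array--and--broom template of \cite[Section 4]{CheKap}, which transfers verbatim once the broom lemma itself has been localized''---is exactly the paper's proof. The problem is that the concrete outline you give before that sentence does not match that template, and as written it has a genuine gap.

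The Chernikov--Kaplan construction does \emph{not} start from an arbitrary coheir and then argue it is strictly invariant; rather, it builds $p'$ directly by showing that $\tp(a/M)$ together with the negations of all ``co-dividing'' formulas (those $\psi(b,x)$ with $b\in p(\M)$ for which the opposite formula $\psi(\bar y,a)$ divides over $M$) is consistent and non-forking over $M$, and it is in that consistency step that the broom lemma is invoked. Your attempted contradiction cannot be completed as stated: Lemma~\ref{lem: local broom lemma} needs an $M$-invariant partial type in the \emph{same} variable $\bar y$ as the $\NTP_2$ type $\bar p(\bar y)$, but ``a global invariant extension of the joint type of the parameters $(a'_i,\bar e^{(i)})$'' is a type in the parameter variables, not in $\bar y$. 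The single formula $\psi(\bar y,a',c)$ is not $M$-invariant (since $a'\notin M$), nor is the conjunction along your coheir Morley sequence; and the $M$-invariant closure $\{\psi(\bar y,a'',c):a''\equiv_M a\}$ is not known to be realized by $B$, so no contradiction follows. In short, the variable-bookkeeping obstacle you flag is real and is not resolved by the steps you wrote down; it is resolved in \cite{CheKap} by the different construction above, and that is what the paper is pointing to.
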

\begin{proof}
Following the proof of \cite[Proposition 3.7]{CheKap} but using Lemma
\ref{lem: local broom lemma} in place of the Broom lemma.
\end{proof}

And finally,
\begin{prop}
\label{prop: forking=00003Ddividing inside NTP2 type} Let $p(x)$
be an $\NTP_{2}$ type over $M$, $a\in p(\M)\cup M$ and assume that
$\left\{ \phi(x,a)\right\} \cup p(x)$ does not divide over $M$.
Then there is $p'(x)\in S(p(\M)\cup M)$ which does not divide over
$M$ and $\left\{ \phi(x,a)\right\} \cup p(x)\subset p'(x)$.\end{prop}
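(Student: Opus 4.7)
The plan is a proof by contradiction relying on the two key ingredients from earlier in this subsection: the existence of global strictly invariant extensions inside an $\NTP_{2}$ type (Corollary \ref{lem: inside NTP2 type, strict invariant extensions exist}) and the local Kim's lemma (Lemma \ref{lem: inside NTP2 type, strict Morley sequence is universal}). Set
\[
\Sigma(x) \;=\; \{\phi(x,a)\} \cup p(x) \cup \{\neg\psi(x,b) : b \in p(\M) \cup M \text{ and } \psi(x,b) \text{ divides over } M\};
\]
any complete type in $S(p(\M) \cup M)$ extending $\Sigma$ is as required, so it is enough to prove $\Sigma$ consistent. If it is not, compactness yields formulas $\psi_{i}(x, b_{i})$ for $i<n$, each dividing over $M$, with $b_{i} \in p(\M) \cup M$ and $\{\phi(x,a)\} \cup p(x) \vdash \bigvee_{i<n} \psi_{i}(x, b_{i})$.

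Write $\bar c = (a, b_{0}, \dots, b_{n-1})$. Then $\tp(\bar c/M)$ extends $\bigcup_{j} p(y_{j})$ on those coordinates lying in $p(\M)$, and this union is $\NTP_{2}$ by the remark in the definition of an $\NTP_{2}$ type, so $\tp(\bar c/M)$ is itself $\NTP_{2}$. The proof of Corollary \ref{lem: inside NTP2 type, strict invariant extensions exist} proceeds via the local Broom lemma and goes through verbatim for a tuple variable, yielding a strictly invariant extension $r'(\bar y) \in S(p(\M) \cup M)$ of $\tp(\bar c/M)$. Let $(\bar c_{k})_{k<\omega}$ with $\bar c_{k} = (a_{k}, b_{k,0}, \dots, b_{k,n-1})$ be a Morley sequence of $r'$ over $M$; in particular $(a_{k})_{k<\omega}$ is $M$-indiscernible with $a_{0} = a$. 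Since $\{\phi(x,a)\} \cup p(x)$ does not divide over $M$, the set $p(x) \cup \{\phi(x,a_{k})\}_{k<\omega}$ is consistent; fix a realization $d$. By $M$-invariance of $r'$ each $\bar c_{k} \equiv_{M} \bar c$, so $d \models \bigvee_{i} \psi_{i}(x, b_{k,i})$ for every $k$, and pigeonhole produces $i^{*}<n$ and an infinite $S \subseteq \omega$ with $d \models \psi_{i^{*}}(x, b_{k,i^{*}})$ for all $k \in S$.

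The projection $r''$ of $r'$ onto the $y_{i^{*}}$-coordinate is still Lascar-invariant over $M$, and right-monotonicity of non-dividing preserves strictness; hence $r''$ is a strictly invariant extension of $\tp(b_{i^{*}}/M)$ in $S(p(\M) \cup M)$, and $(b_{k,i^{*}})_{k<\omega}$ is a Morley sequence of $r''$ over $M$. Splitting $b_{i^{*}}$ into its $p(\M)$- and $M$-parts, Lemma \ref{lem: inside NTP2 type, strict Morley sequence is universal} applied to the dividing formula $\psi_{i^{*}}(x,b_{i^{*}})$ gives that $p(x) \cup \{\psi_{i^{*}}(x,b_{k,i^{*}})\}_{k<\omega}$ is inconsistent. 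By compactness a finite subset is already inconsistent, and $M$-indiscernibility of the Morley sequence transfers the inconsistency to a same-size subset indexed inside $S$, contradicting the choice of $d$. The only non-routine step in this plan is the transcription of Corollary \ref{lem: inside NTP2 type, strict invariant extensions exist} to tuple variables to obtain the \emph{joint} strict invariant extension $r'$; the rest is pigeonhole combined with routine preservation of strict invariance under coordinate projection.
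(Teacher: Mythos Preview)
Your proof is correct and follows essentially the same approach as the paper's: reduce by compactness to a finite disjunction of dividing formulas, take a joint strict Morley sequence in the type of all the parameters (using Corollary \ref{lem: inside NTP2 type, strict invariant extensions exist} for tuples, which works since $\bigcup_{j} p(y_{j})$ is $\NTP_{2}$), use non-dividing of $\{\phi(x,a)\}\cup p(x)$ to get consistency along the sequence, pigeonhole to a single disjunct, and contradict Lemma \ref{lem: inside NTP2 type, strict Morley sequence is universal}. The paper's write-up is slightly more streamlined in that it separates the $p(\M)$- and $M$-parts of the parameters from the outset and concludes directly that some $p(x)\cup\{\phi_{i}(x,a_{i}c_{i})\}$ does not divide (rather than deriving a contradiction via a subset $S$), but the content is identical.
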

\begin{proof}
By compactness, it is enough to show that if $p(x)\cup\left\{ \phi(x,ac)\right\} \vdash\bigvee_{i<n}\phi_{i}(x,a_{i}c_{i})$
with $a,a_{i}\in p(\M)$ and $c,c_{i}\in M$, then $p(x)\cup\left\{ \phi_{i}(x,a_{i}c_{i})\right\} $
does not divide over $M$ for some $i<n$. As in the proof of \cite[Corollary 3.16]{CheKap},
let $\left(a^{j}a_{0}^{j}...a_{n-1}^{j}\right)_{j<\omega}$ be a strict
Morley sequence in $\tp(aa_{0}...a_{n-1}/M)$, which exists by Lemma
\ref{lem: inside NTP2 type, strict invariant extensions exist}. Notice
that $\left(a^{j}ca_{0}^{j}c_{0}...a_{n-1}^{j}c_{n-1}\right)_{j<\omega}$
is still indiscernible over $M$. Then $p(x)\cup\left\{ \phi(x,a^{j}c)\right\} _{j<\omega}$
is consistent, which implies that $p(x)\cup\left\{ \phi_{i}(x,a_{i}^{j}c_{i})\right\} _{j<\omega}$
is consistent for some $i<n$. But then by Lemma \ref{lem: inside NTP2 type, strict Morley sequence is universal},
$p(x)\cup\left\{ \phi_{i}(x,a_{i}c_{i})\right\} $ does not divide
over $M$ --- as wanted.
\end{proof}

\section{\label{sec: NIP types} $\NIP$ types}

Let $T$ be an arbitrary theory.
\begin{defn}

\begin{enumerate}
\item A partial type $p(x)$ over $C$ is called $\NIP$ if there is no
$\phi(x,y)\in L$, $\left(a_{i}\right)_{i\in\omega}$ with $a_{i}\models p(x)$
and $\left(b_{s}\right)_{s\subseteq\omega}$ such that $\models\phi(a_{i},b_{s})$
$\Leftrightarrow$ $i\in s$.
\item The roles of $a$'s and $b$'s in the definition are interchangeable.
It is easy to see that any extension of an $\NIP$ type is again $\NIP$,
and that the type of several realizations of an $\NIP$ type is again
$\NIP$.
\item $p(x)$ is $\NIP$ $\Leftrightarrow$ $\dprk(p)<|T|^{+}$ $\Leftrightarrow$
$\dprk(p)<\infty$ (see Definition \ref{def: dp-rank}).
\end{enumerate}
\end{defn}
\begin{lem}
\label{lem: Everything works inside NIP type} Let $p(x)$ be an $\NIP$
type.
\begin{enumerate}
\item Let $\bar{a}=\left(a_{\alpha}\right)_{\alpha<\kappa}$ be an indiscernible
sequence over $A$ with $a_{\alpha}$ from $p(\M)$, and $c$ be arbitrary.
If $\kappa=\left(|a_{\alpha}|+|c|\right)^{+}$, then some non-empty
end segment of $\bar{a}$ is indiscernible over $Ac$.
\item Let $\left(\bar{a}_{\alpha}\right)_{\alpha<\kappa}$ be mutually indiscernible
(over $\emptyset$), with $\bar{a}_{\alpha}=\left(a_{\alpha i}\right)_{i<\lambda}$
from $p(\M)$. Assume that $\bar{a}=\left(a_{0i}a_{1i}...\right)_{i<\lambda}$
is indiscernible over $A$. Then $\left(\bar{a}_{\alpha}\right)_{\alpha<\kappa}$
is mutually indiscernible over $A$.
\end{enumerate}
\end{lem}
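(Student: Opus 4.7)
The plan for (1) is a contradiction argument adapting the standard NIP ``end-segment indiscernibility over parameters'' fact to the localized setting. Assume no non-empty end segment of $\bar{a}$ is indiscernible over $Ac$. A pigeonhole on $\kappa > |A|+|c|+|T|$ fixes a formula $\phi(\bar{x},y)\in L(A)$ and an integer $n$ such that cofinally many $\alpha<\kappa$ admit strictly increasing $n$-tuples $\bar{i}^\alpha,\bar{j}^\alpha\geq\alpha$ with $\phi(\bar{a}_{\bar{i}^\alpha},c)$ and $\phi(\bar{a}_{\bar{j}^\alpha},c)$ disagreeing. Since $\bar{a}$ is $A$-indiscernible, both truth values of $\phi(\cdot,c)$ on strictly increasing $n$-tuples of $\bar{a}$ are realized cofinally. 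Hence for each $s\subseteq\omega$ one can pick pairwise disjoint strictly increasing $n$-tuples $(\bar{l}^s_m)_{m<\omega}$ with $\models\phi(\bar{a}_{\bar{l}^s_m},c)$ if and only if $m\in s$. Fix one strictly increasing sequence $(\bar{l}_m)_{m<\omega}$ of $n$-tuples in $\bar{a}$; by $A$-indiscernibility there is an $A$-automorphism $\sigma_s$ taking $(\bar{a}_{\bar{l}^s_m})_{m<\omega}$ to $(\bar{a}_{\bar{l}_m})_{m<\omega}$. Setting $a_s:=\sigma_s(c)$ we have $\models\phi(\bar{a}_{\bar{l}_m},a_s)$ iff $m\in s$. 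As every $\bar{a}_{\bar{l}_m}$ is a tuple from $p(\M)$, this is an IP-pattern for $\phi$ inside $p$, contradicting $\NIP$ of $p$ (using that the roles of the two sides are interchangeable and that tuples of realizations of an $\NIP$ type are again $\NIP$).

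For (2) the plan is to reduce to (1). By compactness it is enough to show, for each $\alpha_0<\kappa$ and each finite $\bar{d}\subseteq\bar{a}_{\neq\alpha_0}$, that $\bar{a}_{\alpha_0}$ is indiscernible over $A\bar{d}$. By mutual indiscernibility over $\emptyset$, the row $\bar{a}_{\alpha_0}$ is $\bar{d}$-indiscernible; by projecting the $A$-indiscernible big-tuple sequence $\bar{a}$ to the $\alpha_0$-row, it is also $A$-indiscernible. After extending $\bar{a}$ by compactness to make $\lambda$ as large as needed while preserving both hypotheses, apply (1) to $\bar{a}_{\alpha_0}$ with base set $\bar{d}$ and new parameter a finite $A_0\subseteq A$ adequate for a given test formula; this produces an end segment $E$ of $\bar{a}_{\alpha_0}$, past the columns $I_{\bar{d}}$ of $\bar{d}$, indiscernible over $A\bar{d}$. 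The $A$-indiscernibility of $\bar{a}$ supplies $A$-automorphisms induced by order-preserving bijections of $\lambda$; those fixing $I_{\bar{d}}$ pointwise fix $\bar{d}$ and transport any strictly increasing tuple from $\bar{a}_{\alpha_0}$ whose interleaving pattern with $I_{\bar{d}}$ coincides with that of some tuple in $E$ to that tuple, forcing same-pattern tuples to share an $A\bar{d}$-type. The remaining step is pattern-independence: if distinct interleaving patterns gave different $\phi(\bar{x},\bar{d},c)$-values, one uses mutual indiscernibility of the other rows to place independent copies $\bar{d}^{(k)}$ of $\bar{d}$ in disjoint column blocks, and the $A$-indiscernibility of $\bar{a}$ to shift the $\alpha_0$-row accordingly, assembling — by the same swap-and-automorphism trick as in (1) — an IP-pattern for $\phi$ inside $p(\M)$, contradicting $\NIP$ of $p$.

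The main obstacle is this last pattern-independence step of (2). An $A$-automorphism shifting $I_{\bar{d}}$ also carries $\bar{d}$ along, so cross-pattern comparison cannot be achieved by a single shift fixing $\bar{d}$; one must set up many shifted copies of $\bar{d}$ in disjoint column blocks (under control of mutual indiscernibility of the other rows) and combine them with shifts of the $\alpha_0$-row to build the IP-witness inside $p(\M)$. This is the localized analogue, with $\NIP$ replaced by $\NIP$ of $p$ throughout, of the argument used in the Kaplan--Simon characterization of dp-rank via mutually indiscernible sequences of realizations of a type; the fact that every tuple participating in the construction lies in $p(\M)$ is what makes $\NIP$ of $p$ (rather than of the ambient theory) sufficient to derive the contradiction.
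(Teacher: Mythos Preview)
Your argument for (1) follows the standard approach and is essentially correct; the paper itself gives no proof here, merely citing standard $\NIP$ references.

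For (2), your reduction to (1) is the right idea, but you take the base set in the application of (1) to be only a finite $\bar d\subseteq\bar a_{\neq\alpha_0}$, which is what creates the ``pattern-independence'' problem you flag as the main obstacle. Your proposed resolution via an IP-construction is left sketchy, and in the form you describe (placing shifted copies $\bar d^{(k)}$ along disjoint column blocks and reading off truth values against tuples from the $\alpha_0$-row) appears to produce the order property rather than IP; extracting a genuine IP-pattern inside $p(\M)$ from this would require further argument.

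The standard route avoids the obstacle entirely: apply (1) with base set \emph{all} of $\bar a_{\neq\alpha_0}$ (over which $\bar a_{\alpha_0}$ is already indiscernible, by mutual indiscernibility over $\emptyset$) and with new parameter a finite $c\subseteq A$. After enlarging $\lambda$ by compactness, this yields an end segment $E$ of $\bar a_{\alpha_0}$ indiscernible over $c\,\bar a_{\neq\alpha_0}$. Now given increasing tuples $\bar j,\bar j'$ from $\lambda$ and a finite $\bar d\subseteq\bar a_{\neq\alpha_0}$, use the $A$-indiscernibility of the column sequence to shift the finite set of columns meeting $\bar j\cup\bar j'\cup\bar d$ order-isomorphically into the index set of $E$; this carries $(\bar a_{\alpha_0,\bar j},\bar a_{\alpha_0,\bar j'},\bar d)$ to some $(\bar a_{\alpha_0,\bar l},\bar a_{\alpha_0,\bar l'},\bar d')$ of the same type over $A$, with $\bar l,\bar l'$ in $E$ and $\bar d'\subseteq\bar a_{\neq\alpha_0}$. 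Since $E$ is indiscernible over $c\bar d'$, we get $\bar a_{\alpha_0,\bar l}\equiv_{c\bar d'}\bar a_{\alpha_0,\bar l'}$, hence $\bar a_{\alpha_0,\bar j}\equiv_{c\bar d}\bar a_{\alpha_0,\bar j'}$. No interleaving-pattern analysis is needed: the point is that the column shift moves $\bar d$ along with the $\alpha_0$-tuples, and the larger base set in (1) guarantees the end segment is indiscernible over the \emph{shifted} $\bar d'$ as well.
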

Standard proofs of the corresponding results for $\NIP$ theories
go through, see e.g. \cite{Adl}.

\subsection{Dp-rank of a type is always witnessed by an array of its realizations}

In \cite{KapSim} Kaplan and Simon demonstrate that inside an $\NTP_{2}$
theory, dp-rank of a type can always be witnessed by mutually indiscernible
sequences of realizations of the type. In this section we show that
the assumption that the theory is $\NTP_{2}$ can be omitted, thus
proving the following general theorem with no assumption on the theory.
\begin{thm}
\label{thm: dp-rank is witnessed inside the type} Let $p(x)$ be
an $\NIP$ partial type over $C$, and assume that $\dprk(p)\geq\kappa$.
Then there is $C'\supseteq C$, $b\models p(x)$ and $\left(\bar{a}_{\alpha}\right)_{\alpha<\kappa}$
with $\bar{a}_{\alpha}=\left(a_{\alpha i}\right)_{i<\omega}$ such
that:
\begin{itemize}
\item $a_{\alpha i}\models p(x)$ for all $\alpha,i$
\item $\left(\bar{a}_{\alpha}\right)_{\alpha<\kappa}$ are mutually indiscernible
over $C'$
\item None of $\bar{a}_{\alpha}$ is indiscernible over $bC'$.
\item $\left|C'\right|\leq\left|C\right|+\kappa$.
\end{itemize}
\end{thm}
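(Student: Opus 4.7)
The plan is to run the argument from \cite{KapSim}---which proves this result under the global hypothesis that $T$ is $\NTP_{2}$---while replacing every appeal to the global $\NTP_{2}$ forking calculus by its localisation to the partial type $p(x)$. Because an $\NIP$ type is in particular $\NTP_{2}$, the third subsection of Section~\ref{sec: Forking in NTP2} supplies precisely this localisation: strictly invariant extensions inside $p$ exist by Corollary~\ref{lem: inside NTP2 type, strict invariant extensions exist}, forking equals dividing inside $p$ by Proposition~\ref{prop: forking=00003Ddividing inside NTP2 type}, and a variant of Kim's lemma holds inside $p$ by Lemma~\ref{lem: inside NTP2 type, strict Morley sequence is universal}. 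Lemma~\ref{lem: Everything works inside NIP type} supplies the ambient $\NIP$-style manipulations of indiscernible sequences whose entries come from $p(\M)$.

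Starting from the definition of $\dprk(p)\geq\kappa$, fix $b\models p$ and a mutually $C$-indiscernible array $(\bar{a}_{\alpha})_{\alpha<\kappa}$, each row witnessed by a formula $\phi_{\alpha}(x,y_{\alpha})$ whose value at $b$ alternates along $\bar{a}_{\alpha}$; the entries $a_{\alpha,i}$ need not realise $p$. To import the witnesses into $p(\M)$ I would proceed in two steps. First, use Corollary~\ref{lem: inside NTP2 type, strict invariant extensions exist} to build a long strict Morley sequence for $\tp(b/C)$ inside $p(\M)$, which (together with Lemma~\ref{lem: Everything works inside NIP type}(1)) allows a uniform choice of parameters from $p(\M)$ that replicates the behaviour of $b$. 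Second, apply the honest definitions of \cite{ExtDefI} to the externally definable family $\phi_{\alpha}(b,\,\cdot\,)$ on the $\NIP$ set $p(\M)$: each such trace is approximated by an internal formula $\psi_{\alpha}(y,d_{\alpha})$ with parameters $d_{\alpha}\in p(\M)$. Put $C' := C\cup\{d_{\alpha}:\alpha<\kappa\}$, which satisfies $|C'|\leq|C|+\kappa$.

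Transferring the alternation of $\phi_{\alpha}$ on $\bar{a}_{\alpha}$ at $b$ through the Morley sequence and the honest approximation produces, for each $\alpha$, a new row $\bar{a}'_{\alpha}$ drawn from $p(\M)$ that is non-indiscernible over $bC'$. The remaining task is to arrange $(\bar{a}'_{\alpha})_{\alpha<\kappa}$ to be mutually indiscernible over $C'$. I would do this by an inductive construction over $\alpha$, applying Lemma~\ref{lem: mutual indiscernibility criteria} in its form valid inside the $\NTP_{2}$ type $p$ (the proof adapts verbatim once strict invariance over $p(\M)$ is supplied by Corollary~\ref{lem: inside NTP2 type, strict invariant extensions exist}) to rotate the row at stage $\alpha$ into mutually indiscernible position with the rows already chosen.

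The main obstacle is the simultaneous preservation, over $\kappa$ stages of the induction, of two competing properties: mutual indiscernibility of the $\bar{a}'_{\alpha}$ over $C'$ and non-indiscernibility of each $\bar{a}'_{\alpha}$ over $bC'$. Both the rotations required for mutual indiscernibility and the enlargement of the base by the honest-definition parameters $d_{\alpha}$ threaten to smooth out the witnessing alternation; carrying out all forking operations inside $p(\M)$ via Corollary~\ref{lem: inside NTP2 type, strict invariant extensions exist} and Proposition~\ref{prop: forking=00003Ddividing inside NTP2 type} is what allows the rotations and the honest approximations to cohabit without any assumption on $T$. This is precisely the step where \cite{KapSim} invoked the global $\NTP_{2}$ hypothesis, and the localisation carried out in Section~\ref{sec: Forking in NTP2} is intended to supply the substitute.
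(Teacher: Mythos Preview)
Your high-level plan---run the Kaplan--Simon argument with the global $\NTP_{2}$ tools replaced by their localisations to the $\NTP_{2}$ type $p$, and use honest definitions to pull the witnessing array into $p(\M)$---is exactly what the paper does. But the concrete implementation you sketch has the honest-definition step pointing the wrong way, and this is a real gap.

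You propose to apply honest definitions to the family $\phi_{\alpha}(b,\,\cdot\,)$ on $p(\M)$. That would only be informative about the trace of $\phi_{\alpha}(b,y)$ on tuples $y$ already lying in $p(\M)$; but the original array entries $a_{\alpha,i}$ do not lie in $p(\M)$, so this trace tells you nothing about them, and your ``step~1'' (a strict Morley sequence of $b$) produces copies of $b$, not candidate array entries. The paper instead first uses Fact~\ref{fac: In NIP, burden =00003D dp-rank} to pass from $\dprk(p)\geq\kappa$ to $\bdn(p)\geq\kappa$, and then (Lemma~\ref{lem: burden witnessed inside}) applies honest definitions in the opposite direction: to each $\phi_{\alpha}(x,b_{\alpha i})$, viewed as a formula in the $p$-variable $x$, over the set $A=\{a_{f}\}\subseteq p(\M)$ of realisations of the vertical paths. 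This replaces the external parameters $b_{\alpha i}$ by internal ones $d_{\alpha i}\in p(\M)$ while preserving the $\inp$-pattern. Only after the array is inside $p(\M)$ does the Kaplan--Simon machinery start, with Lemma~\ref{lem: non-dividing gives inv extension in NIP type} supplying the invariant extensions inside $p$ that replace the global ones. Your inductive rotation scheme in the last paragraph is fine in spirit, but it presupposes that the rows are already in $p(\M)$; reorganise your use of Fact~\ref{fac: honest definition} as in Lemma~\ref{lem: burden witnessed inside} and the rest of your outline goes through.
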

\begin{cor}
It follows that dp-rank of a 1-type is always witnessed by mutually
indiscernible sequences of singletons.
\end{cor}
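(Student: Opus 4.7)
The plan is simply to apply Theorem~\ref{thm: dp-rank is witnessed inside the type} in the case $|x| = 1$. Let $p(x)$ be a 1-type over $C$ with $\dprk(p) \geq \kappa$. If $\dprk(p) < \infty$, then $p$ is $\NIP$ (by the characterization recalled just before the theorem), so Theorem~\ref{thm: dp-rank is witnessed inside the type} applies and yields a parameter set $C' \supseteq C$ with $|C'| \leq |C| + \kappa$, an element $b \models p(x)$, and sequences $\bar{a}_\alpha = (a_{\alpha i})_{i < \omega}$ for $\alpha < \kappa$ that are mutually indiscernible over $C'$, none of them indiscernible over $bC'$, and with $a_{\alpha i} \models p(x)$ for every $\alpha, i$.

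The entire content of the corollary is the observation that when $x$ is a single variable, the condition $a_{\alpha i} \models p(x)$ forces each $a_{\alpha i}$ to be a single element of $\M$. Thus each row $\bar{a}_\alpha$ is automatically a sequence of singletons, and the array produced by the theorem is exactly the witness demanded by the corollary. The edge case $\dprk(p) = \infty$ (i.e.\ $p$ has $\IP$) falls outside the hypothesis of the theorem but is handled directly: from the definition of $\IP$ for $p$ one extracts an indiscernible sequence of singleton realizations of $p$ that fails to be indiscernible over some parameter, and a routine Erd\H{o}s-Rado plus compactness argument produces, for any cardinal $\kappa$, a mutually indiscernible array of $\kappa$ such singleton sequences over a common breaking parameter, thereby witnessing arbitrarily large dp-rank by sequences of singletons.

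Since all of the substantive work is carried out in Theorem~\ref{thm: dp-rank is witnessed inside the type} itself -- which is where the forking analysis localized to an $\NTP_{2}$ type (Proposition~\ref{prop: forking=00003Ddividing inside NTP2 type} and the surrounding lemmas) combines with honest definitions to drop the global $\NTP_{2}$ hypothesis -- I do not anticipate any real obstacle at this step. The corollary is purely the remark that for a 1-type the phrases ``realization of $p$'' and ``singleton element of $\M$'' are synonymous, so the strengthening ``$a_{\alpha i} \models p$'' in the theorem already delivers the strengthening ``$a_{\alpha i}$ is a singleton'' in the corollary.
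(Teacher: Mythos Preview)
Your proposal is correct and takes essentially the same approach as the paper, which states the corollary without proof as an immediate consequence of Theorem~\ref{thm: dp-rank is witnessed inside the type}: for a 1-type the realizations $a_{\alpha i}\models p$ produced by the theorem are automatically singletons.

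One small remark on your handling of the $\IP$ edge case: your sketch produces $\kappa$ mutually indiscernible singleton sequences broken by a \emph{common parameter}, but the definition of dp-rank requires the breaking element $b$ to realize $p$, and the shattering parameter you obtain need not. This is easily repaired (reduce to an $\IP$ formula $\psi(x,y)$ with $|y|=1$ via the standard VC/Sauer--Shelah argument, then take the singleton $y$-sequences as the rows and the appropriate $x$-witness in $p$ as $b$), but as written that step is a little loose. In any event the paper does not treat this case separately, so your main argument already matches what is intended.
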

We will use the following result from \cite[Proposition 1.1]{ExtDefI}:
\begin{fact}
\label{fac: honest definition} Let $p(x)$ be a (partial) $\NIP$
type, $A\subseteq p(\M)$ and $\phi(x,c)$ given. Then there is $\theta(x,d)$
with $d\in p(\M)$ such that:
\begin{enumerate}
\item $\theta(A,d)=\phi(A,c)$,
\item $\theta(x,d)\cup p(x)\rightarrow\phi(x,c)$.
\end{enumerate}
\end{fact}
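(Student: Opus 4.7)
My plan is to prove the Fact via a localized UDTFS (uniform definability of $\phi$-types over finite sets) plus a single compactness argument. First, I would establish \emph{localized UDTFS} for $\phi$: an $L$-formula $\psi(x;\bar{y})$ such that for every finite $B \subseteq p(\M)$ and every $c$, some tuple $\bar{b}\in B$ satisfies $\psi(B;\bar{b}) = \phi(B,c)$. This follows from NIP of $p$ by the standard Matou\v{s}ek/Chernikov--Simon argument: the family $\{\phi(B,c') : c' \in \M\}$ on any finite $B \subseteq p(\M)$ is VC-bounded (its VC-dimension is controlled by the alternation rank of $\phi$ restricted to $p$, which is finite by NIP of $p$), and the uniform combinatorial proof of Sauer--Shelah produces its ``definition'' $\psi$ with witnesses sampled from $B$ itself.

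Second, with localized UDTFS in hand the Fact reduces to a one-step compactness argument. Consider the partial type in $\bar{y}$ over $A\cup p(\M)\cup\{c\}$:
\[
\Sigma(\bar{y}) \;=\; p^{|\bar{y}|}(\bar{y}) \;\cup\; \{\psi(a;\bar{y}) \leftrightarrow \phi(a,c) : a \in A\} \;\cup\; \{\psi(x';\bar{y}) \rightarrow \phi(x',c) : x' \in p(\M)\}.
\]
Given any finite $\Sigma_0 \subseteq \Sigma$, let $B \subseteq p(\M)$ be a finite set containing every $a$ and $x'$ mentioned in $\Sigma_0$. Localized UDTFS applied to $\phi(x,c)$ and $B$ yields $\bar{b}\in B\subseteq p(\M)^{|\bar{y}|}$ with $\psi(B;\bar{b}) = \phi(B,c)$, and this single $\bar{b}$ satisfies every formula of $\Sigma_0$ --- the biconditionals and the implications both follow immediately from the trace equality $\psi(B;\bar{b}) = \phi(B,c)$. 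By compactness $\Sigma$ is realized by some $\bar{d}$, which automatically lies in $p(\M)^{|\bar{y}|}$ because $p^{|\bar{y}|}\subseteq \Sigma$. Setting $\theta(x;\bar{d}) := \psi(x;\bar{d})$ gives both conditions~(1) and~(2).

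The main obstacle is the first step: producing localized UDTFS from the mere assumption that $p$ is NIP. The raw Sauer--Shelah bound on the number of traces localizes trivially, but upgrading it to a \emph{single uniform} $L$-formula $\psi(x;\bar{y})$ whose instances parametrized by tuples from $B$ realize all these traces requires a genuinely model-theoretic construction. I would adapt the inductive construction in \cite{ExtDefI} (reducing the alternation rank of $\phi$ step by step), checking that at every step the auxiliary parameters are sampled from the current finite set of realizations; since at all stages this set is a subset of $B \subseteq p(\M)$, the construction stays inside $p(\M)$ and the argument transfers verbatim to the $p$-localized setting.
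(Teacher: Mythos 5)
The paper itself gives no proof of this Fact --- it is quoted verbatim as \cite[Proposition~1.1]{ExtDefI} --- so your proposal is a reconstruction from scratch. Your Step~2 (the compactness reduction from localized UDTFS to honest definitions) is correct as written: a realization $\bar d$ of $\Sigma$ necessarily lies in $p(\M)^{|\bar y|}$ and $\theta := \psi(x;\bar d)$ then satisfies both (1) and (2). The problem is entirely in Step~1, and you have in fact identified it yourself without resolving it.

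The gap is that your route is logically backwards. Localized UDTFS is \emph{at least as strong} as honest definitions, not a lemma on the way to it: taking $p(x) = \{x = x\}$ it specializes to full UDTFS for $\phi^{*}$, and the only known proof of UDTFS in an arbitrary $\NIP$ theory (Chernikov--Simon, ``Externally definable sets and dependent pairs~II'') uses honest definitions as its central ingredient. There is no ``inductive construction reducing the alternation rank'' in \cite{ExtDefI} that you could adapt; the actual proof of Proposition~1.1 there is a direct compactness argument in an elementary pair $(\M', \M)$, using the finite alternation bound on indiscernible sequences inside $p(\M)$ to derive a contradiction if every candidate $\theta(x,d)$ either mis-traces on $A$ or overspills $\phi(p(\M),c)$. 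It never passes through uniform definability of finite types. Finally, the specific claim that ``the uniform combinatorial proof of Sauer--Shelah produces its definition $\psi$'' is not true: Sauer--Shelah bounds the \emph{number} of traces on a finite set but does not produce a single $L$-formula whose instances with parameters in $B$ exhaust those traces --- that upgrade is precisely the content of the UDTFS theorem, which required honest definitions and more. So as it stands the proposal is circular (or at best a conjectural detour), and the missing Step~1 is exactly where the substance of the result lies.
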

~

We begin by showing that the burden of a dependent type can always
be witnessed by mutually indiscernible sequences from the set of its
realizations.
\begin{lem}
\label{lem: burden witnessed inside} Let $p(x)$ be a dependent partial
type over $C$ of burden $\geq\kappa$. Then we can find $\left(\bar{d}_{\alpha}\right)_{\alpha<\kappa}$
witnessing it, mutually indiscernible over $C$ and with $\bar{d}_{i}\subseteq p(\M)\cup C$.\end{lem}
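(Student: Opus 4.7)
The plan is to start with an $\inp$-pattern in $p(x)$ that witnesses its burden and replace each parameter $a_{\alpha,i}$ by an ``honest-definition'' parameter $d_{\alpha,i}$ taken inside $p(\M)$ via Fact~\ref{fac: honest definition}, then restore mutual indiscernibility by Erd\H os--Rado. The point is that honest definitions preserve the combinatorics: clause~(1) turns realizations of the original pattern into realizations of the new one, while clause~(2) propagates $k_\alpha$-inconsistency modulo $p$.

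First I would use Lemma~\ref{lem: IndiscDivPattern} to obtain an $\inp$-pattern $(\bar{a}_\alpha,\phi_\alpha,k_\alpha)_{\alpha<\kappa}$ in $p(x)$ whose rows $\bar{a}_\alpha=(a_{\alpha,i})_{i<\omega}$ are mutually indiscernible over $C$. For every finite $S\subseteq\kappa$ and $f:S\to\omega$ I pick a witness $b^{S}_{f}\models p(x)\cup\{\phi_\alpha(x,a_{\alpha,f(\alpha)}):\alpha\in S\}$ and collect these into a set $A_0\subseteq p(\M)$. Then for each $\alpha$, I apply Fact~\ref{fac: honest definition} with $A=A_0$ to the instances $\phi_\alpha(x,a_{\alpha,i})$. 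In the proof of honest definitions for $\NIP$ types in \cite{ExtDefI}, the output formula $\theta$ is a fixed Boolean combination of instances of $\phi_\alpha$ depending only on $\phi_\alpha$ and its alternation number, so one may take a single formula $\theta_\alpha(x,z_\alpha)$ uniformly in $i$ (if one prefers, a pigeonhole on $i<\omega$, passing to row-wise subsequences of the $\bar{a}_\alpha$, reduces to the uniform case and preserves mutual $C$-indiscernibility). This yields tuples $d_{\alpha,i}\in p(\M)$ with
\[
\theta_\alpha(A_0,d_{\alpha,i})=\phi_\alpha(A_0,a_{\alpha,i}),\qquad \theta_\alpha(x,d_{\alpha,i})\cup p(x)\vdash\phi_\alpha(x,a_{\alpha,i}).
\]

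The resulting data $(\bar{d}_\alpha,\theta_\alpha,k_\alpha)_{\alpha<\kappa}$ with $\bar{d}_\alpha=(d_{\alpha,i})_{i<\omega}\subseteq p(\M)$ is an $\inp$-pattern of depth $\kappa$ in $p(x)$: for each $\alpha$, any $k_\alpha$ instances $\theta_\alpha(x,d_{\alpha,i_j})$ together with $p(x)$ imply the $k_\alpha$ instances $\phi_\alpha(x,a_{\alpha,i_j})$ by clause~(2), and the latter are inconsistent; and for any $f:\kappa\to\omega$, by compactness it suffices to realize finite restrictions, which $b^{S}_{f}\in A_0$ does by clause~(1). The rows $\bar{d}_\alpha$ need not yet be mutually $C$-indiscernible, so I conclude by applying Lemma~\ref{lem: Erdos-Rado}(1) to extract a mutually $C$-indiscernible array whose finite sub-arrays have the same $C$-type as sub-arrays of $(\bar{d}_\alpha)$. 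Since the row-wise $k_\alpha$-inconsistency modulo $p$, the consistency of each path modulo $p$, and the property ``entries lie in the $C$-type-definable set $p(\M)$'' are all $C$-type properties of the array, they are preserved by the extraction.

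The main subtlety is the uniformity step --- that one may choose $\theta_{\alpha,i}$ independent of $i$ and with parameters still in $p(\M)$ --- which is built into the explicit form of honest definitions in \cite{ExtDefI}, or otherwise handled by a pigeonhole that respects mutual $C$-indiscernibility. Everything else is either a direct verification from clauses~(1) and~(2) of Fact~\ref{fac: honest definition} or a standard application of Erd\H os--Rado.
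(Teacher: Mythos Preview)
Your approach is essentially the paper's: take an $\inp$-pattern in $p$, collect a set $A\subseteq p(\M)$ of path-realizers, and replace each $\phi_\alpha(x,a_{\alpha,i})$ by its honest definition $\theta_\alpha(x,d_{\alpha,i})$ with $d_{\alpha,i}\in p(\M)$. Two points where you diverge from the paper are worth flagging.

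First, uniformity of $\theta_\alpha$. Your pigeonhole on $i<\omega$ only works when $|T|\le\aleph_0$, and the uniformity you attribute to the ``explicit form'' of honest definitions is stronger than what Fact~\ref{fac: honest definition} actually states. The paper avoids this by first stretching the rows to length $\lambda$ sufficiently large compared to $|C|+|T|$ (by compactness), and \emph{then} pigeonholing on $i<\lambda$ to force $\theta_{\alpha,i}=\theta_\alpha$; this is robust for arbitrary $|T|$.

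Second, and more to the point of the statement: you end with ``$k_\alpha$-inconsistency modulo $p$'', but an $\inp$-pattern requires genuine $k_\alpha$-inconsistency of the rows. The paper closes this gap by compactness: from $\theta_\alpha(x,d_{\alpha,i})\cup p(x)\vdash\phi_\alpha(x,a_{\alpha,i})$ one extracts a single $\psi_\alpha(x,c_\alpha)\in p$ (so $c_\alpha\in C$) such that $\{\theta_\alpha(x,d_{\alpha,i})\land\psi_\alpha(x,c_\alpha)\}_{i<\omega}$ is $k_\alpha$-inconsistent, and takes $\chi_\alpha=\theta_\alpha\land\psi_\alpha$. This is precisely why the conclusion allows parameters in $p(\M)\cup C$ rather than $p(\M)$ alone; without this step you have not produced an $\inp$-pattern in the sense of Definition~2.1. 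Once you add this, your extraction via Lemma~\ref{lem: IndiscDivPattern} (which is what underlies the mutual-indiscernibility claim) goes through.
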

\begin{proof}
Let $\lambda$ be large enough compared to $\left|C\right|$. Assume
that $\mbox{bdn}(p)\geq\kappa$, then by compactness we can find $\left(\bar{b}_{\alpha},\phi_{\alpha}(x,y_{\alpha}),k_{\alpha}\right)_{i<n}$
such that $\bar{b}_{\alpha}=\left(b_{\alpha i}\right)_{i<\lambda}$,
$\left\{ \phi_{\alpha}(x,b_{\alpha i})\right\} _{\alpha<\kappa}$
is $k_{\alpha}$-inconsistent and $p(x)\cup\left\{ \phi_{\alpha}(x,b_{\alpha f(\alpha)})\right\} _{i<n}$
is consistent for every $f:\,\kappa\to\lambda$, let $a_{f}$ realize
it. Set $A=\left\{ a_{f}\right\} _{f\in\lambda^{\kappa}}\subseteq p(\M)$.

By Fact \ref{fac: honest definition}, let $\theta_{\alpha i}(x,d_{\alpha i})$
be an honest definition of $\phi_{\alpha}(x,b_{\alpha i})$ over $A$
(with respect to $p(x)$), with $d_{\alpha i}\in p(\mathbb{M})$.
As $\lambda$ is very large, we may assume that $\theta_{\alpha i}=\theta_{\alpha}$. 

Now, as $\theta_{\alpha}(x,d_{\alpha i})\cup p(x)\rightarrow\phi_{\alpha}(x,b_{\alpha i})$,
it follows that there is some $\psi_{\alpha}(x,c)\in p$ such that
letting $\chi_{\alpha}(x,y_{1}y_{2})=\theta_{\alpha}(x,y_{1})\land\psi_{\alpha}(x,y_{2})$,
$\left\{ \chi(x,d_{\alpha i}c_{\alpha})\right\} _{i<\omega}$ is $k_{\alpha}$-inconsistent. 

On the other hand, $\left\{ \chi_{\alpha}(x,d_{\alpha f(\alpha)}c_{\alpha})\right\} _{\alpha<\kappa}\cup p(x)$
is consistent, as the corresponding $a_{f}$ realizes it. Thus this
array still witnesses that burden of $p$ is at least $\kappa$.
\end{proof}
We will also need the following lemma.
\begin{lem}
\label{lem: non-dividing gives inv extension in NIP type} Let $p(x)$
be an $\NIP$ type over $M\models T$ 
\begin{enumerate}
\item Assume that $a\in p(\M)\cup M$ and $\phi(x,a)$ does not divide over
$M$, then there is a type $q(x)\in S(p(\M)\cup M)$ invariant under
$M$-automorphisms and with $\phi(x,a)\in q$.
\item Let $p'(x)\supset p(x)$ be an $M$ invariant type such that $p^{(\omega)}$
is an heir-coheir over $M$. If $\left(a_{i}\right)_{i<\omega}$ is
a Morley sequence in $p'$ and indiscernible over $bM$ with $b\in p(\M)$,
then $\tp(b/MI)$ has an $M$-invariant extension in $S(p(\M)\cup M)$.
\end{enumerate}
\end{lem}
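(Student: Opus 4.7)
The plan for both parts is to extend a non-dividing partial type to an $M$-invariant completion inside $p(\M)\cup M$ via a standard $\NIP$ alternation-rank argument, applied inside the partial type $p$.

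For part (1), since $\phi(x,a)$ does not divide over $M$ I first extract an $M$-indiscernible sequence $(a_{i})_{i<\omega}$ with $a_{0}=a$ such that $p(x)\cup\{\phi(x,a_{i})\}_{i<\omega}$ is consistent. As $\NIP$ types are $\NTP_{2}$, the proof of Lemma \ref{lem: inside NTP2 type, strict Morley sequence is universal} applies inside $p$ to give a Kim-type statement: the consistency above persists for \emph{every} $M$-indiscernible sequence in $\tp(a/M)$, so by compactness using saturation of $\M$ the ``$M$-invariant closure''
\[
\Sigma_{0}(x):=p(x)\cup\{\phi(x,a')\,:\,a'\equiv_{M}a\}
\]
is consistent. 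To extend $\Sigma_{0}$ to a complete $M$-invariant $q\in S(p(\M)\cup M)$, I proceed by Zorn: given a partial $M$-invariant $\Sigma\supseteq\Sigma_{0}$ and a formula $\psi(x,c)$ with $c\in p(\M)\cup M$, I add to $\Sigma$ either $\{\psi(x,c')\,:\,c'\equiv_{M}c\}$ or $\{\neg\psi(x,c')\,:\,c'\equiv_{M}c\}$. At least one choice is consistent with $\Sigma$, for otherwise, after using Fact \ref{fac: honest definition} to replace the parameter $c$ by a parameter inside $p(\M)$, Lemma \ref{lem: Everything works inside NIP type} would extract from the two failing patterns an $\IP$-pattern for $\psi$ witnessed by realizations of $p$, contradicting $\NIP$ of $p$.

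For part (2), the assumption that $p'$ is $M$-invariant with $p'^{(\omega)}$ heir-coheir over $M$, together with indiscernibility of $I=(a_{i})_{i<\omega}$ over $bM$, implies that every formula in $\tp(b/MI)$ does not divide over $M$: for any finite tuple from $I$, an $M$-indiscernible sequence extending it can be taken inside $I$ itself (using that $I$ is a Morley sequence in the $M$-invariant $p'$), and indiscernibility of $I$ over $bM$ supplies the required consistency. The construction of part (1) then produces the desired $M$-invariant extension of $\tp(b/MI)$ in $S(p(\M)\cup M)$: one simply runs the same Zorn argument starting from the $M$-invariant closure of $\tp(b/MI)$ in place of $\Sigma_{0}$, noting that $I\subseteq p(\M)$ so all parameters lie in $p(\M)\cup M$ as required.

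The main obstacle is the $\NIP$-based consistency check in the Zorn step of (1); the delicate point is that $\NIP$ of $p$ only directly constrains shattering patterns whose parameters lie inside $p(\M)$, so Fact \ref{fac: honest definition} is essential in order to first replace an arbitrary $c\in p(\M)\cup M$ by a parameter in $p(\M)$ before any $\IP$-pattern contradiction can be derived.
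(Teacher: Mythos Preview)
Your argument for (1) has two genuine gaps.

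First, the consistency of $\Sigma_{0}=p(x)\cup\{\phi(x,a'):a'\equiv_{M}a\}$ is not established. Non-dividing of $p(x)\cup\{\phi(x,a)\}$ over $M$ says precisely that $p(x)\cup\{\phi(x,a_{i})\}_{i<\omega}$ is consistent for every $M$-indiscernible sequence starting with $a$; it says nothing about an arbitrary finite set of $M$-conjugates of $a$, which need not lie on any common indiscernible sequence, so compactness does not yield $\Sigma_{0}$. Lemma~\ref{lem: inside NTP2 type, strict Morley sequence is universal} does not help either: it concerns \emph{strict} Morley sequences and its contrapositive goes from ``consistent along one strict Morley sequence'' to ``non-dividing'', not from ``non-dividing'' to ``consistent along all indiscernible sequences''.

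Second, the Zorn step is unjustified. If both $\Sigma\cup\{\psi(x,c'):c'\equiv_{M}c\}$ and $\Sigma\cup\{\neg\psi(x,c'):c'\equiv_{M}c\}$ fail, all you get is that $\Sigma$ implies a finite disjunction $\bigvee_{i}\neg\psi(x,c_{i})$ and a finite disjunction $\bigvee_{j}\psi(x,d_{j})$ for some conjugates of $c$. There is no evident mechanism to extract an $\IP$ pattern from this, and your appeal to honest definitions and Lemma~\ref{lem: Everything works inside NIP type} is far too vague to bridge the gap.

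The paper avoids both problems by reversing the order of operations. Since an $\NIP$ type is $\NTP_{2}$, Proposition~\ref{prop: forking=00003Ddividing inside NTP2 type} directly produces a \emph{complete} $q\in S(p(\M)\cup M)$ extending $p(x)\cup\{\phi(x,a)\}$ and not dividing over $M$. One then checks that any such $q$ is automatically Lascar-invariant: if $(a_{i})_{i<\omega}\subset p(\M)$ is $M$-indiscernible and $\phi(x,a_{0})\wedge\neg\phi(x,a_{1})\in q$, then $\{\phi(x,a_{2i})\wedge\neg\phi(x,a_{2i+1})\}_{i<\omega}$ is inconsistent (a realization would alternate infinitely along an indiscernible sequence inside $p(\M)$, contradicting $\NIP$ of $p$), so this formula divides over $M$, contradicting the choice of $q$. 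Part (2) then follows: Lemma~\ref{lem: inside NTP2 type, strict Morley sequence is universal} (using that $p'^{(\omega)}$ is heir-coheir, hence strictly invariant) shows $\tp(b/MI)\cup p(x)$ does not divide over $M$, and one reruns the argument of (1). The key idea you are missing is to first pass to a complete non-dividing extension via the $\NTP_{2}$ machinery and only then verify invariance; attempting to build invariance incrementally via Zorn does not go through.
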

\begin{proof}
(1) As $\NIP$ type is in particular an $\NTP_{2}$ type, by Lemma
\ref{prop: forking=00003Ddividing inside NTP2 type} we find a type
$q(x)\in S(p(\M))$ which doesn't divide over $M$ and such that $\phi(x,a)\in q$.
It is enough to show that $q(x)$ is Lascar-invariant over $M$. Assume
that we have an $M$-indiscernible sequence $(a_{i})_{i<\omega}$in
$p(\M)$ such that $\phi(x,a_{0})\land\neg\phi(x,a_{1})\in q$. But
then $\left\{ \phi(x,a_{2i})\land\phi(x,a_{2i+1})\right\} _{i<\omega}$
is inconsistent, so $q$ divides over $M$ --- a contradiction. Easy
induction shows the same for $a_{0}$ and $a_{1}$ at Lascar distance
$n$.

(2) By Lemma \ref{lem: inside NTP2 type, strict Morley sequence is universal}
and (1).
\end{proof}
Now for the \emph{proof of Theorem \ref{thm: dp-rank is witnessed inside the type}}.
The point is that first the array witnessing dp-rank of our type $p(x)$
can be dragged inside the set of realizations of $p$ by Lemma \ref{lem: burden witnessed inside}.
Then, combined with the use of Proposition \ref{lem: non-dividing gives inv extension in NIP type}
instead of the unrelativized version, the proof of Kaplan and Simon
\cite[Section 3.2]{KapSim} goes through working inside $p(\M)$. 
\begin{problem}
\label{Prob: burden witnessed inside} Is the analogue of Lemma \ref{lem: burden witnessed inside}
true for the burden of an arbitrary type in an $\NTP_{2}$ theory?
\end{problem}
We include some partial observations to justify it.
\begin{prop}
The answer to the Problem \ref{Prob: burden witnessed inside} is
positive in the following cases:
\begin{enumerate}
\item $T$ satisfies dependent forking (so in particular if $T$ is $\NIP$).
\item $T$ is simple.
\end{enumerate}
\end{prop}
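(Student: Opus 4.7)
The plan is to handle the two cases separately, in each reducing to a variant of the argument for Lemma~\ref{lem: burden witnessed inside}.

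For Case (1), where $T$ has dependent forking, I will start with a mutually indiscernible $\inp$-pattern of depth $\kappa$ in $p(x)$ over a model $M\supseteq C$ (via Lemma~\ref{lem: IndiscDivPattern}), and reduce by pigeon-hole to a uniform formula $\phi$ and a constant $k$. Let $d$ realize the path $p(x)\cup\{\phi(x,a_{\alpha,0})\}_{\alpha<\kappa}$. For each $\alpha$, since $\phi(x,a_{\alpha,0})\in\tp(d/Ma_{\alpha,0})$ divides over $M$, dependent forking supplies an $\NIP$ formula $\psi_{\alpha}(x,y)$ and $b_{\alpha}$ with $\psi_{\alpha}(x,b_{\alpha})$ in this type and dividing over $M$; a further pigeon-hole (after enlarging $\kappa$ by compactness) yields a uniform $\psi$. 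Using Lemmas~\ref{lem: Erdos-Rado} and~\ref{lem: almost indiscernible array gives an indiscernible array} I extract a mutually indiscernible array $(\bar{b}_{\alpha})_{\alpha<\kappa}$ from indiscernible sequences starting with the $b_{\alpha}$, obtaining an $\inp$-pattern $(\bar{b}_{\alpha},\psi,k')_{\alpha<\kappa}$ in $p(x)$ with the $\NIP$ formula $\psi$. I then apply the $\NIP$-formula version of honest definitions: for the set $A$ of realizations of the various paths (contained in $p(\M)$), every externally $\psi$-definable subset of $A$ is honestly defined by a Boolean combination of instances $\psi(x,a)$ with $a\in A$; choosing $A\subseteq p(\M)$ therefore forces the defining parameters into $p(\M)$. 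This substitutes each $b_{\alpha}$ by a tuple from $p(\M)\cup C$, and the proof of Lemma~\ref{lem: burden witnessed inside} then delivers the required mutually indiscernible $\inp$-pattern inside $p(\M)\cup C$.

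For Case (2), where $T$ is simple, the plan is to invoke Fact~\ref{fac: in simple T, burden is weight} to reduce burden to weight, and then exploit symmetry and the independence theorem. Pick a complete extension $q\supseteq p$ of weight $\geq\kappa$; classically this provides some $b\models q$ and a Morley sequence $(a_{\alpha})_{\alpha<\kappa}$ over $C$ with $b\nind_{C}^{f}a_{\alpha}$ for every $\alpha$. Symmetry of forking in simple theories gives $a_{\alpha}\nind_{C}^{f}b$, and by the independence theorem over Lascar strong types I assemble a Morley sequence $(b_{\alpha})_{\alpha<\kappa}$ of realizations of $q$ together with a single element that forks with each $b_{\alpha}$ over $C$. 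Since Morley sequences in simple theories are automatically mutually indiscernible and forking is witnessed by a dividing formula, this produces an $\inp$-pattern of depth $\kappa$ with rows inside $p(\M)\cup C$.

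The principal difficulty lies in Case (1): the parameter-placement step in Lemma~\ref{lem: burden witnessed inside} invokes Fact~\ref{fac: honest definition}, which uses the $\NIP$-ness of $p$ itself to place the honest-definition parameter in $p(\M)$. In our setting $p$ need not be $\NIP$, so this must be replaced by the $\NIP$-formula version of honest definitions, in which the defining parameter is explicitly a finite tuple from the fixed set $A$; choosing $A\subseteq p(\M)$ then forces the parameter into $p(\M)$ as required. Case (2) is essentially routine once burden has been identified with weight and the standard simple-theory toolkit (symmetry, independence theorem, Morley sequences) is brought to bear.
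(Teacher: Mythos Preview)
Your treatment of Case~(1) is essentially the same as the paper's: both pass to dividing over a model, invoke dependent dividing to replace each dividing witness by an instance of an $\NIP$ formula, and then rerun the honest-definition argument of Lemma~\ref{lem: burden witnessed inside}. Your remark that one must use the \emph{formula}-$\NIP$ version of honest definitions (since $p$ itself need not be $\NIP$) is a valid and useful clarification; the paper's proof is terser here (``the proof of Lemma~\ref{lem: burden witnessed inside} goes through'') but has the same content.

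Case~(2), however, has a genuine gap. The independence theorem amalgamates \emph{non-forking} extensions over independent parameters; it does not let you manufacture a single element that forks with every term of a Morley sequence. Concretely: from $a_\alpha\nind_C b$ you can take a Morley sequence $(b_\beta)_{\beta<\kappa}$ in $q$ and conjugates $c^\beta$ with $c^\beta b_\beta\equiv_C a_0 b$, so $c^\beta\nind_C b_\beta$; but to glue the $c^\beta$ into a single $c$ you would need to amalgamate the types $\tp(c^\beta/b_\beta C)$, and these \emph{fork} over $C$, which is exactly the hypothesis the independence theorem excludes. I do not see how to repair this step.

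The paper's argument for (2) is quite different and worth knowing. Starting from $a\models p$ and an independent family $(b_i)_{i<\kappa}$ with $a\nind b_i$, it fixes $b_0$ and takes a long Morley sequence $I=(a_j)_{j<|T|^+}$ in $\tp(a/b_0)$ (so $I\subseteq p(\M)$), arranged so that $b_{>0}\ind I$. If $I$ were Morley over $\emptyset$, then by bounded weight some $a_j$ would satisfy $a_j\ind b_0$, contradicting $a_j\equiv_{b_0}a$ and $a\nind b_0$. Hence $a\nind a_{<n}$ for some finite $n$, and one checks that $\{a_{<n}\}\cup b_{>0}$ is still independent. This replaces $b_0$ by a finite tuple from $p(\M)$; iterating over all $b_i$ yields an independent family inside $p(\M)$ witnessing the weight, and Fact~\ref{fac: in simple T, burden is weight} finishes.
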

\begin{proof}
(1): Recall that if $\mbox{bdn}(p)\geq\kappa$, then we can find $\left(b_{i}\right)_{i<\kappa}$,
$a\models p$ and $M\supseteq C$ such that $a\nind_{M}^{d}b_{i}$
and $b_{i}\ind_{M}^{\ist}b_{<i}$. Notice that $p(x)$ still has the
same burden in the sense of a Skolemization $T^{\Sk}$. Choose inductively
$M_{i}\supseteq M\cup b_{i}$ such that $M_{i}\ind_{M}^{\ist}b_{<i}$,
let $M_{i}=Sk(M\cup b_{i})$. Let $\phi(x,b_{i})$ witness this dividing
with $\phi(x,y)$ an $\NIP$ formula, we can make $\bar{b}_{i}$ mutually
indiscernible. Now the proof of Lemma \ref{lem: burden witnessed inside}
goes through.

(2): Let $p(x)\in S(A)$, $a\models p(x)$ and let $(b_{i})_{i<\kappa}$
independent over $A$, with $a\nind_{A}b_{i}$. Without loss of generality
$A=\emptyset$. Consider $\mbox{tp}(a/b_{0})$ and take $I=(a_{i})_{i<|T|^{+}}$
such that $a\overset{\frown}{}I$ is a Morley sequence in it. By extension
and automorphism we may assume $b_{>0}\ind_{ab_{0}}I$, together with
$a\ind_{b_{0}}I$ implies $b_{>0}\ind_{b_{0}}I$, thus $b_{>0}\ind I$
(as $b_{>0}\ind b_{0}$).

Assume that $I$ is a Morley sequence over $\emptyset$, then by simplicity
$a_{i}\ind b_{0}$ for some $i$, contradicting $a_{i}\equiv_{b_{0}}a$
and $a\nind b_{0}$. Thus by indiscernibility $a\nind a_{<n}$ for
some $n$, while $\{a_{<n}\}\cup b_{>0}$ is an independent set. 

Repeating this argument inductively and using the fact that the burden
of a type in a simple theory is the supremum of the weights of its
completions (Fact \ref{fac: in simple T, burden is weight}) allows
to conclude.
\end{proof}

\subsection{NIP types inside an $\NTP_{2}$ theory}

We give a characterization of $\NIP$ types in $\NTP_{2}$ theories
in terms of the number of non-forking extensions of its completions.
\begin{thm}
Let $T$ be $\NTP_{2}$, and let $p(x)$ be a partial type over $C$.
The following are equivalent:
\begin{enumerate}
\item $p$ is $\NIP$.
\item Every $p'\supseteq p$ has boundedly many global non-forking extensions.
\end{enumerate}
\end{thm}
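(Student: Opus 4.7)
The plan is to adapt Shelah's classical argument relating $\NIP$ of a theory to boundedness of non-forking extensions to the setting of an $\NIP$ type inside an $\NTP_{2}$ theory, using the tools of Section~\ref{sec: Forking in NTP2}.

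For $(2)\Rightarrow(1)$, I argue by contrapositive. Assume $p$ has $\IP$ via $\varphi(x,y)$; using the dual form of the definition, fix $(b_{i})_{i<\omega}$ and $(a_{s})_{s\subseteq\omega}\subseteq p(\M)$ with $\varphi(a_{s},b_{i})\Leftrightarrow i\in s$. Fix a model $M\supseteq C$. For any cardinal $\kappa$, Ramsey (Lemma~\ref{lem: Erdos-Rado}) and compactness together with Fact~\ref{Fac: Forking in an NTP2 theory}(1) produce a strict Morley sequence $(b_{i})_{i<\kappa}$ over $M$ for which the $\IP$ pattern persists: for every $s\subseteq\kappa$ there is $a_{s}\models p$ with $\varphi(a_{s},b_{i})\Leftrightarrow i\in s$. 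For each $s$ and finite $F\subseteq\kappa$, the shifted sub-Morley sequence $(b_{j|F|+i})_{i\in F,j<\omega}$ together with a suitable $a_{s'}$ realizes $\{\bigwedge_{i\in F}\varphi(x,b_{j|F|+i})^{[i\in s]}\}_{j}$, so by Kim's lemma (Theorem~\ref{thm: NTP2 iff Kims lemma iff bounded weight}) the corresponding finite conjunction does not divide over $M$. It follows that $\tp(a_{s}/M(b_{i})_{i<\kappa})$ does not fork over $M$; extending to a global non-forking $q_{s}\in S(\M)$ (valid since $M$ is an extension base), the $q_{s}$ are pairwise distinct. Varying $\kappa$ then produces unboundedly many global non-forking extensions of $p':=p|_{M}$.

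For $(1)\Rightarrow(2)$, assume $p$ is $\NIP$. For $p'\supseteq p$ with $B:=\mathrm{dom}(p')$, fix a model $M\supseteq B$. Monotonicity of non-forking as the base grows gives that every global non-forking extension of $p'$ is also non-forking over $M$, so it suffices to bound the global non-forking extensions of $p|_{M}$. Each such $q\supseteq p$ is itself an $\NIP$ type. I claim $q$ is $M$-invariant, whence the bound $\leq 2^{2^{|M|+|T|}}$ follows from the usual counting of global $M$-invariant types. Suppose otherwise, say $\varphi(x,a_{0})\wedge\neg\varphi(x,a_{1})\in q$ with $a_{0}\equiv_{M}a_{1}$. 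Embed $a_{0},a_{1}$ into an $M$-indiscernible sequence $(a_{i})_{i<\omega}$ arranged so that $((a_{2j},a_{2j+1}))_{j<\omega}$ is a strict Morley sequence in $\tp(a_{0}a_{1}/M)$ (using Fact~\ref{Fac: Forking in an NTP2 theory}(1)). Non-forking of $q$ over $M$ together with Kim's lemma then produces some $d\in p(\M)$ realizing $\varphi(d,a_{2j})\wedge\neg\varphi(d,a_{2j+1})$ for all $j$. Thus $\varphi(d,\cdot)$ alternates infinitely along the $M$-indiscernible $(a_{i})$ while $d\models p$, contradicting $\NIP$ of $p$ via Shelah's alternation lemma relativized to $p(\M)$.

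The main obstacle is the invariance step in $(1)\Rightarrow(2)$: non-forking over a model need not imply invariance in a general $\NTP_{2}$ theory, and one must use crucially that all realizations of $q$ lie in $p(\M)$, together with the localized Kim's lemma, to push the alternation contradiction through. The delicate point is assembling an $M$-indiscernible sequence $(a_{i})$ whose alternating pairs form a strict Morley sequence in $\tp(a_{0}a_{1}/M)$; this relies on Fact~\ref{Fac: Forking in an NTP2 theory} and the localized forking/dividing results of Section~\ref{sec: Forking in NTP2}.
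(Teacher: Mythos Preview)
For $(1)\Rightarrow(2)$ your idea is right but overcomplicated, and you have misidentified the delicate point. You should aim for Lascar-invariance of $q$ over $B=\mathrm{dom}(p')$, not $M$-invariance: two $M$-equivalent elements cannot in general be placed on an $M$-indiscernible sequence, so your ``embed $a_0,a_1$'' step fails as stated. Starting instead with $a_0,a_1$ on a $B$-indiscernible sequence $(a_i)_{i<\omega}$, no strict Morley machinery is needed at all: since $\chi(x)\wedge\varphi(x,a_0)\wedge\neg\varphi(x,a_1)\in q$ does not divide over $B$ for every $\chi\in p$, and $(a_{2j}a_{2j+1})_{j}$ is itself $B$-indiscernible with first pair $a_0a_1$, the set $p(x)\cup\{\varphi(x,a_{2j})\wedge\neg\varphi(x,a_{2j+1})\}_{j}$ is consistent by the \emph{definition} of non-dividing and compactness. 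Any realization $d\models p$ then alternates infinitely on $(a_i)$, contradicting $\NIP$ of $p$. This is the ``usual argument'' the paper alludes to; the point you flag as delicate is a non-issue.

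The genuine gap is in $(2)\Rightarrow(1)$. You assert that Ramsey, compactness, and Fact~\ref{Fac: Forking in an NTP2 theory}(1) yield a strict Morley sequence $(b_i)_{i<\kappa}$ over a fixed $M\supseteq C$ that still witnesses $\IP$ for $p$, but this is precisely the nontrivial step and you have not justified it. Ramsey and compactness only give an $M$-indiscernible sequence with the $\IP$ pattern; Fact~\ref{Fac: Forking in an NTP2 theory}(1) then gives \emph{some} strictly invariant extension of $\tp(b_0/M)$, but a Morley sequence in an arbitrary such extension has no reason to retain the $\IP$ pattern. One must control \emph{which} strictly invariant type is used. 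The paper does this as follows: take $q(y)$ a global non-algebraic type finitely satisfiable in the original witnesses $I=(b_i)_{i<\omega}$, and choose $M\supseteq IC$ to be $|IC|^{+}$-saturated, so that $q$ is an heir-coheir over $M$ (Lemma~\ref{lem: finding strictly invariant type}), hence strictly invariant. A Morley sequence $J=(c_i)_{i<\kappa}$ in $q$ over $M$ is then a strict Morley sequence, and because $\tp(J/M)$ is finitely satisfiable in $I$, the $\IP$ pattern persists along $J$. From there your Kim's-lemma argument on blocks goes through, and this is essentially what the paper does.
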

\begin{proof}
(1)$\Rightarrow$(2): A usual argument shows that a non-forking extension
of an $\NIP$ type is in fact Lascar-invariant (see Lemma \ref{lem: non-dividing gives inv extension in NIP type}),
thus there are only boundedly many such.

(2)$\Rightarrow$(1): Assume that $p(x)$ is not $\NIP$, that is
there are $I=\left(b_{i}\right)_{i\in\omega}$ such that such that
for any $s\subseteq\omega$, $p_{s}(x)=p(x)\cup\left\{ \phi(x,b_{i})\right\} _{i\in s}\cup\left\{ \neg\phi(x,b_{i})\right\} _{i\notin s}$
is consistent. Let $q(y)$ be a global non-algebraic type finitely
satisfiable in $I$. Let $M\supseteq IC$ be some $|IC|^{+}$-saturated
model. It follows that $q^{(\omega)}$ is a global heir-coheir over
$M$ by Lemma \ref{lem: finding strictly invariant type}. Take an
arbitrary cardinal $\kappa$, and let $J=\left(c_{i}\right)_{i\in\kappa}$
be a Morley sequence in $q$ over $M$. We claim that for any $s\subseteq\kappa$,
$p_{s}(x)$ does not divide over $M$. First notice that $p_{s}(x)$
is consistent for any $s$, as $\mbox{tp}(J/M)$ is finitely satisfiable
in $I$. But as for any $k<\omega$, $(c_{ki}c_{ki+1}...c_{k(i+1)-1})_{i<\omega}$
is a Morley sequence in $q^{(k)}$, together with Fact \ref{Fac: Forking in an NTP2 theory}
this implies that $p_{s}(x)|_{c_{0}...c_{k-1}}$ does not divide over
$M$ for any $k<\omega$, thus by indiscernibility of $J$, $p_{s}(x)$
does not divide over $M$, thus has a global non-forking extension
by Fact \ref{Fac: Forking in an NTP2 theory}. 

As there are only boundedly many types over $M$, there is some $p'\in S(M)$
extending $p$, with unboundedly many global non-forking extensions.\end{proof}
\begin{rem}
(2)$\Rightarrow$(1) is just a localized variant of an argument from
\cite{NFSpectra}.
\end{rem}

\section{\label{sec: Simple types}Simple types}

\subsection{Simple and co-simple types}

Simple types, to the best of our knowledge, were first defined in
\cite[§4]{KimPillayHart} in the form of (2).
\begin{defn}
\label{def: simple} We say that a partial type $p(x)\in S(A)$ is
\emph{simple} if it satisfies any of the following equivalent conditions:
\begin{enumerate}
\item There is no $\phi(x,y)$, $\left(a_{\eta}\right)_{\eta\in\omega^{<\omega}}$
and $k<\omega$ such that: $\left\{ \phi(x,a_{\eta i})\right\} _{i<\omega}$
is $k$-inconsistent for every $\eta\in\omega^{<\omega}$ and $\left\{ \phi(x,a_{\eta\restriction i})\right\} _{i<\omega}\cup p(x)$
is consistent for every $\eta\in\omega^{\omega}$. 
\item Local character: If $B\supseteq A$ and $p(x)\subseteq q(x)\in S(B)$,
then $q(x)$ does not divide over $AB'$ for some $B'\subseteq B$,
$|B'|\leq|T|$.
\item Kim's lemma: If $\left\{ \phi(x,b)\right\} \cup p(x)$ divides over
$B\supseteq A$ and $(b_{i})_{i<\omega}$ is a Morley sequence in
$\mbox{tp}(b/B)$, then $p(x)\cup\{\phi(x,b_{i})\}_{i<\omega}$ is
inconsistent.
\item Bounded weight: Let $B\supseteq A$ and $\kappa\geq\beth_{\left(2^{|B|}\right)^{+}}$.
If $a\models p(x)$ and $(b_{i})_{i<\kappa}$ is such that $b_{i}\ind_{B}^{f}b_{<i}$,
then $a\ind_{B}^{d}b_{i}$ for some $i<\kappa$. 
\item For any $B\supseteq A$, if $b\ind_{B}^{f}a$ and $a\models p(x)$,
then $a\ind_{B}^{d}b$.
\end{enumerate}
\end{defn}
\begin{proof}
~
\begin{lyxlist}{00.00.0000}
\item [{(1)$\Rightarrow$(2):}] Assume (2) fails, then we choose $\phi_{\alpha}(x,b_{\alpha})\in q(x)$
$k_{\alpha}$-dividing over $A\cup B_{\alpha}$, with $B_{\alpha}=\{b_{\beta}\}_{\beta<\alpha}\subseteq B$,
$|B_{\alpha}|\leq|\alpha|$ by induction on $\alpha<|T|^{+}$. Then
w.l.o.g. $\phi_{\alpha}=\phi$ and $k_{\alpha}=k$. Now construct
a tree in the usual manner, such that $\{\phi(x,a_{\eta i})\}_{i<\omega}$
is inconsistent for any $\eta\in\omega^{<\omega}$ and $\{\phi(x,a_{\eta|i})\}_{i<\omega}\cup p(x)$
is consistent for any $\eta\in\omega^{\omega}$.
\item [{(2)$\Rightarrow$(3):}] Let $I=(|T|^{+})^{*}$, and $(b_{i})_{i\in I}$
be Morley over $B$ in $\mbox{tp}(b/B)$. Assume that $a\models p(x)\cup\{\phi(x,b_{i})\}_{i\in I}$.
By (2), $\mbox{tp}(a/(b_{i})_{i\in I}B)$ does not divide over $B(b_{i})_{i\in I_{0}}$
for some $I_{0}\subseteq I$, $|I_{0}|\leq|T|$. Let $i_{0}\in I$,
$i_{0}<I_{0}$. Then $(b_{i})_{i\in I_{0}}\ind_{B}^{f}b_{i_{0}}$,
and thus $\phi(x,b_{i_{0}})$ divides over $BI_{0}$ - a contradiction.
\item [{(3)$\Rightarrow$(4):}] Assume not, then by Erd\"{o}s-Rado and
finite character find a Morley sequence over $B$ and a formula $\phi(x,y)$
such that $\models\phi(a,b_{i})$ and $\phi(x,b_{i})$ divides over
$B$, contradiction to (3).
\item [{(4)$\Rightarrow$(5):}] For $\kappa$ as in (4), let $I=(b_{i})_{i<\kappa}$
be a Morley sequence over $B$, indiscernible over $Ba$ and with
$b_{0}=b$. By (4), $a\ind_{B}^{d}b_{i}$ for some $i<\kappa$, and
so $a\ind_{B}^{d}b$ by indiscernibility.
\item [{(5)$\Rightarrow$(1):}] Let $(b_{\eta})_{\eta\in\omega^{<\omega}}$
witness the tree property of $\phi(x,y)$, such that $\{\phi(x,b_{\eta|i})\}_{i<\omega}\cup p(x)$
is consistent for every $\eta\in\omega^{\omega}$. Then by Ramsey
and compactness we can find $(b_{i})_{i\leq\omega}$ indiscernible
over $a$, $\models\phi(a,b_{i})$ and $\phi(x,b_{i})$ divides over
$b_{<i}A$. Taking $B=A\cup\{b_{i}\}_{i<\omega}$ we see that $a\nind_{B}^{d}b_{\omega}$,
while $b_{\omega}\ind_{B}^{f}a$ (as it is finitely satisfiable in
$B$ by indiscernibility) - a contradiction to (5).
\end{lyxlist}
\end{proof}
\begin{rem}
\label{rem: tuples are simple} Let $p(x)\in S(A)$ be simple.
\begin{enumerate}
\item Any $q(x)\supseteq p(x)$ is simple.
\item Let $p(x)\in S(A)$ be simple and $C\subseteq p(\mathbb{M})$. Then
$\mbox{tp}(C/A)$ is simple. 
\end{enumerate}
\end{rem}
\begin{proof}
(1): Clear, for example by (1) from the definition.

(2): Let $C=(c_{i})_{i\leq n}$, and we show that for any $B\supseteq A$,
if $b\ind_{B}^{f}C$, then $C\ind_{B}^{d}b$ by induction on the size
of $C$. Notice that $b\ind_{Bc_{<n}}^{f}c_{n}$ and $c_{n}\models p$,
thus $c_{n}\ind_{Bc_{<n}}^{d}b$. By the inductive assumption $c_{<n}\ind_{B}^{d}b$,
thus $c_{\leq n}\ind_{B}^{d}b$.
\end{proof}
We give a characterization in terms of local ranks.
\begin{prop}
The following are equivalent:
\begin{enumerate}
\item $p(x)$ is simple in the sense of Definition \ref{def: simple}.
\item $D(p,\Delta,k)<\omega$ for any finite $\Delta$ and $k<\omega$. 
\end{enumerate}
\end{prop}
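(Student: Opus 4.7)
The statement is the local-rank analogue of the familiar characterization of simple theories, now relativized to the type $p$; the plan is to mirror that standard argument in the present setting.

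For $(2) \Rightarrow (1)$, I would argue the contrapositive. Assume $p$ is not simple and apply clause $(1)$ of Definition \ref{def: simple} to obtain a formula $\phi(x,y)$, a constant $k < \omega$, and a tree $(a_\eta)_{\eta \in \omega^{<\omega}}$ such that $\{\phi(x, a_{\eta \frown i})\}_{i<\omega}$ is $k$-inconsistent for every $\eta \in \omega^{<\omega}$, while $p(x) \cup \{\phi(x, a_{\sigma|i})\}_{i<\omega}$ is consistent for every branch $\sigma \in \omega^{\omega}$. A straightforward induction along any branch then yields $D(p, \{\phi\}, k) \geq n$ for each $n$, hence $D(p, \{\phi\}, k) \geq \omega$, contradicting $(2)$.

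For $(1) \Rightarrow (2)$, suppose $D(p, \Delta, k) \geq \omega$ for some finite $\Delta$ and some $k$. A standard compactness argument applied to the inductive definition of $D$ upgrades this to $D(p, \Delta, k) = \infty$. I would then construct, by induction on levels, a tree $(a_\eta)_{\eta \in \omega^{<\omega}}$ together with labels $\phi_\eta \in \Delta$ so that: (i) $\{\phi_\eta(x, a_{\eta \frown i})\}_{i<\omega}$ is $k$-inconsistent for each $\eta$; and (ii) for every node $\eta$, the augmented partial type $p(x) \cup \{\phi_{\eta|(i-1)}(x, a_{\eta|i}) : 1 \leq i \leq |\eta|\}$ still has $D$-rank $\infty$. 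The successor step is immediate from the definition of $D$, taking the children to be an indiscernible sequence witnessing the $k$-dividing, so that the infinite rank propagates through every child (not only the zeroth). Since $\Delta$ is finite, a routine tree pigeonhole (thinning to a monochromatic subtree of the same height and branching) allows us to pass to a subtree on which $\phi_\eta$ is a single formula $\phi \in \Delta$; this tree together with $\phi$ directly falsifies clause $(1)$ of Definition \ref{def: simple} for $p$, contradicting $(1)$.

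The only real difficulty is the bookkeeping in $(1) \Rightarrow (2)$: one must propagate the ``$D = \infty$'' condition along every branch of the tree simultaneously, and then collapse the labels $\phi_\eta$ to a single formula. Both steps are standard — the first via compactness inside the definition of $D$, the second via the finiteness of $\Delta$ — but this is the point at which the proof requires some care.
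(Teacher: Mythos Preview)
Your proposal is correct and is precisely the standard argument the paper invokes (the paper's proof reads only ``Standard proof goes through''). The tree-pigeonhole step in $(1)\Rightarrow(2)$ does work but is a bit more delicate than you indicate; the cleaner standard route is to first extract from $D(p,\Delta,k)=\infty$ an infinite $\Delta$-dividing chain by compactness, pigeonhole on that \emph{linear} chain to obtain a single $\phi\in\Delta$, and only then unfold into the tree witnessing the failure of Definition~\ref{def: simple}(1).
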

\begin{proof}
Standard proof goes through.\end{proof}
\begin{lem}
Let $p(x)\in S(A)$ be simple, $a\models p(x)$ and $B\supseteq A$
arbitrary. Then $a\ind_{B_{0}}^{f}B$ for some $|B_{0}|\leq|T|^{+}$.\end{lem}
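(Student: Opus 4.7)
The plan is to use part (2) of Definition \ref{def: simple} (local character of dividing) to find a small set over which $\tp(a/B)$ does not divide, and then upgrade non-dividing to non-forking via Kim's lemma (part (3)). This reduces the lemma to a classical simple-theory move: for types extending a simple $p$, forking is the same as dividing.

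First, I would apply part (2) to the complete type $q := \tp(a/B)\in S(B)$, which extends $p$. This gives $B'\subseteq B$ with $|B'|\leq|T|$ such that $q$ does not divide over $AB'$. Set $B_0:=AB'$; under the usual assumption $|A|\leq|T|^+$ this gives $|B_0|\leq|T|^+$, as required.

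Next I would show $a\ind_{B_0}^{f}B$. Suppose for contradiction some $\phi(x,b)\in q$ forks over $B_0$, so there are $\psi_i(x,c_i)$ for $i<n$, each dividing over $B_0$, with $\phi(x,b)\vdash\bigvee_{i<n}\psi_i(x,c_i)$. Let $\bar d=(b,c_0,\ldots,c_{n-1})$ and choose a Morley sequence $(\bar d^k)_{k<\omega}$ in $\tp(\bar d/B_0)$, writing $\bar d^k=(b^k,c_0^k,\ldots,c_{n-1}^k)$. Each projection $(c_i^k)_{k<\omega}$ is then a Morley sequence in $\tp(c_i/B_0)$. Since $\psi_i(x,c_i)$ divides over $B_0$, so does the partial type $\{\psi_i(x,c_i)\}\cup p(x)$, and Kim's lemma (part (3)) yields that $p(x)\cup\{\psi_i(x,c_i^k)\}_{k<\omega}$ is inconsistent for each $i<n$.

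Finally I would derive a contradiction via pigeonhole. Any hypothetical $a'\models p(x)\cup\{\phi(x,b^k)\}_{k<\omega}$ must satisfy $\bigvee_i\psi_i(x,c_i^k)$ for every $k$ (by automorphism invariance of $\tp(\bar d/B_0)$), so for each $k$ some $i_k<n$ has $a'\models\psi_{i_k}(x,c_{i_k}^k)$. Pigeonhole produces a fixed $i^*$ and an infinite $K\subseteq\omega$ with $a'\models\psi_{i^*}(x,c_{i^*}^k)$ for all $k\in K$; the sub-sequence $(c_{i^*}^k)_{k\in K}$ is still a Morley sequence in $\tp(c_{i^*}/B_0)$, contradicting the inconsistency from Kim's lemma. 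Thus $p(x)\cup\{\phi(x,b^k)\}_{k<\omega}$ is inconsistent, and by compactness some $\theta(x)\in p$ makes $\theta(x)\wedge\phi(x,b^k)$ $k$-inconsistent, i.e.\ $\theta\wedge\phi$ divides over $B_0$. Since $\theta\wedge\phi\in\tp(a/B)$, this contradicts the non-dividing of $q$ over $B_0$.

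The main obstacle is the ``simultaneous Morley plus pigeonhole'' step: one must take all parameters $b,c_0,\ldots,c_{n-1}$ in a single Morley sequence so that the projections are coherently Morley, and then verify that a sub-sequence of a Morley sequence over $B_0$ is again a Morley sequence (so that Kim's lemma applies to the restricted index set $K$). Once this standard move is carried out, the argument upgrades non-dividing to non-forking with no further work.
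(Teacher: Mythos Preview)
Your argument is essentially the proof of the paper's \emph{next} lemma (forking $=$ dividing for $\{\phi(x,c)\}\cup p(x)$ over an extension base), transplanted to the base $B_0=AB'$. The step that fails is not the pigeonhole bookkeeping you flag, but the very existence of the Morley sequence $(\bar d^{\,k})_{k<\omega}$ in $\tp(\bar d/B_0)$. Such a sequence exists only if $\tp(\bar d/B_0)$ does not fork over $B_0$, and nothing guarantees this: $B_0$ is an arbitrary small subset of $B$, and $\bar d=(b,c_0,\ldots,c_{n-1})$ involves the forking witnesses $c_i$, which may lie anywhere in the monster. This is exactly why the paper's subsequent lemma carries the explicit hypothesis ``$A$ is an extension base''; you are invoking that argument without the hypothesis. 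Passing to a model $M_0\supseteq B_0$ does not help either: Morley sequences then exist over $M_0$, but the $\psi_i(x,c_i)$ need not divide over $M_0$, and non-forking over $M_0$ does not imply non-forking over $B_0$.

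The paper's route avoids this entirely by using the local ranks $D(-,\Delta,k)$ from the preceding proposition. For each finite $\Delta$ and $k$ one picks $\phi_{\Delta,k}\in\tp(a/B)$ of minimal $(\Delta,k)$-rank and lets $B_0$ be the set of parameters used (so $|B_0|\le|T|$, with no need for $A\subseteq B_0$). If some $\phi(x,b)\in\tp(a/B)$ implied $\bigvee_i\psi_i(x,c_i)$ with each $\psi_i$ $k$-dividing over $B_0$, then for $\Delta\supseteq\{\psi_i\}$ one gets
\[
D\bigl(\tp(a/B),\Delta,k\bigr)\;\le\;\max_i D\bigl(\tp(a/B_0)\cup\{\psi_i(x,c_i)\},\Delta,k\bigr)\;<\;D\bigl(\tp(a/B_0),\Delta,k\bigr)\;=\;D\bigl(\tp(a/B),\Delta,k\bigr),
\]
a contradiction. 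No Morley sequences are needed, and as a bonus the size assumption $|A|\le|T|^+$ that you had to insert becomes unnecessary.
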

\begin{proof}
Standard proof using ranks goes through.
\end{proof}
It follows that in the Definition \ref{def: simple} we can replace
everywhere ``dividing'' by ``forking''.
\begin{lem}
\label{lem: Forking=00003Ddividing over an extension base} Let $p(x)\in S(A)$
be simple. If $A$ is an extension base, then $\{\phi(x,c)\}\cup p(x)$
forks over $A$ if and only if it divides over $A$. \end{lem}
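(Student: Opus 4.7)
The direction ``dividing implies forking'' is immediate. For the converse, my plan is to imitate the standard simple-theory argument that non-forking equals non-dividing over an extension base, but using the \emph{localized} Kim's lemma for $p$ (item (3) of Definition \ref{def: simple}) in place of the global one.

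So suppose $\{\phi(x,c)\}\cup p(x)$ forks over $A$, witnessed by
\[
\{\phi(x,c)\}\cup p(x)\;\vdash\;\bigvee_{i<n}\phi_{i}(x,c_{i}),
\]
where each $\phi_{i}(x,c_{i})$ divides over $A$ (hence $\{\phi_{i}(x,c_{i})\}\cup p(x)$ divides over $A$ as well). Let $\bar{c}=c\,c_{0}\cdots c_{n-1}$. Since $A$ is an extension base, $\tp(\bar{c}/A)$ has a global non-forking extension; iterating produces a Morley sequence $(\bar{c}^{\,j})_{j<\omega}=(c^{j}c_{0}^{j}\cdots c_{n-1}^{j})_{j<\omega}$ over $A$.

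Assume toward a contradiction that $\{\phi(x,c)\}\cup p(x)$ does not divide over $A$. Then $p(x)\cup\{\phi(x,c^{j})\}_{j<\omega}$ is consistent; let $a$ realize it. For each $j$ we have $a\models\bigvee_{i<n}\phi_{i}(x,c_{i}^{j})$, so by pigeonhole there is a fixed $i<n$ and an infinite set $J\subseteq\omega$ with $a\models\phi_{i}(x,c_{i}^{j})$ for every $j\in J$. The subsequence $(c_{i}^{\,j})_{j\in J}$ is a projection and infinite subsequence of a Morley sequence, hence is itself a Morley sequence in $\tp(c_{i}/A)$. Thus $p(x)\cup\{\phi_{i}(x,c_{i}^{j})\}_{j\in J}$ is consistent, contradicting Kim's lemma (clause (3) of Definition \ref{def: simple}) applied to the simple type $p$ and the formula $\phi_{i}(x,c_{i})$, which divides over $A$.

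The only delicate point is to check carefully that an infinite subsequence of the projection of a Morley sequence is still Morley in the relevant type over $A$ -- but this is routine from indiscernibility and the fact that non-forking is preserved under sub-tuples. Everything else is direct use of the equivalent characterizations of simplicity of $p$ already established in Definition \ref{def: simple}.
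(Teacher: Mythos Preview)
Your proof is correct and follows essentially the same argument as the paper: take a Morley sequence in $\tp(cc_{0}\ldots c_{n-1}/A)$ using that $A$ is an extension base, realize $p(x)\cup\{\phi(x,c^{j})\}_{j<\omega}$ by non-dividing, and then use pigeonhole together with the localized Kim's lemma (Definition~\ref{def: simple}(3)) to obtain a contradiction. The paper's version is slightly terser (it asserts directly that $p(x)\cup\{\phi_{i}(x,c_{i}^{j})\}_{j<\omega}$ is consistent for some $i$, implicitly using indiscernibility to pass from an infinite subset to all of $\omega$), but your explicit use of an infinite $J$ and the remark that projections and subsequences of Morley sequences remain Morley is a perfectly good way to fill in that step.
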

\begin{proof}
Assume that $\{\phi(x,c)\}\cup p(x)$ does not divide over $A$, but
$\{\phi(x,c)\}\cup p(x)\vdash\bigvee_{i<n}\phi_{i}(x,c_{i})$ and
each of $\phi_{i}(x,c_{i})$ divides over $A$. As $A$ is an extension
base, let $(c_{i}c_{0,i}...c_{n-1,i})$ be a Morley sequence in $\mbox{tp}(cc_{0}...c_{n-1}/A)$.
As $p(x)\cup\{\phi(x,c)\}$ does not divide over $A$, let $a\models p(x)\cup\{\phi(x,c_{i})\}$,
but then $p(x)\cup\{\phi_{i}(x,c_{i,j})\}_{j<\omega}$ is consistent
for some $i<n$, contradicting Kim's lemma. \end{proof}
\begin{problem}
Let $q(x)$ be a non-forking extension of a complete type $p(x)$,
and assume that $q(x)$ is simple. Does it imply that $p(x)$ is simple?
\end{problem}
~

Unlike stability or $\mbox{NIP}$, it is possible that $\phi(x,y)$
does not have the tree property, while $\phi^{*}(x',y')=\phi(y',x')$
does. This forces us to define a dual concept.
\begin{defn}
\label{def: co-simple} A partial type $p(x)$ over $A$ is \emph{co-simple}
if it satisfies any of the following equivalent properties:
\begin{enumerate}
\item No formula $\phi(x,y)\in L(A)$ has the tree property witnessed by
some $(a_{\eta})_{\eta\in\omega^{<\omega}}$ with $a_{\eta}\subseteq p(\mathbb{M})$.
\item Every type $q(x)\in S(BA)$ with $B\subseteq p(\mathbb{M})$ does
not divide over $AB'$ for some $B'\subseteq B$, $|B'|\leq\left(|A|+|T|\right)^{+}$.
\item Let $(a_{i})_{i<\omega}\subseteq p(\mathbb{M})$ be a Morley sequence
over $BA$, $B\subseteq p(\mathbb{M})$ and $\phi(x,y)\in L(A)$.
If $\phi(x,a_{0})$ divides over $BA$ then $\{\phi(x,a_{i})\}_{i<\omega}$
is inconsistent.
\item Let $B\subseteq p(\mathbb{M})$ and $\kappa\geq\beth_{\left(2^{|B|+|A|}\right)^{+}}$.
If $(b_{i})_{i<\kappa}\subseteq p(\mathbb{M})$ is such that $b_{i}\ind_{AB}^{f}b_{<i}$
and $a$ arbitrary, then $a\ind_{AB}^{d}b_{i}$ for some $i<\kappa$. 
\item For $B\subseteq p(\mathbb{M})$, if $a\models p$ and $a\ind_{AB}^{f}b$,
then $b\ind_{AB}^{d}a$.
\end{enumerate}
\end{defn}
\begin{proof}
Similar to the proof in Definition \ref{def: simple}.\end{proof}
\begin{rem}
It follows that if $p(x)$ is a co-simple type over $A$ and $B\subseteq p(\M)$,
then any $q(x)\in S(AB)$ extending $p$ is co-simple (while adding
the parameters from outside of the set of solutions of $p$ may ruin
co-simplicity).
\end{rem}
~

It is easy to see that $T$ is simple $\Leftrightarrow$ every type
is simple $\Leftrightarrow$ every type is co-simple. What is the
relation between simple and co-simple in general?
\begin{example}
\label{ex: co-simple not simple type} There is a co-simple type over
a model which is not simple.\end{example}
\begin{proof}
Let $T$ be the theory of an infinite triangle-free random graph,
this theory eliminates quantifiers. Let $M\models T$, $m\in M$ and
consider $p(x)=\{xRm\}\cup\{\neg xRa\}_{a\in M\setminus\{m\}}$ -
a non-algebraic type over $M$. As there can be no triangles, if $a,b\models p(x)$
then $\neg aRb$. It follows that for any $A\subseteq p(\mathbb{M})$
and any $B$, $B\nind_{M}^{d}A$ $\Leftrightarrow$ $B\cap A\neq\emptyset$.
So $p(x)$ is co-simple, for example by checking the bounded weight
(Definition \ref{def: co-simple}(4)).

For each $\alpha<\omega$, take $\left(b_{\alpha,i}'b_{\alpha,i}''\right)_{i<\omega}$
such that $b_{\alpha,i}'Rb_{\alpha,j}''$ for all $i\neq j$, and
no other edges between them or to elements of $M$. Then $\left\{ xRb_{\alpha,i}'\land xRb_{\alpha,i}''\right\} _{i<\omega}$
is $2$-inconsistent for every $\alpha$, while $p(x)\cup\left\{ xRb_{\alpha,\eta(\alpha)}'\land xRb_{\alpha,\eta(\alpha)}''\right\} _{\alpha<\omega}$
is consistent for every $\eta:\,\omega\to\omega$. Thus $p(x)$ is
not simple by Definition \ref{def: simple}(1).
\end{proof}
However, this $T$ has $\mbox{TP}_{2}$ by Example \ref{ex: triangle-free random graph has TP2}.
\begin{problem}
Is there a simple, non co-simple type in an arbitrary theory?
\end{problem}

\subsection{Simple types are co-simple in $\mbox{NTP}_{2}$ theories}

In this section we assume that $T$ is $\NTP_{2}$ (although some
lemmas remain true without this restriction). In particular, we will
write $\ind$ to denote non-forking/non-dividing when working over
an extension base as they are the same by Fact \ref{Fac: Forking in an NTP2 theory}(3).
\begin{lem}
\label{lem: Chain condition} Weak chain condition: Let $A$ be an
extension base, $p(x)\in S(A)$ simple. Assume that $a\models p(x)$,
$I=(b_{i})_{i<\omega}$ is a Morley sequence over $A$ and $a\ind_{A}b_{0}$.
Then there is an $aA$-indiscernible $J\equiv_{Ab_{0}}I$ satisfying
$a\ind_{A}J$ .\end{lem}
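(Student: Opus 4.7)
The strategy is the classical one used to prove chain conditions in simple theories: extract a conjugate $a'$ of $a$ (over $Ab_{0}$) such that $I$ is indiscernible over $Aa'$, then transport the conclusion back via an $Ab_{0}$-automorphism. The key inputs will be the standard characterization of non-dividing via extraction of indiscernible sequences, together with Lemma \ref{lem: Forking=00003Ddividing over an extension base}, and simplicity of every extension of $p$ (Remark \ref{rem: tuples are simple}).

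First, since $p$ is simple and $a\ind_{A}b_{0}$ with $A$ an extension base, the type $\tp(a/Ab_{0})$ does not divide over $A$. Combined with the $A$-indiscernibility of the Morley sequence $I=(b_{i})_{i<\omega}$ starting with $b_{0}$, a standard Ramsey/extraction argument produces $a'\equiv_{Ab_{0}}a$ such that $I$ is indiscernible over $Aa'$. Now choose $\sigma\in\Aut(\M/Ab_{0})$ with $\sigma(a')=a$ and set $J:=\sigma(I)$; then $J$ starts with $\sigma(b_{0})=b_{0}$, $J\equiv_{Ab_{0}}I$, and $J$ is indiscernible over $Aa$ as the image of an $Aa'$-indiscernible sequence. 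It remains to establish $a\ind_{A}J$, equivalently $a'\ind_{A}I$.

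The bulk of the work is this last step. I would prove $a'\ind_{A}b_{\leq n}$ by induction on $n$: the base case is the hypothesis $a'\ind_{A}b_{0}$ (from $a'\equiv_{Ab_{0}}a$ and $a\ind_{A}b_{0}$). For the inductive step, observe that the tail $(b_{n+k})_{k\geq0}$ is indiscernible over $Aa'b_{<n}$ (as $I$ is $Aa'$-indiscernible, hence so is its tail over any initial segment) and is in particular $Ab_{<n}$-indiscernible; moreover $a'$ itself realizes $\bigcup_{k}\tp(a'/Ab_{<n}b_{n+k})$, which exhibits $\tp(a'/Ab_{\leq n})$ as not dividing over $Ab_{<n}$. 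Invoking Lemma \ref{lem: Forking=00003Ddividing over an extension base} (applied to the simple extension $\tp(a'/Ab_{<n})$ of $p$, simple by Remark \ref{rem: tuples are simple}) upgrades this to non-forking over $Ab_{<n}$, and transitivity of non-forking combined with the inductive hypothesis then yields $a'\ind_{A}b_{\leq n}$. Passing to the union gives $a'\ind_{A}I$ as required. The main obstacle is justifying at each step that $Ab_{<n}$ remains an extension base relative to the simple type $\tp(a'/Ab_{<n})$, so that Lemma \ref{lem: Forking=00003Ddividing over an extension base} applies; this should follow from the simplicity of $p$ together with the fact that $b_{<n}$ is drawn from a Morley sequence over the extension base $A$, ensuring that global non-forking extensions survive along the $b_{<n}$, but it is the delicate point of the argument.
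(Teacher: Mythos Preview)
Your approach differs structurally from the paper's: you first extract $a'\equiv_{Ab_0}a$ with $I$ indiscernible over $Aa'$ and then try to establish $a'\ind_A I$ by induction along initial segments, whereas the paper first shows directly that $\bigcup_{i<\omega}p(x,b_i)$ (with $p(x,b_0)=\tp(a/Ab_0)$) does not divide over $A$, then finds a realization $a'$ with $a'\ind_A I$, and only afterwards extracts indiscernibility via Ramsey.

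There are two problems with your inductive step. First, the sentence ``$a'$ itself realizes $\bigcup_k\tp(a'/Ab_{<n}b_{n+k})$, which exhibits $\tp(a'/Ab_{\leq n})$ as not dividing over $Ab_{<n}$'' is not valid as written: consistency along a single indiscernible sequence never proves non-dividing. What saves you here is that the tail $(b_{n+k})_{k<\omega}$ is in fact a \emph{Morley} sequence over $Ab_{<n}$, so Kim's lemma for the simple type $p$ (Definition~\ref{def: simple}(3)) applies; you must invoke it explicitly. Second, and more seriously, the delicate point you flag is a genuine gap: Lemma~\ref{lem: Forking=00003Ddividing over an extension base} requires the base to be an extension base, i.e.\ that \emph{every} type over $Ab_{<n}$ admit a global non-forking extension. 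Your suggested justification only shows that the particular type $\tp(a'/Ab_{<n})$ extends non-forkingly (since it is a non-forking extension from the extension base $A$), but the proof of Lemma~\ref{lem: Forking=00003Ddividing over an extension base} needs a Morley sequence over $Ab_{<n}$ in the type of the \emph{parameters} witnessing forking, which may be arbitrary elements of the monster. There is no reason $Ab_{<n}$ should be an extension base in general.

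The paper sidesteps both issues by never leaving the base $A$: for any $\phi(x,b_0,\ldots,b_{m-1})$ realized by $a$, the blocked sequence $(b_{mi}\ldots b_{mi+m-1})_{i<\omega}$ is a Morley sequence over $A$, and since $\{\phi(x,b_i)\}_{i<\omega}\cup p(x)$ is consistent, Kim's lemma at $A$ yields that $\{\phi(x,b_0,\ldots,b_{m-1})\}\cup p(x)$ does not divide over $A$. Then Lemma~\ref{lem: Forking=00003Ddividing over an extension base} is applied once, at $A$, where the extension-base hypothesis is available. If you want to rescue your order of operations, you can do the same: once $I$ is $Aa'$-indiscernible, apply blocking and Kim's lemma directly over $A$ to conclude $a'\ind_A^d I$, then upgrade to non-forking at $A$; the induction over $Ab_{<n}$ is both unnecessary and the source of the difficulty.
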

\begin{proof}
Let $a\models\phi(x,b_{0})$, then $\{\phi(x,b_{0})\}\cup p(x)$ does
not divide over $A$.
\begin{claim*}
$\{\phi(x,b_{0})\land\phi(x,b_{1})\}\cup p(x)$ does not divide over
$A$.\end{claim*}
\begin{proof}
As $p(x)$ satisfies Definition \ref{def: simple}(3), $(b_{2i}b_{2i+1})_{i<\omega}$
is a Morley sequence over $A$ and $\{\phi(x,b_{i})\}_{i<\omega}\cup p(x)$
is consistent.
\end{proof}

By iterating the claim and compactness, we conclude that $\bigcup_{i<\omega}p(x,b_{i})$
does not divide over $A$, where $p(x,b_{0})=\mbox{tp}(a/b_{0})$.
As $A$ is an extension base and forking equals dividing, there is
$a'\models\bigcup_{i<\omega}p(x,b_{i})$ satisfying $a'\ind_{A}I$.
By Ramsey, compactness and the fact that $a'b_{i}\equiv_{A}ab_{0}$
we find a sequence as wanted.\end{proof}
\begin{rem}
If fact, in \cite{CheBY} we demonstrate that in an $\NTP_{2}$ theory
this lemma holds over extension bases with $I$ just an indiscernible
sequence, not necessarily Morley.\end{rem}
\begin{lem}
\label{lem: Making almost mutually indiscernible} Let $A$ be an
extension base, $p\in S(A)$ simple. For $i<\omega$, Let $\bar{a}_{i}$
be a Morley sequence in $p(x)$ over $A$ starting with $a_{i}$,
and assume that $(a_{i})_{i<\omega}$ is a Morley sequence in $p(x)$.
Then we can find $\bar{b}_{i}\equiv_{Aa_{i}}\bar{a}_{i}$ such that
$(\bar{b}_{i})_{i<\omega}$ are mutually indiscernible over $A$.\end{lem}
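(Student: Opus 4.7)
The plan is to build the rows $\bar{b}_\alpha$ one at a time by induction, applying the Chain Condition (Lemma \ref{lem: Chain condition}) at each step, and then to upgrade to mutual indiscernibility at the end. By compactness it is enough to handle finitely many rows, and by Lemma \ref{lem: almost indiscernible array gives an indiscernible array} it is in fact enough to produce $(\bar{b}_\alpha)_{\alpha<n}$ that are \emph{almost} mutually $A$-indiscernible (in the sense of the paper) with $\bar{b}_\alpha$ starting with $a_\alpha$ and $\bar{b}_\alpha\equiv_{Aa_\alpha}\bar{a}_\alpha$, since that lemma upgrades such an array to a genuinely mutually $A$-indiscernible one without disturbing the first elements (and hence preserving the type over $Aa_\alpha$).

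Along the construction I would maintain, besides the almost-mutual-indiscernibility clause for each row, the non-forking invariant $\bar{b}_{<n}\ind_{A}(a_\beta)_{\beta\geq n}$ (vacuous at $n=0$). At step $n$, set $c=\bar{b}_{<n}(a_\beta)_{\beta>n}$; every element of $c$ realizes $p$, so $\tp(c/A)$ is simple by Remark \ref{rem: tuples are simple}. Morley-ness of $(a_\beta)_{\beta\geq n}$ gives $a_n\ind_{A}(a_\beta)_{\beta>n}$, and combining this with the inductive invariant via the symmetry of non-forking over $A$ for realizations of a simple type (Definition \ref{def: simple}(5) applied to $\tp(c/A)$), base monotonicity, and transitivity yields $c\ind_{A}a_n$. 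I would then apply the Chain Condition to the simple type $\tp(c/A)$, realization $c$, and Morley sequence $\bar{a}_n$ starting with $a_n$, obtaining $\bar{b}_n\equiv_{Aa_n}\bar{a}_n$ which is indiscernible over $Ac$ and satisfies $\bar{b}_n\ind_{A}c$. The first two properties supply the required almost-indiscernibility clause for the new row.

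To re-establish the invariant at $n+1$, i.e.\ $\bar{b}_{\leq n}\ind_{A}(a_\beta)_{\beta>n}$, I would apply base monotonicity to $\bar{b}_n\ind_{A}c$ to get $\bar{b}_n\ind_{A\bar{b}_{<n}}(a_\beta)_{\beta>n}$, combine it with the restriction $\bar{b}_{<n}\ind_{A}(a_\beta)_{\beta>n}$ of the old invariant, and use transitivity of non-forking over $A$ for tuples realizing simple types. The induction then continues, and Lemma \ref{lem: almost indiscernible array gives an indiscernible array} concludes the proof.

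The main technical point will be this bookkeeping of non-forking among tuples of realizations of $p$: correctly identifying the ``element'' and ``Morley sequence'' for each invocation of the Chain Condition, and systematically using symmetry, base monotonicity, and transitivity of non-forking in simple types --- all available here via Definition \ref{def: simple}(5) and Remark \ref{rem: tuples are simple}.
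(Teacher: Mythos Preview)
Your proposal is correct and follows essentially the same approach as the paper: an inductive construction of the rows using the Chain Condition (Lemma \ref{lem: Chain condition}) applied to the simple type of a tuple of realizations of $p$, maintaining a non-forking invariant, and then upgrading almost mutual indiscernibility to mutual indiscernibility via Lemma \ref{lem: almost indiscernible array gives an indiscernible array}. The paper carries the induction directly over all $i<\omega$ (so the compactness reduction is unnecessary) and tracks two invariants --- your $(4)$-style invariant together with the statement $a_{>i+1}\bar{b}_{\leq i}\ind a_{i+1}$ --- rather than deriving the latter each time, but this is only a bookkeeping difference.
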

\begin{proof}
W.l.o.g. $A=\emptyset$. 

First observe that by simplicity of $p$, $\{a_{i}\}_{i<\omega}$
is an independent set.

For $i<\omega$, we choose inductively $\bar{b}_{i}$ such that:
\begin{enumerate}
\item $\bar{b}_{i}\equiv_{a_{i}}\bar{a}_{i}$
\item $\bar{b}_{i}$ is indiscernible over $a_{>i}\bar{b}_{<i}$
\item $a_{>i+1}\bar{b}_{\leq i}\ind a_{i+1}$
\item $a_{\geq i+1}\ind\bar{b}_{\leq i}$
\end{enumerate}

Base step: As $a_{>0}\ind a_{0}$ and $\mbox{tp}(a_{>0})$ is simple
by Remark \ref{rem: tuples are simple} and Lemma \ref{lem: Chain condition},
we find an $a_{>0}$-indiscernible $\bar{b}_{0}\equiv_{a_{0}}\bar{a}_{0}$
with $a_{>0}\ind\bar{b}_{0}$.

Induction step: Assume that we have constructed $\bar{b}_{0},...,\bar{b}_{i-1}$.
By (3) for $i-1$ it follows that $a_{>i}\bar{b}_{<i}\ind a_{i}$.
Again by Remark \ref{rem: tuples are simple} and Lemma \ref{lem: Chain condition}
we find an $a_{>i}\bar{b}_{<i}$-indiscernible sequence $\bar{b}_{i}\equiv_{a_{i}}\bar{a}_{i}$
such that $a_{>i}\bar{b}_{<i}\ind\bar{b}_{i}$. 

We check that it satisfies (3): As all tuples are inside $p(\mathbb{M})$,
we can use symmetry, transitivity and $\ind^{d}=\ind^{f}$ freely.
And so, $a_{>i+1}a_{i+1}\bar{b}_{<i}\ind\bar{b}_{i}$ $\Rightarrow$
$a_{>i+1}\bar{b}_{<i}\ind_{a_{i+1}}\bar{b}_{i}$ $+$ $a_{>i+1}\bar{b}_{<i}\ind a_{i+1}$(as
$a_{>i+1}\ind a_{i+1}$ and $\bar{b}_{<i}\ind a_{\geq i+1}$ by (4)
for $i-1$) $\Rightarrow$ $a_{>i+1}\bar{b}_{<i}\ind\bar{b}_{i}a_{i+1}$
$\Rightarrow$ $a_{>i+1}\bar{b}_{<i}\ind_{\bar{b}_{i}}a_{i+1}$ $+$
$\bar{b}_{i}\ind a_{i+1}$ $\Rightarrow$ $a_{>i+1}\bar{b}_{\leq i}\ind a_{i+1}$.

We check that it satisfies (4): As $a_{>i}\bar{b}_{<i}\ind\bar{b}_{i}$
$\Rightarrow$ $a_{>i}\ind_{\bar{b}_{<i}}\bar{b}_{i}$ $+$ $a_{>i}\ind\bar{b}_{<i}$
by (4) for $i-1$ $\Rightarrow$ $a_{>i}\ind\bar{b}_{\leq i}$.

~

Having chosen $(\bar{b}_{i})_{i<\omega}$ we see that they are almost
mutually indiscernible by (1) and (2). Conclude by Lemma \ref{lem: almost indiscernible array gives an indiscernible array}.\end{proof}
\begin{lem}
\label{lem: In simple type in NTP2 theory, there is a MS witnessing dividing}
Let $T$ be $\mbox{NTP}_{2}$, $A$ an extension base and $p(x)\in S(A)$
simple. Assume that $\phi(x,a)$ divides over $A$, with $a\models p(x)$.
Then there is a Morley sequence over $A$ witnessing it.\end{lem}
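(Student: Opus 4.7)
The plan is to pass from the extension base $A$ to a sufficiently saturated model $M \supseteq A$, build a strict Morley sequence inside $\tp(a/M)$, and then invoke Fact~\ref{Fac: Forking in an NTP2 theory}(2). Using that $A$ is an extension base, I first realize a non-forking extension of $\tp(a/A)$ over $M$ and, after an $A$-automorphism, assume $a \ind_A^f M$. By Fact~\ref{Fac: Forking in an NTP2 theory}(1), the type $\tp(a/M)$ has a global strictly invariant extension $q$ over $M$; take the strict Morley sequence $(a_i)_{i<\omega}$ with $a_i \models q|_{Ma_{<i}}$ and $a_0 = a$. Since each $a_i \equiv_M a$ gives $a_i \ind_A^f M$, combining with $a_i \ind_M^f a_{<i}$ via left transitivity of non-forking yields $a_i \ind_A^f a_{<i}$; together with $M$-indiscernibility (hence $A$-indiscernibility), this shows $(a_i)$ is a Morley sequence over $A$ in $\tp(a/A)$. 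Once $\phi(x,a)$ is known to divide over $M$, Fact~\ref{Fac: Forking in an NTP2 theory}(2) forces $\{\phi(x,a_i)\}_{i<\omega}$ to be inconsistent, and we are done.

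The heart of the argument is therefore to show that $\phi(x,a)$ divides over $M$. I would pick an $A$-indiscernible witness $\bar{b} = (b_i)_{i<\omega}$ with $b_0 = a$ and $\{\phi(x,b_i)\}$ inconsistent; each $b_i$ realises $p$, so by Remark~\ref{rem: tuples are simple} the tuple type $\tp(\bar{b}/A)$ is simple. Using this simplicity, extend $\tp(\bar{b}/A)$ to a global non-forking Lascar-invariant extension, realise it over $M$ to obtain $\bar{b}^* \equiv_A \bar{b}$ with $\bar{b}^*$ being $M$-indiscernible (by canonicity of non-forking extensions of simple types), and then use stationarity to ensure $b_0^* \equiv_M a$; an $M$-automorphism now places $a$ at the first position, giving an $M$-indiscernible witness of $\phi(x,a)$ dividing over $M$.

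The main obstacle is precisely this last upgrade: transferring the $A$-indiscernible witness of dividing to an $M$-indiscernible one while preserving $a$ at the first position. It relies on simple partial types over an extension base admitting Lascar-invariant stationary global non-forking extensions --- properties standard for simple theories but requiring careful verification for simple types inside a general $\NTP_2$ theory. A route avoiding this difficulty would be to apply Lemma~\ref{lem: Making almost mutually indiscernible} to a Morley sequence in $\tp(a/A)$ together with auxiliary Morley sequences starting at each $a_i$ and try to extract an inp-pattern contradicting $\NTP_2$; however, Kim's lemma for simple $p$ only yields inconsistency of $p(x)\cup\{\phi(x,a_i)\}$ along Morley sequences, so bridging to inconsistency of $\{\phi(x,a_i)\}$ alone runs into the same core difficulty.
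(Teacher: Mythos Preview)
Your proof has a genuine gap, and it stems from choosing the wrong direction of independence between $M$ and $a$. You take $a \ind_A^f M$; the paper takes $M \ind_A^f a$. This reversal is the whole point.

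With $M \ind_A^f a$ (hence $M \ind_A^d a$), the step you call ``the heart of the argument'' is a one-liner: any $A$-indiscernible witness $(b_i)$ of dividing with $b_0 = a$ can be replaced by an $M$-indiscernible sequence of the same type over $Aa$, so $\phi(x,a)$ divides over $M$. There is no need for Lascar-invariant extensions or stationarity of simple types, and indeed those tools are not available at this point in the paper.

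Conversely, your derivation that $(a_i)$ is Morley over $A$ is where your choice hurts. From $a_i \ind_A^f M$ and $a_i \ind_M^f a_{<i}$ you want $a_i \ind_A^f a_{<i}$; this is \emph{right} transitivity (same element on the left, base shrinking), which fails for non-forking in arbitrary theories. It is exactly Corollary~\ref{cor: simple types have full symmetry}(2), proved only after the lemma you are trying to establish, so invoking it is circular. The paper avoids this by flipping first: from $a_i \ind_M a_{<i}$ use simplicity of $\tp(a_{<i}/M)$ (Remark~\ref{rem: tuples are simple}, Definition~\ref{def: simple}(5)) to get $a_{<i} \ind_M a_i$; since $M \ind_A a_i$ (as $a_i \equiv_M a$), genuine left transitivity of non-dividing gives $a_{<i} \ind_A a_i$; finally simplicity of $p$ flips back to $a_i \ind_A a_{<i}$.

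So the missing idea is simply to pick $M$ with $M \ind_A^f a$ rather than $a \ind_A^f M$. Both directions are available since $A$ is an extension base; only one makes the argument go through with the tools at hand.
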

\begin{proof}
As $A$ is an extension base, let $M\supseteq A$ be such that $M\ind_{A}^{f}a$.
Then $\phi(x,a)$ divides over $M$. By Fact \ref{Fac: Forking in an NTP2 theory}(1),
there is a Morley sequence $(a_{i})_{i<\omega}$ over $M$ witnessing
it (in particular $(a_{i})_{i<\omega}\subseteq p(\mathbb{M})$). We
show that it is actually a Morley sequence over $A$. Indiscernibility
is clear, and we check that $a_{i}\ind_{A}a_{<i}$ by induction. As
$a_{i}\ind_{M}a_{<i}$, $a_{<i}\ind_{M}a_{i}$ by simplicity of $\mbox{tp}(a_{<i}/M)$.
Noticing that $M\ind_{A}a_{i}$, we conclude $a_{<i}\ind_{A}a_{i}$,
so again by simplicity $a_{i}\ind_{A}a_{<i}$. \end{proof}
\begin{prop}
\label{thm: simple implies cosimple in NTP2} Let $T$ be $\mbox{NTP}_{2}$,
$A$ an extension base and $p(x)\in S(A)$ simple. Assume that $a\models p$
and $a\ind_{A}^{f}b$. Then $b\ind_{A}^{d}a$. \end{prop}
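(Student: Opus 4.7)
The plan is to argue by contradiction. Assume $\phi(x,a)\in\tp(b/Aa)$ divides over $A$; since $A$ is an extension base in an $\NTP_{2}$ theory, Fact~\ref{Fac: Forking in an NTP2 theory}(3) gives $a\ind_{A}^{d}b$, and Lemma~\ref{lem: In simple type in NTP2 theory, there is a MS witnessing dividing} supplies a Morley sequence $(a_{i})_{i<\omega}$ in $p$ over $A$ with $a_{0}=a$ and $\{\phi(x,a_{i})\}_{i<\omega}$ $k$-inconsistent for some $k$.

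The key move is to construct a \emph{second} Morley sequence $(a_{i}^{*})_{i<\omega}$ in $p$ over $A$ with $a_{0}^{*}=a$ that is additionally indiscernible over $Ab$. Once this is in place, $\models\phi(b,a)=\phi(b,a_{0}^{*})$ propagates to $\models\phi(b,a_{i}^{*})$ for every $i$, so $b\models\{\phi(x,a_{i}^{*})\}_{i<\omega}$; and by uniqueness of Morley sequences of the simple type $p$ over $A$ we have $(a_{i}^{*})\equiv_{A}(a_{i})$, so $\{\phi(x,a_{i}^{*})\}_{i<\omega}$ is still $k$-inconsistent --- a contradiction.

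The construction of $(a_{i}^{*})$ proceeds in two stages. First, I would thicken the $b$-side: take a Morley sequence $(b_{j})_{j<\omega}$ in $\tp(b/A)$ with $b_{0}=b$ (which exists as $A$ is an extension base) and apply the weak chain condition (Lemma~\ref{lem: Chain condition}) to $a\models p$ and $(b_{j})$, using the available $a\ind_{A}b$. This yields $J=(b_{j}^{*})_{j<\omega}$ which is Morley over $A$ in $\tp(b/A)$, indiscernible over $Aa$, with $b_{0}^{*}=b$ and $a\ind_{A}J$. Second, I would choose an appropriate global non-dividing extension $\hat{p}$ of $\tp(a/AJ)$ and set $a_{0}^{*}=a$, $a_{i}^{*}\models\hat{p}\restriction_{AJa_{<i}^{*}}$; then $(a_{i}^{*})$ is indiscernible over $AJ$ (hence over $Ab$, since $b=b_{0}^{*}\in J$) because $\hat{p}$ is sufficiently invariant, and $(a_{i}^{*})$ is Morley in $p$ over $A$ because $\hat{p}$ does not divide over $A$.

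The main obstacle is the second stage: since $AJ$ is not a priori an extension base, Fact~\ref{Fac: Forking in an NTP2 theory}(3) cannot be invoked there directly. The argument has to be run \emph{inside} $p(\M)$ --- one mimics the proof of Corollary~\ref{lem: inside NTP2 type, strict invariant extensions exist} by combining the local Broom Lemma~\ref{lem: local broom lemma} with the local Kim's Lemma~\ref{lem: inside NTP2 type, strict Morley sequence is universal} applied to the (in particular $\NTP_{2}$) simple type $p$, producing a non-dividing extension of $\tp(a/AJ)$ in $S(p(\M)\cup AJ)$ that is Lascar-invariant over $A$. It is this Lascar-invariance that transports $A$-indiscernibility of the resulting Morley chain to the required $AJ$-indiscernibility, closing the argument.
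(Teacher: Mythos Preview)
Your argument has a genuine gap at the step ``by uniqueness of Morley sequences of the simple type $p$ over $A$ we have $(a_i^*)\equiv_A(a_i)$''. This uniqueness is false: already in the random graph, take $p$ the unique $1$-type over $\emptyset$; any injective sequence of vertices is a Morley sequence in $p$ over $\emptyset$, yet two such sequences can have different types (edges vs.\ non-edges). The same failure underlies your final paragraph: a non-forking extension of a simple type is \emph{not} Lascar-invariant over the base in general (again, the random graph), so the $\hat p$ you describe need not exist, and the lemmas you invoke (\ref{lem: local broom lemma}, \ref{lem: inside NTP2 type, strict Morley sequence is universal}, \ref{lem: inside NTP2 type, strict invariant extensions exist}) are all stated over a model $M$, not over an arbitrary extension base $A$. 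So neither the uniqueness claim nor the Lascar-invariance claim can be rescued.

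What the paper does instead is replace ``uniqueness'' by a genuine co-Kim's lemma: it shows that if $\phi(x,a)$ divides over $A$ with $a\models p$, then for \emph{every} Morley sequence $(a_i)_{i<\omega}$ in $p$ over $A$, $\{\phi(x,a_i)\}_{i<\omega}$ is inconsistent. This is where the real work and the $\NTP_2$ hypothesis are used: assuming consistency, one takes for each $a_i$ a Morley sequence $\bar a_i\subseteq p(\M)$ witnessing that $\phi(x,a_i)$ divides (Lemma~\ref{lem: In simple type in NTP2 theory, there is a MS witnessing dividing}), then uses Lemma~\ref{lem: Making almost mutually indiscernible} to make the $\bar a_i$'s mutually indiscernible while fixing the first coordinates $a_i$; the result is a $\TP_2$ array, contradiction. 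After this, the existence of a Morley sequence in $\tp(a/A)$ starting with $a$ and indiscernible over $Ab$ is the routine part (take a long sequence in a global non-forking extension of $\tp(a/Ab)$ and extract by Erd\H os--Rado), and your detour through $J$ and $\hat p$ is unnecessary.
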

\begin{proof}
Assume that there is $\phi(x,a)\in L(Aa)$ such that $\models\phi(b,a)$
and $\phi(x,a)$ divides over $A$. Let $(a_{i})_{i<\omega}$ be a
Morley sequence over $A$ starting with $a$. Assume that $\{\phi(x,a_{i})\}_{i<\omega}$
is consistent. Let $\bar{a}_{0}$ be a Morley sequence witnessing
that $\phi(x,a_{0})$ $k$-divides over $A$ (exists by Lemma \ref{lem: In simple type in NTP2 theory, there is a MS witnessing dividing}),
and let $\bar{a}_{i}$ be its image under an $A$-automorphism sending
$a_{0}$ to $a_{i}$. By Lemma \ref{lem: Making almost mutually indiscernible},
we find $\bar{a}_{i}'\equiv_{a_{i}A}\bar{a}_{i}$, such that $(\bar{a}_{i}')_{i<\omega}$
are mutually indiscernible. But then we have that $\{\phi(x,a_{i,\eta(i)})\}_{i<\omega}$
is consistent for any $\eta\in\omega^{\omega}$, while $\{\phi(x,a_{i,j})\}_{j<\omega}$
is $k$-inconsistent for any $i<\omega$ --- contradiction to $\mbox{NTP}_{2}$.

Now let $(a_{i})_{i<\omega}$ be a Morley sequence over $A$ starting
with $a$ and indiscernible over $Ab$. Then clearly $b\models\left\{ \phi(x,a_{i})\right\} _{i<\omega}$
for any $\phi(x,a)\in\tp(b/aA)$, so by the previous paragraph $b\ind_{A}^{d}a$.\end{proof}
\begin{lem}
\label{lem: non co-simple fails symmetry over some M} Let $p(x)$
be a partial type over $A$. Assume that $p(x)$ is not co-simple
over $A$. Then there is some $M\supseteq A$, $a\models p(x)$ and
$b$ such that $a\ind_{M}^{u}b$ but $b\nind_{M}^{d}a$.\end{lem}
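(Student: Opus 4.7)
The plan is to reduce non-co-simplicity to its most concrete form via Definition \ref{def: co-simple}(1): the existence of a formula $\phi(x,y) \in L(A)$, a tree $(a_\eta)_{\eta \in \omega^{<\omega}}$ with every $a_\eta \subseteq p(\M)$, and $k < \omega$ such that $\{\phi(x, a_{\eta \frown i})\}_{i<\omega}$ is $k$-inconsistent for every $\eta$, while $\{\phi(x, a_{\eta|i})\}_{i<\omega}$ is consistent along every branch. The rest of the argument will mimic the (5)$\Rightarrow$(1) implication in the proof associated with Definition \ref{def: simple}, but dualized so that the common branch realizer plays the role of the ``outer'' parameter and the realizations from $p(\M)$ form the indiscernible sequence providing the bad dividing.

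Concretely, I fix a branch (say $0^\omega$), choose $b \models \{\phi(x, a_{0^i})\}_{i<\omega}$, and set $c_i := a_{0^i} \in p(\M)$. Applying the standard tree-to-sequence Ramsey extraction, I may assume $(c_i)_{i<\omega}$ is $A$-indiscernible, the $\phi(b, c_i)$ all hold, and $\phi(x, c_i)$ divides over $A \cup \{c_j\}_{j<i}$ for each $i$ (the sibling $k$-inconsistency in the tree survives the passage to the branch once the tree has been made sufficiently indiscernible). A further Ramsey refinement over $Ab$ produces a sequence with the same EM-type over $A$ (so dividing is preserved) and with the additional property of $Ab$-indiscernibility, and the formulas $\phi(b, c_i)$ clearly survive since they are part of the EM-type over $Ab$. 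I then extend one step to obtain $(c_i)_{i \leq \omega}$, still $Ab$-indiscernible.

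Now set $M := A \cup \{c_i\}_{i < \omega}$ (or its Skolem hull, should $M$ be required to be a model, since none of the conclusions below are affected) and take $a := c_\omega$, which realizes $p(x)$. For $b \nind_M^d a$: the element $\phi(x, c_\omega)$ divides over $M = A \cup \{c_j\}_{j<\omega}$ by construction, and $\models \phi(b, c_\omega)$ by $Ab$-indiscernibility, so $\phi(x, a) \in \tp(b/Ma)$ is a dividing formula. For $a \ind_M^u b$: any formula in $\tp(a/Mb)$ involves parameters in $A \cup \{c_i\}_{i<\omega} \cup \{b\}$, and by $Ab$-indiscernibility it is realized by $c_j$ for any $j < \omega$ larger than the indices occurring in those parameters; since $c_j \in M$, this gives finite satisfiability of $\tp(a/Mb)$ in $M$.

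The main obstacle is the extraction step in the middle paragraph: one has to ensure that after moving from the tree to a linearly indexed sequence and then refining over $Ab$, the dividing of $\phi(x, c_i)$ over $A \cup \{c_j\}_{j<i}$ is preserved simultaneously with the $Ab$-indiscernibility and the formulas $\phi(b, c_i)$. The first is handled by invoking a strongly indiscernible tree (standard tree-Ramsey), which turns sibling $k$-inconsistency into dividing over the initial branch segment; the second is handled because dividing over $A c_{<i}$ is determined by the EM-type over $A c_{<i}$, which only depends on the EM-type over $A$ together with the ordering, both of which are preserved by any subsequent Ramsey refinement. Once the indiscernible sequence is in hand, the verification of the two independence/dependence statements is straightforward as indicated.
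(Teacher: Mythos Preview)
Your approach is essentially the paper's: extract from the tree an $Ab$-indiscernible sequence $(c_i)_{i\leq\omega}$ of realizations of $p$ with $\phi(b,c_i)$ holding and $\phi(x,c_i)$ dividing over $Ac_{<i}$, then take $M$ built from $A\cup\{c_i\}_{i<\omega}$ and $a=c_\omega$. The extraction and the verification of the two conclusions are fine when $M$ is literally the set $A\cup\{c_i\}_{i<\omega}$.

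The only real issue is the parenthetical ``or its Skolem hull, should $M$ be required to be a model, since none of the conclusions below are affected.'' In the paper $M$ must indeed be a model (it is used downstream as an extension base), and passing to the Skolem hull \emph{does} affect both verifications unless the entire extraction is carried out in $T^{\Sk}$. Concretely: your argument for $a\ind_M^u b$ explicitly assumes every formula in $\tp(a/Mb)$ has parameters in $A\cup\{c_i\}_{i<\omega}\cup\{b\}$, which fails once $M=\Sk(A\cup\{c_i\})$ contains new elements; and your argument for $b\nind_M^d a$ needs the dividing witness for $\phi(x,c_\omega)$ to be indiscernible over $M$, not just over $Ac_{<\omega}$. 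Both problems disappear if the Ramsey/compactness step is performed in $L^{\Sk}$, since then $(c_i)_{i\leq\omega}$ is $L^{\Sk}$-indiscernible over $Ab$, $M\subseteq\dcl^{L^{\Sk}}(Ac_{<\omega})$, and one descends to $T$ at the end exactly as the paper does (``also in the sense of $T$, as dividing is witnessed by an $L$-formula $\phi(x,y)$''). So the fix is simply to declare at the outset that you work in a Skolemization; the rest of your write-up then goes through and matches the paper.
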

\begin{proof}
So assume that $p(x)$ is not co-simple over $A$, then there is an
$L(A)$-formula $\phi(x,y)$ and $\left(a_{\eta}\right)_{\eta\in\omega^{<\omega}}\subseteq p(\M)$
witnessing the tree property. Let $T^{\Sk}$ be a Skolemization of
$T$, then of course $\phi(x,y)$ and $a_{\eta}$ still witness the
tree property. As in the proof of (5)$\Rightarrow$(1) in Definition
\ref{def: co-simple}, working in the sense of $T^{\Sk}$, we can
find an $Ab$-indiscernible sequence $\left(a_{i}\right)_{i<\omega+1}$
in $p(x)$ such that $\phi(x,a_{i})$ divides over $Aa_{<i}$ and
$b\models\left\{ \phi(x,a_{i})\right\} _{i<\omega+1}$. Let $I=\left(a_{i}\right)_{i<\omega}$
and $\Sk(AI)=M\models T$. It follows that $a_{\omega}\ind_{M}^{u}b$
(by indiscernibility) and that $b\nind_{M}^{d}a_{\omega}$ (as $M\subseteq\dcl(Aa_{<\omega})$)
--- also in the sense of $T$, as dividing is witnessed by an $L$-formula
$\phi\left(x,y\right)$.\end{proof}
\begin{thm}
Let $T$ be $\NTP_{2}$, $A$ an arbitrary set and assume that $p(x)$
over $A$ is simple. Then $p(x)$ is co-simple over $A$.\end{thm}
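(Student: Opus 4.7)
The plan is to reduce to the case handled by Proposition \ref{thm: simple implies cosimple in NTP2} (simple implies co-simple over an extension base) by using Lemma \ref{lem: non co-simple fails symmetry over some M}, which was tailor-made for exactly this kind of reduction. The point is that failure of co-simplicity over $A$ can always be upgraded to a failure of the symmetry $a\ind^{u}\Rightarrow b\ind^{d}a$ over some model $M\supseteq A$, and models are extension bases.

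More precisely, I would argue by contradiction. Suppose $p(x)$ is not co-simple over $A$. By Lemma \ref{lem: non co-simple fails symmetry over some M} there exist $M\supseteq A$, $a\models p(x)$ and $b$ with $a\ind_{M}^{u}b$ but $b\nind_{M}^{d}a$. Let $p'(x)=\tp(a/M)$; since $p'(x)\supseteq p(x)$, Remark \ref{rem: tuples are simple}(1) (or equivalently clause (1) of Definition \ref{def: simple}) gives that $p'$ is simple over $M$. Since $M$ is a model, it is an extension base, and $a\ind_{M}^{u}b$ implies $a\ind_{M}^{f}b$ (finite satisfiability is stronger than non-forking).

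Now apply Proposition \ref{thm: simple implies cosimple in NTP2} to $p'\in S(M)$: from $a\models p'$ and $a\ind_{M}^{f}b$ we conclude $b\ind_{M}^{d}a$, directly contradicting $b\nind_{M}^{d}a$. Hence $p(x)$ is co-simple over $A$.

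There is essentially no hard step here, since both lemmas have been arranged to fit together; the only thing to check is that simplicity of $p$ over $A$ transfers to the extension $\tp(a/M)$, which is immediate from Remark \ref{rem: tuples are simple}(1), and that $M$ being a model suffices to invoke Proposition \ref{thm: simple implies cosimple in NTP2}, which only needs $M$ to be an extension base.
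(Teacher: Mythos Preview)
Your proof is correct and essentially identical to the paper's own argument: both invoke Lemma \ref{lem: non co-simple fails symmetry over some M} to pass to a model $M\supseteq A$ where symmetry fails, then use Proposition \ref{thm: simple implies cosimple in NTP2} (via Remark \ref{rem: tuples are simple}(1) to transfer simplicity to $\tp(a/M)$) to derive the contradiction. The only cosmetic difference is that the paper phrases the last step contrapositively (concluding $\tp(a/M)$ is not simple), while you apply the proposition directly.
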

\begin{proof}
If $p(x)$ over $A$ is not co-simple over $A$, then by Lemma \ref{lem: non co-simple fails symmetry over some M}
we find some $M\supseteq A$, $a\models p$ and $b$ such that $a\ind_{M}^{u}b$,
but $b\nind_{M}^{d}a$. As $M$ is an extension base, it follows by
Proposition \ref{thm: simple implies cosimple in NTP2} that $\tp(a/M)$
is not simple, thus $p(x)$ is not simple by Remark \ref{rem: tuples are simple}(1)
--- a contradiction.\end{proof}
\begin{cor}
\label{cor: simple types have full symmetry} Let $T$ be $\mbox{NTP}_{2}$
and $p(x)\in S(A)$ simple.
\begin{enumerate}
\item If $a\models p(x)$ then $a\ind_{A}b\Leftrightarrow b\ind_{A}a$
\item Right transitivity: If $a\models p(x)$, $B\supseteq A$, $a\ind_{A}B$
and $a\ind_{B}C$ then $a\ind_{A}C$.
\end{enumerate}
\end{cor}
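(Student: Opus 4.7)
My plan is to derive both parts of the corollary as easy consequences of the main theorem just proved (simple types in $\NTP_2$ are co-simple), combined with the forking characterizations (5) in Definitions \ref{def: simple} and \ref{def: co-simple}.

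For part (1), consider the two directions separately. The implication $b\ind_A a \Rightarrow a\ind_A b$ is immediate from clause (5) of Definition \ref{def: simple} applied to $p$ (simplicity gives that if $b$ does not fork over $A$ with $a$, then $a$ does not divide over $A$ with $b$, since $a\models p$). For the converse $a\ind_A b \Rightarrow b\ind_A a$, I invoke co-simplicity of $p$ (granted by the preceding theorem) via clause (5) of Definition \ref{def: co-simple} with the parameter set ``$B$'' taken to be empty, so that the hypothesis $B\subseteq p(\M)$ is vacuous. This gives exactly: if $a\models p$ and $a\ind_A^f b$, then $b\ind_A^d a$. Under the standing convention that $\ind$ denotes non-forking (equivalently non-dividing when over an extension base, cf. Fact \ref{Fac: Forking in an NTP2 theory}(3)), this is the desired symmetry.

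For part (2), I combine the symmetry from (1) with the standard left transitivity of $\ind^f$. Starting from $a\ind_A B$ and $a\ind_B C$, apply (1) to $\tp(a/A)$ to flip the first independence to $B\ind_A a$. Since any extension of a simple type is simple (Remark \ref{rem: tuples are simple}(1)), $\tp(a/B)$ is again simple, so (1) applies once more to give $C\ind_B a$ from $a\ind_B C$. Left transitivity of non-forking then yields $BC\ind_A a$, and a final application of (1) (together with monotonicity) delivers $a\ind_A BC$, hence $a\ind_A C$.

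I do not anticipate any real obstacle: the substantive work was done in proving that simple $\NTP_2$-types are co-simple, and the present corollary is just the systematic exploitation of that fact. The only mild points to watch are (a) invoking Definition \ref{def: co-simple}(5) with empty additional parameters so that its $B\subseteq p(\M)$ hypothesis is automatic, and (b) noting that simplicity transfers to extensions, so symmetry is available over $B$ as well as over $A$.
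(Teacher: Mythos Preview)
Your proposal is correct and is exactly the argument the paper has in mind: the corollary is stated without proof immediately after the theorem that simple types in $\NTP_2$ are co-simple, and your derivation of (1) from clauses~(5) of Definitions~\ref{def: simple} and~\ref{def: co-simple} (with $B=\emptyset$), and of (2) from (1) via left transitivity and the fact that extensions of simple types are simple, is precisely the intended unpacking. The two caveats you flag --- taking the auxiliary parameter set empty in Definition~\ref{def: co-simple}(5), and passing simplicity to $\tp(a/B)$ via Remark~\ref{rem: tuples are simple}(1) --- are the only points that need mentioning, and you handle them correctly.
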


\subsection{Independence and co-independence theorems.\protect \\
}

In \cite{MR1833481} Kim demonstrates that if $T$ has $\TP_{1}$,
then the independence theorem fails for types over models, assuming
the existence of a large cardinal. We give a proof of a localized
and a dual versions, showing in particular that the large cardinal
assumption is not needed.

\begin{defn}
Let $p(x)$ be (partial) type over $A$.
\begin{enumerate}
\item We say that $p(x)$ \emph{satisfies the independence theorem} if for
any $b_{1}\ind_{A}^{f}b_{2}$ and $c_{1}\equiv_{A}^{\mbox{\ensuremath{\lstp}}}c_{2}\subseteq p(\mathbb{M})$
such that $c_{1}\ind_{A}^{f}b_{1}$ and $c_{2}\ind_{A}^{f}b_{2}$,
there is some $c\ind_{A}^{f}b_{1}b_{2}$ such that $c\equiv_{b_{1}A}c_{1}$
and $c\equiv_{b_{2}A}c_{2}$.
\item We say that $p(x)$ \emph{satisfies the co-independence theorem} if
for any $b_{1}\ind_{A}^{f}b_{2}$ and $c_{1}\equiv_{A}^{\lstp}c_{2}\models p$
such that $b_{1}\ind_{A}^{f}c_{1}$ and $b_{2}\ind_{A}^{f}c_{2}$
, there is some $c\models p$ such that $b_{1}b_{2}\ind_{A}^{f}c$
and $c\equiv_{Ab_{1}}c_{1}$, $c\equiv_{Ab_{2}}c_{2}$.
\end{enumerate}
\end{defn}
Of course, both the independence and the co-independence theorems
hold in simple theories, but none of them characterizes simplicity.
\begin{prop}
\label{prop: co-independence theorem implies simple} Let $T$ be
$\mbox{NTP}_{2}$ and $p(x)$ is a partial type over $A$.
\begin{enumerate}
\item If every $p'(x)\supseteq p$ with $p'(x)\in S(M)$, $M\supseteq A$
satisfies the co-independence theorem, then it is simple.
\item If $p(x)$ satisfies the independence theorem, then it is co-simple.
\end{enumerate}
\end{prop}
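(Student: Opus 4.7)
The strategy for both parts is contrapositive: convert a failure of (co-)simplicity of $p$ into a failure of the relevant amalgamation theorem, using the $\NTP_{2}$ hypothesis in place of the large-cardinal assumption of Kim's original proof \cite{MR1833481}. The common core is to feed a $\TP$-witness into Fact \ref{fac: TP =00003D TP1 or TP2} to obtain a $\TP_{1}$-witness with the correct placement relative to $p$; pass by tree-Ramsey to a strongly indiscernible tree; use the existence of strictly invariant extensions over models in $\NTP_{2}$ theories (Fact \ref{Fac: Forking in an NTP2 theory}(1)) to further refine the tree into a Morley-tree over a saturated $M\supseteq A$; pick two incomparable nodes at the same level; and assemble the four objects required by the amalgamation theorem using those nodes and realizations of branches through them.

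For part (1), non-simplicity of $p$ gives $\phi(x,y)$ with $\TP$ witnessed by $(a_{\eta})$ with paths consistent with $p$, so Fact \ref{fac: TP =00003D TP1 or TP2} yields $\psi(x,\bar{y})=\bigwedge_{i<k}\phi(x,y_{i})\wedge\chi(x)$ with $\chi\in p$ having $\TP_{1}$ witnessed by $(b_{\eta})$, paths still consistent with $p$. Taking $b_{i}:=b_{\nu_{i}}$ for incomparable $\nu_{1},\nu_{2}$ and $c_{i}\models p\cup\{\psi(x,b_{\nu_{i}|j})\}_{j}$ a generic realization over $Mb_{i}$, the Morley-tree structure gives $b_{1}\ind_{M}^{f}b_{2}$, $b_{i}\ind_{M}^{f}c_{i}$, and $c_{1}\equiv_{M}^{\lstp}c_{2}$, while any amalgam $c\models p$ with $c\equiv_{Mb_{i}}c_{i}$ would satisfy $\psi(c,b_{1})\wedge\psi(c,b_{2})$, contradicting $\TP_{1}$-inconsistency at incomparable nodes. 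Thus $\tp(c_{1}/M)$ is a completion of $p$ refuting the co-independence theorem, contrary to the hypothesis of (1).

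Part (2) dualizes the placement: non-co-simplicity of $p$ supplies a $\TP$-witness $(a_{\eta})\subseteq p(\M)$, so Fact \ref{fac: TP =00003D TP1 or TP2} produces a $\TP_{1}$-witnessing formula $\psi(x,\bar{y})$ with tree $(b_{\eta})\subseteq p(\M)^{k}$. Now the realizations side of the independence theorem is populated with $c_{i}:=b_{\nu_{i}}\in p(\M)^{k}$ and the parameter side with $b_{i}$ realizing a branch through $\nu_{i}$ generically over $M$. The analogous independence and $\lstp$ conditions hold, and the independence theorem applied to $p$ would furnish an amalgam $c\in p(\M)^{k}$ with $c\equiv_{Mb_{i}}c_{i}$; combined with a further Morley extraction of branch-realizations along the tree (using that forking equals dividing inside $\NTP_{2}$ types, Proposition \ref{prop: forking=00003Ddividing inside NTP2 type}), this produces a single $x$-realization of $\psi(x,b_{\nu_{1}})\wedge\psi(x,b_{\nu_{2}})$, contradicting the $\TP_{1}$-inconsistency.

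The main obstacle is coordinating the construction so that the Morley-tree simultaneously witnesses $\TP_{1}$ and satisfies all the $\ind^{f}$ and $\equiv^{\lstp}$ preconditions of the (co-)amalgamation theorem; the strict-invariant extension machinery available in $\NTP_{2}$ theories is what permits building such a tree over a model without invoking large cardinals. In part (2), the additional difficulty is that the forced amalgam $c$ sits in the parameter sort of $\psi$, so the passage back to a forbidden common $x$-realization must go through an extra Morley-sequence extraction inside the tree.
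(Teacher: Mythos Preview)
Your overall contrapositive strategy matches the paper's, but the execution diverges at the key technical step, and your version has a genuine gap.

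The paper does \emph{not} build a Morley-tree over a saturated model using strictly invariant extensions. Instead, after invoking Fact \ref{fac: TP =00003D TP1 or TP2} to obtain a $\TP_{1}$-tree $(a_{\eta})_{\eta\in\kappa^{<\kappa}}$ (with $\kappa$ large), it passes to a Skolemization $T^{\Sk}$ and extracts a \emph{sequence of sibling pairs}: by Erd\H os--Rado one chooses, level by level along a single branch, $c_{i}=a_{s_{0}\ldots s_{i}}$ and $d_{i}=a_{s_{0}\ldots s_{i-1}t_{i}}$ so that $c_{i}d_{i}$ begins an infinite sequence indiscernible over $c_{<i}d_{<i}$. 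One then Ramsey-extracts $(c_{i}d_{i})_{i\leq\omega+1}$ indiscernible over a branch-realization $a\models p$, and sets $M=\Sk((c_{i}d_{i})_{i<\omega})$. The required independences are then \emph{coheir}: $c_{\omega+1}\ind_{M}^{u}a$ and $d_{\omega}\ind_{M}^{u}c_{\omega+1}$ come for free from indiscernibility of the tail over the Skolem hull. The crucial $c_{1}\equiv_{M}^{\lstp}c_{2}$ (in your notation) is obtained by observing that $c_{\omega}d_{\omega}$ begins an $M$-indiscernible sequence, so some $\sigma\in\Aut(\M/M)$ sends $c_{\omega}\mapsto d_{\omega}$; setting $a'=\sigma(a)$ gives $a'\equiv_{M}^{\lstp}a$, $\models\phi(a',d_{\omega})$, and $d_{\omega}\ind_{M}^{u}a'$. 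Then $\phi(x,c_{\omega+1})\wedge\phi(x,d_{\omega})$ inconsistent kills any amalgam.

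Your proposal instead asserts that strict-invariance machinery produces a ``Morley-tree'' over a saturated $M$ satisfying simultaneously the $\TP_{1}$-pattern, $b_{1}\ind_{M}^{f}b_{2}$, $b_{i}\ind_{M}^{f}c_{i}$, and $c_{1}\equiv_{M}^{\lstp}c_{2}$. You yourself flag this coordination as ``the main obstacle,'' and indeed it is not clear how Fact \ref{Fac: Forking in an NTP2 theory}(1) alone delivers it: strictly invariant extensions let you build Morley \emph{sequences}, but propagating that through a branching tree while preserving the $\TP_{1}$-inconsistency at incomparable nodes and the Lascar-equivalence of branch realizations is exactly what required the measurable cardinal in Kim's original argument. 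The paper's Skolemization-plus-sibling-pairs trick sidesteps this entirely: the model is manufactured \emph{after} the combinatorics, and the $\equiv^{\lstp}$ condition comes from a single automorphism rather than from any tree-wide genericity. For part (2) the paper simply says ``Similar''; your proposed extra Morley-extraction to pass from the amalgam $c$ back to an $x$-realization is unnecessary once the roles are set up as in the paper's scheme.
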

\begin{proof}
(1) Our argument is based on the proof of \cite[Proposition 2.5]{MR1833481}.
Without loss of generality $A=\emptyset$. Assume that $p$ is not
simple, then by Fact \ref{fac: TP =00003D TP1 or TP2} there are some
formula $\phi(x,y)$ , $\left(a_{\eta}\right)_{\eta\in\omega^{<\omega}}$
such that:
\begin{itemize}
\item $\left\{ \phi(x,a_{\eta|i})\right\} _{i\in\omega}\cup p(x)$ is consistent
for every $\eta\in\omega^{\omega}$.
\item $\phi(x,a_{\eta})\land\phi(x,a_{\eta'})$ is inconsistent for any
incomparable $\eta,\eta'\in\omega^{<\omega}$.
\end{itemize}
By compactness we can find a tree with the same properties indexed
by $\kappa^{<\kappa}$, for a cardinal $\kappa$ large enough. Let
$T^{\Sk}$ be some Skolemization of $T$, and we work in the sense
of $T^{\Sk}$.
\begin{claim*}
There is a sequence $\left(c_{i}d_{i}\right)_{i\in\omega}$ satisfying:
\begin{enumerate}
\item $\left\{ \phi(x,c_{i})\right\} _{i\in\omega}\cup p(x)$ is consistent.
\item $c_{i},d_{i}$ start an infinite sequence indiscernible over $c_{<i}d_{<i}$.
\item $\phi(x,d_{i})\land\phi(x,d_{j})$ is inconsistent for any $i\neq j\in\omega$.
\end{enumerate}
\end{claim*}
\begin{proof}
By induction we choose $s_{i}\neq t_{i}\in\kappa$, $c_{i}=a_{s_{1}...s_{i-1}s_{i}}$
and $d_{i}=a_{s_{1}...s_{i-1}t_{i}}$ for some $s_{i}\neq t_{i}\in\kappa$
such that there is a $c_{<i}d_{<i}$-indiscernible sequence starting
with $a_{s_{1}...s_{i-1}s_{i}},a_{s_{1}...s_{i-1}t_{i}}$ (exists
by Erd\H os-Rado as $\kappa$ is large enough), so we get (2). From
the assumption on $(a_{\eta})_{\eta\in\kappa^{<\kappa}}$ we get (1)
as $s_{0}\vartriangleleft s_{0}s_{1}\vartriangleleft s_{0}s_{1}s_{2}\vartriangleleft\ldots$
lie on the same branch in the tree order and (3) as $s_{0}\ldots s_{i-1}t_{i}$
and $s_{0}\ldots s_{i-1}s_{i}$ are incomparable in the tree order.
\end{proof}
By compactness and Ramsey we can find $a$ and $\left(c_{i}d_{i}\right)_{i\leq\omega+1}$
indiscernible over $a$, satisfying (1)--(3) and such that $a\models p(x)\cup\left\{ \phi(x,c_{i})\right\} $.

Let $M=\Sk(c_{i}d_{i})_{i<\omega}$, a model of $T^{\Sk}$. Then we
have $c_{\omega+1}\ind_{M}^{u}a$ and $d_{\omega}\ind_{M}^{u}c_{\omega+1}$
by indiscernibility. As $c_{\omega}d_{\omega}$ start an $M$-indiscernible
sequence, there is $\sigma\in Aut(\M/M)$ sending $c_{\omega}$ to
$d_{\omega}$. Let $a'=\sigma(a)$, then $a'\equiv_{M}^{\lstp}a$,
$d_{\omega}\ind_{M}^{u}a'$ (as $c_{\omega}\ind_{M}^{u}a$ by indiscernibility)
and $\phi(a',d_{\omega})$. But $\phi(x,c_{\omega+1})\land\phi(x,d_{\omega})$
is inconsistent by (3)+(2). As $\phi$ is an $L$-formula, $M$ is
in particular an $L$-model and $\ind^{u}$ in the sense of $T^{\Sk}$
implies $\ind^{u}$ in the sense of $T$, we get that the co-independence
theorem fails for $p'=\tp_{L}(a/M)$ in $T$.

(2) Similar.
\end{proof}
Now we will show that in $\NTP_{2}$ theories simple types satisfy
the independence theorem over extension bases. We will need the following
fact from \cite{CheBY}.
\begin{fact}
\label{fac: Udi's weak independence theorem} Let $T$ be $\NTP_{2}$
and $M\models T$. Assume that $c\ind_{M}ab$, $b\ind_{M}a$, $b'\ind_{M}a$,
$b\equiv_{M}b'$. Then there exists $c'\ind_{M}ab'$ and $c'b'\equiv_{M}cb$,
$c'a\equiv_{M}ca$.
\end{fact}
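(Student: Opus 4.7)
The approach is to first reduce the problem to a matching question via a preliminary $M$-automorphism, and then to solve the matching using the $\NTP_{2}$ machinery: strictly invariant extensions and absolute boundedness of $\ind^{\ist}$-weight (Theorem~\ref{thm: NTP2 iff Kims lemma iff bounded weight}).

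First, since $b \equiv_{M} b'$, pick $\sigma \in \Aut(\M/M)$ with $\sigma(b) = b'$, and set $a_{1} := \sigma(a)$ and $c_{1} := \sigma(c)$. Applying $\sigma$ to the given independencies yields $c_{1} \ind_{M} a_{1} b'$, $a_{1} \ind_{M} b'$, $c_{1} a_{1} \equiv_{M} ca$, and $c_{1} b' \equiv_{M} cb$. The task therefore reduces to producing $c'$ with $c' \equiv_{Mb'} c_{1}$ (ensuring $c' b' \equiv_{M} cb$), with $c' a \equiv_{M} c_{1} a_{1}$ (ensuring $c' a \equiv_{M} ca$), and with $c' \ind_{M} a b'$. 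Informally, I need to replace $a_{1}$ by $a$ in the triple $(c_{1}, a_{1}, b')$ without destroying the independence over $M$.

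Second, use Fact~\ref{Fac: Forking in an NTP2 theory}(1) to extend $\tp(c_{1} a_{1} / Mb')$ to a global type $p \in S(\M)$ strictly invariant over $M$, and inductively build a long Morley sequence $(c_{i}, a_{i})_{i < \kappa}$ in $p$ over $Mb'$ starting with $(c_{0}, a_{0}) = (c_{1}, a_{1})$. By strict invariance each $a_{i}$ realizes $\tp(a/M)$ and $a_{i} \ind_{M}^{\ist} b' a_{<i}$, so $(a_{i})_{i < \kappa}$ is an $\ind^{\ist}$-independent sequence of $M$-conjugates of $a$, while the whole sequence $(c_{i}, a_{i})_{i}$ is $Mb'$-indiscernible with $c_{i} \equiv_{Mb'} c_{1}$ for every $i$.

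Third, invoke Theorem~\ref{thm: NTP2 iff Kims lemma iff bounded weight}: since $T$ is $\NTP_{2}$, $\ind^{\ist}$-weight is absolutely bounded, so for $\kappa$ sufficiently large (above $|T|^{+}$) some index $i$ satisfies $a \ind_{M}^{d} a_{i} b'$. Combined with the $Mb'$-indiscernibility of $(c_{j}, a_{j})_{j}$ and a chain-condition style application of Lemma~\ref{lem: mutual indiscernibility criteria}, this extra independence lets me produce an $M$-automorphism $\tau$ with $\tau(a_{i}) = a$ and such that $c' := \tau(c_{i})$ still realizes $\tp(c_{1}/Mb')$ and satisfies $c' \ind_{M} a b'$. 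The main obstacle is this final matching step: since $b \equiv_{M}^{\lstp} b'$ is not assumed, I cannot transport $(c_{1}, a_{1}, b')$ to $(c, a, b)$ via a single automorphism, and so the argument must carefully combine strict invariance (to ensure $Mb'$-indiscernibility along the sequence and constancy of $\tp(c_{i}/Mb')$) with the $\NTP_{2}$ bound on $\ind^{\ist}$-weight to exhibit the right transport.
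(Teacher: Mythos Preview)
The paper does not prove this statement; it is quoted as a fact from \cite{CheBY} and used as a black box in the proof of Proposition~\ref{prop: simple types satisfy independence theorem}. So there is no proof in the present paper to compare yours against.

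On the merits of your sketch: the reduction in step~1 is fine, but steps~2 and~3 have real gaps. In step~2, Fact~\ref{Fac: Forking in an NTP2 theory}(1) only produces a global strictly invariant extension of a type \emph{over $M$}; it does not directly say that the type $\tp(c_{1}a_{1}/Mb')$ over the larger set $Mb'$ extends to a global type strictly invariant over $M$. You do have $c_{1}a_{1}\ind_{M}b'$, so this type does not fork over $M$, but upgrading a non-forking extension to a strictly invariant one is additional work not covered by the fact you cite.

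The decisive gap is step~3. Theorem~\ref{thm: NTP2 iff Kims lemma iff bounded weight}(2) applied to the sequence $(c_{i}a_{i})_{i}$ yields only $a\ind_{M}^{d}c_{i}a_{i}$ for some $i$, not $a\ind_{M}^{d}a_{i}b'$: the fixed parameter $b'$ cannot be folded into the $\ind^{\ist}$-independent sequence, and you cannot add it afterwards using $b'\ind_{M}a$ because $\NTP_{2}$ provides neither symmetry nor transitivity of non-forking between arbitrary elements. More importantly, your final ``chain-condition style application of Lemma~\ref{lem: mutual indiscernibility criteria}'' is not an argument. Even granting $a\ind_{M}^{d}a_{i}b'$, nothing you have set up gives an $M$-automorphism sending $a_{i}$ to $a$ while fixing $\tp(c_{i}/Mb')$: the elements $a$ and $a_{i}$ need not have the same type over $Mb'$, and Lemma~\ref{lem: mutual indiscernibility criteria} is about straightening arrays of indiscernible sequences, not about producing such a $\tau$. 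That matching step is exactly the content of the theorem, and your outline does not explain how it is achieved.
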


\begin{prop}
\label{prop: simple types satisfy independence theorem} Let $T$
be $\NTP_{2}$ and $p(x)$ a simple type over $M\models T$. Then
it satisfies the independence theorem: assume that $e_{1}\ind_{M}e_{2}$,
$d_{i}\ind_{M}e_{i}$, $d_{1}\equiv_{M}d_{2}\models p(x)$. Then there
is $d\ind_{M}e_{1}e_{2}$ with $d\equiv_{e_{i}M}d_{i}$.\end{prop}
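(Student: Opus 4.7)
My plan is to follow the classical template for the independence theorem in simple theories, with the simplicity of the whole theory replaced by the assumption that $p$ is simple and that $T$ is $\NTP_2$. The key external tool is Fact \ref{fac: Udi's weak independence theorem}, which plays the role of a one-sided amalgamation statement, together with the simple-type technology developed in this section --- in particular the weak chain condition (Lemma \ref{lem: Chain condition}), full symmetry (Corollary \ref{cor: simple types have full symmetry}), and the mutual-indiscernibility construction of Lemma \ref{lem: Making almost mutually indiscernible}.

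First I would upgrade $d_1 \equiv_M d_2$ to $d_1 \equiv_M^{\lstp} d_2$: since both realize the simple type $p$ over the model $M$, one can iterate extension inside $p(\M)$ to build long Morley sequences, and a G-compactness-style argument, localized to $p(\M)$ using the bounded-weight characterization of simplicity (Definition \ref{def: simple}(4)), shows that $\equiv_M$ and $\equiv_M^{\lstp}$ already coincide on realizations of $p$. Then, by extension ($M$ is an extension base and $p$ is simple, so in particular $p$ admits global non-forking extensions and Lemma \ref{lem: Forking=00003Ddividing over an extension base} applies), I choose a ``generic'' realization $d^*\models p$ with $d^*\ind_M e_1 e_2 d_1 d_2$; so $d^*\equiv_M^{\lstp} d_i$ for $i=1,2$.

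The heart of the argument is to assemble the desired $d$ from $d^*$ and the two local data $(d_i, e_i)$. Using $d_1\equiv_M^{\lstp} d^*$, I would pick a Morley sequence in $p$ over $M$ connecting $d_1$ to $d^*$; by Lemma \ref{lem: Chain condition} applied to the simple type $p$ together with $d_1\ind_M e_1$ (using Corollary \ref{cor: simple types have full symmetry} to move symmetry across freely), this sequence can be replaced by one indiscernible over $Me_1$, yielding an $\tilde{d}_1$ with $\tilde{d}_1\equiv_{Me_1} d_1$, $\tilde{d}_1\equiv_M^{\lstp} d^*$ and $\tilde{d}_1\ind_M e_1 e_2$. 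The symmetric construction on the $e_2$-side produces $\tilde{d}_2$ with $\tilde{d}_2\equiv_{Me_2} d_2$, $\tilde{d}_2\equiv_M^{\lstp} d^*$ and $\tilde{d}_2\ind_M e_1 e_2$. At this point $\tilde{d}_1$ and $\tilde{d}_2$ are two Lstp-equivalent realizations of $p$, both independent from $e_1 e_2$ over $M$, and each already matching $d_i$ over $Me_i$. I would then apply Fact \ref{fac: Udi's weak independence theorem} with $a=e_1$, $b=e_2$, $c=\tilde{d}_1$ (so that $c\ind_M ab$), and $b'$ chosen as an appropriately prepared $M$-conjugate of $e_2$ reflecting the $\tilde{d}_2$-side (so that $b\equiv_M b'$, $b'\ind_M a$, and the amalgamation output $c'$ inherits $\tp(\tilde{d}_2/Me_2)$ along $b'$). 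The conclusion $c'b'\equiv_M cb$, $c'a\equiv_M ca$ and $c'\ind_M ab'$, after transporting $b'$ back to $e_2$ by an appropriate $Me_1$-automorphism, produces the required $d$.

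The main obstacle I expect is a careful bookkeeping: ensuring that all auxiliary tuples remain inside $p(\M)$ so that the simple-type tools (full symmetry, chain condition, forking equals dividing over extension bases) keep applying, and that the Lstp-class is preserved through the successive uses of extension and chain condition, so that a single final application of Udi's theorem simultaneously realizes $\tp(d_1/Me_1)$ and $\tp(d_2/Me_2)$. The preliminary upgrade $\equiv_M\Rightarrow\equiv_M^{\lstp}$ is also delicate in the purely local setting, but can be handled by running the standard argument inside $p(\M)$ using the bound on $\ind^{\ist}$-weight coming from the simplicity of $p$.
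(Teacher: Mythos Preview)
Your plan takes a long detour that the paper avoids entirely, and the final amalgamation step as you describe it does not work. After all the preparation you end up with $\tilde d_1,\tilde d_2\models p$ Lstp-equivalent over $M$, with $\tilde d_i\equiv_{Me_i}d_i$ and (you claim) $\tilde d_i\ind_M e_1e_2$. First, the independence $\tilde d_1\ind_M e_1e_2$ is not produced by Lemma~\ref{lem: Chain condition}, which only involves $e_1$; you would need an extra extension step, carefully preserving $\tp(\tilde d_1/Me_1)$. More seriously, even granting everything, you are back to exactly the statement you are trying to prove: amalgamate two $M$-conjugate realizations of $p$, each independent from $e_1e_2$, matching $d_i$ over $Me_i$. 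Your application of Fact~\ref{fac: Udi's weak independence theorem} with $a=e_1$, $b=e_2$, $c=\tilde d_1$ only yields $c'b'\equiv_M\tilde d_1e_2$ and $c'e_1\equiv_M\tilde d_1e_1$; nothing here mentions $\tilde d_2$, and no choice of $b'$ makes the output ``inherit $\tp(\tilde d_2/Me_2)$'' unless you already know $\tilde d_1 e_2\equiv_M\tilde d_2 e_2$, which is precisely what is missing. The subsequent ``transport $b'$ back to $e_2$ by an $Me_1$-automorphism'' would in addition require $b'\equiv_{Me_1}e_2$, which the Fact does not provide.

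The paper's proof bypasses all of this with one move: instead of manipulating the $d$'s, move $e_1$. Since $d_1\equiv_M d_2$, an $M$-automorphism carries $(d_1,e_1)$ to $(d_2,\sigma(e_1))$ with $\sigma(e_1)\ind_M d_2$ (symmetry from simplicity of $p$); one non-forking extension gives $e_1'\equiv_{Md_2}\sigma(e_1)$ with $e_1'\ind_M d_2e_2$, and then $d_2\ind_M e_1'e_2$ by Corollary~\ref{cor: simple types have full symmetry}. A single application of Fact~\ref{fac: Udi's weak independence theorem} with $a=e_2$, $b=e_1'$, $b'=e_1$, $c=d_2$ now gives $d$ with $de_1\equiv_M d_2e_1'\equiv_M d_1e_1$ and $de_2\equiv_M d_2e_2$. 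No Lstp upgrade, no auxiliary $d^*$, no chain condition are needed.
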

\begin{proof}
First we find some $e_{1}'\ind_{M}d_{2}e_{2}$ and such that $e_{1}'d_{2}\equiv_{M}e_{1}d_{1}$
(Let $\sigma\in\Aut(\M/M)$ be such that $\sigma(d_{1})=d_{2}$, then
$\sigma(e_{1})d_{2}\equiv_{M}e_{1}d_{1}$. By simplicity of $\tp(d_{1}/M)$
and the assumption we get $e_{1}\ind_{M}d_{1}$, which implies that
$\sigma(e_{1})\ind_{M}d_{2}$. Let $e_{1}'$ realize a non-forking
extension to $d_{2}e_{2}$). Then we also have $d_{2}\ind_{M}e_{1}'e_{2}$
(by transitivity and symmetry using simplicity of $\tp(d_{2}/M)$).

Applying Fact \ref{fac: Udi's weak independence theorem} with $a=e_{2},b=e_{1}',b'=e_{1},c=d_{2}$
we find some $d\ind_{M}e_{1}e_{2}$, $de_{1}\equiv_{M}d_{2}e_{1}'\equiv_{M}d_{1}e_{1}$
and $de_{2}\equiv_{M}d_{2}e_{2}$ --- as wanted.
\end{proof}
We conclude with the main theorem of the section.
\begin{thm}
Let $T$ be $\NTP_{2}$ and $p(x)$ a partial type over $A$. Then
the following are equivalent:
\begin{enumerate}
\item $p(x)$ is simple (in the sense of Definition \ref{def: simple}).
\item For any $B\supseteq A$, $a\models p$ and $b$, $a\ind_{A}^{f}b$
if and only if $b\ind_{A}^{f}a$.
\item Every extension $p'(x)\supseteq p(x)$ to a model $M\supseteq A$
satisfies the co-independence theorem.
\end{enumerate}
\end{thm}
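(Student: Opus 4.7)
The plan is to prove the equivalences via two separate biconditionals, $(1) \Leftrightarrow (2)$ and $(1) \Leftrightarrow (3)$. Both rest on the main theorem of this subsection (simple types in $\NTP_2$ theories are co-simple) and its immediate corollary (\ref{cor: simple types have full symmetry}): the full symmetry of forking between realizations of a simple $p$ and arbitrary tuples. With that in hand the task reduces to assembling pieces already developed in the section.

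For $(1) \Rightarrow (2)$, given a simple $p$ over $A$ and $B \supseteq A$, the type $p$ viewed as a partial type over $B$ is still simple by Remark \ref{rem: tuples are simple}(1), so Corollary \ref{cor: simple types have full symmetry} yields $a \ind_B^f b \Leftrightarrow b \ind_B^f a$ whenever $a \models p$. Conversely, $(2) \Rightarrow (1)$ is a direct verification of clause (5) of Definition \ref{def: simple}: if $b \ind_B^f a$ with $a \models p$, then by $(2)$ we have $a \ind_B^f b$, and since non-forking always implies non-dividing we get $a \ind_B^d b$, which is exactly what simplicity demands.

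For $(1) \Rightarrow (3)$, any $p'(x) \in S(M)$ extending $p$ is still simple by Remark \ref{rem: tuples are simple}(1), so by Proposition \ref{prop: simple types satisfy independence theorem} it satisfies the independence theorem. Given the data $b_1 \ind_M^f b_2$, $c_1 \equiv_M c_2 \models p'$ with $b_i \ind_M^f c_i$ required by the co-independence theorem, the independence theorem produces $c \models p'$ with $c \ind_M^f b_1 b_2$ and $c \equiv_{Mb_i} c_i$; applying the full symmetry from $(1) \Rightarrow (2)$ converts $c \ind_M^f b_1 b_2$ into $b_1 b_2 \ind_M^f c$, which is precisely the conclusion demanded by the co-independence theorem. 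Finally, $(3) \Rightarrow (1)$ is exactly Proposition \ref{prop: co-independence theorem implies simple}(1).

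The main conceptual point, and the only mildly delicate step, is in $(1) \Rightarrow (3)$: a priori the conclusion of the independence theorem (the $p$-realization being non-forking over the arbitrary pair) and the conclusion of the co-independence theorem (the arbitrary pair being non-forking over the $p$-realization) are formally different statements, and they are reconciled precisely by the symmetry in $(2)$. With that observation, the theorem follows by stringing together the co-simplicity theorem, Proposition \ref{prop: simple types satisfy independence theorem}, and Proposition \ref{prop: co-independence theorem implies simple}.
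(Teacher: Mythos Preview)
Your proof is correct and follows essentially the same route as the paper: $(1)\Leftrightarrow(2)$ via Definition \ref{def: simple}(5) and Corollary \ref{cor: simple types have full symmetry}, $(1)\Rightarrow(3)$ via Proposition \ref{prop: simple types satisfy independence theorem} together with the symmetry from Corollary \ref{cor: simple types have full symmetry}, and $(3)\Rightarrow(1)$ via Proposition \ref{prop: co-independence theorem implies simple}. One tiny addendum: in $(1)\Rightarrow(3)$ you invoke symmetry to flip the conclusion $c\ind_M^f b_1b_2$ to $b_1b_2\ind_M^f c$, but you also need it once at the start to flip the hypotheses $b_i\ind_M^f c_i$ to $c_i\ind_M^f b_i$ before Proposition \ref{prop: simple types satisfy independence theorem} applies; this is harmless since you have already established the symmetry.
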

\begin{proof}
(1) is equivalent to (2) is by Definitions \ref{def: simple} and
Corollary \ref{cor: simple types have full symmetry}.

(1) implies (3): By Proposition \ref{prop: simple types satisfy independence theorem}
and Corollary \ref{cor: simple types have full symmetry}.

(3) implies (1) is by Proposition \ref{prop: co-independence theorem implies simple}.\end{proof}
\begin{problem}
Is every co-simple type simple in an $\NTP_{2}$ theory?
\end{problem}
We point out that at least every co-simple \emph{stably embedded}
type (defined over a small set) is simple. Recall that a partial type
$p(x)$ defined over $A$ is called\emph{ }stably embedded if for
any $\phi(\bar{x},c)$ there is some $\psi(\bar{x},y)\in L(A)$ and
$d\in p(\M)$ such that $p(\M)^{n}\cap\phi(\bar{x},c)=p(\M)^{n}\cap\psi(\bar{x},d)$.
If $p(x)$ happens to be defined by finitely many formulas, it is
easy to see by compactness that $\psi(\bar{x},y)$ can be chosen to
depend just on $\phi(\bar{x},y)$, and not on $c$. But for an arbitrary
type this is not true.
\begin{prop}
Let $T$ be $\NTP_{2}$. Let $p(x)$ be a co-simple type over $A$
and assume that $p$ is stably embedded. Then $p(x)$ is simple.\end{prop}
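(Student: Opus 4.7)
The plan is to argue by contrapositive: assume $p$ is not simple, and produce a formula $\psi'(x,y) \in L(A)$ witnessing the tree property with parameters from $p(\M)$, which directly contradicts co-simplicity in the form of Definition \ref{def: co-simple}(1). (Interestingly, $\NTP_2$ does not seem to be needed for the argument; stably embeddedness is what does all the work.) By Definition \ref{def: simple}(1), failure of simplicity gives $\phi(x,y) \in L$, $k < \omega$ and a tree $(a_\eta)_{\eta \in \omega^{<\omega}}$ such that $\{\phi(x,a_{\eta i})\}_{i<\omega}$ is $k$-inconsistent for every $\eta$, while $\{\phi(x,a_{\eta|i})\}_{i<\omega} \cup p(x)$ is consistent for every $\eta \in \omega^{\omega}$. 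By compactness we may inflate the tree to be indexed by $\kappa^{<\kappa}$ for a sufficiently large $\kappa > |L(A)| + \aleph_0$, retaining the same properties.

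Next I would apply stably embeddedness at each node: for every $\eta$ there exist a formula $\psi_\eta(x,y_\eta) \in L(A)$ and a tuple $d_\eta \in p(\M)$ with $\psi_\eta(\M, d_\eta) \cap p(\M)^{|x|} = \phi(\M, a_\eta) \cap p(\M)^{|x|}$. Since there are fewer than $\kappa$ choices for the pair $(\psi_\eta, |y_\eta|)$, a standard Erd\H{o}s--Rado / tree pigeon-hole argument extracts an $\omega^{<\omega}$-shaped sub-tree on which the pair is constantly equal to some $(\psi, y)$. On this sub-tree, restricted to realizations of $p$, the formulas $\psi(x, d_\eta)$ and $\phi(x, a_\eta)$ define the same set, so $\{\psi(x, d_{\eta i})\}_{i<\omega} \cup p(x)$ inherits $k$-inconsistency from $\{\phi(x, a_{\eta i})\}_{i<\omega}$. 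By compactness, for each $\eta$ there is a finite conjunction $\chi_\eta(x) \in p$ such that $\{\psi(x, d_{\eta i}) \wedge \chi_\eta(x)\}_{i<\omega}$ is already $k$-inconsistent outright. Another pigeon-hole (after having started with a sufficiently large tree) lets us assume $\chi_\eta = \chi$ uniformly.

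The resulting formula $\psi'(x,y) := \psi(x,y) \wedge \chi(x) \in L(A)$ has the tree property with parameters $(d_\eta)_{\eta \in \omega^{<\omega}} \subseteq p(\M)$: siblings are $k$-inconsistent by construction, and for any branch $\eta \in \omega^\omega$, any $b \in p(\M)$ realizing $\{\phi(x,a_{\eta|i})\}_{i<\omega} \cup p(x)$ automatically realizes $\psi(x, d_{\eta|i})$ for every $i$ (by the stably embeddedness equivalence and $b \in p(\M)$) and satisfies $\chi(x)$. This directly contradicts Definition \ref{def: co-simple}(1), completing the proof. The main delicate point is the second pigeon-hole step, where one must be careful to enlarge the initial tree enough (before the uniformization on $\psi$) so that after also stabilizing $\chi$ an $\omega^{<\omega}$-shaped tree still survives; this is handled by a single application of Erd\H{o}s--Rado to a tree indexed by a sufficiently iterated $\beth$-cardinal above $|L(A)| + |p|$.
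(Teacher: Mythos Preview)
Your overall strategy is exactly the paper's: contrapose, use stable embeddedness to replace each $\phi(x,a_\eta)$ by some $\psi_\eta(x,d_\eta)$ with $d_\eta\subseteq p(\M)$, uniformize to a single $\psi$ and a single $\chi\in p$, and conclude that $p$ is not co-simple. The divergence, and the gap, is in the uniformization step.

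You assert that a ``standard Erd\H{o}s--Rado / tree pigeon-hole argument'' extracts an $\omega^{<\omega}$-shaped subtree on which $\psi_\eta$ is constant. This is not standard, and as stated it fails. The $k$-inconsistency you need to preserve is for \emph{siblings}, so your subtree embedding must send siblings to siblings (equivalently, be level-preserving on a height-$\omega$ tree). But then colour each node of $\kappa^{<\omega}$ by the parity of its level: no level-preserving subtree can be monochromatic, regardless of how large $\kappa$ is. The same obstruction applies to your second pigeon-hole on $\chi_\eta$. Allowing non-level-preserving embeddings would rescue the pigeon-hole but lose the sibling $k$-inconsistency (you would need $\TP_1$, i.e.\ inconsistency for arbitrary incomparable pairs, which is exactly what $\NTP_2$ buys you via Fact~\ref{fac: TP =00003D TP1 or TP2}).

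The paper handles this by first passing to an ss-indiscernible tree over $A$ still witnessing the tree property; this step uses $\NTP_2$ (see the reference to \cite{TreeIndiscernibility}). Once the tree is indiscernible over $A$, all $a_\eta$ have the same type over $A$, so one application of stable embeddedness at the root transports by $A$-automorphisms to give a \emph{single} $\psi$ everywhere, and then indiscernibility again gives a single $\chi$. The paper's Remark immediately after the proof makes this explicit: for a definable $p$ the uniformity of stable embeddedness comes for free by compactness and no indiscernible tree is needed, whereas for general $p$ the extraction of the indiscernible tree ``seems to require $\NTP_2$''. So your parenthetical claim that $\NTP_2$ is not needed is exactly the point where the argument breaks.
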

\begin{proof}
Assume $p(x)$ is not simple, and let $(a_{\eta})_{\eta\in\omega^{<\omega}}$,
$k$ and $\phi(x,y)$ witness this. We may assume in addition that
$(a_{\eta})$ is an indiscernible tree over $A$ (that is, ss-indiscernible
in the terminology of \cite{TreeIndiscernibility}, see Definition
3.7 and the proof of Theorem 6.6 there).

By the stable embeddedness assumption, there is some $\psi(x,z)\in L(A)$
and $b\subseteq p(\M)$ such that $\psi(x,b)\cap p(\M)=\phi(x,a_{\emptyset})\cap p(\M)$.
It follows by the indiscernibility over $A$ that for every $\eta\in\omega^{<\omega}$
there is $b_{\eta}\subseteq p(\M)$ satisfying $\psi(x,b_{\eta})\cap p(\M)=\phi(x,a_{\eta})\cap p(\M)$. 

As $\left\{ \phi(x,a_{\emptyset i})\right\} _{i<\omega}$ is $k$-inconsistent,
it follows that $\left\{ \psi(x,b_{\emptyset i})\right\} _{i<\omega}\cup p(x)$
is $k$-inconsistent, thus $\left\{ \psi(x,b_{\emptyset i})\right\} _{i<\omega}\cup\left\{ \chi(x)\right\} $
is $k$-inconsistent for some $\chi(x)\in p$ by compactness and indiscernibility.
Again by the indiscernibility over $A$ we have that $\left\{ \psi(x,b_{\eta i})\right\} _{i<\omega}\cup\left\{ \chi(x)\right\} $
is $k$-inconsistent for every $\eta\in\omega^{<\omega}$. It is now
easy to see that $\psi'(x,z)=\psi(x,z)\land\chi(x)$ and $\left(b_{\eta}\right)_{\eta\in\omega^{<\omega}}$
witness that $p(x)$ is not co-simple over $A$.\end{proof}
\begin{rem}
If $p(x)$ is actually a definable set, the argument works in an arbitrary
theory since instead of extracting a sufficiently indiscernible tree
(which seems to require $\NTP_{2}$), we just use the uniformity of
stable embeddedness given by compactness.
\end{rem}

\section{\label{sec: Examples}Examples}

In this section we present some examples of $\NTP_{2}$ theories.
But first we state a general lemma which may sometimes simplify checking
$\NTP_{2}$ in particular examples.
\begin{lem}
\label{lem: boolean operations on inp-patterns} ~
\begin{enumerate}
\item If $(\bar{a}_{\alpha},\phi_{\alpha,0}(x,y_{\alpha,0})\lor\phi_{\alpha,1}(x,y_{\alpha,1}),\, k_{\alpha})_{\alpha<\kappa}$
is an $\inp$-pattern, then $(\bar{a}_{\alpha},\phi_{\alpha,f(\alpha)}(x,y_{\alpha,f(\alpha)}),$
$k_{\alpha})_{\alpha<\kappa}$ is an $\inp$-pattern for some $f:\,\kappa\to\{0,1\}$.
\item Let $\left(\bar{a}_{\alpha},\phi_{\alpha}(x,y_{\alpha}),k_{\alpha}\right)_{\alpha<\kappa}$
be an $\inp$-pattern and assume that $\phi_{\alpha}(x,a_{\alpha0})\leftrightarrow\psi_{\alpha}(x,b_{\alpha})$
for $\alpha<\kappa$. Then there is an $\inp$-pattern of the form
$\left(\bar{b}_{\alpha},\psi_{\alpha}(x,z_{\alpha}),k_{\alpha}\right)_{\alpha<\kappa}$.
\end{enumerate}
\end{lem}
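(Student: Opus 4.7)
My strategy for both parts is to first reduce, via Lemma \ref{lem: IndiscDivPattern}, to the case where the rows $\bar{a}_\alpha$ of the given $\inp$-pattern are mutually indiscernible over the relevant parameter set $C$, and then to extract the required new pattern by an automorphism argument. Specifically, for (1), I would observe that $k_\alpha$-inconsistency of each disjunct $\{\phi_{\alpha,j}(x,a_{\alpha,i})\}_{i<\omega}$ (for $j\in\{0,1\}$) is automatic, since any realizer of $k_\alpha$ instances of $\phi_{\alpha,j}(x,a_{\alpha,i})$ realizes the same number of instances of $(\phi_{\alpha,0}\lor\phi_{\alpha,1})(x,a_{\alpha,i})$. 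For the selection function, I would fix any realizer $b$ of $p(x)\cup\{(\phi_{\alpha,0}\lor\phi_{\alpha,1})(x,a_{\alpha,0})\}_{\alpha<\kappa}$ (available by consistency of the given disjunctive pattern along the zero path) and define $f\colon\kappa\to\{0,1\}$ by $f(\alpha)=0$ if $\models\phi_{\alpha,0}(b,a_{\alpha,0})$ and $f(\alpha)=1$ otherwise; then $b$ realizes $p(x)\cup\{\phi_{\alpha,f(\alpha)}(x,a_{\alpha,0})\}_{\alpha<\kappa}$. For arbitrary $g\colon\kappa\to\omega$, mutual indiscernibility over $C$ yields $(a_{\alpha,g(\alpha)})_{\alpha<\kappa}\equiv_C(a_{\alpha,0})_{\alpha<\kappa}$ (by rotating one row at a time and applying compactness), and the corresponding $C$-automorphism sends $b$ to a realizer of $p(x)\cup\{\phi_{\alpha,f(\alpha)}(x,a_{\alpha,g(\alpha)})\}_{\alpha<\kappa}$.

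For (2), after reducing to mutually indiscernible rows while preserving each pair $(a_{\alpha,0},b_\alpha)$ (via Lemma \ref{lem: IndiscDivPattern} followed by a $C$-automorphism restoring the distinguished first columns), indiscernibility of $\bar{a}_\alpha$ supplies, for each $i<\omega$, an automorphism $\sigma_{\alpha,i}\in\Aut(\M/C)$ with $\sigma_{\alpha,i}(a_{\alpha,0})=a_{\alpha,i}$. I would set $b_{\alpha,i}:=\sigma_{\alpha,i}(b_\alpha)$; applying $\sigma_{\alpha,i}$ to the hypothesis then yields $\phi_\alpha(x,a_{\alpha,i})\leftrightarrow\psi_\alpha(x,b_{\alpha,i})$ for every $i$. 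Consequently $\{\psi_\alpha(x,b_{\alpha,i})\}_{i<\omega}$ and $\{\phi_\alpha(x,a_{\alpha,i})\}_{i<\omega}$ cut out the same solution set in $\M$, so the $k_\alpha$-inconsistency transfers to the former. Likewise, for any $f\colon\kappa\to\omega$ the sets $p(x)\cup\{\psi_\alpha(x,b_{\alpha,f(\alpha)})\}_{\alpha<\kappa}$ and $p(x)\cup\{\phi_\alpha(x,a_{\alpha,f(\alpha)})\}_{\alpha<\kappa}$ have identical realizers, so path-consistency transfers as well, and $(\bar{b}_\alpha,\psi_\alpha,k_\alpha)$ is the desired $\inp$-pattern.

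The delicate point will be the reduction step in (2), where one must ensure the passage to mutually indiscernible rows can be arranged so as to preserve the specific equivalences $\phi_\alpha(x,a_{\alpha,0})\leftrightarrow\psi_\alpha(x,b_\alpha)$ given by hypothesis; I expect to handle this by performing the Erd\H{o}s--Rado extraction underlying Lemma \ref{lem: IndiscDivPattern} and then applying $C$-automorphisms to restore the original first columns together with their attached parameters $b_\alpha$. Beyond this bookkeeping, both parts reduce to a direct automorphism calculation with no further combinatorial content.
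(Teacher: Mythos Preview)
The paper states this lemma without proof. Your argument for (1) is correct and is the natural one. For (2), the steps \emph{after} your reduction are also fine: once the rows $\bar a_\alpha$ are mutually indiscernible over $C$ and the equivalence $\phi_\alpha(x,a_{\alpha,0})\leftrightarrow\psi_\alpha(x,b_\alpha)$ is in hand, propagating via automorphisms $\sigma_{\alpha,i}$ sending $a_{\alpha,0}$ to $a_{\alpha,i}$ yields $b_{\alpha,i}$ with $\phi_\alpha(x,a_{\alpha,i})\leftrightarrow\psi_\alpha(x,b_{\alpha,i})$, and both $\inp$ conditions transfer verbatim. The problem is exactly the reduction step you flag, and your proposed fix does not work. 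The Erd\H{o}s--Rado extraction behind Lemma~\ref{lem: Erdos-Rado} only guarantees that each finite sub-array of the new array realizes the $C$-type of \emph{some} sub-array of the old one, so the new first element $a'_{\alpha,0}$ has the $C$-type of some $a_{\alpha,j_\alpha}$ with $j_\alpha$ uncontrolled; there is in general no $C$-automorphism carrying it back to $a_{\alpha,0}$, and hence no way to recover the given $b_\alpha$.

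In fact part (2), read literally, is false. In the theory of an infinite set with a unary predicate $P$ naming a single constant $c$, take $\phi(x,y)=(x=y)$, any row $\bar a_0$ of distinct elements with $a_{0,0}=c$, and $\psi(x,z)=(x=z\wedge P(z))$ with $b_0=c$. Then $(\bar a_0,\phi,2)$ is an $\inp$-pattern of depth $1$ and $\phi(x,a_{0,0})\leftrightarrow\psi(x,b_0)$, yet there is no $\inp$-pattern $(\bar b_0,\psi,k)$ for any $k$, since $\psi(x,b)$ is consistent only for $b=c$. What the paper actually uses (in the valued-field application, via Pas cell decomposition) is the \emph{uniform} hypothesis $\models\forall y_\alpha\,\exists z_\alpha\,\forall x\,\bigl(\phi_\alpha(x,y_\alpha)\leftrightarrow\psi_\alpha(x,z_\alpha)\bigr)$. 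Under that assumption your reduction is unnecessary: choose $b_{\alpha,i}$ witnessing the equivalence for every $a_{\alpha,i}$ directly, and the two arrays cut out identical families of definable sets, so the $\inp$-pattern transfers immediately. Equivalently, one may first pass to mutually indiscernible rows and only \emph{then} invoke the equivalence for the new $a_{\alpha,0}$; this is how the lemma is meant to be applied.
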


\subsection{Adding a generic predicate}

Let $T$ be a first-order theory in the language $L$. For $S(x)\in L$
we let $L_{P}=L\cup\left\{ P(x)\right\} $ and $T_{P,S}^{0}=T\cup\left\{ \forall x\left(P(x)\rightarrow S(x)\right)\right\} $. 
\begin{fact}
\label{fac: properties of T_P}\cite{ChatzidakisPillay} Let $T$
be a theory eliminating quantifiers and $\exists^{\infty}$. Then:
\begin{enumerate}
\item $T_{P,S}^{0}$ has a model companion $T_{P,S}$, which is axiomatized
by $T$ together with 
\begin{eqnarray*}
 & \forall\bar{z}\left[\exists\bar{x}\phi(\bar{x},\bar{z})\land\left(\bar{x}\cap\acl_{L}(\bar{z})=\emptyset\right)\land\bigwedge_{i<n}S(x_{i})\land\bigwedge_{i\neq j<n}x_{i}\neq x_{j}\right]\rightarrow\\
 & \left[\exists\bar{x}\phi(\bar{x},\bar{z})\land\bigwedge_{i\in I}P(x_{i})\land\bigwedge_{i\notin I}\neg P(x_{i})\right]
\end{eqnarray*}
for every formula $\phi(\bar{x},\bar{z})\in L$, $\bar{x}=x_{0}...x_{n-1}$
and every $I\subseteq n$. It is possible to write it in first-order
due to the elimination of $\exists^{\infty}$.
\item $\acl_{L}(a)=\acl_{L_{P}}(a)$
\item $a\equiv^{L_{P}}b$ $\Leftrightarrow$ there is an isomorphism between
$L_{P}$ structures $f:\acl(a)\to\acl(b)$ such that $f(a)=b$.
\item Modulo $T_{P,S}$, every formula $\psi(\bar{x})$ is equivalent to
a disjunction of formulas of the form $\exists\bar{z}\phi(\bar{x},\bar{z})$
where $\phi(\bar{x},\bar{z})$ is a quantifier-free $L_{P}$ formula
and for any $\bar{a},\bar{b}$, if $\models\phi(\bar{a},\bar{b})$,
then $\bar{b}\in\acl(\bar{a})$.
\end{enumerate}
\end{fact}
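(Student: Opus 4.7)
The plan is to treat all four parts together via the standard model-companion analysis for generic expansions. For (1), I would first verify that the candidate axiom scheme is genuinely first-order: the clause $\bar{x} \cap \acl_L(\bar{z}) = \emptyset$ says that each coordinate $x_i$ avoids every $L$-formula $\theta(x,\bar{z})$ with only finitely many $x$-solutions, which is a first-order condition precisely because $T$ eliminates $\exists^{\infty}$. One can in fact bake this clause into $\phi$ itself, producing a uniform first-order axiomatization. Then to show $T_{P,S}$ is the model companion of $T_{P,S}^0$: every $M_0 \models T_{P,S}^0$ embeds into a model of $T_{P,S}$ by transfinitely realising each instance of the scheme (at each stage, enlarge the $L$-reduct using QE of $T$, and freely assign $P$ on the new elements, which is legitimate because they lie outside the $L$-algebraic closure of the existing parameters). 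Model-completeness of $T_{P,S}$ follows from a back-and-forth between two models $M, N \models T_{P,S}$ extending a partial $L_P$-isomorphism $f : A \to B$: given $m \in M$, if $m \in \acl_L(A)$ then QE of $T$ together with the $P$-values already fixed on $\acl_L(A)$ determine the quantifier-free $L_P$-type of $m$ over $A$; otherwise the genericity axiom provides a realisation of $\tp_L(m/A)$ in $N$ with any desired $P$-value.

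This back-and-forth simultaneously delivers (3): the $L_P$-type of $a$ is determined by the quantifier-free $L_P$-type of $\acl_L(a)$, so every $L_P$-isomorphism $f : \acl_L(a) \to \acl_L(b)$ with $f(a) = b$ extends to an automorphism of the monster. For (2), the inclusion $\acl_L(a) \subseteq \acl_{L_P}(a)$ is automatic from $L \subseteq L_P$; for the converse, if $b \notin \acl_L(a)$ then $\tp_L(b/a)$ has infinitely many realisations, and the genericity axiom lets us realise this $L$-type in conjunction with any prescribed $P$-value, so $\tp_{L_P}(b/a)$ also has infinitely many realisations and $b \notin \acl_{L_P}(a)$.

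For (4), one argues by compactness and (3) that every $L_P$-formula $\psi(\bar{x})$ is equivalent modulo $T_{P,S}$ to a disjunction of formulas each describing the $L_P$-isomorphism type of a finite algebraic neighbourhood of $\bar{x}$; each such description has the form $\exists \bar{z}\,\phi(\bar{x},\bar{z})$ where $\phi$ is quantifier-free $L_P$ and is strengthened to force $\bar{z} \in \acl_L(\bar{x})$ by incorporating an explicit $L$-bound on the number of their $L$-conjugates. The main obstacle throughout lies in (1), specifically in making the ``avoid the algebraic closure'' clause genuinely first-order: without elimination of $\exists^{\infty}$ one would be forced into an infinitary scheme, so this hypothesis is truly essential. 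Once it is handled uniformly, the remaining parts reduce to routine back-and-forth and compactness arguments.
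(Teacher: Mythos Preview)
The paper does not prove this statement at all: it is stated as a \emph{Fact} with a citation to Chatzidakis--Pillay and no proof is given. Your sketch is essentially the standard argument from that reference (back-and-forth over algebraically closed substructures, using elimination of $\exists^{\infty}$ to make the genericity axioms first-order), so there is nothing to compare against here.
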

\begin{thm}
Let $T$ be geometric (that is, the algebraic closure satisfies the
exchange property, and $T$ eliminates $\exists^{\infty}$) and $\NTP_{2}$.
Then $T_{P}$ is $\NTP_{2}$.\end{thm}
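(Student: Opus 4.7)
The plan is to argue by contrapositive: assume $T_{P}$ has $\TP_{2}$ and deduce the same for $T$, contradicting the hypothesis. By Corollary~\ref{thm: k_inp^n =00003D k_inp^1} combined with Lemma~\ref{lem: IndiscDivPattern}, I fix an $\inp$-pattern in $T_{P}$ with $|x|=1$, witness formula $\psi(x,y)$, modulus $k$, and rows $\bar{a}_{\alpha}=(a_{\alpha,i})_{i<\omega}$ mutually $L_{P}$-indiscernible over a sufficiently saturated $M\models T_{P}$. By Fact~\ref{fac: properties of T_P}(4) and Lemma~\ref{lem: boolean operations on inp-patterns}(1), I may assume $\psi(x,y)=\exists\bar{z}\,\phi(x,y,\bar{z})$ with $\phi$ quantifier-free $L_{P}$ and $\bar{z}\in\acl_{L}(xy)$ on any realization. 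Write $\phi=\phi_{L}\wedge\phi_{P}$, where $\phi_{P}$ is a Boolean combination of $P$-literals $\pm P(t(x,y,\bar{z}))$ for $L$-terms $t$. The strategy has two steps: (1) promote the existential witnesses $\bar{z}$ into the parameter variable, yielding a $\TP_{2}$-pattern for $\phi(x,y\bar{z})$ in $T_{P}$; and (2) reduce $\phi$ to its $L$-part $\phi_{L}$ using the genericity of $P$, yielding a $\TP_{2}$-pattern for $\phi_{L}(x,y\bar{z})$ in $T$ and the desired contradiction.

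For step~(1), pick $b\models\{\psi(x,a_{\alpha,0})\}_{\alpha<\omega}$ and, for each $\alpha<\omega$, choose $\bar{c}_{\alpha}\in\acl_{L}(Mba_{\alpha,0})$ witnessing $\models\phi(b,a_{\alpha,0},\bar{c}_{\alpha})$. Using an $L_{P}$-automorphism of $\M$ over $M\bar{a}_{\neq\alpha}$ that sends $a_{\alpha,0}$ to $a_{\alpha,i}$ (available by mutual $L_{P}$-indiscernibility over $M$), transport $\bar{c}_{\alpha}$ along the $\alpha$-row to obtain $\bar{c}_{\alpha,i}$. After a further extraction via Lemma~\ref{lem: Erdos-Rado}, the enlarged array $(a_{\alpha,i}\bar{c}_{\alpha,i})_{\alpha,i}$ is mutually $L$-indiscernible over $M$ while retaining $\bar{c}_{\alpha,0}=\bar{c}_{\alpha}$ on the diagonal. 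Since $\phi(x,a_{\alpha,0}\bar{c}_{\alpha,0})\vdash\psi(x,a_{\alpha,0})$, the element $b$ witnesses consistency of the diagonal; and $k$-inconsistency of $\phi$ along rows is inherited from that of $\psi$.

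For step~(2), diagonal consistency and row-$L$-indiscernibility pass verbatim to the $L$-formula $\phi_{L}(x,y\bar{z})$, so the main obstacle is lifting the $T_{P}$-$k$-inconsistency along rows to $T$-$k$-inconsistency. Suppose, toward a contradiction, that some $b^{*}\in\M$ realizes $\{\phi_{L}(x,a_{\alpha,i_{j}}\bar{c}_{\alpha,i_{j}})\}_{j<k}$ in $T$. I would like to find $b^{**}$ with $b^{**}\equiv_{M\bar{a}}^{L}b^{*}$ whose $L_{P}$-type over $M\bar{a}$ (parametrized by the $L_{P}$-isomorphism class of $\acl_{L}(Mb^{**}\bar{a})$ via Fact~\ref{fac: properties of T_P}(3)) assigns the required $P$-values to the terms $t(b^{**},a_{\alpha,i_{j}},\bar{c}_{\alpha,i_{j}})$; such a $b^{**}$ would give an $L_{P}$-realization of $k$-many $\psi$-instances on the row $\alpha$, contradicting the original pattern. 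The hard step is verifying that the desired $P$-assignment is consistent: here the geometric hypothesis enters. Using the exchange property and elimination of $\exists^{\infty}$, one splits the relevant terms into an $L$-algebraically independent \emph{generic} part over $M\bar{a}$, where the axioms of Fact~\ref{fac: properties of T_P}(1) license any compatible $P$-assignment, and a \emph{forced} part in $\acl_{L}(M\bar{a})$, whose $P$-values are already determined; a back-and-forth along the independent part realizes any prescribed $P$-values, while a preliminary Ramsey-type extraction (Lemma~\ref{lem: Erdos-Rado}) guarantees uniformity of the forced part, so that $\phi_{P}$ can be arranged along the row. This completes the reduction and produces the desired $\TP_{2}$-pattern for $\phi_{L}$ in $T$.
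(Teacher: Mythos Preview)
Your overall strategy---decompose via near-model-completeness, push the algebraic witnesses $\bar z$ into the parameters, and then argue that the residual $L$-formula $\phi_L$ must itself carry a $\TP_2$-pattern in $T$---is natural and not far from the paper's approach, but step~(2) has a real gap.

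The claim you need is: if $\bigwedge_{j<k}\phi_L(x,a_{\alpha,i_j}\bar c_{\alpha,i_j})$ is consistent in $T$, then $\bigwedge_{j<k}\phi(x,a_{\alpha,i_j}\bar c_{\alpha,i_j})$ is consistent in $T_P$. Your argument takes \emph{some} $b^*$ realizing the $L$-conjunction and tries to recolour to get $b^{**}$. But the recolouring argument (Fact~\ref{fac: properties of T_P}(1)) only lets you assign $P$-values freely on elements outside $\acl_L$ of the parameters; it does nothing if $b^*\in\acl_L(M\bar a)$. Nothing in your setup prevents the $L$-formula $\bigwedge_j\phi_L(x,d_{i_j})$ from having only algebraic solutions, in which case every term $t(b^*,d_{i_j})$ lies in $\acl_L(M\bar a)$, all $P$-values are already fixed, and there is no room to arrange $\phi_P$. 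Your ``split into generic and forced parts'' is exactly right in spirit, but you never secure that the generic part is nonempty.

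This is precisely the point the paper's proof is organized around. Rather than isolating a single $L$-formula, it works with the full $L$-type $p_i(x,a_{i0})=\tp_L(b_0/a_{i0})$, and---crucially---first produces an infinite $A$-indiscernible sequence $(b_t)_{t<\omega}$ of realizations of the diagonal. Then $\NTP_2$ of $T$ is applied not to the original pattern but to the \emph{thickened} pattern in variables $x_0\ldots x_{N-1}$ (the types $q_i^N$), which forces, for some row $i$, a realization $b$ of $\bigcup_j p_i(x,a_{ij})$ with $b\notin\acl_L(A)$. That single genericity condition, together with exchange, yields $\acl(a_{ij}b)\cap\acl(a_{ik}b)=\acl(b)$ and $\acl(A)\cap\acl(a_{ij}b)=\acl(a_{ij})$, and \emph{that} is what makes the $P$-colourings on the pieces $\acl(a_{ij}b)$ mutually compatible. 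Your sketch has no analogue of the ``infinitely many $b_t$'s'' step, and without it the genericity of $b^*$ is simply unavailable.

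There is also a smaller issue in step~(1): transporting $\bar c_\alpha$ along a row by \emph{separate} $L_P$-automorphisms $\sigma_{\alpha,i}$ does not produce an indiscernible sequence $(a_{\alpha,i}\bar c_{\alpha,i})_i$ (the $\sigma_{\alpha,i}$'s need not be coherent), and the Erd\H os--Rado extraction you invoke will not in general preserve the first column, so ``retaining $\bar c_{\alpha,0}=\bar c_\alpha$ on the diagonal'' is not justified as stated.
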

\begin{proof}
Denote $a\ind_{c}^{a}b$ $\Leftrightarrow$ $a\notin\acl(bc)\setminus\acl(c)$.
As $T$ is geometric, $\ind^{a}$ is a symmetric notion of independence,
which we will be using freely from now on.

Let $\left(\bar{a}_{i},\phi(x,y),k\right)_{i<\omega}$ be an $\inp$-pattern,
such that $\left(\bar{a}_{i}\right)_{i<\omega}$ is an indiscernible
sequence and $\bar{a}_{i}$'s are mutually indiscernible in the sense
of $L_{P}$, and $\phi$ an $L_{P}$-formula.
\begin{claim*}
For any $i$, $\left\{ a_{ij}\right\} _{j<\omega}$ is an $\ind^{a}$-independent
set (over $\emptyset$) and $a_{ij}\notin\acl(\emptyset)$.\end{claim*}
\begin{proof}
By indiscernibility and compactness.
\end{proof}
Let $A=\bigcup_{i<\omega}\bar{a}_{i}$.
\begin{claim*}
There is an infinite $A$-indiscernible sequence $\left(b_{t}\right)_{t<\omega}$
such that $b_{t}\models\left\{ \phi(x,a_{i0})\right\} _{i<\omega}$
for all $t<\omega$. \end{claim*}
\begin{proof}
First, there are infinitely many different $b_{t}$'s realizing $\left\{ \phi(x,a_{i0})\right\} _{i<\omega}$,
as $\left\{ \phi(x,a_{i0})\right\} _{0<i<\omega}\cup\left\{ \phi(x,a_{0j})\right\} $
is consistent for any $j<\omega$ and $\left\{ \phi(x,a_{0j})\right\} _{j<\omega}$
is $k$-inconsistent. Extract an $A$-indiscernible sequence from
it.
\end{proof}
Let $p_{i}(x,a_{i0})=\tp_{L}(b_{0}/a_{i0})$. 
\begin{claim*}
For some/every $i<\omega$, there is $b\models\bigcup_{j<\omega}p_{i}(x,a_{ij})$
such that in addition $b\notin\acl(A)$.\end{claim*}
\begin{proof}
For any $N<\omega$, let 
\[
q_{i}^{N}(x_{0}...x_{N-1},a_{i0})=\bigcup_{n<N}p_{i}(x_{n},a_{i0})\cup\left\{ x_{n_{1}}\neq x_{n_{2}}\right\} _{n_{1}\neq n_{2}<N}
\]
As $b_{0}...b_{N-1}\models\bigcup_{i<\omega}q_{i}^{N}(x_{0}...x_{N-1},a_{i0})$
and $T$ is $\NTP_{2}$, there must be some $i<\omega$ such that
$\bigcup_{j<\omega}q_{i}^{N}(x_{0}...x_{N-1},a_{ij})$ is consistent
for arbitrary large $N$ (and by indiscernibility this holds for every
$i$). Then by compactness we can find $b\models\bigcup_{j<\omega}p_{i}(x,a_{ij})$
such that in addition $b\notin\acl(A)$.
\end{proof}
Work with this fixed $i$. Notice that $b_{0}a_{i0}\equiv^{L}ba_{ij}$
for all $j\in\omega$.
\begin{claim*}
The following is easy to check using that $\ind^{a}$ satisfies exchange.
\begin{enumerate}
\item $\acl(A)\cap\acl(a_{ij}b)=\acl(a_{ij})$.
\item $\acl(a_{ij}b)\cap\acl(a_{ik}b)=\acl(b)$ for $j\neq k$.
\end{enumerate}
\end{claim*}
Now we conclude as in the proof of \cite[Theorem 2.7]{ChatzidakisPillay}.
That is, we are given a coloring $P$ on $\bar{a}_{i}$. Extend it
to a $P_{i}$-coloring on $\acl(a_{ij}b)$ such that $a_{ij}b$ realizes
$\tp_{L_{P}}(a_{i0}b_{0})$, and by the claim all $P_{i}$'s are consistent.
Thus there is some $b'$ such that $b_{0}a_{i0}\equiv^{L_{P}}b'a_{ij}$
for all $j\in\omega$, in particular $b'\models\left\{ \phi_{i}(x,a_{ij})\right\} $
--- a contradiction.\end{proof}
\begin{example}
Adding a (directed) random graph to an $o$-minimal theory is $\NTP_{2}$.\end{example}
\begin{problem}
Is it true without assuming exchange for the algebraic closure? Is
$\kappa_{\inp}$ preserved? So in particular, is strongness preserved?
\end{problem}

\subsection{Valued fields}

In this section we are going to prove the following theorem:
\begin{thm}
\label{thm: Ax-Kochen} Let $\bar{K}=\left(K,\Gamma,k,v:\, K\to\Gamma,ac:\, K\to k\right)$
be a Henselian valued field of characteristic $(0,0)$ in the Denef-Pas
language. Let $\kappa=\kappa_{\inp}^{1}(k)\times\kappa_{\inp}^{1}(\Gamma)$.
Then $\kappa_{\inp}^{1}(K)<R(\kappa+2,\Delta)$ for some finite set
of formulas $\Delta$ (see Definition \ref{def: Ramsey number}).
In particular:
\begin{enumerate}
\item If $k$ is $\NTP_{2}$, then $\bar{K}$ is $\NTP_{2}$ (If $K$ was
$\TP_{2}$, then by Lemma \ref{lem: NTP_2 =00003D bounded burden}
we would have $\kappa_{\inp}^{1}\left(K\right)=\infty>\beth_{\omega}\left(\left|T\right|^{+}\right)>R\left(\left|T\right|^{+}+2,\Delta\right)$.
Every ordered abelian group is $\NIP$ by \cite{GurevichSchmitt},
thus $\kappa_{\inp}(\Gamma)\leq\left|T\right|$. But then the theorem
implies $\kappa_{\inp}^{1}\left(k\right)>\left|T\right|^{+}$, so
$k$ has $\TP_{2}$).
\item If $k$ and $\Gamma$ are strong (of finite burden), then $\bar{K}$
is strong (resp. of finite burden). The argument is the same as for
(1) using Definition \ref{def: Ramsey number}(1),(2).
\end{enumerate}
\end{thm}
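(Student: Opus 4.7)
The strategy is to reduce an $\inp$-pattern in the valued field sort $K$ to $\inp$-patterns in the residue field $k$ and the value group $\Gamma$ via the Denef--Pas quantifier elimination, then combine the resulting bounds using the product array theorem (Theorem~\ref{thm: product array}). Suppose for contradiction that there is an $\inp$-pattern of depth $\kappa' \geq R(\kappa+2,\Delta)$ in a $1$-type over $K$, for a finite $\Delta$ to be chosen below. By Lemma~\ref{lem: IndiscDivPattern} we may assume the rows $(\bar a_\alpha)$ form a mutually indiscernible array, and by Lemma~\ref{lem: Erdos-Rado} applied to the rows of rows we may further take the array to be strongly indiscernible.

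The first key reduction is Denef--Pas quantifier elimination: every $L_{DP}$-formula $\phi_\alpha(x,y_\alpha)$ with $x$ a valued-field singleton is a Boolean combination of formulas of the form $\chi(ac(f_1(x,y)),\dots,ac(f_n(x,y)))$ with $\chi$ from $k$ and $f_i$ polynomials, and of the form $\rho(v(g_1(x,y)),\dots,v(g_m(x,y)))$ with $\rho$ from $\Gamma$ and $g_j$ polynomials. Using Lemma~\ref{lem: boolean operations on inp-patterns}(1) iteratively, we may assume each $\phi_\alpha$ is either a pure \emph{residue} formula or a pure \emph{valuation} formula. By pigeonhole, a fraction of at least half the rows are of the same kind; for definiteness assume they are residue formulas. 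Now I choose $\Delta$ large enough so that a Ramsey extraction of length $\kappa+2$ among rows further homogenizes the polynomial shapes used, so that all rows use polynomials of the same total degree profile. This is where the Ramsey number $R(\kappa+2,\Delta)$ enters: we get a sub-pattern of depth $\kappa+1$ in which every $\phi_\alpha(x,a_{\alpha,i})$ has the form $\chi_0(ac(f(x,a_{\alpha,i})))$ for a \emph{fixed} $\chi_0$ and a fixed polynomial shape $f$.

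Given a realization $b\in K$ of the $\inp$-pattern, the elements $t_{\alpha,i} := ac(f(b,a_{\alpha,i}))$ in $k$, together with the parameter rows, are designed to produce an $\inp$-pattern in $k$. One row at a time, the sequence $(ac(f(x,a_{\alpha,i})))_{i<\omega}$ realized by $b$ is inconsistent in $k$ (by $k_\alpha$-inconsistency transferred through $ac$), while the diagonal is consistent. Applying this to the $\kappa+1$ sub-pattern gives an $\inp$-pattern of depth exceeding $\kappa_{\inp}^1(k)$ in the residue field, a contradiction. The symmetric argument with valuation formulas produces an $\inp$-pattern in $\Gamma$. Finally, since the initial choice of "residue vs.\ valuation'' halves the depth and we must allow both possibilities contributing simultaneously, the product array theorem (Theorem~\ref{thm: product array}) applied to the "split'' type of $b$ (conceptually $ac$-data on one side, $v$-data on the other) upgrades the additive bound to the multiplicative bound $\kappa_{\inp}^1(k)\times\kappa_{\inp}^1(\Gamma)$. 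The main obstacle, and the reason one needs Henselianity and the $(0,0)$ hypothesis, is step two: controlling the degeneracies when $f(x,a_{\alpha,i})=0$ or when $ac(f(x,a_{\alpha,i}))$ collapses for $x$ close to a root of $f(\cdot,a_{\alpha,i})$. Here one invokes Hensel's lemma (or the Denef--Pas cell decomposition) to factor such polynomials and, if necessary, subdivide each row of the $\inp$-pattern into finitely many sub-rows on which $ac\circ f$ behaves uniformly; absorbing this finite subdivision into the Ramsey bound is precisely what determines the finite $\Delta$.
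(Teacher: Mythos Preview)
Your proposal has genuine gaps in the two places where the real work happens.

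First, the Boolean splitting step is wrong. Lemma~\ref{lem: boolean operations on inp-patterns}(1) lets you pass from a disjunction to one disjunct, but the Denef--Pas normal form gives each $\phi_\alpha$ as a disjunction of \emph{conjunctions} of a valuation part and a residue part. You cannot separate these into ``pure residue'' or ``pure valuation'' rows; the two parts interact through the same element $x$, and the inconsistency along a row may come from either or from their interaction. Relatedly, you keep arbitrary polynomials $f(x,y)$ inside $ac(\cdot)$ and $v(\cdot)$. The paper's use of Pas' cell decomposition is precisely to eliminate this: one reduces to formulas built from \emph{linear} terms $v(x-c)$ and $ac(x-c)$ only, and then argues by induction on the number of such terms (Lemma~\ref{lem: linear formulas are NTP2}) down to a single center (Lemma~\ref{lem: 1-lin formulas are NTP2}).

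Second, you misplace the Ramsey step and omit the geometric core. In the paper the Ramsey bound $R(\kappa+2,\Delta)$ is not used to homogenize formula shapes across rows; it is applied to the sequence of \emph{centers} $(c_{\alpha 0})_{\alpha}$ (one per row) to make it $\Delta$-indiscernible, so that Lemma~\ref{lem: val on indiscernible sequence} classifies it as pseudo-convergent, reverse, or constant. A case analysis on this trichotomy, together with the phase-change point of Lemma~\ref{lem: Phase change point}, is what lets one replace $v(a-c_{\alpha 0})$ and $ac(a-c_{\alpha 0})$ by expressions depending only on a fixed auxiliary $c_\infty$; only then can one produce genuine $\inp$-patterns in $\Gamma$ and in $k$. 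Your direct push ``$t_{\alpha,i}=ac(f(b,a_{\alpha,i}))$ gives an $\inp$-pattern in $k$'' does not work: these $t_{\alpha,i}$ depend on the witness $b$, so they are not parameters for a $k$-pattern, and $k_\alpha$-inconsistency of $\{\phi_\alpha(x,a_{\alpha,i})\}_i$ in $K$ does not transfer to inconsistency of any family in a single $k$-variable without the linearization and phase-change analysis. Finally, the product $\kappa_{\inp}^1(k)\times\kappa_{\inp}^1(\Gamma)$ is obtained not via Theorem~\ref{thm: product array} on a split type of $b$, but by blocking the rows into $\kappa_{\inp}^1(\Gamma)$ groups of size $\kappa_{\inp}^1(k)$ and arguing first in $\Gamma$, then in $k$ (see Case~1.2 of Lemma~\ref{lem: 1-lin formulas are NTP2}).
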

\begin{example}

\begin{enumerate}
\item Hahn series over pseudo-finite fields are $\NTP_{2}$.
\item In particular, let $K=\prod_{p\mbox{ prime}}\mathbb{Q}_{p}/\mathfrak{U}$
with $\mathfrak{U}$ a non-principal ultra-filter. Then $k$ is pseudo-finite,
so has $\IP$ by \cite{DuretPAChasIP}. And $\Gamma$ has SOP of course.
It is known that the valuation rings of $\mathbb{Q}_{p}$ are definable
in the pure field language uniformly in $p$ (see e.g. \cite{AxDefiningValuation}),
thus the valuation ring is definable in $K$ in the pure field language,
so $K$ has both IP and SOP in the pure field language. By Theorem
\ref{thm: Ax-Kochen} it is strong of finite burden, even in the larger
Denef-Pas language. Notice, however, that the burden of $K$ is at
least $2$ (witnessed by the formulas ``$ac(x)=y$'', ``$v(x)=y$''
and infinite sequences of different elements in $k$ and $\Gamma$.
\end{enumerate}
\end{example}
\begin{cor}
\cite{Sh863} If $k$ and $\Gamma$ are strongly dependent, then $K$
is strongly dependent.\end{cor}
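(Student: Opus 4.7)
The plan is to derive this corollary as a direct combination of Theorem \ref{thm: Ax-Kochen}(2) with Delon's transfer theorem for $\NIP$, exploiting the decomposition \textquotedblleft strongly dependent $=$ $\NIP$ $+$ strong\textquotedblright{} supplied by Fact \ref{fac: In NIP, burden =00003D dp-rank}(2). So there is no substantive new model-theoretic work; one simply has to verify that the two halves of strong dependence transfer separately via the available tools and then recombine them.

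First, by Fact \ref{fac: In NIP, burden =00003D dp-rank}(2), the hypothesis that $k$ and $\Gamma$ are strongly dependent means exactly that each of $k$, $\Gamma$ is both $\NIP$ and strong. (For the value group this is automatic on the $\NIP$ side since every ordered abelian group is $\NIP$ by Gurevich--Schmitt, as already noted in the statement of Theorem \ref{thm: Ax-Kochen}.) Next, by Delon's theorem \cite{Delon}, a Henselian valued field of residue characteristic zero in the Denef--Pas language is $\NIP$ whenever its residue field is, so $\bar{K}$ is $\NIP$. In parallel, Theorem \ref{thm: Ax-Kochen}(2) applied to the strong halves of $k$ and $\Gamma$ yields that $\bar{K}$ itself is strong. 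Combining these two conclusions and appealing once more to Fact \ref{fac: In NIP, burden =00003D dp-rank}(2) gives that $\bar{K}$ is $\NIP$ and strong, hence strongly dependent, as required.

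The only point that deserves a second look is that \textquotedblleft strong\textquotedblright{} in Theorem \ref{thm: Ax-Kochen}(2) is stated as an assertion about $\kappa_{\inp}^{1}$, i.e.\ about singletons, while strong dependence is a statement about arbitrary finite tuples. This is harmless: by Corollary \ref{thm: k_inp^n =00003D k_inp^1} one has $\kappa_{\inp}(T)=\kappa_{\inp}^{1}(T)$ whenever the latter is infinite, and sub-multiplicativity of burden (the corollary to Theorem \ref{thm: product array}) promotes finite singleton bounds to finite bounds on arbitrary tuples. Thus strongness of $\bar{K}$ in the sense of the theorem is the same as strongness in the sense of the classification hierarchy.

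There is essentially no obstacle to this argument beyond the work already packaged into Theorem \ref{thm: Ax-Kochen}; the real content is the Ax--Kochen--Ershov-style transfer of burden itself, which has been established above. The corollary is then a purely formal recombination of \textquotedblleft $\NIP$ transfer\textquotedblright{} (Delon) and \textquotedblleft strongness transfer\textquotedblright{} (Theorem \ref{thm: Ax-Kochen}(2)) via the equation $\text{strongly dependent} = \NIP + \text{strong}$.
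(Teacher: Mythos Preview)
Your proposal is correct and follows essentially the same approach as the paper: decompose strongly dependent as $\NIP$ plus strong via Fact~\ref{fac: In NIP, burden =00003D dp-rank}(2), transfer $\NIP$ by Delon, transfer strongness by Theorem~\ref{thm: Ax-Kochen}(2), and recombine. The paper's proof is the same argument in two sentences; your additional paragraph about singletons versus tuples is an unnecessary worry (the definition of strong in the paper is already about arbitrary types), but it does no harm.
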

\begin{proof}
By Delon's theorem \cite{Delon}, if $k$ is $\NIP$, then $K$ is
$\NIP$. Conclude by Theorem \ref{thm: Ax-Kochen} and Fact \ref{fac: In NIP, burden =00003D dp-rank}.
\end{proof}
We start the proof with a couple of lemmas about the behavior of $v(x)$
and $ac(x)$ on indiscernible sequences which are easy to check.
\begin{lem}
\label{lem: val on indiscernible sequence} Let $(c_{i})_{i\in I}$
be indiscernible. Consider function $(i,j)\mapsto v(c_{j}-c_{i})$
with $i<j$. It satisfies one of the following:
\begin{enumerate}
\item It is strictly increasing depending only on $i$ (so the sequence
is pseudo-convergent).
\item It is strictly decreasing depending only on $j$ (so the sequence
taken in the reverse direction is pseudo-convergent).
\item It is constant (we'll call such a sequence ``constant'').
\end{enumerate}
\end{lem}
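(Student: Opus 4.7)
The approach is a trichotomy driven by the ultrametric inequality and indiscernibility. For any triple $i<j<k$, exactly one of
\[
v(c_j-c_i)<v(c_k-c_j),\qquad v(c_j-c_i)>v(c_k-c_j),\qquad v(c_j-c_i)=v(c_k-c_j)
\]
holds in $\Gamma$, and by indiscernibility of $(c_i)_{i\in I}$ the selected alternative depends only on the order type of the triple, hence is the same on every increasing triple. Call these three uniform configurations (a), (b), (c).

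In configuration (a), the ultrametric inequality $v(c_k-c_i)\geq\min(v(c_j-c_i),v(c_k-c_j))$ with equality in the unequal case forces $v(c_k-c_i)=v(c_j-c_i)$, so $v(c_j-c_i)$ depends only on the smaller index $i$. Applying (a) now to a triple $i_1<i_2<j$ gives $v(c_j-c_{i_2})>v(c_{i_2}-c_{i_1})$; combining with ``depends only on $i$'' this reads as: value at $i_2$ strictly exceeds value at $i_1$. This is the pseudo-convergence asserted in (1). Configuration (b) is the mirror image (swap the roles of the two indices): the ultrametric identity becomes $v(c_k-c_i)=v(c_k-c_j)$, so the value depends only on $j$, and the analogous comparison via a triple $i<j_1<j_2$ yields strict decrease in $j$, giving (2). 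In configuration (c) all three values in every triple coincide; chaining through a common middle index shows $v(c_j-c_i)$ takes the same value on every pair $i<j$, giving (3).

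There is no real obstacle: the lemma is essentially a direct translation of what indiscernibility says about three-element subsequences, combined with one use of the ultrametric inequality. The only two points meriting attention are (i) that the three configurations are genuinely exhaustive, which is immediate from trichotomy of the ordering on $\Gamma$ applied to an arbitrary fixed triple, and (ii) that the monotonicity in (1) and (2) is \emph{strict}, which is why it is extracted from the strict inequality in the trichotomy rather than from the ultrametric identity alone.
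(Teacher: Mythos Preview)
Your argument is correct and is exactly the elementary verification the paper has in mind when it says these lemmas ``are easy to check'': the paper gives no proof at all, so there is nothing to compare against beyond the observation that your trichotomy-on-triples plus ultrametric reasoning is the standard route. One tiny point: in configuration (c) your assertion that ``all three values in every triple coincide'' does not follow from the defining equality $v(c_j-c_i)=v(c_k-c_j)$ alone (cancellation could a priori give $v(c_k-c_i)$ strictly larger), but it is immediate once you invoke indiscernibility on a fourth index, and in any case your chaining argument goes through directly from (c) without needing that intermediate claim.
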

Contrary to the usual terminology we do not exclude index sets with
a maximal element.

\begin{lem}
\label{lem: Phase change point} Let $(c_{i})_{i\in I}$ be an indiscernible
pseudo-convergent sequence. Then for any $a$ there is some $h\in\bar{I}\cup\{+\infty,-\infty\}$
(where $\bar{I}$ is the Dedekind closure of $I$) such that (taking
$c_{\infty}$ such that $I\frown c_{\infty}$ is indiscernible):
\begin{lyxlist}{00.00.0000}
\item [{For~$i<h$:}] $v(c_{\infty}-c_{i})<v(a-c_{\infty})$,$v(a-c_{i})=v(c_{\infty}-c_{i})$
and $ac(a-c_{i})=ac(c_{\infty}-c_{i})$.
\item [{For~$i>h$:}] $v(c_{\infty}-c_{i})>v(a-c_{\infty})$, $v(a-c_{i})=v(a-c_{\infty})$
and $ac(a-c_{i})=ac(a-c_{\infty})$.
\end{lyxlist}
\end{lem}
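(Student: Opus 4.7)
The plan is to reduce everything to monotonicity of the map $i \mapsto v(c_\infty - c_i)$ and then apply the ultrametric inequality together with the standard behavior of the angular component map $ac$. First I would use indiscernibility of $I \frown c_\infty$ to observe that $i \mapsto v(c_\infty - c_i)$ is strictly increasing on $I$: by Lemma~\ref{lem: val on indiscernible sequence} applied to the extended sequence, and the assumption that the original sequence $(c_i)_{i \in I}$ is pseudo-convergent (case (1) of that lemma), the value $v(c_j - c_i)$ depends only on $i$ and is strictly increasing in $i$; taking $j = \infty$ gives what I want.

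Next I would define
\[
h \;=\; \sup\bigl\{\, i \in I \;:\; v(c_\infty - c_i) < v(a - c_\infty) \,\bigr\},
\]
which lives in $\bar I \cup \{+\infty, -\infty\}$. If there happens to exist (necessarily unique, by strict monotonicity) an $i_0 \in I$ with $v(c_\infty - c_{i_0}) = v(a - c_\infty)$, I would redefine $h := i_0$; this is legitimate because then $i_0 \in \bar I$ and the conditions ``$i < h$'' and ``$i > h$'' in the statement exclude $i_0$ itself. In both cases, for $i < h$ we have $v(c_\infty - c_i) < v(a - c_\infty)$, and for $i > h$ we have $v(c_\infty - c_i) > v(a - c_\infty)$.

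Finally I would verify the three equalities in each case by writing $a - c_i = (a - c_\infty) + (c_\infty - c_i)$ and using that in an ultrametric valuation, whenever the two summands have distinct valuations, $v$ of the sum equals the minimum and $ac$ of the sum equals $ac$ of the summand of smaller valuation. In the case $i < h$, the term $c_\infty - c_i$ has strictly smaller valuation, giving $v(a - c_i) = v(c_\infty - c_i)$ and $ac(a - c_i) = ac(c_\infty - c_i)$; in the case $i > h$, the term $a - c_\infty$ has strictly smaller valuation, giving $v(a - c_i) = v(a - c_\infty)$ and $ac(a - c_i) = ac(a - c_\infty)$. The displayed inequalities $v(c_\infty - c_i) < v(a - c_\infty)$ and $v(c_\infty - c_i) > v(a - c_\infty)$ are immediate from the definition of $h$.

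There is no real obstacle here: the only subtle point is the handling of a possible element $i_0 \in I$ where the valuations coincide, which is what forces the parameter $h$ to range over $\bar I \cup \{\pm\infty\}$ rather than just giving a clean split; once $i_0$ is absorbed into $h$ as above, the ultrametric computation is entirely routine.
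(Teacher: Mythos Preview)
Your argument is correct and is exactly the routine verification the paper has in mind: the paper does not give an explicit proof of this lemma, merely introducing it (together with Lemma~\ref{lem: val on indiscernible sequence}) as ``easy to check'' using the behavior of $v$ and $ac$ on indiscernible sequences. Your reduction to the strict monotonicity of $i\mapsto v(c_\infty-c_i)$, the definition of $h$ as the cut determined by $v(a-c_\infty)$, and the ultrametric computation on $a-c_i=(a-c_\infty)+(c_\infty-c_i)$ are precisely what is needed.
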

Notice that in fact there is a finite set of formulas $\Delta$ such
that these lemmas are true for $\Delta$-indiscernible sequences.
Fix it from now on, and let $\delta=R(\kappa+2,\Delta)$ for $\kappa=\kappa_{k}\times\kappa_{\Gamma}$
with $\kappa_{k}=\kappa_{\inp}^{1}(k)$ and $\kappa_{\Gamma}=\kappa_{\inp}^{1}(\Gamma)$.
\begin{lem}
\label{lem: 1-lin formulas are NTP2} In $K$, there is no $\inp$-pattern
$\left(\phi_{\alpha}(x,y_{\alpha}),\bar{d}_{\alpha},k_{\alpha}\right)_{\alpha<\delta}$
with mutually indiscernible rows such that $x$ is a singleton and
$\phi_{\alpha}(x,y_{\alpha})=\chi_{\alpha}(v(x-y),y_{\alpha}^{\Gamma})\land\rho_{\alpha}(ac(x-y),y_{\alpha}^{k})$,
where $\chi_{\alpha}\in L_{\Gamma}$ and $\rho_{\alpha}\in L_{k}$.\end{lem}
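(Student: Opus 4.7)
Plan. Assume for contradiction there is an $\inp$-pattern $(\phi_\alpha, \bar d_\alpha, k_\alpha)_{\alpha < \delta}$ with $\delta = R(\kappa+2, \Delta)$, mutually indiscernible rows, and formulas of the given form $\phi_\alpha(x, y_\alpha) = \chi_\alpha(v(x-y), y_\alpha^\Gamma) \land \rho_\alpha(ac(x-y), y_\alpha^k)$. Choose $\Delta$ to contain the finitely many formulas needed so that $\Delta$-indiscernibility of an indexing sequence suffices for the trichotomy of Lemma~\ref{lem: val on indiscernible sequence} and the phase-change analysis of Lemma~\ref{lem: Phase change point} to apply uniformly. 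By Definition~\ref{def: Ramsey number}, I pass to a subarray of $\kappa + 2$ rows whose sequence of rows is $\Delta$-indiscernible, retaining mutual indiscernibility of rows. The resulting cross-row indiscernibility forces every $K$-indexed row $(y_{\alpha, i})_{i<\omega}$ into the same case of Lemma~\ref{lem: val on indiscernible sequence}; I treat the pseudo-convergent case in detail, with the reverse-pseudo-convergent case symmetric and the constant case structurally parallel (handled via residue-class analysis using that the residue field has characteristic zero).

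Fix a realization $b \models \{\phi_\alpha(x, d_{\alpha, 0})\}_{\alpha < \kappa + 2}$. For each row $\alpha$, Lemma~\ref{lem: Phase change point} applied to $b$ supplies a phase index $h_\alpha \in \overline{\omega} \cup \{\pm\infty\}$ such that for $i > h_\alpha$ both $v(b - y_{\alpha, i}) = \gamma_\alpha$ and $ac(b - y_{\alpha, i}) = \xi_\alpha$ become constant in $i$ (depending only on $b$ and the pseudo-limit $y_\alpha^\ast$ of the row), whereas for $i < h_\alpha$ they are determined by the row alone as $v(y_\alpha^\ast - y_{\alpha, i})$ and $ac(y_\alpha^\ast - y_{\alpha, i})$. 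If $h_\alpha$ is finite, I replace row $\alpha$ by a tail starting past $h_\alpha$; the resulting sub-row remains indiscernible and the $\inp$-pattern property survives. Performing this on every row, and spending the two buffer rows to absorb the exceptional case $h_\alpha = \omega$ (when $b$ sits at the pseudo-limit of a row and no finite shift suffices), I arrange that on the remaining $\kappa$ rows and for every $i$, $\phi_\alpha(b, d_{\alpha, i})$ is equivalent to $\chi_\alpha(\gamma_\alpha, y_{\alpha, i}^\Gamma) \land \rho_\alpha(\xi_\alpha, y_{\alpha, i}^k)$ with $(\gamma_\alpha, \xi_\alpha) \in \Gamma \times k$ fixed.

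Setting $\bar e_\alpha = ((y_{\alpha, i}^\Gamma, y_{\alpha, i}^k))_{i<\omega}$ and $\psi_\alpha((z_\Gamma, z_k), (y^\Gamma, y^k)) = \chi_\alpha(z_\Gamma, y^\Gamma) \land \rho_\alpha(z_k, y^k)$, the data $(\psi_\alpha, \bar e_\alpha, k_\alpha)_{\alpha < \kappa}$ forms an $\inp$-pattern of depth $\kappa$ in the two-sorted reduct with variable $(z_\Gamma, z_k)$. Since $\Gamma$ and $k$ are orthogonal and stably embedded in the Denef-Pas language for Henselian valued fields of characteristic $(0,0)$, this reduct is just the disjoint theory of $\Gamma$ and $k$. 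By sub-multiplicativity of burden (the corollary of Theorem~\ref{thm: product array}) applied to the pair $(z_\Gamma, z_k)$, the depth of any such pattern is strictly less than $\kappa_\inp^1(\Gamma) \cdot \kappa_\inp^1(k) = \kappa$, a contradiction. The main obstacle is the phase-change reduction within the two-row buffer, especially the extremal case $h_\alpha = \omega$ and the parallel constant-row analysis, where one must show carefully that the row's dividing is controlled entirely by an $\inp$-pattern in the $(\Gamma, k)$-reduct rather than by genuinely $K$-specific behavior.
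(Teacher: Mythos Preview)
Your approach diverges from the paper's at the crucial step and contains a genuine gap. You apply the phase-change lemma (Lemma~\ref{lem: Phase change point}) to each \emph{row} $(y_{\alpha,i})_{i<\omega}$ separately, obtaining for each $\alpha$ a pair $(\gamma_\alpha,\xi_\alpha)=\bigl(v(b-y_\alpha^*),\,ac(b-y_\alpha^*)\bigr)$ that depends on the pseudo-limit $y_\alpha^*$ of that particular row. These pairs have no reason to coincide as $\alpha$ varies, so you have not produced a single element of $\Gamma\times k$ realizing the vertical path $\{\psi_\alpha(\,\cdot\,,e_{\alpha,0})\}_{\alpha<\kappa}$; consequently the data $(\psi_\alpha,\bar e_\alpha,k_\alpha)_{\alpha<\kappa}$ is not known to be an $\inp$-pattern, and the appeal to sub-multiplicativity is unjustified. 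The paper avoids this by applying the phase-change lemma to the \emph{column} $(c_{\alpha 0})_{\alpha}$ (which is why the $\Delta$-indiscernible subsequence is extracted from that column), fixing a single element $c_\infty$ extending it and making the whole array mutually indiscernible over $c_\infty$. In the main Case~1.2 one then has $v(a-c_{\alpha 0})=v(a-c_\infty)$ and $ac(a-c_{\alpha 0})=ac(a-c_\infty)$ \emph{simultaneously for all} $\alpha$, so the single pair $\bigl(v(a-c_\infty),ac(a-c_\infty)\bigr)$ realizes the entire vertical path.

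There is a second gap even granting the first: you do not verify that $\{\psi_\alpha(\,\cdot\,,e_{\alpha,i})\}_{i<\omega}$ is inconsistent. A hypothetical $(z_\Gamma,z_k)$ realizing it need not lift to an element $a'\in K$ with $v(a'-y_{\alpha,i})=z_\Gamma$ for all $i$; for a pseudo-convergent row no such $a'$ exists unless $z_\Gamma$ lies strictly below all $v(y_\alpha^*-y_{\alpha,i})$. The paper handles this by explicitly augmenting the $\Gamma$-formula with the constraint $x'<v(c_\infty-c_{\alpha i})$ (and in the constant case augmenting the $k$-formula with $x'\neq ac(c_\infty-c_{\alpha i})$), precisely so that any solution in $\Gamma$ (resp.\ $k$) can be lifted back to $K$ via some $a'$ with $v(a'-c_\infty)=a_\Gamma$, thereby transferring the $k_\alpha$-inconsistency. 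Your remark about ``spending the two buffer rows'' does not address either issue; in the paper the two extra rows are used to manufacture $c_{-\infty},c_\infty$ for the column while keeping the array mutually indiscernible over them, not to absorb row-wise exceptional phase indices.
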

\begin{proof}
Assume otherwise, and let $d_{\alpha i}=c_{\alpha i}d_{\alpha i}^{\Gamma}d_{\alpha i}^{k}$
where $c_{\alpha i}\in K$ corresponds to $y$, $d_{\alpha i}^{\Gamma}\in\Gamma$
corresponds to $y_{\alpha}^{\Gamma}$ and $d_{\alpha i}^{k}\in k$
corresponds to $y_{\alpha}^{k}$. By the choice of $\delta$, there
is a $\Delta$-indiscernible sub-sequence of $\left(c_{\alpha0}\right)_{\alpha<\delta}$
of length $\kappa+2$. Take a sub-array consisting of rows starting
with these elements -- it is still an $\inp$-pattern of depth $\kappa+2$
-- and replace our original array with it. Let $c_{-\infty}$ and
$c_{\infty}$ be such that $c_{-\infty}\frown\left(c_{\alpha0}\right)_{\alpha<\kappa}\frown c_{\infty}$
is $\Delta$-indiscernible and $\left(\bar{d}_{\alpha}\right)_{\alpha<\kappa}$
is a mutually indiscernible array over $c_{-\infty}c_{\infty}$ (so
either find $c_{\infty}$ by compactness if $\kappa$ is infinite,
or just let it be $c_{\kappa-1,0}$ and replace our array by $\left(\bar{d}_{\alpha}\right)_{\alpha<\kappa-1}$).
Let $a\models\left\{ \phi_{\alpha}(x,d_{\alpha0})\right\} _{\alpha<\kappa+1}$.

\textbf{Case 1}. $\left(c_{\alpha0}\right)$ is pseudo-convergent.
Let $h\in\{-\infty\}\cup\kappa+1\cup\{\infty\}$ be as given by Lemma
\ref{lem: Phase change point}.

\emph{Case 1.1}. Assume $0<h$. Then $v(a-c_{00})=v(c_{\infty}-c_{00})$,
$ac(a-c_{00})=ac(c_{\infty}-c_{00})$. But then actually $c_{\infty}\models\phi(x,d_{00})$,
and by indiscernibility of the array over $c_{\infty}$, $c_{\infty}\models\{\phi(x,d_{0i})\}_{i<\omega}$
--- a contradiction.

\textit{Case 1.2}: Thus $v(a-c_{\alpha0})=v(a-c_{\infty})$, $ac(a-c_{\alpha0})=ac(a-c_{\infty})$
and $v(a-c_{\infty})<v(c_{\infty}-c_{\alpha0})$ for all $0<\alpha<\kappa+1$.

Let $\chi_{\alpha}'(x',e_{\alpha i}^{\Gamma}):=\chi_{\alpha}(x',d_{\alpha i}^{\Gamma})\land x'<v(c_{\infty}-c_{\alpha i})$
with $e_{\alpha i}^{\Gamma}=d_{\alpha i}^{\Gamma}\cup v(c_{\infty}-c_{\alpha i})$.
Finally, for $\alpha<\kappa_{\Gamma}$ let $f_{\alpha i}^{\Gamma}=\bigcup_{\beta<\kappa_{k}}e_{\kappa_{k}\times\alpha+\beta,i}$
and $p_{\alpha}(x',f_{\alpha i}^{\Gamma})=\left\{ \chi_{\beta}'(x',e_{\kappa_{k}\times\alpha+\beta,i}^{\Gamma})\right\} _{\beta<\kappa_{k}}$.
As $\left(f_{\alpha i}^{\Gamma}\right)$ is a mutually indiscernible
array in $\Gamma$, $\left\{ p_{\alpha}(x',f_{\alpha0}^{\Gamma})\right\} _{\alpha<\kappa_{\Gamma}}$
is realized by $v(a-c_{\infty})$ and $\kappa_{\inp}^{1}(\Gamma)=\kappa_{\Gamma}$,
there must be some $\alpha<\kappa_{\Gamma}$ and $a_{\Gamma}\in\Gamma$
such that (unwinding) $a_{\Gamma}\models\left\{ \chi_{\beta}'(x',e_{\kappa_{k}\times\alpha+\beta,i}^{\Gamma})\right\} _{\beta<\kappa_{k},i<\omega}$. 

Analogously letting $\chi'_{\beta}(x',e_{\beta i}^{k}):=\rho_{\kappa_{k}\times\alpha+\beta}(x',d_{\kappa_{k}\times\alpha+\beta,i}^{k})$,
noticing that $(e_{\beta i}^{k})_{\beta<\kappa_{k},i<\omega}$ is
an indiscernible array in $k$ and $\kappa_{k}=\kappa_{\inp}(k)$,
there must be some $a_{\rho}\in k$ and $\beta<\kappa_{k}$ such that
$a_{\rho}\models\{\chi'_{\beta}(x',e_{\beta i}^{k})\}_{i<\omega}$.

Finally, take $a'\in K$ with $v(a'-c_{\infty})=a_{\Gamma}\land ac(a'-c_{\infty})=a_{\rho}$
and let $\gamma=\kappa_{k}\times\alpha+\beta$. As $a_{\Gamma}<v(c_{\infty}-c_{\gamma i})$
it follows that $v(a'-c_{\gamma i})=v(a'-c_{\infty})$ and $ac(a'-c_{\gamma i})=ac(a'-c_{\infty})$.
But then $a'\models\left\{ \phi_{\gamma}(x,d_{\gamma i})\right\} _{i<\omega}$
--- a contradiction.

\textbf{Case 2}: $(c_{0}^{\alpha})$ is decreasing --- reduces to
the first case by reversing the order of rows.

\textbf{Case 3}: $(c_{0}^{\alpha})$ is constant. 

If $v(a-c_{\alpha0})<v(c_{\infty}-c_{\alpha0})$ ($=v(c_{\beta0}-c_{\alpha0})$
for $\beta\neq\alpha$) for some $\alpha$, then $v(a-c_{\alpha0})=v(a-c_{\beta0})=v(a-c_{\infty})$
for any $\beta$, and $ac(a-c_{\alpha0})=ac(a-c_{\infty})$ for all
$\alpha$'s and it falls under case 1.2.

Next, there can be at most one $\alpha$ with $v(a-c_{\alpha0})>v(c_{\infty}-c_{\alpha0})$
(if also $v(a-c_{\beta0})>v(c_{\infty}-c_{\beta0})$ for some $\beta>\alpha$
then $v(c_{\infty}-c_{\beta0})=v(c_{\beta0}-c_{\alpha0})=v(a-c_{\beta0})>v(c_{\infty}-c_{\beta0})$,
a contradiction). Throw the corresponding row away and we are left
with the case $v(a-c_{\alpha0})=v(c_{\infty}-c_{\alpha0})=v(a-c_{\infty})$
for all $\alpha<\kappa$. It follows by indiscernibility that $v(a-c_{\infty})=v(c_{\infty}-c_{\alpha i})$
for all $\alpha,i$. Notice that it follows that $ac(a-c_{\alpha0})\neq ac(c_{\infty}-c_{\alpha0})$
and $ac(a-c_{\alpha0})=ac(a-c_{\infty})+ac(c_{\infty}-c_{\alpha0})$.

Let $\rho_{\alpha}'(x',e_{\alpha i}^{k}):=\rho_{\alpha}(x'-ac(c_{\infty}-c_{\alpha i}),d_{\alpha i}^{k})\land x'\neq ac(c_{\infty}-c_{\alpha i})$
with $e_{\alpha i}^{k}=d_{\alpha i}^{k}\cup ac(c_{\infty}-c_{\alpha i})$.
Notice that $ac(a-c_{\infty})\models\left\{ \rho_{\alpha}'(x',e_{\alpha0}^{k})\right\} $
and that $\left(e_{\alpha i}^{k}\right)$ is a mutually indiscernible
array in $k$. Thus there is some $\alpha<\kappa$ and $a_{k}\models\left\{ \rho_{\alpha}'(x',e_{\alpha i}^{k})\right\} _{i<\omega}$.

Take $a'\in K$ such that $v(a'-c_{\infty})=v(a-c_{\infty})\land ac(a'-c_{\infty})=a_{k}$.
By the choice of $a_{k}$ we have that $v(a'-c_{\infty})=v(a-c_{\infty})=v(c_{\infty}-c_{\alpha i})$
and that $ac(a'-c_{\infty})\neq ac(c_{\infty}-c_{\alpha i})$, thus
$v(a'-c_{\alpha i})=v(a-c_{\infty})$ and $ac(a'-c_{\alpha i})=a_{k}+ac(c_{\infty}-c_{\alpha i})$.
It follows that $a'\models\left\{ \phi_{\alpha}(x,d_{\alpha i})\right\} _{i<\omega}$
--- a contradiction.\end{proof}
\begin{lem}
\label{lem: linear formulas are NTP2} In $K$, there is no $\inp$-pattern
$\left(\phi_{\alpha}(x,y_{\alpha}),\bar{d}_{\alpha},k_{\alpha}\right)_{\alpha<\delta}$
such that $x$ is a singleton and $\phi_{\alpha}(x,y_{\alpha})=\chi_{\alpha}(v(x-y_{1}),...,v(x-y_{n}),y_{\alpha}^{\Gamma})\land\rho_{\alpha}(ac(x-y_{1}),...,ac(x-y_{n}),y_{\alpha}^{k})$,
where $\chi_{\alpha}\in L_{\Gamma}$ and $\rho_{\alpha}\in L_{k}$.\end{lem}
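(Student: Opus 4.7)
The approach is by induction on $n$, with base case $n=1$ provided by Lemma \ref{lem: 1-lin formulas are NTP2}. For the inductive step, replay the Ramsey reduction of Lemma \ref{lem: 1-lin formulas are NTP2}, but now applied to the sequence of $n$-tuples of field parameters appearing in the first column. Choosing $\delta$ to be a suitably tall Ramsey tower (the statement of the lemma only needs $\delta$ bounded in terms of $\kappa$ and $n$, which still yields $\kappa_{\inp}^1(K) < \infty$ uniformly for Theorem \ref{thm: Ax-Kochen}), extract a sub-array whose first-column $n$-tuples $(c_{\alpha,0,1},\ldots,c_{\alpha,0,n})_\alpha$ are $\Delta$-indiscernible, fix a limit $c_\infty = (c_{\infty,1},\ldots,c_{\infty,n})$ extending this sequence, and let $a$ realise $\{\phi_\alpha(x,d_{\alpha 0})\}$ along the remaining rows.

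Focus on the last coordinate $y_n$ in isolation. Lemmas \ref{lem: val on indiscernible sequence} and \ref{lem: Phase change point} classify the single column $(c_{\alpha,0,n})_\alpha$ and produce a phase index $h$ describing the behaviour of $a$ with respect to it. Following Cases 1.1--1.2 and 3 of the proof of Lemma \ref{lem: 1-lin formulas are NTP2}, either we obtain a direct contradiction, or after discarding at most one exceptional row we conclude that the pair $(v(a - c_{\alpha,i,n}), ac(a - c_{\alpha,i,n}))$ is uniformly expressible in terms of the two ``master'' constants $v(a - c_{\infty,n})$, $ac(a - c_{\infty,n})$ together with the array-computable data $v(c_{\infty,n} - c_{\alpha,i,n})$ and $ac(c_{\infty,n} - c_{\alpha,i,n})$; in the constant sub-case this uses the additive identity $ac(a - c_{\alpha,i,n}) = ac(a - c_{\infty,n}) + ac(c_{\infty,n} - c_{\alpha,i,n})$. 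Substituting these uniform expressions into $\chi_\alpha$ and $\rho_\alpha$ absorbs the entire dependence on $x - y_n$ into new $\Gamma$- and $k$-parameters appended to $d_{\alpha,i}^{\Gamma}$ and $d_{\alpha,i}^{k}$.

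By Lemma \ref{lem: boolean operations on inp-patterns}(2) and the mutual indiscernibility of the original array, the resulting array is an $\inp$-pattern of the same syntactic form but involving only $n-1$ field parameters $y_1,\ldots,y_{n-1}$, of depth still large enough to contradict the inductive hypothesis. The main obstacle is the bookkeeping in Case 3 (the constant sub-case): the single ``bad'' row on which $v(a - c_{\alpha,0,n})>v(c_{\infty,n} - c_{\alpha,0,n})$ must be discarded, and the additivity identity above must be invoked to separate the $a$-dependent from the parameter-dependent information before the rewriting can be performed. This manipulation is exactly the one already carried out in Case 3 of Lemma \ref{lem: 1-lin formulas are NTP2} and transfers without change to the present column-by-column reduction.
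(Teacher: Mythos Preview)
Your approach has a genuine gap: the ``column-wise'' elimination of $y_n$ via $c_{\infty,n}$ does not actually reduce the number of field terms in the formula. In the analogue of Case~1.2 (and likewise Case~3), what you obtain is $v(a-c_{\alpha,i,n})=v(a-c_{\infty,n})$ and $ac(a-c_{\alpha,i,n})=ac(a-c_{\infty,n})$. These quantities still depend on $x$: if you simply substitute the constants $v(a-c_{\infty,n}),\,ac(a-c_{\infty,n})$ into $\chi_\alpha,\rho_\alpha$ as parameters, the resulting formula is no longer implied by $\phi_\alpha$, so $k_\alpha$-inconsistency of each row can fail. If instead you keep the constraint honest by adding a conjunct such as $v(x-c_{\infty,n})<v(c_{\infty,n}-c_{\alpha,i,n})$ (as in Lemma~\ref{lem: 1-lin formulas are NTP2}), then the new formula contains the term $v(x-c_{\infty,n})$, which is still of the shape $v(x-y)$ with a field parameter; you have replaced a varying $y_n$ by a fixed $c_{\infty,n}$, but the syntactic count of field slots is unchanged and the inductive hypothesis does not apply. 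The same problem blocks your Case~1.1: in the $n>1$ setting $c_{\infty,n}$ cannot be plugged in for $x$ to get an immediate contradiction, because $\phi_\alpha$ also constrains $v(x-c_{\alpha,0,j})$ for $j<n$, and there is no reason those are satisfied by $c_{\infty,n}$.

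The paper's inductive step is quite different and avoids this entirely: it works \emph{within each row}. For fixed $\alpha$ one applies the ultrametric inequality to the triple $a,\,c_{\alpha 0}^{1},\,c_{\alpha 0}^{n}$ and compares $v(a-c_{\alpha 0}^{1})$ with $v(c_{\alpha 0}^{n}-c_{\alpha 0}^{1})$. In each of the five resulting cases, $v(x-c_{\alpha i}^{n})$ and $ac(x-c_{\alpha i}^{n})$ are rewritten in terms of $v(x-c_{\alpha i}^{1})$, $ac(x-c_{\alpha i}^{1})$ and the row-computable data $v(c_{\alpha i}^{n}-c_{\alpha i}^{1})$, $ac(c_{\alpha i}^{n}-c_{\alpha i}^{1})$ (with the appropriate side condition added to guarantee row-inconsistency). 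This genuinely drops from $n$ to $n-1$ field slots, row by row, with no Ramsey extraction and no auxiliary $c_\infty$; in particular the same $\delta=R(\kappa+2,\Delta)$ works uniformly for all $n$, which your ``tall Ramsey tower'' would not give.
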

\begin{proof}
We prove it by induction on $n$. The base case is given by Lemma
\ref{lem: 1-lin formulas are NTP2}. So assume that we have proved
it for $n-1$, and let $\left(\phi_{\alpha}(x,y_{\alpha}),\bar{d}_{\alpha},k_{\alpha}\right)_{\alpha<\delta}$
be an $\inp$-pattern with $\phi_{\alpha}(x,y_{\alpha})=\chi_{\alpha}(v(x-y_{1}),...,v(x-y_{n}),y_{\alpha}^{\Gamma})\land\rho_{\alpha}(ac(x-y_{1}),...,ac(x-y_{n}),y_{\alpha}^{k})$
and $d_{\alpha i}=c_{\alpha i}^{1}...c_{\alpha i}^{n}d_{\alpha i}^{\Gamma}d_{\alpha i}^{k}$. 

So let $a\models\left\{ \phi_{\alpha}(x,d_{\alpha0})\right\} _{\alpha<\delta}$.
Fix some $\alpha<\delta$.

\textbf{Case 1}: $v(a-c_{\alpha0}^{1})<v(c_{\alpha0}^{n}-c_{\alpha0}^{1})$. 

Then $v(a-c_{\alpha0}^{1})=v(a-c_{\alpha0}^{n})$ and $ac(a-c_{\alpha0}^{1})=ac(a-c_{\alpha0}^{n})$.
We take 
\begin{eqnarray*}
\phi_{\alpha}'(x,d_{\alpha i}') & = & \left(\chi_{\alpha}(v(x-c_{\alpha i}^{1}),...,v(x-c_{\alpha i}^{1}),d_{\alpha i}^{\Gamma})\land v(x-c_{\alpha0}^{1})<v(c_{\alpha i}^{n}-c_{\alpha i}^{1})\right)\\
 &  & \land\rho_{\alpha}(ac(x-c_{\alpha i}^{1}),...,ac(x-c_{\alpha i}^{1}),d_{\alpha i}^{\rho})
\end{eqnarray*}
 and $d'_{\alpha i}=d_{\alpha i}\cup v(c_{\alpha i}^{n}-c_{\alpha i}^{1})$.

\textbf{Case 2}: $v(a-c_{\alpha0}^{1})>v(c_{\alpha0}^{n}-c_{\alpha0}^{1})$.

Then $v(a-c_{\alpha0}^{n})=v(c_{\alpha0}^{n}-c_{\alpha0}^{1})$ and
$ac(a-c_{\alpha0}^{n})=ac(c_{\alpha0}^{n}-c_{\alpha0}^{1})$. Take
\begin{eqnarray*}
\phi_{\alpha}'(x,d_{\alpha i}') & = & \left(\chi_{\alpha}(v(x-c_{\alpha i}^{1}),...,v(c_{\alpha0}^{n}-c_{\alpha0}^{1}),d_{\alpha i}^{\Gamma})\land v(x-c_{\alpha0}^{1})>v(c_{\alpha i}^{n}-c_{\alpha i}^{1})\right)\\
 &  & \land\rho_{\alpha}(ac(x-c_{\alpha i}^{1}),...,ac(c_{\alpha0}^{n}-c_{\alpha0}^{1}),d_{\alpha i}^{\rho})
\end{eqnarray*}
 and $d'_{\alpha i}=d_{\alpha i}\cup v(c_{\alpha i}^{n}-c_{\alpha i}^{1})\cup ac(c_{\alpha0}^{n}-c_{\alpha0}^{1})$.

\textbf{Case 3}: $v(a-c_{\alpha0}^{n})<v(c_{\alpha0}^{n}-c_{\alpha0}^{1})$
and \textbf{Case 4}: $v(a-c_{\alpha0}^{n})>v(c_{\alpha0}^{n}-c_{\alpha0}^{1})$
are symmetric to the cases 1 and 2, respectively.

\textbf{Case 5}: $v(a-c_{\alpha0}^{1})=v(a-c_{\alpha0}^{n})=v(c_{\alpha0}^{n}-c_{\alpha0}^{1})$.

Then $ac(a-c_{\alpha0}^{n})=ac(a-c_{\alpha0}^{1})-ac(c_{\alpha0}^{n}-c_{\alpha0}^{1})$.
We take 

\begin{eqnarray*}
\phi_{\alpha}'(x,d_{\alpha i}') & = & \left(\chi_{\alpha}(v(x-c_{\alpha i}^{1}),...,v(c_{\alpha0}^{n}-c_{\alpha0}^{1}),d_{\alpha i}^{\Gamma})\land v(x-c_{\alpha0}^{1})=v(c_{\alpha i}^{n}-c_{\alpha i}^{1})\right)\\
 &  & \land\left(\rho_{\alpha}(ac(x-c_{\alpha i}^{1}),...,ac(c_{\alpha0}^{n}-c_{\alpha0}^{1}),d_{\alpha i}^{\rho})\land ac(x-c_{\alpha0}^{1})\neq ac(c_{\alpha i}^{n}-c_{\alpha i}^{1})\right)
\end{eqnarray*}

and $d'_{\alpha i}=d_{\alpha i}\cup v(c_{\alpha i}^{n}-c_{\alpha i}^{1})\cup ac(c_{\alpha0}^{n}-c_{\alpha0}^{1})$.

~

In any case, we have that $\left\{ \phi_{\alpha}'(x,d_{\alpha i}')\right\} _{i<\omega}$
is inconsistent, $\left\{ \phi_{\beta}(x,d_{\beta,0})\right\} _{\beta<\alpha}\cup\left\{ \phi'_{\alpha}(x,d_{\alpha0}')\right\} \cup\left\{ \phi_{\beta}(x,d_{\beta0})\right\} _{\alpha<\beta<\delta}$
is consistent, and $\left(\bar{d}_{\beta}\right)_{\beta<\alpha}\cup\left\{ \bar{d}_{\alpha}'\right\} \cup\left(\bar{d}_{\beta}\right)_{\alpha<\beta<\delta}$
is a mutually indiscernible array. Doing this for all $\alpha$ by
induction we get an $\inp$-pattern of the same depth involving strictly
less different $v(x-y_{i})$'s --- contradicting the inductive hypothesis.
\end{proof}
Finally, we are ready to prove Theorem \ref{thm: Ax-Kochen}.
\begin{proof}
By the cell decomposition of Pas \cite{MR1004136}, every formula
$\phi(x,\bar{c})$ is equivalent to one of the form $\bigvee_{i<n}(\chi_{i}(x)\land\rho_{i}(x))$
where $\chi_{i}=\bigwedge\chi_{j}^{i}(v(x-c_{j}^{i}),\bar{d}_{j}^{i})$
with $\chi_{j}^{i}(x,\bar{d}_{j}^{i})\in L(\Gamma)$ and $\rho_{i}=\bigwedge\rho_{j}^{i}(ac(x-c_{j}^{i}),\bar{e}_{j}^{i})$
with $\rho_{j}^{i}(x,\bar{e}_{j}^{i})\in L(k)$. By Lemma \ref{lem: boolean operations on inp-patterns},
if there is an $\inp$-pattern of depth $\kappa$ with $x$ ranging
over $K$, then there has to be an $\inp$-pattern of depth $\kappa$
and of the form as in Lemma \ref{lem: linear formulas are NTP2},
which is impossible. It is sufficient, as $\Gamma$ and $k$ are stably
embedded with no new induced structure and are fully orthogonal.\end{proof}
\begin{problem}
~
\begin{enumerate}
\item Can the bound on $\kappa_{\inp}^{1}(K)$ given in Theorem \ref{thm: Ax-Kochen}
be improved? Specifically, is it true that $\kappa_{\inp}^{1}\left(K\right)\leq\kappa_{\inp}^{1}\left(k\right)\times\kappa_{\inp}^{1}\left(\Gamma\right)$
in the ring language?
\item Determine the burden of $K=\prod_{p\mbox{ prime}}\mathbb{Q}_{p}/\mathfrak{U}$
in the pure field language. In \cite{DolichGoodrickLippel} it is
shown that each of $\mathbb{Q}_{p}$ is $\mbox{dp}$-minimal, so combined
with Fact \ref{fac: In NIP, burden =00003D dp-rank} it has burden
$1$. Note that $K$ is not $\mbox{inp}$-minimal in the Denef-Pas
language, as the residue field is infinite, so $\{v(x)=v_{i}\}$,
$\{ac(x)=a_{i}\}$ shows that the burden is at least $2$. However,
Hrushovski pointed out to me that the angular component is not definable
in the pure ring langauge, thus the conjecture is that every ultraproduct
of $p$-adics is of burden $1$ in the pure ring (or $RV$) language.
\end{enumerate}
\end{problem}
\bibliographystyle{alpha}
\bibliography{common}

\end{document}